\setlist[enumerate,1]{label=\textup{(\arabic*)}}
\newtheorem{theorem}{Theorem}[section]
\newtheorem{corollary}[theorem]{Corollary}
\newtheorem{lemma}[theorem]{Lemma}
\newtheorem{proposition}[theorem]{Proposition}
\newtheorem{definition}[theorem]{Definition}
\newtheorem{question}[theorem]{Question}
\newtheorem{conjecture}[theorem]{Conjecture}
\newtheorem{example}[theorem]{Example}
\numberwithin{equation}{section}
\def\Val{\mathrm{Val}}
\def\val{\mathrm{val}}
\def\ZVal{\mathrm{ZVal}}
\def\QM{\mathrm{QM}}
\def\qm{\mathrm{qm}}
\def\lct{\mathrm{lct}}
\def\Arn{\mathrm{Arn}}
\def\ZV{\mathrm{ZV}}
\def\T{\mathcal{T}}
\def\wt{\widetilde}
\def\fk{\mathfrak}
\def\bf{\mathbf}
\def\cal{\mathcal}
\def\scr{\mathscr}
\def\bfQ{\mathbf{Q}}
\def\bfZ{\mathbf{Z}}
\def\bfR{\mathbf{R}}
\def\bfC{\mathbf{C}}
\def\bfB{\mathbf{B}}
\def\V{\mathcal{V}}
\DeclareMathOperator{\ord}{ord}
\DeclareMathOperator{\Spec}{Spec}
\DeclareMathOperator{\ddc}{dd^c}
\begin{document}

\title[Algebraic Zhou valuations]
 {Algebraic Zhou valuations}

\author{Shijie Bao}
\address{Shijie Bao: Institute of Mathematics, Academy of Mathematics and Systems Science, Chinese Academy of Sciences, Beijing 100190, China}
\email{bsjie@amss.ac.cn}

\author{Qi'an Guan}
\address{Qi'an Guan: School of
Mathematical Sciences, Peking University, Beijing 100871, China}
\email{guanqian@math.pku.edu.cn}

\author{Lin Zhou}
\address{Lin Zhou: Institute of Algebraic Geometry, Leibniz University Hannover, Hannover 30167, Germany}
\email{zhou@math.uni-hannover.de}

\thanks{}

\subjclass[2020]{14F18, 12J20, 14B05, 32U05, 32U35}

\keywords{Zhou valuation, Jonsson--Musta\c{t}\u{a}'s conjecture, graded sequence of ideals, multiplier ideal, jumping number}

\date{\today}

\dedicatory{}

\commby{}


\begin{abstract}
    In this paper, we generalize Zhou valuations, originally defined on complex domains, to the framework of general schemes. We demonstrate that an algebraic version of the Jonsson--Musta\c{t}\u{a} conjecture is equivalent to the statement that every Zhou valuation is quasi-monomial. By introducing a mixed version of jumping numbers and Tian functions associated with valuations, we obtain characterizations of a valuation being a Zhou valuation or computing some jumping number using the Tian functions. Furthermore, we establish the correspondence between Zhou valuations in algebraic settings and their counterparts in analytic settings.
\end{abstract}

\maketitle

\setcounter{tocdepth}{1}

\tableofcontents
\section{Introduction}

In the present paper, we work on \emph{separated regular connected Noetherian excellent schemes} over $k\coloneqq\bfQ$. We extend the definition of Zhou valuations, originally introduced in \cite{BGMY23}, from the ring of germs of holomorphic functions on $\bfC^n$ to such schemes, while all the settings are completely algebraic. First, we recall some concepts that are closely related to the definition and properties of algebraic Zhou valuations.

\subsection{Background}
\subsubsection{Jumping number and graded sequence of ideals}
Let $X$ be a regular algebraic variety defined over a field with characteristic zero, and let $\fk{a}$ and $\fk{q}$ be nonzero ideals on $X$. The \emph{jumping number} (associated to $\fk{q}$), denoted by $\lct^\fk{q}(\fk{a})$, is defined as
\[\lct^\fk{q}(\fk{a})\coloneqq\inf_E\frac{A(\ord_E)+\ord_E(\fk{q})}{\ord_E(\fk{a})} =\min\{\lambda\colon \fk{q}\not\subseteq\cal{J}(\fk{a}^\lambda)\},\]
where $E$ runs through all prime divisors over $X$, $\cal{J}(\fk{a})$ is the \emph{multiplier ideal} associated to $\fk{a}$ (cf. \cite[Part Three]{LarII04}) and the number $A(\ord_E)-1$ is the coefficient of the divisor $E$ on $Y$ in the relative canonical class $K_{Y /X}$. When $q=\cal{O}_X$, the jumping number specializes to the \emph{log canonical threshold} (see \cite[Section 8]{Kol97}), denoted by $\lct(\fk{a})$. Both jumping numbers and log canonical thresholds are important birational invariants in algebraic geometry, which are algebro-geometric analogues of jumping numbers (cf. \cite{ELSV04}) and complex singularity exponents (cf. \cite{Tian87,Kol97,DK01}) in complex analysis.

Given a nonzero \emph{graded sequence of ideals} $\fk{a}_{\bullet}$, one can define an \emph{asymptotic jumping number} $\lct^{\fk{q}}(\fk{a}_{\bullet})$ as 
\[0<\lct^{\fk{q}}(\fk{a}_{\bullet})\coloneqq  \sup_m m\cdot \lct^{\fk{q}}(\fk{a}_m)=\lim_{m\to \infty}m\cdot\lct^{\fk{q}}(\fk{a}_m).\]
In \cite{JM12} Jonsson--Musta\c{t}\u{a} showed that if we set $v(\fk{a}_{\bullet})\coloneqq \lim_{m\to \infty}v(\fk{a}_m)/m$, then
\[\lct^{\fk{q}}(\fk{a}_{\bullet})=\inf_{v}\frac{A(v)+v(\fk{q})}{v(\fk{a}_{\bullet})},\]
where $v$ runs all the nontrivial valuations on $X$ and $A(v)$ is the \emph{log discrepancy} of $v$ (see \cite[Section 5]{JM12} or \Cref{subsetion-Log discrepancy and jumping numbers}). Using the compactness arguments, they proved that there must exist a valuation $v$ computing $\lct^{\fk{q}}(\fk{a}_{\bullet})$ if $\lct^{\fk{q}}(\fk{a}_{\bullet})\neq \infty$, i.e. $\lct^{\fk{q}}(\fk{a}_{\bullet})=\big(A(v)+v(\fk{q})\big)/v(\fk{a}_{\bullet})$.

\subsubsection{Jonsson--Musta\c{t}\u{a}'s conjectures and Demailly's strong openness conjecture}

In \cite{JM12}, Jonsson--Musta\c{t}\u{a} proposed several conjectures regarding the valuations computing $\lct^{\fk{q}}(\fk{a}_{\bullet})$. We present here a conjecture closely related to our work; for further versions, see \cite{JM12,JM14}.

\begin{conjecture}[Algebraic version of Jonsson--Musta\c{t}\u{a}'s conjecture]\label{conj-weak version JM}
    Let $\fk{a}_{\bullet}$ be a graded sequence of ideals on $X$ and $\fk{q}$ a nonzero ideal on $X$ with $\lct^{\fk{q}}(\fk{a}_{\bullet})<\infty$. There exists a quasi-monomial valuation $v\in \Val_X^*$ that computes $\lct^{\fk{q}}(\fk{a}_{\bullet})$.
\end{conjecture}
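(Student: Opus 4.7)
The plan is to exploit the equivalence advertised in the abstract: the conjecture reduces to showing that every algebraic Zhou valuation on $X$ is quasi-monomial. I would proceed in three stages.

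First, I would start from the Jonsson--Musta\c{t}\u{a} existence result already recalled above. Under the hypothesis $\lct^{\fk{q}}(\fk{a}_{\bullet})<\infty$, a compactness argument on $\Val_X^*$ produces some valuation $v_{0}$ that computes the asymptotic jumping number, i.e.\ achieves the infimum in
\[
\lct^{\fk{q}}(\fk{a}_{\bullet})=\inf_{v}\frac{A(v)+v(\fk{q})}{v(\fk{a}_{\bullet})}.
\]
The content of the conjecture is that $v_{0}$ can be taken quasi-monomial; a priori it need only be a real valuation.

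Second, I would introduce the algebraic Zhou valuation: a valuation $v\in\Val_X^*$ that is maximal (with respect to pointwise comparison on ideals $\fk{b}\subseteq\cal{O}_X$) among those satisfying the extremal identity $A(v)+v(\fk{q})=\lct^{\fk{q}}(\fk{a}_{\bullet})\cdot v(\fk{a}_{\bullet})$, equivalently among extremizers of the associated Tian-type function. Using the lower semicontinuity of $A(\cdot)$, the upper semicontinuity of $v\mapsto v(\fk{a}_{m})$, and the quasi-compactness of the sublevel sets $\{A+v(\fk{q})\le C\}$ employed in \cite{JM12}, a Zorn-style maximization starting from $v_{0}$ should yield a Zhou valuation $v$ that still computes $\lct^{\fk{q}}(\fk{a}_{\bullet})$. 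This is the algebraic analogue of the construction in \cite{BGMY23}, and it reduces the conjecture to the assertion: every algebraic Zhou valuation is quasi-monomial.

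Third, and this is where the main obstacle lies, I would prove that any algebraic Zhou valuation $v$ has rational rank equal to the transcendence degree of its residue field extension plus its real rank, i.e.\ is Abhyankar, and in fact quasi-monomial on some log smooth model. The paper's intended route seems to be to give an intrinsic characterization of Zhou valuations via mixed jumping numbers and Tian functions, to establish a dictionary between algebraic Zhou valuations and their analytic counterparts (via completion at the center and GAGA-type comparisons), and then to import the quasi-monomial property from the analytic setting. The obstruction is precisely that no unconditional structure theorem forcing an algebraic Zhou valuation to be Abhyankar is presently known in full generality; the best one can hope for within the scope of this paper is the equivalence ``Conjecture~\ref{conj-weak version JM} $\Longleftrightarrow$ every Zhou valuation is quasi-monomial'', together with partial verifications (for instance in low dimension or under divisorial approximation hypotheses), rather than an unconditional proof.
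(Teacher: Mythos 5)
The statement you were asked to prove is a \emph{conjecture}, and you correctly recognize that the paper does not (and does not claim to) resolve it unconditionally; the paper explicitly notes that \Cref{conj-weak version JM} remains open when $\fk{q}\neq\cal{O}_X$. Your stages 1 and 2 track the paper's actual reduction: a compactness argument (\Cref{thm-exist.val.comp}) produces a valuation computing $\lct^{\fk{q}}(\fk{a}_{\bullet})$, and a Zorn-type maximization (\Cref{thm-existence.gr.sys}, \Cref{cor-ZV.computes}) replaces it by a Zhou valuation that still computes the jumping number, yielding exactly the equivalence of \Cref{thm-equiv.weak.ZVal=QM}.

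Two substantive inaccuracies. First, you misdefine the algebraic Zhou valuation: the paper takes $v$ to be a maximal element of $\Val(X;\fk{q})=\{w:\lct^{\fk{q}}(\fk{a}^{w}_{\bullet})\le 1\}$, a condition depending only on $\fk{q}$ and the graded sequence $\fk{a}^{w}_{\bullet}$ \emph{associated to $w$}, not a maximizer of an extremal identity tied to a fixed $\fk{a}_{\bullet}$. It is a consequence (\Cref{thm-A(v)=1-v(q)}, \Cref{cor-ZV.computes}) that such $v$ satisfies $A(v)+v(\fk{q})=1$ and computes $\lct^{\fk{q}}(\fk{a}_{\bullet})$ for any $\fk{a}_{\bullet}$ you started from, but that is a theorem, not the definition. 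Moreover the chain-maximization in the paper does not invoke the semicontinuity/compactness of sublevel sets you cite; it uses the Noetherian property of $X$ to pass an increasing chain of valuations to a supremum valuation lying in $\Val(X;\fk{q})$.

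Second, your stage 3 misstates the route to the partial results. The paper does not import quasi-monomiality from the analytic setting: the case $\fk{q}=\cal{O}_X$ (\Cref{cor-ZV.OX.qm}) is deduced directly from Xu's theorem (\Cref{thm-Xu}) combined with the rigidity statement of \Cref{thm-A(v)=1-v(q)} (any valuation computing $\lct^{\fk{q}}(\fk{a}^v_{\bullet})$ is a scalar multiple of $v$), and the case $\dim X\le 2$ (\Cref{cor-dim.le.2}) uses the already-known strong version of Jonsson--Musta\c{t}\u{a} in dimension $\le 2$ (\Cref{thm-strong version of An}). The algebraic-to-analytic dictionary of \Cref{section11} serves a different purpose (\Cref{thm-main thm C}), not a mechanism for establishing quasi-monomiality.
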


While \Cref{conj-weak version JM} was being formulated, the cases for $\dim X\le 2$ were already established in \cite{JM12}. Moreover, Xu \cite{Xu20} made a breakthrough on \Cref{conj-weak version JM}, that is, proving the case $\fk{q}=\cal{O}_X$ of \Cref{conj-weak version JM}.

One very important purpose of Jonsson--Musta\c{t}\u{a} in proposing these conjectures was to study the openness conjecture and the strong openness conjecture through algebraic approaches. In particular, as stated in \cite[Theorem D and D']{JM14}, the validity of \Cref{conj-weak version JM} would imply the strong openness conjecture, and when $\fk{q}=\cal{O}_X$, the \Cref{conj-weak version JM} regarding the log canonical threshold would imply the openness conjecture. 

We recall \emph{Demailly--Koll\'{a}r's openness conjecture} and \emph{Demailly's strong openness conjecture}. Let $\varphi$ be a plurisubharmonic (psh for short) function on a complex manifold $M$, and let $\cal{J}(\varphi)$ be the \emph{multiplier ideal sheaf} of $\varphi$ (\cite{Nad89,Nad90}); see (\ref{equ-MIS}) for the definition. Set $\cal{J}_+(\varphi)\coloneqq \bigcup_{\varepsilon>0}\cal{J}\big((1+\varepsilon)\varphi\big)$. Then Demailly--Koll\'{a}r's openness conjecture (\cite{DK01}) can be formulated as:

\vspace{0.3cm}

\paragraph{\emph{Openness Conjecture}}  If $\cal{J}(\varphi)=\cal{O}_M$, then $\cal{J}_+(\varphi)=\cal{J}(\varphi)$.

\vspace{0.3cm}

When the restriction of $\cal{J}(\varphi)$ is removed, this is Demailly's strong openness conjecture (see \cite{Dem00,AMAG}):

\vspace{0.3cm}

\paragraph{\emph{Strong Openness Conjecture}} $\cal{J}_+(\varphi)=\cal{J}(\varphi)$.

\vspace{0.3cm}

Breakthroughs in the openness conjecture and strong openness conjecture initially emerged from algebraic pathways. Using the valuative tree theory (cf. \cite{FJ04}), Favre--Jonsson \cite{FJ05a} (see also \cite{FJ05b}) proved the $2$-dimensional case of the openness conjecture, and Jonsson--Musta\c{t}\u{a} \cite{JM12} established the $2$-dimensional case of the strong openness conjecture by proving the $2$-dimensional case of \Cref{conj-weak version JM}. Berndtsson \cite{Bern13} proved the openness conjecture by using the complex Brunn-Minkowski inequality (see also \cite{Bern15}). After that, Guan-Zhou \cite{GZ15soc} proved the strong openness conjecture by movably using the Ohsawa-Takegoshi $L^2$ extension theorem (\cite{OT87}). Meanwhile, Xu’s breakthrough (\cite{Xu20}) on \Cref{conj-weak version JM} for the log canonical threshold case provided an algebraic proof of the openness conjecture without dimensional restrictions.

More precisely, the (algebraic version of) Jonsson-Musta\c{t}\u{a}'s conjecture (\Cref{conj-weak version JM}) can imply the strong openness conjeture through implying the following conjecture (\cite[Conjecture B']{JM14}):

\vspace{0.3cm}

\paragraph{\emph{(Analytic version of) Jonsson-Musta\c{t}\u{a}'s conjecture}} If $c_o^{\fk{q}}(\varphi)<+\infty$, then
\[\frac{1}{r^2}\mu\big(\{c_o^{\fk{q}}(\varphi)\varphi-\log|\fk{q}|<\log r\}\big)\]
has a positive lower bound independent of $r\in (0,1]$, where $\varphi$ is a psh germ at $o\in\bfC^n$, $\fk{q}\subseteq\cal{O}_o$ is a nonzero ideal, $\mu$ is the Lebesgue measure on $\bfC^n$, and
\[c_o^{\fk{q}}(\varphi)\coloneqq\sup\big\{c\ge 0\colon |\fk{q}|^2e^{-2c\varphi} \text{ is locally $L^1$ integrable near } o\big\}\]
is the \emph{jumping number} (see \cite{DK01}). 

\vspace{0.3cm}

This (analytic version of) Jonsson-Musta\c{t}\u{a}'s conjecture was proved by Guan--Zhou based on the truth of the strong openness conjecture (\cite{GZ15eff}), and a proof without using the strong openness property can be found in \cite{BGY25}, with a sharp effectiveness result related to this conjecture obtained (see also \cite{BG24a}). The effectiveness results of the strong openness conjecture can also be referred to \cite{GZ15eff} (by giving the $L^2$ estimates to the solutions of $\bar{\partial}$-equations), \cite{Guan19} (by the concavity of minimal $L^2$ integrals), and \cite{BG22,BG24b} (by the log-plurisubharmonicity of fiberwise $\xi$-Bergman kernels, a variant of Berndtsson's log-plurisubharmonicity of fiberwise Bergman kernels \cite{Bern06}).

Although the strong openness conjecture and the analytic version of Jonsson-Musta\c{t}\u{a}'s conjecture have been proved, the algebraic version of Jonsson--Musta\c{t}\u{a}'s conjecture still remains open in the case of nontrivial ideal $\fk{q}$ (i.e. $\fk{q}\neq\cal{O}_X$ in \Cref{conj-weak version JM}).

\subsubsection{Zhou weight, Zhou number and Zhou valuation}

Recall the definitions of \emph{Zhou weight}, \emph{Zhou number} and \emph{Zhou valuation} introduced in \cite{BGMY23}. Let $f_0=(f_{0,1},\ldots,f_{0,m})$ be a vector of holomorphic functions near the origin $o$ in $\bfC^n$, and let $\varphi_0$ be a psh function near $o$ such that $|f_0|^2e^{-2\varphi_0}$ is integrable near $o$.

\begin{definition}[see {\cite[Definition 1.2]{BGMY23}}]\label{def-Zhou.weight.number.val}
We call that a psh function $\Phi$ near $o$ is a (local) \emph{Zhou weight} related to $|f_0|^2e^{-2\varphi_0}$ near $o$, if the following three statements hold:
\begin{enumerate}
\item$|f_0|^2e^{-2\varphi_0}|z|^{2N_0}e^{-2\Phi}$ is integrable near $o$ for large enough $N_0\gg 0$;
\item $|f_0|^2e^{-2\varphi_0}e^{-2\Phi}$ is not integrable near $o$;
\item For any psh function $\varphi'\geq\Phi+O(1)$ near $o$ such that
$|f_0|^2e^{-2\varphi_0}e^{-2\varphi'}$ is not integrable near $o$, $\varphi'=\Phi+O(1)$ holds.
\end{enumerate}
The relative type of a psh germ $u$ at $o$ to a Zhou weight $\Phi$:
\[\sigma(u,\Phi)\coloneqq\sup\{c\ge 0\colon u\le c\Phi +O(1) \text{ near } o\}\]
is called the \emph{Zhou number} of $u$ to $\Phi$.

Call a valuation $v$ on $\cal{O}_o$ a \emph{Zhou valuation} related to $|f_0|^2e^{-2\varphi_0}$, if there exists a Zhou weight $\Phi$ related $|f_0|^2e^{-2\varphi_0}$ near $o$ such that $\sigma(\log |f|,\Phi)=v(f)$ for all $(f,o)\in\cal{O}_o$.
\end{definition}

The existence of Zhou valuations is based on the truth of the strong openness conjecture. By establishing an integral expression of Zhou numbers (see \cite[Theorem 1.6]{BGMY23}), it was proved in \cite{BGMY23} that the Zhou numbers satisfying both \emph{tropical additivity} and \emph{tropical multiplicativity}, inducing the valuation structure, which makes that every Zhou weight can be associated with a Zhou valuation (see \cite[Corollary 1.9]{BGMY23}).

The notions of Zhou weights and Zhou numbers were used to analytically reprove and generalize the characterization of singularities of plurisubharmonic germs established by Boucksom--Favre--Jonsson in \cite{BFJ08}, while Zhou valuations were also used to characterize if one holomorphic germ can be divided by another (see \cite[Theorem 1.11 and Corollary 1.14]{BGMY23}). In this context, \emph{Tian functions} were introduced as technical instruments to facilitate the construction and analysis of these structures. These constructions revealed connections between the singularities in several complex variables and valuation theory.

Furthermore, \cite{BGY23} demonstrated some connections between Zhou valuations and Jonsson--Musta\c{t}\u{a}'s conjectures in complex analytic framework. For example, it was showed that for any Zhou valuation $\nu$, there exists a graded sequence of ideals $\fk{a}_{\bullet}$ and a nonzero ideal $\fk{q}$ such that $\nu$ $\scr{A}$-computes (see \cite[Definition 1.5]{BGY23}) the jumping number $\lct^{\fk{q}}(\fk{a}_{\bullet})$. The results in \cite{BGY23} reveal not only the possibility but also the necessity of extending the notion of Zhou valuations beyond the analytic setting into a broader algebro-geometric framework.

\subsection{Algebraic Zhou valuation and main results}
Motivated by these observations, the present paper undertakes a systematic algebraic study of Zhou valuations. The idea of generalizing Zhou valuations originates from the answer in \cite{BGY23} to a question asked by Jonsson, which concerns how to characterize a valuation on $\cal{O}_o$ appearing as a Zhou valuation (see \cite[Question 1.10 and Proposition 1.12]{BGY23}). Based on this characterization, we realize that it is reasonable to define \emph{algebraic Zhou valuations} related to a given nonzero ideal \(\fk{q}\) on more general scheme $X$, as follows:

\begin{definition}[=\Cref{def-Zhou.val}]\label{def-intro Zhou val}
    A valuation on $X$ is called an algebraic \emph{Zhou valuation} related to a nonzero ideal $\fk{q}$ if it is a maximal element among the valuations $v$ satisfying $\lct^{\fk{q}}(\fk{a}_{\bullet}^v)\le 1$, where $\fk{a}_{\bullet}^v$ is the graded sequence of ideals associated to the valuation $v$ (see (\ref{equ-graded sequence ass v})).
\end{definition}

 The algebraic Zhou valuations can be seen as the analogue of the original Zhou valuations and we will prove in fact there is direct correspondence in \Cref{section11} (see also \Cref{thm-main thm C}).
 
 We establish a direct connection between algebraic Zhou valuations and the Jonsson--Musta\c{ț}\u{a} conjecture (\Cref{conj-weak version JM}), and we show that Zhou valuations satisfy the following properties in the most general setting considered in article \cite{JM12}.

\begin{theorem}\label{thm-main thmA}
    Let $\fk{q}$ be a nonzero ideal on $X$, and $\fk{a}_{\bullet}$ be a graded sequence of ideal on $X$ with $\lct^{\fk{q}}(\fk{a}_{\bullet})<+\infty$. Then the following statements hold.
    \begin{enumerate}
        \item (\Cref{cor-ZV.computes}) There exists a Zhou valuation related to $\fk{q}$ computing $\lct^{\fk{q}}(\fk{a}_{\bullet})$.
        \item (\Cref{thm-A(v)=1-v(q)}) Any Zhou valuation $v$ related to $\fk{q}$ computes $\lct^{\fk{q}}(\fk{a}_{\bullet}^v)$, and especially $A(v)=1-v(\fk{q})$.
        \item (\Cref{thm-equiv.weak.ZVal=QM}) The \Cref{conj-weak version JM} is equivalent to that all Zhou valuations are quasi-monomial.
        \item (\Cref{cor-ZV.OX.qm}+\Cref{cor-dim.le.2}) If $\fk{q}=\cal{O}_X$ or $\dim X\le 2$, then all Zhou valuations related to $\fk{q}$ are quasi-monomial.
    \end{enumerate}
\end{theorem}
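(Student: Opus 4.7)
The plan is to treat parts (1) and (2) as the structural heart of the theorem and then to read off (3) and (4) as quick consequences.

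For part (1), the starting point is the Jonsson--Musta\c{t}\u{a} existence theorem cited in the introduction: since $\lct^{\fk{q}}(\fk{a}_{\bullet})<+\infty$, there is a valuation $v_0$ computing $\lct^{\fk{q}}(\fk{a}_{\bullet})$. The basic tool is the scaling identity
\[\lct^{\fk{q}}(\fk{a}_{\bullet}^{tv})=t\cdot\lct^{\fk{q}}(\fk{a}_{\bullet}^{v})\qquad (t>0),\]
which follows from $\fk{a}_m^{tv}=\{f:v(f)\ge m/t\}$ after passing to the limit. Using it I rescale $v_0$ so that $\lct^{\fk{q}}(\fk{a}_{\bullet}^{v_0})\le 1$, and then apply Zorn's lemma to the poset of valuations $v\ge v_0$ satisfying $\lct^{\fk{q}}(\fk{a}_{\bullet}^{v})\le 1$. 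The upper-bound-of-chains hypothesis should come from the tree-like compactness of $\Val_X$ used in \cite{JM12} together with the monotonicity of $v\mapsto\lct^{\fk{q}}(\fk{a}_{\bullet}^v)$ in the pointwise order. A maximal element is by definition a Zhou valuation, and since it dominates $v_0$ pointwise while respecting the ceiling $\lct^{\fk{q}}(\fk{a}_{\bullet}^v)\le 1$, a direct check should show it still computes $\lct^{\fk{q}}(\fk{a}_{\bullet})$.

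For part (2), let $v$ be a Zhou valuation related to $\fk{q}$. The scaling identity combined with maximality immediately forces $\lct^{\fk{q}}(\fk{a}_{\bullet}^{v})=1$: a value strictly less than $1$ would let me dilate $v$ by a factor $>1$ while staying in the defining class, contradicting maximality. Testing against $v$ itself in the formula
\[\lct^{\fk{q}}(\fk{a}_{\bullet}^v)=\inf_{w}\frac{A(w)+w(\fk{q})}{w(\fk{a}_{\bullet}^v)},\]
together with the standard identity $v(\fk{a}_{\bullet}^v)=1$, gives $A(v)+v(\fk{q})\ge 1$. The reverse inequality $A(v)+v(\fk{q})\le 1$, which is equivalent both to $A(v)=1-v(\fk{q})$ and to $v$ computing $\lct^{\fk{q}}(\fk{a}_{\bullet}^v)$, is the main obstacle. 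My plan is to invoke Jonsson--Musta\c{t}\u{a} existence once more on the graded sequence $\fk{a}_{\bullet}^v$, pick a valuation $w$ achieving the infimum, rescale it so that $w(\fk{a}_{\bullet}^v)=1$, and then show via a comparison argument that $w\ge v$ pointwise; maximality of $v$ then collapses $w$ to $v$ and yields the missing inequality. The comparison should be driven by the asymptotic $w(\fk{a}_m^v)/m\to 1$ together with the containment $\fk{a}_m^v\subseteq\{f:w(f)\ge w(\fk{a}_m^v)\}$, and this is the delicate technical step of the whole argument.

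Parts (3) and (4) follow formally. For (3), the forward direction is immediate: given $\fk{a}_\bullet,\fk{q}$ with $\lct^{\fk{q}}(\fk{a}_\bullet)<+\infty$, part (1) produces a Zhou valuation computing it, which is quasi-monomial by assumption, verifying \Cref{conj-weak version JM}. Conversely, assuming the conjecture and given a Zhou valuation $v$, I apply the conjecture to the graded sequence $\fk{a}_{\bullet}^v$ with the ideal $\fk{q}$, producing a quasi-monomial valuation $w$ computing $\lct^{\fk{q}}(\fk{a}_{\bullet}^v)=1$; the same comparison argument as in (2) identifies $w$ with $v$, so $v$ itself is quasi-monomial. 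Part (4) is then a direct corollary of (3) together with the two known cases of \Cref{conj-weak version JM}, namely \cite{Xu20} for $\fk{q}=\cal{O}_X$ and \cite{JM12} for $\dim X\le 2$.
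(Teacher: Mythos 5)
Your overall strategy for parts (1)--(2) is in the right spirit, and (3)--(4) are handled correctly, but there are two genuine gaps and a couple of places where you have replaced a one-line citation with a hazy sketch.

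The most serious issue is a logical inversion between (1) and (2). You present (1) first and assert that, once Zorn gives a maximal element $v\ge v_0/\lct^{\fk{q}}(\fk{a}_\bullet)$, ``a direct check should show it still computes $\lct^{\fk{q}}(\fk{a}_\bullet)$.'' There is no such direct check: the inequality you need is
\[
\frac{A(v)+v(\fk{q})}{v(\fk{a}_\bullet)}\le\lct^{\fk{q}}(\fk{a}_\bullet),
\]
and since the construction only guarantees $v(\fk{a}_\bullet)\ge 1/\lct^{\fk{q}}(\fk{a}_\bullet)$, you are forced to use the identity $A(v)+v(\fk{q})=1$ --- which is precisely the content of part (2). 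The paper proves part (2) (Theorem~\ref{thm-A(v)=1-v(q)}) \emph{before} part (1) (Corollary~\ref{cor-ZV.computes}) for exactly this reason, and your write-up should invert the order accordingly.

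The second gap is inside your proof of (2). You pick $w$ computing $\lct^{\fk{q}}(\fk{a}_\bullet^v)=1$, rescale so $w(\fk{a}_\bullet^v)=1$, and conclude $w\ge v$ (this part is fine, though it is simply Lemma~\ref{lem-w(a.bullet.v)=inf} in the paper, not a ``delicate technical step'' --- your proposed asymptotic argument essentially reproves that lemma). But maximality of $v$ in $\Val(X;\fk{q})$ only forces $w=v$ if you know $w\in\Val(X;\fk{q})$, i.e.\ $\lct^{\fk{q}}(\fk{a}_\bullet^w)\le 1$. You never verify this. It does hold, via Lemma~\ref{lem-JM.compute}(2): $\lct^{\fk{q}}(\fk{a}_\bullet^w)=w(\fk{a}_\bullet^v)\cdot\lct^{\fk{q}}(\fk{a}_\bullet^v)=1$, but this step must be made explicit because it is the step that actually puts $w$ in the poset where maximality of $v$ applies.

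Finally, the Zorn upper-bound-of-chains argument is justified in the paper by the Noetherian property of $X$, not by ``tree-like compactness'' of $\Val_X$: for each fixed $m$ the decreasing chain of ideals $\{\fk{a}_m^{v_j}\}_j$ stabilizes, so $\fk{a}_m^{\tilde v}=\fk{a}_m^{v_{j_m}}$ for some $j_m$, whence $\fk{q}\not\subseteq\cal{J}(\tfrac{1}{m}\fk{a}_m^{\tilde v})$ for every $m$; this also shows $\tilde v$ is a genuine real valuation. Your appeal to compactness does not deliver this and should be replaced by the Noetherian stabilization argument.

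Once (2) is proven first with the $\Val(X;\fk{q})$-membership check inserted, and the chain argument is redone via Noetherianity, your proposal reduces to essentially the paper's proof; parts (3) and (4) follow as you describe.
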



By \Cref{thm-main thmA}, regarding as \Cref{conj-weak version JM}, it is natural to investigate the question that how to determine whether a given valuation is a Zhou valuation. To address this question, we develop algebraic analogues of the objects studied in \cite{BGMY23}, including \emph{mixed jumping numbers} and \emph{(algebraic) Tian functions}. For given nonzero ideals $\fk{q}$, $\fk{q}'$ and $\fk{a}$, we define the mixed version of jumping number as 
\begin{equation*}
 0<\lct(\fk{q}, t\cdot\fk{q}';\fk{a})\coloneqq \inf_E\frac{A(\ord_E)+\ord_E(\fk{q})+t\cdot\ord_E(\fk{q}')}{\ord_E(\fk{a})},
\end{equation*}
where the infimum is over all divisors $E$ over $X$, and $t\in(-\varepsilon_0,+\infty)$ with small enough $\varepsilon_0>0$ (to allow $t<0$ is very important in the latter discussions). Similarly, for a nonzero graded sequence of ideals $\fk{a}_{\bullet}$, we can then define an \emph{asymptotic mixed version jumping number} as
\begin{align*}
 \lct(\fk{q}, t\cdot\fk{q}';\fk{a}_{\bullet})\coloneqq\lim_{m\to\infty}m\cdot\lct(\fk{q}, t\cdot\fk{q}';\fk{a}_m)=\sup_{m} m\cdot\lct(\fk{q}, t\cdot\fk{q}';\fk{a}_m)\in (0,+\infty].
\end{align*}
For any nonzero ideals $\fk{q}$ and $\fk{q}'$ and for any nonzero graded sequence of ideals $\fk{a}_{\bullet}$, the function
\[t\longmapsto \lct(\fk{q}, t\cdot \fk{q}'; \fk{a}_{\bullet})\]
is called as the (algebraic) Tian function, which is the analogue of the Tian function introduced in \cite{BGMY23}.

We will establish a criterion, formulated via algebraic Tian functions, for determining whether a given valuation is a Zhou valuation associated to some nonzero ideal. This result can be seen as parallel with \cite[Proposition 4.8]{BGMY23}. In addition, a characterization on the valuations computing some jumping number will also be demonstrated, while the proof of this result is much farther away from the analytic framework. We are going to prove

\begin{theorem}\label{thm-main thmB}
    Let $\fk{q}$ be a nonzero ideal on $X$ and $v\in\Val^*_X$ with $A(v)<+\infty$. Assume $\lct^{\fk{q}}(\fk{a}_{\bullet})<+\infty$. We have the followings:
    \begin{enumerate}
        \item (\Cref{prop-compute.linear}+\Cref{prop-linear.imply.compute}) The valuation $v$ computes the jumping number $\lct^{\fk{q}}(\fk{a}_{\bullet}^v)$ if and only if the function 
        \[t\longmapsto \lct(\fk{q}, t\cdot \fk{q}'; \fk{a}_{\bullet}^v)\]
        is linear on $[0,+\infty)$ for every nonzero ideal $\fk{q}'$ on $X$.
         \item (\Cref{thm-ZV.lct.differential}+\Cref{prop-converse.differential.ZVal}) The valuation $v$ is a Zhou valuation related to $\fk{q}$ if and only if $\lct^{\fk{q}}(\fk{a}_{\bullet}^v)=1$ and the function
        \[(-\varepsilon_{\fk{q}'},+\infty)\ni t\longmapsto \lct(\fk{q}, t\cdot \fk{q}'; \fk{a}_{\bullet}^v)\]
        is linear on $[0,+\infty)$ and differentiable at $t=0$ for every nonzero ideal $\fk{q}'$ on $X$, where $\varepsilon_{\fk{q}'}>0$ are sufficiently small. 
    \end{enumerate}
\end{theorem}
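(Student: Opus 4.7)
The plan rests on two basic lemmas. The first is the \textbf{key inequality} $w(\fk{q}') \geq v(\fk{q}') \cdot w(\fk{a}_\bullet^v)$, valid for any valuation $w$ and nonzero ideal $\fk{q}'$; it follows by applying $w$ to the containment $(\fk{q}')^k \subseteq \fk{a}_{k\cdot v(\fk{q}')}^v$, dividing by $k\cdot v(\fk{q}')$ and letting $k\to\infty$. The second is the \textbf{minimizer inequality}: if a valuation $w$ attains $(A(w)+w(\fk{q}))/w(\fk{a}_\bullet^v)=\lct^\fk{q}(\fk{a}_\bullet^v)$, then after rescaling so that $w(\fk{a}_\bullet^v)=1$ one has $w\geq v$ as valuations, because any $f\in\fk{a}_m^v$ with $v(f)=m$ satisfies $w(f)\geq w(\fk{a}_m^v)=m=v(f)$. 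Throughout I write $T(t)=\lct(\fk{q},t\fk{q}';\fk{a}_\bullet^v)$ and use the valuative formula $T(t)=\inf_w(A(w)+w(\fk{q})+t w(\fk{q}'))/w(\fk{a}_\bullet^v)$ from \cite{JM12}.

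For part (1), the forward direction is immediate: given that $v$ computes, combine $A(w)+w(\fk{q})\geq(A(v)+v(\fk{q}))w(\fk{a}_\bullet^v)$ with the key inequality scaled by $t\geq 0$ and divide by $w(\fk{a}_\bullet^v)$; taking infima matches the trivial upper bound at $w=v$, producing linearity. The backward direction starts by identifying the asymptotic slope $\lim_{t\to\infty}T(t)/t=\inf_w w(\fk{q}')/w(\fk{a}_\bullet^v)=v(\fk{q}')$ (lower bound from the key inequality, upper bound from $w=v$); linearity on $[0,+\infty)$ then forces $T(t)=T(0)+v(\fk{q}')t$, which viewed as an identity of infima yields a minimizing sequence $w_n$ for $T(0)$ with $w_n(\fk{q}')/w_n(\fk{a}_\bullet^v)\to v(\fk{q}')$. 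Applying the hypothesis to the product ideal $\fk{q}'_1\cdots\fk{q}'_k$, the identity $w(\fk{q}'_1\cdots\fk{q}'_k)=\sum_i w(\fk{q}'_i)$ forces equalities in each individual key inequality, giving the finite intersection property for the closed subsets $\{w\text{ minimizing }T(0):w(\fk{q}')/w(\fk{a}_\bullet^v)=v(\fk{q}')\}$. Compactness of $\{w:A(w)\leq C\}$ from \cite{JM12} then produces a common minimizer $w_0$ with $w_0(\fk{q}')/w_0(\fk{a}_\bullet^v)=v(\fk{q}')$ for \emph{every} nonzero $\fk{q}'$; specializing to principal $\fk{q}'=(f)$ forces $w_0=w_0(\fk{a}_\bullet^v)\cdot v$, and the scale invariance of $(A(w)+w(\fk{q}))/w(\fk{a}_\bullet^v)$ under $w\mapsto cw$ gives $T(0)=A(v)+v(\fk{q})$.

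For part (2) forward, Theorem A gives that $v$ computes $\lct^\fk{q}(\fk{a}_\bullet^v)=1$ with $A(v)+v(\fk{q})=1$, so part (1) yields linearity on $[0,+\infty)$. The minimizer inequality together with Zhou maximality of $v$ force $v$ to be the \emph{unique} normalized minimizer of $T(0)=1$. A continuity/compactness argument in the topology of pointwise convergence of valuations upgrades uniqueness to the uniform estimate: for every $\varepsilon>0$ there exists $\delta>0$ such that $(A(w)+w(\fk{q}))/w(\fk{a}_\bullet^v)-1<\delta$ implies $w(\fk{q}')/w(\fk{a}_\bullet^v)-v(\fk{q}')<\varepsilon$. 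Inserted into the valuative formula this gives the lower bound $T(t)\geq 1+(v(\fk{q}')+\varepsilon)t$ on some interval $(-\delta_\varepsilon,0)$, so $T'(0^-)\leq v(\fk{q}')$; combined with the concavity bound $T'(0^-)\geq T'(0^+)=v(\fk{q}')$, this is differentiability.

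For part (2) backward, linearity and (1) give that $v$ computes, with $A(v)+v(\fk{q})=1$. If $v$ were not Zhou, there would exist $v'>v$ with $\lct^\fk{q}(\fk{a}_\bullet^{v'})\leq 1$; since $\fk{a}_m^v\subseteq\fk{a}_m^{v'}$ forces $\lct^\fk{q}(\fk{a}_\bullet^{v'})\geq 1$, one has equality, and \cite{JM12} furnishes a minimizer $w^*$ of $\lct^\fk{q}(\fk{a}_\bullet^{v'})=1$. The squeeze $1=T(0)\leq(A(w^*)+w^*(\fk{q}))/w^*(\fk{a}_\bullet^v)\leq(A(w^*)+w^*(\fk{q}))/w^*(\fk{a}_\bullet^{v'})=1$ shows that $w^*$ is also a minimizer of $T(0)$ with $w^*(\fk{a}_\bullet^v)=w^*(\fk{a}_\bullet^{v'})$. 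Since some $f$ has $v'(f)>v(f)$, a short computation yields $v(\fk{a}_\bullet^{v'})<1$, so the normalized $w^*$ cannot equal $v$; picking $g$ with $w^*(g)>v(g)$ and setting $\fk{q}'=(g)$, the inequality $(A(w^*)+w^*(\fk{q})+t w^*(g))/w^*(\fk{a}_\bullet^v)<1+v(\fk{q}')t$ for $t<0$ forces $T'(0^-)>v(\fk{q}')=T'(0^+)$, contradicting differentiability. The main obstacle is the backward direction of (1): the compactness-plus-finite-intersection argument must be handled with care, particularly as the resulting common minimizer need not be divisorial.
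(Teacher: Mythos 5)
Your forward direction of (1) is correct and, if anything, cleaner than the paper's route: you bound the valuative formula termwise using the two basic inequalities, whereas the paper checks linearity at integer $t$ via \cite[Theorem 7.7]{JM12} and then invokes concavity. Your part (2) backward is also sound and runs essentially parallel to the paper's Proposition~\ref{prop-converse.differential.ZVal}, although your stated dependence on (1) for $A(v)+v(\fk{q})=1$ is never actually used in the contradiction you derive. The more substantive divergence is in the backward direction of (1): you pursue a compactness-plus-finite-intersection argument, whereas the paper constructs an explicit log-smooth pair $(Y,E)$ and a tailored ideal $\fk{q}'=\fk{q}\cdot(b_1\cdots b_N)$ built from denominators of local coordinates, then invokes the Zhou valuation machinery (Lemma~\ref{lem-f(t).linear.on.T.+infty}, resting on Proposition~\ref{prop-lctqav'=lctqav}) to obtain an exact minimizer with $w(\fk{q}')=v(\fk{q}')$, deriving $A(w)\ge A(v')$ and a contradiction. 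Your route can likely be made to work, but the compactness of $\{w: w(\fk{a}_\bullet^v)\ge 1,\ A(w)+w(\fk{q})\le T(0)\}$ is not stated in \cite{JM12}; it has to be assembled from the projective-limit structure, the lower semicontinuity of $A$, and the compactness of $\{A\le C\}$ inside each simplicial complex $\QM(Y,D)$, and all of this should be made explicit. You rightly flag this as the shakiest part.

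The genuine gap is in the forward direction of (2). You claim a uniform estimate --- if $(A(w)+w(\fk{q}))/w(\fk{a}_\bullet^v)-1<\delta$ then $w(\fk{q}')/w(\fk{a}_\bullet^v)-v(\fk{q}')<\varepsilon$ --- and then insert this into the formula for $T(t)$ with $t<0$ to conclude $T(t)\ge 1+(v(\fk{q}')+\varepsilon)t$. But this only bounds the contribution of valuations \emph{close} to the unique normalized minimizer $v$. For $w$ with $(A(w)+w(\fk{q}))/w(\fk{a}_\bullet^v)-1\ge\delta$, set $\beta'=w(\fk{q}')/w(\fk{a}_\bullet^v)-v(\fk{q}')\ge 0$; the inequality you need, $\delta+t(\beta'-\varepsilon)\ge 0$, fails whenever $\beta'>\varepsilon+\delta/|t|$, and $\beta'$ is a priori unbounded over all valuations. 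To rule this out one needs an additional growth bound such as Izumi's inequality $w(\fk{q}')\le A(w)\ord_\xi(\fk{q}')$ to tether $w(\fk{q}')/w(\fk{a}_\bullet^v)$ to $A(w)$, which your sketch does not supply. The paper circumvents this difficulty entirely by the enlarged-graded-sequence construction: set $\fk{c}_m=\sum_i\fk{a}_i^v\cdot{\fk{q}'}^{\lceil\gamma(m-i)\rceil}$ with $\gamma=1/\T'_-(0)$, observe via Lemma~\ref{lem-lct.c.equal.lct.a} that $\lct^{\fk{q}}(\fk{c}_\bullet)=1$, and then use Zhou-maximality (Lemma~\ref{lem-beta.ge.vq'}) to force $\gamma\ge 1/v(\fk{q}')$, hence $\T'_-(0)\le v(\fk{q}')=\T'_+(0)$. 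That construction avoids any uniform estimate over the whole valuation space and is the key idea you are missing.
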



At last, it is natural to consider if one can go back to the (analytic) Zhou valuations (see \Cref{def-analytic Zhou valuation}) introduced in \cite{BGMY23} from the algebraic Zhou valuations defined in the present paper. In fact, if we let $\mathcal{O}_o$ be the local ring of germs of holomorphic functions at the origin $o$ in $\bfC^n$, then restricted to $X = \mathrm{Spec}\, \mathcal{O}_o$, the algebraic and analytic Zhou valuations coincide, that is

\begin{theorem}[=\Cref{thm-Zhou.valuation.coincide}]\label{thm-main thm C}
    Let $\fk{q}\subseteq\cal{O}_o$ be a nonzero ideal, and let $v$ be a valuation on $\cal{O}_o$ with $v(\fk{m})>0$. Then $v$ is an algebraic Zhou valuation related to $\fk{q}$ if and only if $v$ is an analytic Zhou valuation related to $|\fk{q}|^2$.
\end{theorem}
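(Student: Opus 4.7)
The plan is to bridge the two settings through a natural plurisubharmonic germ $\Phi_v$ attached to every valuation $v$ on $\cal{O}_o$ with $A(v)<+\infty$, built from its graded sequence of ideals $\fk{a}_\bullet^v$. After fixing generators $g_{m,1},\dots,g_{m,k_m}$ of each $\fk{a}_m^v$, set $\Phi_v := (\limsup_{m\to\infty}\tfrac{1}{2m}\log\sum_i|g_{m,i}|^2)^{*}$; this is psh near $o$, and its class modulo bounded functions is independent of all choices. The preliminary step is the analytic–algebraic dictionary, largely implicit in \cite{BGMY23, BGY23}: (a) $\sigma(\log|f|,\Phi_v)=v(f)$ for every $f\in\cal{O}_o$, and (b) $c_o^{\fk{q}}(\Phi_v)=\lct^{\fk{q}}(\fk{a}_\bullet^v)$ for every nonzero ideal $\fk{q}\subseteq\cal{O}_o$. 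Item (b) follows from Demailly's approximation identifying the multiplier ideals of $\Phi_v$ with those of $\fk{a}_\bullet^v$, together with the definition of the asymptotic jumping number; item (a) is its relative-type variant. A further input is the monotonicity $v'\ge v \Rightarrow \Phi_{v'}\ge\Phi_v+O(1)$, obtained by a compatible choice of generators once one notes that $v'\ge v$ implies $\fk{a}_m^v\subseteq\fk{a}_m^{v'}$ for every $m$.

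For the forward direction ($\Rightarrow$), assume $v$ is an algebraic Zhou valuation related to $\fk{q}$. By \Cref{thm-main thmA}(2), $\lct^{\fk{q}}(\fk{a}_\bullet^v)=1$, so (b) gives conditions (1) and (2) of \Cref{def-Zhou.weight.number.val} for $\Phi_v$. For the maximality clause (3), let $\varphi'\ge\Phi_v+O(1)$ be psh near $o$ with $|\fk{q}|^2 e^{-2\varphi'}$ not locally integrable. Then $v_{\varphi'}(f):=\sigma(\log|f|,\varphi')$ defines a valuation on $\cal{O}_o$ by the tropical additivity and multiplicativity of relative types in \cite{BGMY23}; by (a) and $\varphi'\ge\Phi_v+O(1)$, $v_{\varphi'}\ge v$, and by (b) applied to $v_{\varphi'}$, $\lct^{\fk{q}}(\fk{a}_\bullet^{v_{\varphi'}})\le 1$. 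Maximality in \Cref{def-intro Zhou val} forces $v_{\varphi'}=v$, so by (a), $\sigma(\log|f|,\varphi')=\sigma(\log|f|,\Phi_v)$ for all $f$; combined with $\varphi'\ge\Phi_v+O(1)$, this forces $\varphi'=\Phi_v+O(1)$. Hence $\Phi_v$ is a Zhou weight related to $|\fk{q}|^2$, whose associated analytic Zhou valuation is exactly $v$ by (a).

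For the reverse direction ($\Leftarrow$), let $v$ be the analytic Zhou valuation of some Zhou weight $\Phi$ related to $|\fk{q}|^2$, so $\sigma(\log|f|,\Phi)=v(f)$ for all $f$. By (a), the relative-type functions of $\Phi$ and $\Phi_v$ coincide; the uniqueness clause (3) of $\Phi$ forces $\Phi=\Phi_v+O(1)$, and then (b) together with condition (2) of $\Phi$ gives $\lct^{\fk{q}}(\fk{a}_\bullet^v)=1$. To verify maximality, let $v'\ge v$ be any valuation with $\lct^{\fk{q}}(\fk{a}_\bullet^{v'})\le 1$. The monotonicity noted above yields $\Phi_{v'}\ge\Phi_v+O(1)=\Phi+O(1)$, and (b) gives $|\fk{q}|^2 e^{-2\Phi_{v'}}$ not locally integrable; condition (3) of $\Phi$ now forces $\Phi_{v'}=\Phi+O(1)=\Phi_v+O(1)$, whence (a) yields $v'=v$, so $v$ is an algebraic Zhou valuation.

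The main obstacle is to make the dictionary (a)–(b) fully rigorous when $\fk{a}_\bullet^v$ does not stabilise, in particular to verify that $\sigma(\log|f|,\Phi_v)=v(f)$ holds on \emph{all} of $\cal{O}_o$ rather than merely on a generating subsystem, and to secure the monotonicity $v'\ge v\Rightarrow \Phi_{v'}\ge \Phi_v+O(1)$ via a compatible asymptotic choice of generators. These analytic ingredients are essentially treated in \cite{BGMY23, BGY23}; the conceptual content is then to marry them with the algebraic maximality formulation of \Cref{def-intro Zhou val} through the identity $\lct^{\fk{q}}(\fk{a}_\bullet^v)=1$ supplied by \Cref{thm-main thmA}(2).
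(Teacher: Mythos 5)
Your proposal correctly identifies the bridging object ($\Phi_v$ built from $\fk{a}_\bullet^v$) and the two facts needed—$c_o^{\fk q}(\Phi_v)=\lct^{\fk q}(\fk{a}_\bullet^v)$ and $\sigma(\log|f|,\Phi_v)=v(f)$—and the overall architecture parallels the paper's. However, you treat fact (a), the identity $\sigma(\log|f|,\Phi_v)=v(f)$ for \emph{all} $f$, as a general feature of the dictionary valid for every $v$ with $A(v)<+\infty$ and $v(\fk m)>0$, and this is the crux of the matter. What is automatic from the construction is only the inequality $\sigma(\log|f|,\Phi_v)\ge v(f)$. The reverse inequality is exactly the delicate content that the paper must and does earn: in Lemma~\ref{lem-valuation.corre.psh.general}(4) one only gets $\T'_-(0)\ge\sigma(\log|\fk q'|,\Phi_v^\star)$, and then equality $\sigma(\log|\fk q'|,\Phi_v^\star)=v(\fk q')$ is deduced \emph{specifically for Zhou valuations} in Proposition~\ref{prop-alg.ZVal.to.Zhou.weight}(2), by appealing to the differentiability of the Tian function at $t=0$ (Theorem~\ref{thm-ZV.lct.differential}), a substantive result whose proof occupies Sections~\ref{section7}--\ref{section8}. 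Whether (a) holds for arbitrary $v$ with $A(v)<+\infty$ is precisely the content of Question~\ref{ques-relative.type.compatible.valuation}, which the paper poses as open; it is not a generic consequence of Demailly approximation. Asserting (a) in full generality therefore imports the hardest step without argument.

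The second gap is in the verification of clause~(3) of Definition~\ref{def-Zhou.weight.number.val}. You set $v_{\varphi'}(f):=\sigma(\log|f|,\varphi')$ and invoke the tropical additivity and multiplicativity of relative types from \cite{BGMY23} to claim this is a valuation. But the tropical multiplicativity in \cite{BGMY23} is a theorem about \emph{Zhou weights}, not about arbitrary psh germs $\varphi'$ dominating $\Phi_v$; for a general maximal psh weight, $\sigma(\log|\,\cdot\,|,\varphi')$ is not a valuation. To know $\varphi'$ is a Zhou weight one would already need the maximality of $\Phi_v$—the very thing being proved. The paper sidesteps this circularity by a different argument: from $\psi\ge\Phi_v^\star+O(1)$ with $c_o^{\fk q}(\psi)=1$ it forms the subadditive system $\fk b_t=\cal J(t\psi)_o$, finds (via \cite[Prop.~2.1]{JM14}) a valuation $w$ with $A(w)<\infty$ computing $\lct^{\fk q}(\fk b_\bullet)$, shows $w\ge v$ hence $w=v$ by maximality of $v$, and then bounds $\psi$ above by $\Phi_v^\star$ using controlled growth and Demailly approximation. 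The same mechanism drives the reverse implication in the paper, again avoiding any appeal to (a) for valuations that are not yet known to be Zhou. Your reverse direction also invokes (a) for an arbitrary competitor $v'\ge v$ with $\lct^{\fk q}(\fk a_\bullet^{v'})\le 1$, where $A(v')$ may even be infinite, so the same difficulty recurs. In short, the strategy is aligned with the paper's, but the two load-bearing claims—the general validity of (a), and that $\sigma(\log|\cdot|,\varphi')$ is a valuation for arbitrary $\varphi'$—are not justified, and bridging them requires the Tian-function differentiability theorem and the subadditive-system detour that the paper supplies.
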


Actually, Theorem \ref{thm-Zhou.valuation.coincide} can be proved combining with the definition of algebraic Zhou valuation and the results in \cite{BGMY23,BGY23}, but for the sake of completeness, we provide a brief proof in \Cref{section11}, which involves much more algebraic methods, and is far different with the methods in \cite{BGMY23,BGY23}. We do not assume the existence of Zhou weights in the proof, and the truth of the strong openness property of multiplier ideal sheaves, which is the foundation of the proofs in \cite{BGMY23}, will only be used in the last step.

To establish the equivalence between these two notions, we start with an (algebraic) Zhou valuation $v$ on $\cal{O}_o$, and construct the corresponding Zhou weight $\Phi_v^{\star}$. Then we show that the Zhou number $\sigma(\log|f|,\Phi_v^{\star})$ coincides with $v(f)$. We also use this correspondence to provide algebraic reproofs of several results concerning Zhou weights from \cite{BGMY23} (see \Cref{cor-Zhou.weight.to.Zhou.val,cor-Zhou.weight.tame,ex-Zhou.weight}).

\subsection*{Organization}

The structure of the paper is as follows. In \Cref{section2}, we recall some definitions and properties of valuations and jumping numbers that will be frequently used in subsequent sections. In \Cref{section3}, we introduce the Zhou valuations, and in \Cref{section4}, we prove \Cref{thm-main thmA}. To prepare for the proof of \Cref{thm-main thmB}, we define a mixed version of jumping numbers in \Cref{section5} and extend certain properties of jumping numbers to this setting. \Cref{section6} is devoted to the proof of \Cref{thm-main thmB} (1). \Cref{section7} develops technical tools necessary for the proof of \Cref{thm-main thmB} (2), via the enlargement of graded sequences, and we prove \Cref{thm-main thmB} (2) in \Cref{section8}. In \Cref{section9}, we show that the cone of Zhou valuations is dense in the space of valuations. In \Cref{section10}, we provide characterizations of singularities of graded sequences of ideals via Zhou valuations.  Finally, in \Cref{section11}, we describe the correspondence between algebraic and analytic Zhou valuations, and complete the proof of \Cref{thm-main thm C}.

\subsection*{Notations and Conventions}
\begin{itemize}[leftmargin=1em]
    \item We always assume that $X$ is a \emph{separated}, \emph{regular}, \emph{connected}, and \emph{Noetherian excellent scheme} over $k\coloneqq\bfQ$. A Noetherian scheme $X$ is excellent if $X$ can be covered by spectra of excellent rings. 
    \item A \emph{log-smooth pair} over $X$ is a pair $(Y,D)$ with $Y$ regular and $D$ a reduced effective simple normal crossing (SNC for short) divisor, together with a proper birational morphism $\pi\colon Y\to X$ which is an isomorphism outside $\mathrm{Supp}(D)$.  Moreover, we say $(Y,D)\preceq (Y',D')$ if there is a birational morphism $\phi\colon Y'\to Y$ over $X$ such that we have  $\mathrm{Supp}(\phi^*(D))\subseteq\mathrm{Supp}(D')$. 
    \item Let $\fk{a}\subseteq \mathcal{O}_X$ be a nonzero ideal sheaf on $X$. A \emph{log resolution of $\fk{a}$} is a proper birational morphism $\pi\colon Y\to X$ such that $Y$ is regular and 
\begin{equation*}
    \pi^{-1}\fk{a}\coloneqq \fk{a}\cdot\mathcal{O}_Y=\mathcal{O}_Y(-D),
\end{equation*}
where $D$ is an effective divisor on $Y$ such that $D+\mathrm{except}(\pi)$ has simple normal
crossing support.
\item All the ``Zhou valuation" appears in \Cref{section2} to \Cref{section10} are ``algebraic Zhou valuation", and since both the algebraic Zhou valuation and the analytic Zhou valuation will appear in \Cref{section11}, we will make a clear distinction in terminology between the two in this section.
\end{itemize}

\section{Preliminaries}\label{section2}
In this section, for the convenience of readers, we recall some basic knowledge about valuations and jumping numbers. Moreover, we have compiled the main results and definitions from \cite{JM12} that will appear repeatedly in our subsequent sections.

\subsection{The space of valuations}
\subsubsection{Valuations}
A \emph{real valuation} of the function field of $X$ with center $\xi$ on $X$ is a  map $v\colon K(X)^*\to \bfR$ satisfying the following:
\begin{enumerate}
    \item $v(fg)=v(f)+v(g)$ for all $f,g$;
    \item $v(f+g)\geq \min\{v(f),v(g)\}$ for all $f,g$;
    \item $v(f)=0$ for all $f\in \bfQ^*$;
    \item there is an inclusion of local rings $\mathcal{O}_{X,\xi}\hookrightarrow \mathcal{O}_v$, where $\mathcal{O}_{X,\xi}$ is the local ring of $\xi\in X$ and  $\mathcal{O}_v\coloneqq \{f\in K(X)\colon v(f)\geq 0\}$ is the valuation ring associated to the valuation $v$.
\end{enumerate}
We set $v(0)=+\infty$. The \emph{trivial valuation} is the valuation whose restriction to $K(X)^*$ is zero. Note that the center of a valuation, if it exists, is unique as $X$ is separated, and we denote the center of the valuation $v$ by $c_X(v)$. Let $\Val_{X,\xi}$ be the set of nontrivial real valuations of $K(X)$ with the center $\xi\in X$. Let  $\Val_X$ be the set of all real valuations of the function field $K(X)$ of $X$ that admits a center on $X$, and we denote $\Val_X^*\subseteq\Val_X$ the subset of nontrivial valuations with a center on $X$. 

For each valuation $v$ with a center $\xi$, if $\fk{a}$ is an ideal sheaf on $X$, then we write
\begin{equation*}
    v(\fk{a})\coloneqq \min_{f\in \fk{a}\cdot \mathcal{O}_{X,\xi}}v(f).
\end{equation*}
Then this function satisfies the equations
\begin{equation*}
    v(\fk{a}\cdot\fk{b})=v(\fk{a})+v(\fk{b})\quad \text{and} \quad v(\fk{a}+\fk{b})=\min\{v(\fk{a}),v(\fk{b})\}.
\end{equation*}
Moreover, $v(\fk{a})>0$ if and only if $\xi\in V(\fk{a})$. For any $v,w\in\Val_X$, we say $v\le w$ if $v(\fk{a})\le w(\fk{a})$ for all nonzero ideals on $X$. 
\subsubsection{Divisorial valuations}
 We say $E$ is a divisor over $X$, if $E$ is a prime divisor on a normal scheme $Y$ with a proper birational morphism $\pi\colon Y\to X$. For a divisor $E$ over $X$, we define the \emph{divisorial valuation} $\ord_E$ that sends each rational function $f$ in $K(X)^*=K(Y)^*$ to its order of vanishing along $E$. By resolution of singularities (see \cite{Hi64, Tem08, Tem18}), we may always assume that $E$ is a prime divisor on a regular scheme $Y$. A valuation $v\in \Val_X$ is a divisorial valuation if there is a divisor $E$ over $X$ and $\lambda\in \bfR_{>0}$ such that $v(f)=\lambda\cdot \ord_E(f)$ for all $f\in K(X)^*$. 

\subsubsection{Quasi-monomial valuations}
Let $\pi\colon Y\to X$ be a proper birational morphism with regular and connected $Y$, and let $y=(y_1,\cdots,y_r)\in \mathcal{O}_{Y,\eta}^{\oplus r}$ be a system of parameters at a point $\eta\in Y$. For $\alpha,\beta\in \bfR^r$, we set $\langle\alpha,\beta\rangle\coloneqq \sum_{i=1}^r\alpha_i\beta_i$.
\begin{definition}
Set $Y$ and $y$ as above. The \emph{quasi-monomial valuation} associated with $y$ and $\alpha=(\alpha_1,\cdots,\alpha_r)\in \bfR^r_{\geq 0}$ is a valuation $\val_{\alpha}\colon K(X)\to \bfR\cup \{+\infty\}$ such that 
\begin{equation*}
    \val_{\alpha}(f)=\min\{\langle \alpha,\beta\rangle\colon c_{\beta}\neq 0\}
\end{equation*}
for all $f\in \mathcal{O}_{Y,\eta}$, where  $f=\sum_{\beta\in \bfZ^r_{\geq 0}}c_{\beta}y^{\beta}$ under the completion of a local ring $\mathcal{O}_{Y,\eta}\to \widehat{\mathcal{O}_{Y,\eta}}$.
\end{definition}
Given a quasi-monomial valuation $v=\val_{\alpha}$, the center of $v$ on $Y$ is the generic point $\eta'$ of $\bigcap_{i}V(y_i)$ where $i$ runs through all $i$ such that $\alpha_i\neq 0$, hence the center of $v$ on $X$ is $\pi(\eta')$; see \cite[Proposition 3.1]{JM12}.

Let $v\in \Val_X^*$ with a center $\xi\in X$. The \emph{rational rank} of $v$ is defined as 
\begin{equation*}
    \mathrm{rat.rk}(v)\coloneqq\dim_{\bfQ}(v(K(X)^*)\otimes _{\bfZ}\bfQ);
\end{equation*}
and the \emph{transcendence degree} of $v$ is defined as
\begin{equation*}
    \mathrm{tr.deg}_X(v)\coloneqq \mathrm{tr.deg}(\kappa_v/\kappa(\xi)),
\end{equation*}
where $\kappa_v$ and $\kappa(\xi)$ are the \emph{residue fields} of the valuation ring $\mathcal{O}_v$ and $\mathcal{O}_{X,\xi}$ respectively. Write $\dim(\mathcal{O}_{X,\xi})$ as the \emph{Krull dimension} of the local ring $\mathcal{O}_{X,\xi}$. The \emph{Abhyankar inequality} says we have the following inequality for all $v\in \Val_{X,\xi}$ (see \cite{Va97}):
\begin{equation*}
   \mathrm{rat.rk}(v)+ \mathrm{tr.deg}_X(v)\leq \dim(\mathcal{O}_{X,\xi}),
\end{equation*}
and a valuation satisfying the equality is called an \emph{Abhyankar valuation}.
\begin{proposition}[cf.{\cite[Proposition 3.7, Remark 3.9]{JM12}, \cite{ELS03}}]
 A valuation $v\in \Val_X$ is quasi-monomial if and only if $v$ is Abhyankar.  A valuation $v\in \Val_X^*$ is divisorial if and only if $v$ is quasi-monomial with $\mathrm{rat.rk}(v)=1$.
\end{proposition}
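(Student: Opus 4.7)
The plan is to split the proposition into the two equivalences and handle each direction separately. For ``quasi-monomial $\Rightarrow$ Abhyankar'', I would begin with a presentation $v=\val_\alpha$ attached to parameters $y=(y_1,\ldots,y_r)$ at a point $\eta\in Y$. After a monomial change of coordinates among the $y_i$'s one may assume the positive entries of $\alpha$ are $\bfQ$-linearly independent; reorder so that $\alpha_1,\ldots,\alpha_s>0$ are independent and $\alpha_{s+1}=\cdots=\alpha_r=0$. The center of $v$ on $Y$ is then the generic point $\eta'$ of $\bigcap_{i\le s}V(y_i)$, which has codimension $s$ in $Y$. The value group of $v$ contains the free abelian group on $\alpha_1,\ldots,\alpha_s$, so $\mathrm{rat.rk}(v)\ge s$; the residue field $\kappa_v$ contains $\kappa(\eta')$, which has transcendence degree $\dim\cal{O}_{X,\xi}-s$ over $\kappa(\xi)$ (with $\xi=\pi(\eta')=c_X(v)$) by comparing transcendence degrees of function fields of $X$ and $Y$. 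Summing gives equality in Abhyankar's inequality.

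For ``Abhyankar $\Rightarrow$ quasi-monomial'', I would invoke local uniformization for Abhyankar valuations in characteristic zero, which is the substantive input from \cite{JM12,ELS03}. Given such $v$, one produces, via a suitable sequence of blowups, a proper birational model $\pi\colon Y\to X$ with a regular point $\eta'$ and a system of parameters $y_1,\ldots,y_r$ for which the $v(y_i)$ are exactly the weights of a quasi-monomial presentation; the equality case of Abhyankar's inequality forces the value group and residue field extension to be realized inside this regular system.

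The second equivalence is then routine. The implication ``divisorial $\Rightarrow$ quasi-monomial with $\mathrm{rat.rk}=1$'' is immediate by taking a single parameter cutting out $E$ with weight $\lambda$. Conversely, if $v=\val_\alpha$ is quasi-monomial with $\mathrm{rat.rk}(v)=1$, then all nonzero $\alpha_i$ lie on a single rational ray, so after rescaling one obtains $\alpha_i=n_i\in\bfZ_{>0}$ for $i\le s$; the weighted blowup of $\eta$ along $(y_1,\ldots,y_s)$ with weights $(n_1,\ldots,n_s)$ yields a prime exceptional divisor $E$ with $\ord_E=\val_\alpha$ up to the rescaling factor, verified by computing on an affine chart for monomials and extending via the definition of quasi-monomial valuations.

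The main obstacle is the ``Abhyankar $\Rightarrow$ quasi-monomial'' implication: this is not a direct calculation but a substantive theorem resting on local uniformization, and it is where one must import external machinery (Zariski's uniformization in characteristic zero, the Knaf--Kuhlmann theorem, or the explicit formulations in \cite{JM12,ELS03}). The remaining steps are essentially bookkeeping with value groups, residue fields, and codimensions on a fixed regular model, and the excellent-Noetherian hypotheses over $k=\bfQ$ adopted here ensure that the required resolution of singularities is available.
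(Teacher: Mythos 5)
The paper itself gives no proof of this proposition; it is stated as a cited fact from \cite[Proposition 3.7, Remark 3.9]{JM12} and \cite{ELS03}. So your sketch is not being compared to an in-paper argument, but it does contain a genuine gap.

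The problem is the reduction step in ``quasi-monomial $\Rightarrow$ Abhyankar.'' You assert that ``after a monomial change of coordinates among the $y_i$'s one may assume the positive entries of $\alpha$ are $\bfQ$-linearly independent.'' A monomial substitution $z_i=\prod_j y_j^{a_{ij}}$ preserves the property of being a regular system of parameters at $\eta$ only when the exponent matrix is (up to units) a permutation matrix, since otherwise the differentials $dz_i$ become linearly dependent at $\eta$; and a permutation cannot change the $\bfQ$-span of $\{\alpha_i\}$. Concretely, for $\val_{(1,1)}$ on $\mathbf{A}^2$ no coordinate change on the fixed chart makes the two weights $\bfQ$-independent. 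Without this reduction your counting breaks: if $s'\coloneqq\mathrm{rat.rk}(v)=\dim_\bfQ\sum_i\bfQ\alpha_i<s$, then you only get $\mathrm{rat.rk}(v)\geq s'$ and $\mathrm{tr.deg}_X(v)\geq\dim\cal{O}_{X,\xi}-s$, whose sum falls short of $\dim\cal{O}_{X,\xi}$. The correct tool, as in \cite[Remark 3.5]{JM12}, is to replace $(Y,D)$ by a higher log-smooth model by a finite sequence of blowups (Perron's algorithm / the Zariski--Abhyankar reduction), after which the weights can be taken $\bfQ$-linearly independent; alternatively, one must directly exhibit the transcendental elements of $\kappa_v$ over $\kappa(\eta')$ arising from ratios $y_i^{n_j}/y_j^{n_i}$ whenever $n_i\alpha_i=n_j\alpha_j$. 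Once this is fixed, your appeal to Abhyankar's inequality closes the argument (the precise input for $\mathrm{tr.deg}(\kappa(\eta')/\kappa(\xi))=\dim\cal{O}_{X,\xi}-s$ in the excellent setting is the dimension formula for universally catenary schemes, not a naive comparison of function-field transcendence degrees). The remaining parts --- outsourcing ``Abhyankar $\Rightarrow$ quasi-monomial'' to local uniformization, and the divisorial equivalence via a weighted blowup --- are correctly identified and sound.
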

Given a log-smooth pair $(Y,D)$ over $X$, let $\QM_{\eta}(Y,D)$ be the set of all quasi-monomial valuation $v$ that can be described at the point $\eta\in Y$ with respect to coordinates $y_1,\cdots,y_r$ such that each $y_i$ defines at $\eta$ an irreducible component of~$D$. We put $\QM(Y,D)\coloneqq \bigcup_{\eta}\QM_{\eta}(Y,D)$ where $\eta$ runs through all generic point of a connected component of the intersection of some of $D_i$. For every quasi-monomial valuation $v$, there exists a log-smooth pair $(Y,D)$ over $X$ such that $v\in \QM(Y,D)$ by \cite[Remark 3.4]{JM12}.
\subsubsection{Topology of the space of valuations}
Let $\tau$ be \emph{the weakest topology} on $\Val_X$ such that the evaluation map 
\begin{equation*}
    \varphi_{\fk{a}}\colon \Val_X\to \bfR \qquad v\mapsto \varphi_{\fk{a}}(v)\coloneqq v(\fk{a})
\end{equation*}
is continuous for all nonzero ideals $\fk{a}$ on $X$. Under this topology, the map
\begin{equation*}
    c_X\colon \Val_X\to X \qquad\qquad v\mapsto c_X(v)
\end{equation*}
is \emph{anti-continuous}, i.e., the inverse image of any open subset is closed. Indeed, if $U\subseteq X$ is an open subset of $X$, let $J$ be the ideal sheaf of $X\setminus U$ with reduced scheme structure. Then
\begin{equation*}
    c_X^{-1}(U)=\Val_U\coloneqq \{v\in \Val_X\colon v(J)=0\},
\end{equation*}
because $v(J)\geq 0$ for all real valuation $v$ with a center on $X$ and $v(J)>0$ if and only if $c_X(v)\in V(J)=(X\setminus U)_{\mathrm{red}}$.

Applying this topology, we provide more descriptions of $\QM(Y,D)\subseteq \Val_X$. 
\begin{proposition}[{\cite[Section 4.2]{JM12}}]\label{prop-topology of ValX}
Let $(Y,D)$ be a log-smooth pair over $X$. Then we have the followings:
\begin{enumerate}
    \item If $\eta$ is the generic point of a connected component of the intersection of $r$ irreducible components $D_1,\cdots,D_r$ of $D$, then the map
    \begin{equation*}
        \QM_{\eta}(Y,D)\to \bfR^r\qquad v\mapsto (v(D_1),\cdots,v(D_r))
    \end{equation*}
    gives a homeomorphism onto the cone $\bfR^r_{\geq 0}$.
    \item The space $\QM(Y,D)$ is the union of finitely many closed simplicial cones $\QM_{\eta}(Y,D)$.
\end{enumerate}
\end{proposition}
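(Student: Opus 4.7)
The plan is to exhibit an explicit inverse to the stated evaluation map in (1), verify continuity in both directions using the definition of the topology on $\Val_X$, and then deduce (2) by enumerating strata of $D$. For the bijection in (1), I would define $\Psi:\bfR^r_{\geq 0}\to\QM_\eta(Y,D)$ by $\Psi(\alpha)\coloneqq\val_\alpha$. Because $(Y,D)$ is log-smooth and $\eta$ is the generic point of a codimension-$r$ stratum of $D$, the elements $y_1,\ldots,y_r$ form a regular system of parameters of $\cal{O}_{Y,\eta}$ and each $y_i$ locally cuts out $D_i$; hence $\val_\alpha(D_i)=\val_\alpha(y_i)=\alpha_i$ for every $\alpha$, so $\Psi$ is a right inverse to the evaluation map, and by the very definition of $\QM_\eta(Y,D)$ every $v$ there is $\val_\alpha$ for a unique $\alpha$, so $\Psi$ is also a left inverse. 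By the definition of the topology on $\Val_X$, continuity of $\Psi$ reduces to showing that $\alpha\mapsto\val_\alpha(\fk{a})$ is continuous for every nonzero ideal $\fk{a}$ on $X$. Writing $\pi^{-1}\fk{a}\cdot\cal{O}_{Y,\eta}=(g_1,\ldots,g_s)$ and expanding $g_j=\sum_\beta c_{j,\beta}y^\beta$ in $\widehat{\cal{O}_{Y,\eta}}$, one obtains
\[
\val_\alpha(\fk{a})=\min_{1\leq j\leq s}\min\{\langle\alpha,\beta\rangle:c_{j,\beta}\neq 0\},
\]
which is piecewise $\bfR$-linear (a finite minimum of support functions of Newton polyhedra), hence continuous on $\bfR^r_{\geq 0}$.

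The main obstacle is the reverse continuity, i.e., that $v\mapsto(v(D_1),\ldots,v(D_r))$ is continuous on $\QM_\eta(Y,D)$ in the $\Val_X$-subspace topology, since the $D_i$ live on $Y$ and may be $\pi$-exceptional, so carry no direct ideal data on $X$. The plan is to prove that $\Psi$ is a proper map, after which the elementary fact that a continuous proper bijection between locally compact Hausdorff spaces is a homeomorphism closes part~(1). For each $i$, take $\fk{a}_i$ to be the reduced ideal sheaf of the scheme-theoretic image $\pi(D_i)\subseteq X$, so that $D_i\subseteq\pi^{-1}(\pi(D_i))$ gives $m_i\coloneqq\ord_{D_i}(\pi^{-1}\fk{a}_i)\geq 1$. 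For any local generator $g$ of $\pi^{-1}\fk{a}_i$ at $\eta$, expanding $g=\sum c_\beta y^\beta$ in $\widehat{\cal{O}_{Y,\eta}}$ yields $\beta_i\geq m_i$ whenever $c_\beta\neq 0$ (since $\ord_{D_i}$ at $\eta$ is exactly $\val_{e_i}$ for $e_i$ the $i$-th standard basis vector), and consequently $\val_\alpha(\fk{a}_i)\geq m_i\alpha_i\geq \alpha_i$. Thus if $K\subseteq\Val_X$ is compact then $v(\fk{a}_i)$ is bounded on $K$ for each $i$, forcing every $\alpha_i$ to be bounded on $\Psi^{-1}(K)$; this proves properness.

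For (2), the divisor $D$ has only finitely many irreducible components $D_1,\ldots,D_N$, and for every subset $I\subseteq\{1,\ldots,N\}$ with $\bigcap_{i\in I}D_i\neq\emptyset$, this intersection has only finitely many connected components, each with a generic point $\eta$ giving, by (1), a closed simplicial cone $\QM_\eta(Y,D)\cong\bfR^{|I|}_{\geq 0}$. Summing over all such strata expresses $\QM(Y,D)$ as a finite union of closed simplicial cones, and setting $\alpha_i=0$ in $\QM_\eta(Y,D)$ produces exactly the cone attached to the stratum of lower codimension obtained by dropping $D_i$, so the cones fit together consistently into a simplicial fan.
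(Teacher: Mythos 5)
The paper gives no proof of this proposition---it simply cites \cite[Section~4.2]{JM12}---so there is no in-house argument to compare against, but your route does differ from the one in Jonsson--Musta\c{t}\u{a}. Your forward continuity argument is sound: the inner minimum $\min\{\langle\alpha,\beta\rangle:c_{j,\beta}\neq 0\}$ is a priori over an infinite index set, but by Dickson's lemma it equals the minimum over the finitely many vertices of the Newton polyhedron of $g_j$, hence is piecewise linear and continuous on $\bfR^r_{\geq 0}$. The bijectivity is immediate, and the key inequality $\val_\alpha(\fk{a}_i)\geq m_i\alpha_i\geq\alpha_i$ (with $\fk{a}_i$ the reduced ideal of $\pi(D_i)$ and $m_i=\ord_{D_i}(\pi^{-1}\fk{a}_i)\geq 1$) is correct; the identification $\ord_{D_i}=\val_{e_i}$ on $\cal{O}_{Y,\eta}$ is the standard fact that for a regular parameter $y_i$, the $y_i$-adic order of $g=\sum c_\beta y^\beta$ equals $\min\{\beta_i:c_\beta\neq 0\}$.

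There is, however, a gap at the last step. The fact you invoke---that a continuous proper bijection between locally compact Hausdorff spaces is a homeomorphism---hinges on local compactness of the \emph{target}: that is what makes a proper map closed. You never verify that $\QM_\eta(Y,D)$ is locally compact, and a priori it is merely a subspace of $\Val_X$, which need not be locally compact. Fortunately the bound you already have closes the gap. For any $v_0=\val_{\alpha_0}$ and any $R>\max_i\val_{\alpha_0}(\fk{a}_i)$, the set $V_R\coloneqq\{v\in\QM_\eta(Y,D):v(\fk{a}_i)<R\ \forall i\}$ is open (a finite intersection of $\varphi_{\fk{a}_i}^{-1}\big([0,R)\big)$), contains $v_0$, and satisfies $V_R\subseteq\Psi\big([0,R]^r\big)$ because $\alpha_i\leq\val_\alpha(\fk{a}_i)<R$; the latter set is compact. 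Hence $\Psi\big([0,R]^r\big)$ is a compact neighborhood of $v_0$, $\QM_\eta(Y,D)$ is locally compact Hausdorff, and your argument goes through.

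It is worth knowing that the route in \cite{JM12} (used implicitly in the present paper, e.g.\ in the proof of \Cref{prop-linear.imply.compute}) avoids properness altogether and gives the reverse continuity directly. Shrink $X$ to an affine open $U=\Spec A$ containing $\pi(\eta)$; since $c_Y(\val_\alpha)$ is a generization of $\eta$, its image is a generization of $\pi(\eta)$ and hence lies in $U$ for all $\alpha$. Writing $y_i=a_i/b_i$ with $a_i,b_i\in A$ nonzero, one gets $v(D_i)=v(y_i)=v(a_i)-v(b_i)$, manifestly a difference of functions continuous by definition of the topology. This simultaneously handles exceptional and non-exceptional $D_i$ and eliminates the need for the Newton polyhedron/properness machinery in the backward direction. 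Your part (2) is fine: finiteness comes from $D$ having finitely many components and $Y$ being Noetherian, and your remark about the face maps is the right way to see the cones form a simplicial complex.
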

\subsubsection{Retraction maps and structure theorem}
We define the \emph{retraction map} with respect to a log-smooth pair $(Y,D)$ over $X$ as the continuous map 
\begin{equation*}
  r_{Y,D}\colon\Val_X\to \QM(Y,D)  
\end{equation*}
 that maps a valuation $v$ to the quasi-monomial valuation $w\coloneqq r_{Y,D}(v)$ satisfying $w(D_i)=v(D_i)$ for every irreducible component $D_i$ of $D$ by \Cref{prop-topology of ValX} (1). 
 
 For every valuation $v\in \Val_X$ and any ideal $\fk{a}$ on $X$, we have $r_{Y,D}(v)(\fk{a})\leq v(\fk{a})$ with equality if $(Y,D)$ gives a log resolution of $\fk{a}$ (see \cite[Corollary 4.8]{JM12}). Applying retraction maps, the space of valuations could be achieved as a projective limit of simplicial cone complexes.
 \begin{proposition}[cf. {\cite[Section 4.4]{JM12}}]\label{prop-structure thm of ValX}
     The retraction maps induce a homeomorphism
     \begin{equation*}
         r\colon \Val_X\to \varprojlim_{(Y,D)}\QM(Y,D)
     \end{equation*}
such that 
\begin{enumerate}
    \item the set of quasi-monomial valuations is dense in $\Val_X$;
    \item the set of divisorial valuations is dense in $\Val_X$.
\end{enumerate}
 \end{proposition}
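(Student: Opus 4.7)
The plan is to construct $r$ from the system of retractions, verify it is a homeomorphism onto the inverse limit, and then extract the two density statements as corollaries. The overall strategy follows \cite{JM12}, but I will try to explain the key ingredients.

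First, I would check that the family $\{r_{Y,D}\}_{(Y,D)}$ is compatible with the partial order $\preceq$. Given $(Y,D) \preceq (Y',D')$ via a birational morphism $\phi : Y' \to Y$ over $X$, every irreducible component of $D$ pulls back under $\phi$ to a non-negative combination of components of $D'$, since $\mathrm{Supp}(\phi^*D) \subseteq \mathrm{Supp}(D')$. Comparing values on these components and using \Cref{prop-topology of ValX} (1), I would deduce that the natural projection $\pi_{(Y,D),(Y',D')} : \QM(Y',D') \to \QM(Y,D)$ induced by $\phi$ satisfies $\pi_{(Y,D),(Y',D')} \circ r_{Y',D'} = r_{Y,D}$, which makes the map $r$ well-defined into the inverse limit.

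Next, I would establish bijectivity. For injectivity, any valuation in $\Val_X$ is determined by its values on all nonzero ideals $\fk{a}$, and the key identity $r_{Y,D}(v)(\fk{a}) = v(\fk{a})$ holds whenever $(Y,D)$ is a log resolution of $\fk{a}$ (by \cite[Corollary 4.8]{JM12}); hence $r(v) = r(w)$ forces $v = w$. For surjectivity, given a compatible system $\{v_{(Y,D)}\}$, I would define $v(\fk{a}) := v_{(Y,D)}(\fk{a})$ for any $(Y,D)$ log-resolving $\fk{a}$, and use compatibility to show the value is independent of the choice. The valuation axioms (for sums and products of rational functions, or equivalently of ideals) are verified by picking a common log resolution dominating all the ideals involved in the identity to be checked, so that the axioms on $\Val_X$ reduce to their obvious counterparts on the quasi-monomial cone.

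To upgrade the bijection to a homeomorphism, I would note that continuity of $r$ follows immediately because the topology on the inverse limit is generated by the composites $v \mapsto r_{Y,D}(v)(\fk{a})$, which are continuous on $\Val_X$; conversely, continuity of $r^{-1}$ reduces to showing that each evaluation $\varphi_{\fk{a}}$ on $\Val_X$ factors continuously through the limit via a log resolution of $\fk{a}$. The density of quasi-monomial valuations in (1) is then an immediate payoff: any basic open neighborhood of $v$ is carved out by finitely many ideals $\fk{a}_1,\ldots,\fk{a}_n$, and $r_{Y,D}(v) \in \QM(Y,D)$ lies in the same neighborhood whenever $(Y,D)$ is a log resolution of $\prod_i \fk{a}_i$. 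Statement (2) then reduces, via (1), to the fact that inside each closed cone $\QM_\eta(Y,D) \cong \bfR^r_{\geq 0}$, the divisorial valuations correspond exactly to the rays with rational direction (quasi-monomial valuations of rational rank one), and these are dense in the cone.

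I expect the main obstacle to be the surjectivity step together with the continuity of $r^{-1}$: both hinge on the existence of a log resolution for every nonzero ideal and on the sharp equality $r_{Y,D}(v)(\fk{a}) = v(\fk{a})$ on such a resolution. In particular, verifying that the reconstructed $v$ satisfies $v(\fk{a}+\fk{b}) = \min\{v(\fk{a}), v(\fk{b})\}$ requires producing a single log-smooth pair dominating log resolutions of $\fk{a}$, $\fk{b}$, and $\fk{a}+\fk{b}$ simultaneously, so that the identity can be checked inside one quasi-monomial cone where it is automatic.
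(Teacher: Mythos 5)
Your reconstruction is correct and follows the same route as \cite[Section 4.4]{JM12}, which the paper cites without reproducing a proof: compatibility of retractions, bijectivity via log resolutions and \cite[Corollary 4.8]{JM12}, continuity in both directions, and density by retracting onto log resolutions and then approximating by rational rays inside each simplicial cone. The one technical point elided — that the reconstructed compatible family has a well-defined center on $X$ and thus genuinely defines a valuation in $\Val_X$ rather than merely a function on ideals — is exactly the detail [JM12] handles in their construction, but your sketch captures the structure of the argument.
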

\subsection{Graded sequences and subadditive systems of ideals}
\subsubsection{Graded sequences}
A \emph{graded sequence} of ideals $\fk{a}_{\bullet}=(\fk{a}_m)_{m\in \bfZ_{>0}}$ is a sequence of ideals on $X$ satisfying $\fk{a}_p\cdot\fk{a}_q\subseteq \fk{a}_{p+q}$ for all $p,q\geq 1$ (\cite{ELS01}). By convention, we put $\fk{a}_0=\mathcal{O}_X$, and we always assume that such a sequence is \emph{nonzero}, i.e., $\fk{a}_m\neq 0$ for some $m>0$. Let $\fk{a}_{\bullet}$ be a nonzero graded sequence and let $v\in \Val_X$. Then we can define 
\begin{equation*}
    v(\fk{a}_{\bullet})\coloneqq \inf_{m}\frac{v(\fk{a}_m)}{m}=\lim_{\substack{m\to \infty\\ \fk{a}_m\neq 0}}\frac{v(\fk{a}_m)}{m}\in \bfR_{\geq 0},
\end{equation*}
which is well-defined as we have $v(\fk{a}_{p+q})\leq v(\fk{a}_p\cdot\fk{a}_q)=v(\fk{a}_p)+v(\fk{a}_q)$.

For any $v\in\Val_X^*$ with center $\xi=c_X(v)\in X$, we can associate $v$ with a {graded sequence} $\fk{a}^v_{\bullet}=\{\fk{a}^v_m\}$ of ideals such that for an affine open subset $U\subset X$
\begin{equation}\label{equ-graded sequence ass v}
\Gamma(U,\fk{a}_m^v)\coloneqq\left\{\begin{array}{ll}
      \{f\in \Gamma(U,\mathcal{O}_X)\colon v(f)\geq m\} & \text{if } \xi\in U \\
      \Gamma(U,\mathcal{O}_X) & \text{if } \xi\notin U
  \end{array} \right.
\end{equation}
for any $m\in\bf{Z}_{\ge 0}$. Below we present a lemma that is repeatedly used in this article.
\begin{lemma}[{\cite[Lemma 2.4]{JM12}}]\label{lem-w(a.bullet.v)=inf}
    Let $\fk{a}_{\bullet}^v$ be the graded sequence of ideals associated to a nontrivial valuation $v\in \Val_X^*$. Then for any $w\in \Val_X$, we have
    \begin{equation*}
        w(\fk{a}_{\bullet}^v)=\inf_{\fk{b}}\frac{w(\fk{b})}{v(\fk{b})},
    \end{equation*}
    where $\fk{b}$ ranges over ideals on $X$ for which $v(\fk{b})>0$. In particular, $v(\fk{a}_{\bullet}^v)=1$, and $w\geq v$ if and only if $w(\fk{a}_{\bullet}^v)\geq 1$.
\end{lemma}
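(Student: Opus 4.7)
The plan is to prove the identity $w(\fk{a}_{\bullet}^v)=\inf_{\fk{b}}w(\fk{b})/v(\fk{b})$ by establishing the two inequalities, and then derive the two ``in particular'' statements as immediate consequences.

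For the inequality $\inf_{\fk{b}}w(\fk{b})/v(\fk{b})\le w(\fk{a}_{\bullet}^v)$, I would simply take $\fk{b}=\fk{a}_m^v$ as a test ideal for each $m$ large enough that $\fk{a}_m^v$ is nonzero. By the very definition of $\fk{a}_m^v$, every section $f$ of $\fk{a}_m^v$ over an affine open containing $c_X(v)$ satisfies $v(f)\ge m$, so $v(\fk{a}_m^v)\ge m>0$ (and is in particular eligible as a test ideal). Then
\[
\inf_{\fk{b}}\frac{w(\fk{b})}{v(\fk{b})}\le \frac{w(\fk{a}_m^v)}{v(\fk{a}_m^v)}\le \frac{w(\fk{a}_m^v)}{m},
\]
and taking the infimum over $m$ gives the desired bound.

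For the reverse inequality $w(\fk{a}_{\bullet}^v)\le \inf_{\fk{b}}w(\fk{b})/v(\fk{b})$, I would fix an ideal $\fk{b}$ with $v(\fk{b})>0$ and consider its powers $\fk{b}^k$, which satisfy $v(\fk{b}^k)=k v(\fk{b})$ and $w(\fk{b}^k)=k w(\fk{b})$. Setting $m_k\coloneqq \lfloor k v(\fk{b})\rfloor$, every element $f$ of $\fk{b}^k$ (locally at the center of $v$) has $v(f)\ge k v(\fk{b})\ge m_k$, so $\fk{b}^k\subseteq \fk{a}_{m_k}^v$ on a neighborhood of $c_X(v)$, and thus $w(\fk{a}_{m_k}^v)\le w(\fk{b}^k)=k w(\fk{b})$. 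Therefore
\[
w(\fk{a}_{\bullet}^v)\le \frac{w(\fk{a}_{m_k}^v)}{m_k}\le \frac{k\, w(\fk{b})}{\lfloor k v(\fk{b})\rfloor}\xrightarrow{k\to\infty}\frac{w(\fk{b})}{v(\fk{b})},
\]
and taking the infimum over $\fk{b}$ finishes this direction.

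The two ``in particular'' consequences follow cheaply: applying the identity with $w=v$ gives $v(\fk{a}_{\bullet}^v)=\inf_{\fk{b}}v(\fk{b})/v(\fk{b})=1$. For the monotonicity statement, if $w\ge v$ then $w(\fk{b})\ge v(\fk{b})$ for all nonzero ideals $\fk{b}$, so every ratio is $\ge 1$ and hence $w(\fk{a}_{\bullet}^v)\ge 1$; conversely $w(\fk{a}_{\bullet}^v)\ge 1$ forces $w(\fk{b})\ge v(\fk{b})$ whenever $v(\fk{b})>0$, while if $v(\fk{b})=0$ the inequality $w(\fk{b})\ge 0=v(\fk{b})$ holds automatically, so $w\ge v$. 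The only subtle point in the whole argument is a bookkeeping one, namely that the inclusion $\fk{b}^k\subseteq \fk{a}_{m_k}^v$ is a priori only local at $c_X(v)$; this is exactly the regime in which $\fk{a}_\bullet^v$ is defined by (\ref{equ-graded sequence ass v}), so no issue arises, but I would make this localization explicit to avoid confusion.
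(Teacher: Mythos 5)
Your argument is correct, and it is essentially the standard proof of \cite[Lemma 2.4]{JM12}, which the paper simply cites without reproving. The first inequality by testing $\fk{b}=\fk{a}_m^v$, the second by passing to powers $\fk{b}^k\subseteq\fk{a}_{\lfloor kv(\fk{b})\rfloor}^v$ and letting $k\to\infty$, and the reduction of the two ``in particular'' statements to the main identity are all sound. Two minor bookkeeping remarks: $\fk{a}_m^v$ is nonzero for \emph{every} $m$ (not merely for $m$ large) since $v$ is nontrivial with a center, so one can find $f$ with $v(f)>0$ and then $f^{\lceil m/v(f)\rceil}\in\fk{a}_m^v$; and the inclusion $\fk{b}^k\subseteq\fk{a}_{m_k}^v$ is in fact global, not only local at $c_X(v)$, because by definition (\ref{equ-graded sequence ass v}) the sheaf $\fk{a}_{m}^v$ is the full structure sheaf on any affine open not containing $\xi=c_X(v)$, so the only nontrivial check is at $\xi$, which you did. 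That second point matters: one needs the global inclusion to conclude $w(\fk{a}_{m_k}^v)\le w(\fk{b}^k)$, since $w$ is evaluated at the center of $w$, which may differ from $\xi$.
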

\subsubsection{Subadditive systems}
A \emph{subadditive system} of ideals $\fk{b}_{\bullet}$ is a one-parameter family $(\fk{b}_t)_{t\in \bfR_{>0}}$ of nonzero ideals satisfying $\fk{b}_{s+t}\subseteq \fk{b}_s\cdot \fk{b}_t$ for all $s,t>0$. Again by convention we put $\fk{b}_0=\mathcal{O}_X$. Similar to graded sequences, let $\fk{b}_{\bullet}$ be a nonzero graded sequence and let $v\in \Val_X$. Then we can define 
\begin{equation*}
    v(\fk{b}_{\bullet})\coloneqq \sup_{t}\frac{v(\fk{b}_t)}{t}=\lim_{t\to \infty}\frac{v(\fk{b}_t)}{t}\in \bfR_{\geq 0}\cup \{+\infty\}.
\end{equation*}

Given a graded sequence of ideals  on $X$, denoted by $\fk{a}_{\bullet}$, the \emph{asymptotic multiplier ideals} of $\fk{a}_{\bullet}$ are defined as
\begin{equation*}
    \fk{b}_t\coloneqq \mathcal{J}(\fk{a}_{\bullet}^t)\coloneqq \mathcal{J}(\fk{a}_m^{t/m}),
\end{equation*}
where $m$ is divisible enough and $\mathcal{J}(\fk{a})$ is the \emph{multiplier ideal associated to an ideal sheaf} $\fk{a}$ (see \cite[Definition 1.4]{ELS01} or \cite[Definition 9.2.3]{LarII04}). 

By the definition of $\fk{b}_{\bullet}\coloneqq (\fk{b}_t)_{t>0}$ associated to a graded sequence of ideals $\fk{a}_{\bullet}$, $\fk{b}_{\bullet}$ is a {subadditive system} of ideals and $\fk{a}_m\subseteq \fk{b}_m$ for all $m\in \bfZ_{>0}$. Moreover, there are many interesting relations between $\fk{a}_{\bullet}$ and $\fk{b}_{\bullet}$ (e.g. \Cref{lem-relation between a and b}), and we will introduce them one by one in the other subsections of this section.
\subsection{Log discrepancy and jumping numbers}\label{subsetion-Log discrepancy and jumping numbers}
We now introduce the main objects that we wish to study.
\subsubsection{Log discrepancy} The \emph{log discrepancy} $A(v)$ for every valuation $v\in \Val_X$ is defined as follows, by applying the topological structure of $\Val_X$ (e.g., \Cref{prop-topology of ValX} and \Cref{prop-structure thm of ValX}).

Let $E$ be a divisor over $X$ with a proper birational morphism $\pi\colon Y\to X$. The log discrepancy associated with the divisorial valuation $\ord_E$ is defined as 
\begin{equation*}
    A(\ord_E)\coloneqq \ord_E(K_{Y/X})+1,
\end{equation*}
where $K_{Y/X}$ is the relative canonical divisor of $\pi$. The log discrepancy $A(\ord_E)$ depends on the scheme $X$ but does not depend on the choice of $Y$ (cf. \cite[pp.~40]{Kol13}). We sometimes denote it by $A_X(\ord_E)$ if there is some ambiguity. Then for a divisorial valuation $v=\lambda\cdot \ord_E$ for a rational number $\lambda>0$ and a divisor $E$ over $X$, we define $A(v)\coloneqq\lambda\cdot A(\ord_E)$.

For every quasi-monomial valuation $v\in \QM(Y,D)$ associated with a log-smooth pair $(Y,D)$, one can define (see \cite[(5.1)]{JM12})
\begin{equation}\label{equ-A(v) for QM(Y,D)}
    A_X(v)\coloneqq\sum_{i=1}^N v(D_i)\cdot A_X(\ord_{D_i}),
\end{equation}
where $D_1,\cdots,D_N$ are the irreducible components of $D$. It does not depend on any choices made by \cite[Proposition 5.1]{JM12}. Now for an arbitrary valuation $v\in \Val_X$, we set
\begin{equation*}
   A_X(v)\coloneqq \sup_{(Y,D)}A_X(r_{Y,D}(v))\in \bfR_{\geq 0}\cup \{\infty\},
\end{equation*}
where the supremum is over all log-smooth pairs $(Y,D)$ over $X$. When $v\in \Val_X^*$, then $A(v)>0$. Moreover, $A(v)$ might be $\infty$ for some $v\in \Val_X^*$; see \cite[Remark~5.12]{JM12}.

Below are some basic properties of the log discrepancy function.
\begin{proposition}[cf. {\cite{JM12}}]\label{prop-log dis}
 The log discrepancy function $A\colon \Val_X\to \bfR_{\geq 0}\cup \{\infty\}$  for every $v\in \Val_X$ is well-defined and satisfies the following:
 \begin{enumerate}
     \item $A(r_{Y,D}(v))\leq A(v)$ for each log-smooth pair $(Y,D)$ over $X$, with equality if and only if $v\in \QM(Y,D)$;
     \item if $(Y,D)\preceq (Y',D')$ for log-smooth pairs over $X$, then for all $v\in \Val_X$ we have $A(r_{Y,D}(v))\leq A(r_{Y',D'}(v))$, with equality if and only if $r_{Y',D'}\in \QM(Y,D)$;
     \item the function $A$ is continuous on each $\QM(Y,D)$ and lower semi-continuous on $\Val_X$;
     \item if $\mu\colon X'\to X$ is a proper birational morphism with $X'$ regular, then $A_X(v)=A_{X'}(v)+v(K_{X'/X})$ for all $v\in \Val_X$.
 \end{enumerate}
\end{proposition}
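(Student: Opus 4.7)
The plan is to treat the four items in order, with (2) as the technical heart from which (1) and (3) follow easily, and (4) by a separate but standard argument. Before anything else, one must verify that the explicit formula $A(v)=\sum_i v(D_i)A_X(\ord_{D_i})$ on $\QM(Y,D)$ does not depend on the particular log-smooth pair chosen, so that the supremum definition agrees with the explicit formula on quasi-monomial valuations. Given a refinement $(Y,D)\preceq(Y',D')$ with associated $\phi:Y'\to Y$ and $v\in\QM(Y,D)\cap\QM(Y',D')$, I would expand $\sum_j v(D'_j)A_X(\ord_{D'_j})$ using $K_{Y'/X}=K_{Y'/Y}+\phi^*K_{Y/X}$ together with $v(D_i)=v(\phi^*D_i)=\sum_j a_{ij}v(D'_j)$, where $a_{ij}\coloneqq\ord_{D'_j}(\phi^*D_i)$; the key structural point is that $v\in\QM(Y,D)$ forces the sum to collapse back to $\sum_i v(D_i)A_X(\ord_{D_i})$.

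For (2), using $r_{Y,D}=r_{Y,D}\circ r_{Y',D'}$, I set $w\coloneqq r_{Y',D'}(v)\in\QM(Y',D')$ and $u\coloneqq r_{Y,D}(w)\in\QM(Y,D)$, so that $u(D_i)=w(D_i)=\sum_j a_{ij}w(D'_j)$. Direct substitution gives
\[
A(w)-A(u)=\sum_j w(D'_j)\Bigl(A_X(\ord_{D'_j})-\sum_i a_{ij}A_X(\ord_{D_i})\Bigr),
\]
and rewriting the first term via $K_{Y'/X}=K_{Y'/Y}+\phi^*K_{Y/X}$ collapses the bracket to $\ord_{D'_j}(K_{Y'/Y})+1-\sum_i a_{ij}$. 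This is exactly the pair-theoretic log discrepancy of $\ord_{D'_j}$ for the log-smooth pair $(Y,D_{\mathrm{red}})$, which is nonnegative by log-smoothness. A bookkeeping argument then shows that the total sum vanishes precisely when $w\in\QM(Y,D)$, which gives both (2) and, by choosing any $(Y',D')$ on which $r_{Y',D'}(v)\notin\QM(Y,D)$, the converse direction of (1).

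Parts (3) and (4) are softer. Continuity of $A$ on $\QM(Y,D)$ is immediate from linearity in the simplicial coordinates $v(D_i)$; lower semi-continuity on $\Val_X$ then follows because $A=\sup_{(Y,D)}A\circ r_{Y,D}$ is a supremum of continuous functions (each retraction is continuous by construction, and $A$ is continuous on each $\QM$-cone by (1) and the explicit formula). For (4), the divisorial case is immediate from $K_{Y/X}=K_{Y/X'}+\pi'^*K_{X'/X}$ on a common regular model $Y$ dominating both $X$ and $X'$; linearity extends it to quasi-monomial valuations, and passing to suprema over log-smooth pairs (noting that pairs over $X'$ are pairs over $X$) extends it to arbitrary $v\in\Val_X$.

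The main obstacle I anticipate is the bracket computation in (2): identifying $\ord_{D'_j}(K_{Y'/Y})+1-\sum_i a_{ij}$ as the pair-theoretic log discrepancy of $\ord_{D'_j}$ on $(Y,D_{\mathrm{red}})$, verifying nonnegativity from log-smoothness, and pinning down the equality case as $w\in\QM(Y,D)$. Once this identity is in hand, the rest reduces to the supremum definition, the explicit formula, and the continuity properties already established.
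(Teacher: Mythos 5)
The paper does not prove this proposition at all: it is recalled verbatim from Jonsson--Musta\c{t}\u{a} \cite{JM12} (Proposition 5.1, Corollary 5.3, Proposition 5.7 and their surroundings), so the only meaningful comparison is with the argument in that reference. Your outline tracks the JM12 strategy faithfully: expand $A$ on $\QM$-cones via $K_{Y'/X}=K_{Y'/Y}+\phi^*K_{Y/X}$, reduce the per-component bracket to $A_Y(\ord_{D'_j})-\ord_{D'_j}(\phi^*D)$, invoke log-canonicity of the reduced SNC pair $(Y,D)$ for nonnegativity, and deduce (1), (3), (4) from (2) plus the supremum definition and continuity of retractions.

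The one place where the write-up undersells the difficulty is the sentence ``A bookkeeping argument then shows that the total sum vanishes precisely when $w\in\QM(Y,D)$.'' Vanishing of the sum forces each bracket $A_{(Y,D)}(\ord_{D'_j})=1+\ord_{D'_j}(K_{Y'/Y})-\sum_i a_{ij}$ to be zero for every $j$ with $w(D'_j)>0$, and from there one must conclude that $w$, \emph{a priori} only monomial on $Y'$, is in fact monomial at $\phi(\eta')$ with respect to the coordinates cutting out $D$. That implication is not a sign check; it is the core of JM12's Lemma~5.3 and requires writing $\phi^*y_i=u_i\prod_j (y'_j)^{a_{ij}}$ in local coordinates, using the vanishing to pin down the Jacobian and the matrix $(a_{ij})$, and verifying that the restriction of $w$ to $\widehat{\mathcal{O}}_{Y,\eta}$ is monomial. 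One cannot shortcut it by citing the characterization of lc places of an SNC pair as exactly $\QM(Y,D)$, since that characterization is usually proved via the very inequality being established here. The same coordinate computation is what makes the well-definedness claim (``the sum collapses back'') rigorous --- you have correctly reduced it to showing $w(K_{Y'/Y}+D'-\phi^*D)=0$ when $w\in\QM(Y,D)\cap\QM(Y',D')$, but the verification again goes through the local expression of $\phi$. If you flesh out that coordinate argument once, both the well-definedness and the equality case of (1)--(2) fall out of it, and the remainder of your proof of (3) and (4) is sound as written.
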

Now we can introduce a relation between a graded sequence of ideals $\fk{a}_{\bullet}$ on $X$ and its associated asymptotic multiplier ideals $\fk{b}_{\bullet}$.
\begin{lemma}[cf. {\cite[Proposition 2.13, Proposition 6.2]{JM12}}]\label{lem-relation between a and b}
Let $\fk{a}_{\bullet}$ be a graded sequence of ideals and let $\fk{b}_{\bullet}$ be the corresponding system of asymptotic multiplier ideals. Then
\begin{enumerate}
\item the system $\fk{b}_{\bullet}$ has \emph{controlled growth}, that is
\begin{equation*}
    \frac{\ord_E(\fk{b}_t)}{t}>\ord_E(\fk{b}_{\bullet})-\frac{A(\ord_E)}{t}
\end{equation*}
for every divisor $E$ over $X$ and every $t>0$. Moreover, for every $v\in \Val_X$, we have
\begin{equation*}
    v(\fk{b}_{\bullet})-\frac{A(v)}{t}\leq \frac{v(\fk{b}_t)}{t}\leq v(\fk{b}_{\bullet}).
\end{equation*}
\item $\ord_E(\fk{a}_{\bullet})=\ord_E(\fk{b}_{\bullet})$ for every divisor $E$ over $X$;
\item $v(\fk{a}_{\bullet})\geq v(\fk{b}_{\bullet})$ for all $v\in \Val_X$, with equality if $A(v)<\infty$.
\end{enumerate}
\end{lemma}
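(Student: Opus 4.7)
The plan is to derive all three statements from a single Skoda-type inequality of the form
\[
v(\mathcal{J}(\fk{c}^\lambda)) > \lambda \cdot v(\fk{c}) - A(v) \qquad (\ast)
\]
for every nonzero ideal $\fk{c}$ on $X$ and every $v \in \Val_X$ with $A(v) < \infty$. Applying $(\ast)$ to $\fk{c} = \fk{a}_m$ and $\lambda = t/m$ for $m$ divisible enough so that $\fk{b}_t = \mathcal{J}(\fk{a}_m^{t/m})$, and using the bound $v(\fk{a}_m)/m \geq v(\fk{a}_\bullet)$ valid for all $m$, one obtains the master estimate
\[
v(\fk{b}_t) > t\, v(\fk{a}_\bullet) - A(v), \qquad (\ast\ast)
\]
from which all three parts fall out by simple manipulations.

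To establish $(\ast)$, I would first treat the divisorial case $v = \ord_E$: pick a log resolution $\pi\colon Y' \to X$ of $\fk{c}$ on which $E$ appears as a prime divisor, write $\fk{c}\cdot\mathcal{O}_{Y'} = \mathcal{O}_{Y'}(-F)$, and apply the formula $\mathcal{J}(\fk{c}^\lambda) = \pi_*\mathcal{O}_{Y'}(K_{Y'/X} - \lfloor \lambda F \rfloor)$ together with $A(\ord_E) = \ord_E(K_{Y'/X}) + 1$; the strict inequality then comes from $\lfloor x \rfloor + 1 > x$. Next, I would extend $(\ast)$ to quasi-monomial $v \in \QM_\eta(Y,D)$ when $(Y,D)$ is a log resolution of $\fk{c}$: at $\eta$ the ideal $\fk{c}\cdot\mathcal{O}_{Y,\eta}$ is principal and monomial in coordinates $y_1, \ldots, y_r$ cutting out the $D_i$, giving the identity $v(\fk{c}) = \sum_i \alpha_i \ord_{D_i}(\fk{c})$, and by \eqref{equ-A(v) for QM(Y,D)} one has $A(v) = \sum_i \alpha_i A(\ord_{D_i})$; combining these with the divisorial bounds on each $\ord_{D_i}(g)$ and the expansion $g = \sum c_\beta y^\beta$ yields $v(g) > \lambda v(\fk{c}) - A(v)$. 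Finally, for arbitrary $v$ with $A(v) < \infty$, use the retraction $w \coloneqq r_{Y,D}(v)$ onto a log resolution of $\fk{c}$: by \cite[Corollary 4.8]{JM12} one has $w(\fk{c}) = v(\fk{c})$, while $A(w) \leq A(v)$ and $w \leq v$ by \Cref{prop-log dis}(1), so $(\ast)$ transfers from $w$ to $v$.

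With $(\ast\ast)$ in hand, the three parts follow as follows. For (1), the upper bound $v(\fk{b}_t)/t \leq v(\fk{b}_\bullet)$ is immediate from the superadditivity $v(\fk{b}_{s+t}) \geq v(\fk{b}_s) + v(\fk{b}_t)$ of the subadditive system, and the lower bound comes from $(\ast\ast)$ combined with $v(\fk{a}_\bullet) \geq v(\fk{b}_\bullet)$, which itself follows from $\fk{a}_m \subseteq \fk{b}_m$ by taking limits of $v(\fk{a}_m)/m \geq v(\fk{b}_m)/m$. For (2) and (3): the inclusion $\fk{a}_m \subseteq \fk{b}_m$ yields $v(\fk{a}_\bullet) \geq v(\fk{b}_\bullet)$; conversely, dividing $(\ast\ast)$ by $t$ and letting $t \to \infty$ gives $v(\fk{b}_\bullet) \geq v(\fk{a}_\bullet)$ whenever $A(v) < \infty$. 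Since $A(\ord_E) < \infty$ automatically, (2) is unconditional, while (3) requires the finiteness hypothesis only in the reverse direction.

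The main obstacle is the quasi-monomial step of $(\ast)$, where one must relate the valuative formula $v(g) = \min\{\langle \alpha, \beta\rangle\colon c_\beta \neq 0\}$ coming from the monomial expansion of $g$ to the divisorial lower bounds on each $\ord_{D_i}(g)$. The crucial point making the estimate tight is that on a log resolution of $\fk{c}$ the ideal $\fk{c}$ is itself monomial at the relevant stratum, which upgrades the one-sided inequality $v(\fk{c}) \geq \sum_i \alpha_i \ord_{D_i}(\fk{c})$, valid in general, to an equality. Without this identity only a weaker form of $(\ast)$ would emerge. Once this case is handled, the extension via retraction maps is routine from \Cref{prop-log dis} and \Cref{prop-structure thm of ValX}.
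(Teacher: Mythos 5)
The paper does not prove this lemma itself; it is quoted verbatim from Jonsson--Musta\c{t}\u{a} \cite[Propositions 2.13 and 6.2]{JM12}. Your argument correctly reconstructs the proof given there: the engine is exactly the Skoda-type vanishing estimate
\[
v\bigl(\mathcal{J}(\fk{c}^\lambda)\bigr) > \lambda\, v(\fk{c}) - A(v),
\]
established first for divisorial valuations via the log-resolution description of $\mathcal{J}(\fk{c}^\lambda)$ and the elementary inequality $\lfloor x\rfloor > x - 1$, then pushed to quasi-monomial valuations on a log resolution of $\fk{c}$ (where the $D_i$-orders of $\fk{c}$ determine $v(\fk{c})$ linearly), and finally to arbitrary $v$ by retraction using $r_{Y,D}(v)\le v$, $A(r_{Y,D}(v))\le A(v)$, and $r_{Y,D}(v)(\fk{c})=v(\fk{c})$. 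Applying this with $\fk{c}=\fk{a}_m$, $\lambda=t/m$ and $v(\fk{a}_m)/m\ge v(\fk{a}_\bullet)$ yields $v(\fk{b}_t)>t\,v(\fk{a}_\bullet)-A(v)$, which together with the monotonicity $v(\fk{a}_\bullet)\ge v(\fk{b}_\bullet)$ (from $\fk{a}_m\subseteq\fk{b}_m$) delivers all three parts, with (2) unconditional because $A(\ord_E)<\infty$ automatically. One small correction of phrasing: the upper bound $v(\fk{b}_t)/t\le v(\fk{b}_\bullet)$ is immediate from the definition of $v(\fk{b}_\bullet)$ as a supremum; the Fekete/superadditivity argument is only what guarantees that supremum is also a limit. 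This does not affect the validity of the argument.
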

\subsubsection{Jumping numbers}
For every ideal sheaf $\fk{a}$ and every nonzero ideal sheaf $\fk{q}$ on $X$, the \emph{jumping number} of $\fk{a}$ with respect to $\fk{q}$ is defined by
\begin{equation*}
    \lct^{\fk{q}}(\fk{a})\coloneqq\min\{\lambda\geq 0\colon \fk{q}\not\subseteq \mathcal{J}(\fk{a}^{\lambda})\}.
\end{equation*}
By convention, we put $\lct^{\fk{q}}(\mathcal{O}_X)=\infty$, and when $\fk{q}=\mathcal{O}_X$ we write it as $\lct(\fk{a})$. Moreover, the \emph{Arnold multiplicity} of $\fk{a}$ with respect to $\fk{q}$ is defined as 
\begin{equation*}
    \Arn^{\fk{q}}(\fk{a})\coloneqq \lct^{\fk{q}}(\fk{a})^{-1},
\end{equation*}
and again when $\fk{q}=\mathcal{O}_X$ we write it as $\Arn(\fk{a})$.
\begin{lemma}[cf. {\cite[Lemma 1.7]{JM12}}]\label{lem-JM12 lem1.7}
 If $\pi\colon Y\to X$ is a log resolution of the nonzero ideal $\fk{a}$, and if $\fk{a}\cdot \mathcal{O}_Y=\mathcal{O}_Y(-\sum_i\alpha_iE_i)$ and $K_{Y/X}=\sum_i\kappa_iE_i$, then for every nonzero ideal $\fk{q}$
 \begin{equation}\label{equ-compute lctq(a)}
     \lct^{\fk{q}}(\fk{a})=\min_{\alpha_i>0}\frac{A(\ord_{E_i})+\ord_{E_i}(\fk{q})}{\ord_{E_i}(\fk{a})}=\min_{\alpha_i>0}\frac{\kappa_i+1+\ord_{E_i}(\fk{q})}{\alpha_i}.
 \end{equation}
 Moreover, for any ideals $\fk{a}$, $\fk{b}$, $\fk{q}$, $\fk{q}_1$ and $\fk{q}_2$, we have
 \begin{enumerate}
     \item if $\fk{a}\subseteq \fk{b}$, then $\lct^{\fk{q}}(\fk{a})\leq \lct^{\fk{q}}(\fk{b})$;
     \item $\lct^{\fk{q}}(\fk{a}^m)=\lct^{\fk{q}}(\fk{a})/m$ for every $m\geq 1$;
     \item $\lct^{\fk{q}_1+\fk{q}_2}(\fk{a})=\min_{i=1,2}\lct^{\fk{q}_i}(\fk{a})$.
 \end{enumerate}
\end{lemma}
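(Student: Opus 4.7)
The plan is to establish the divisorial formula for $\lct^{\fk{q}}(\fk{a})$ first, and then deduce the three monotonicity/homogeneity properties as formal corollaries. Using the given log resolution $\pi\colon Y\to X$ of $\fk{a}$, the multiplier ideal admits the explicit description
\[\mathcal{J}(\fk{a}^{\lambda})=\pi_{*}\mathcal{O}_{Y}\bigl(K_{Y/X}-\lfloor\lambda F\rfloor\bigr), \qquad F=\sum_{i}\alpha_{i}E_{i},\]
so that a local section $f\in\mathcal{O}_{X}$ lies in $\mathcal{J}(\fk{a}^{\lambda})$ if and only if $\ord_{E_{i}}(f)\geq\lfloor\lambda\alpha_{i}\rfloor-\kappa_{i}$ for every component $E_{i}$. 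Since $\ord_{E_{i}}$ is a valuation, the ideal containment $\fk{q}\subseteq\mathcal{J}(\fk{a}^{\lambda})$ is equivalent to the finitely many numerical conditions $\ord_{E_{i}}(\fk{q})\geq\lfloor\lambda\alpha_{i}\rfloor-\kappa_{i}$, which I would verify by reducing to a finite generating set of $\fk{q}$ and applying $\ord_{E_{i}}(\fk{q})=\min_{g}\ord_{E_{i}}(g)$.

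From here, the failure of containment at $\lambda$ reduces to $\lfloor\lambda\alpha_{i}\rfloor>\ord_{E_{i}}(\fk{q})+\kappa_{i}$ for some $i$; since the right-hand side is a nonnegative integer, this is in turn equivalent to $\lambda\alpha_{i}\geq\ord_{E_{i}}(\fk{q})+\kappa_{i}+1$, which forces $\alpha_{i}>0$. The least such $\lambda$, varying $i$, is then precisely
\[\min_{\alpha_{i}>0}\frac{\ord_{E_{i}}(\fk{q})+\kappa_{i}+1}{\alpha_{i}}=\min_{\alpha_{i}>0}\frac{A(\ord_{E_{i}})+\ord_{E_{i}}(\fk{q})}{\ord_{E_{i}}(\fk{a})},\]
which is exactly (\ref{equ-compute lctq(a)}). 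Because the minimum runs over a finite set, it is attained, matching the ``$\min$'' in the definition of $\lct^{\fk{q}}(\fk{a})$.

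The three consequences follow by applying the formula on suitably chosen common log resolutions. For (1), a simultaneous log resolution of $\fk{a}$ and $\fk{b}$ satisfies $\ord_{E_{i}}(\fk{a})\geq\ord_{E_{i}}(\fk{b})$ at every component, which both enlarges the index set contributing to the minimum and increases each denominator, giving $\lct^{\fk{q}}(\fk{a})\leq\lct^{\fk{q}}(\fk{b})$. For (2), a log resolution of $\fk{a}$ is also a log resolution of $\fk{a}^{m}$ with $\alpha_{i}$ replaced by $m\alpha_{i}$, so the factor $1/m$ pulls out of the minimum. For (3), using the valuative identity $\ord_{E_{i}}(\fk{q}_{1}+\fk{q}_{2})=\min\bigl(\ord_{E_{i}}(\fk{q}_{1}),\ord_{E_{i}}(\fk{q}_{2})\bigr)$ and commuting minima via $\min_{i}\min(a_{i},b_{i})=\min\bigl(\min_{i}a_{i},\min_{i}b_{i}\bigr)$ delivers the equality directly. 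The only delicate point in the whole argument is the integer-rounding bookkeeping in passing between $\lfloor\lambda\alpha_{i}\rfloor$ and the critical rational $\lambda$, but this is purely elementary once one observes that $\ord_{E_{i}}(\fk{q})+\kappa_{i}$ is an integer.
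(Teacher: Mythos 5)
Your proof is correct. The paper states this lemma without proof, citing \cite[Lemma 1.7]{JM12}; the route you take---unpacking $\mathcal{J}(\fk{a}^\lambda)=\pi_*\mathcal{O}_Y\bigl(K_{Y/X}-\lfloor\lambda F\rfloor\bigr)$ and using the integer-rounding argument to show that failure of $\fk{q}\subseteq\mathcal{J}(\fk{a}^\lambda)$ is equivalent to $\lambda\alpha_i\geq\kappa_i+1+\ord_{E_i}(\fk{q})$ for some $i$ with $\alpha_i>0$---is essentially the argument of that reference, and the three consequences follow formally as you describe.

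It is worth contrasting your approach with the paper's own proof of the mixed generalization, \Cref{lem-log.resolution.lct}, which takes a genuinely different route. There the mixed jumping number is \emph{defined} as an infimum over all divisors over $X$, with no reference to multiplier ideals, and the content of the proof is that this infimum is attained among the finitely many $E_i$ of a log resolution: the easy inequality comes from restricting the infimum, and the converse comes from comparing an arbitrary $\ord_E$ living on a higher model $Y'\to Y\to X$ with the $\ord_{E_i}$, using the pullback decomposition of $K_{Y'/X}$ and effectivity of $K_{Y'/Y}$. The two arguments are complementary facets of the same equivalence: your multiplier-ideal computation directly identifies the threshold $\min\{\lambda:\fk{q}\not\subseteq\mathcal{J}(\fk{a}^\lambda)\}$ with the finite minimum, but it does not by itself recover the formula $\lct^{\fk{q}}(\fk{a})=\inf_E\bigl(A(\ord_E)+\ord_E(\fk{q})\bigr)/\ord_E(\fk{a})$ quoted in the introduction, for which the divisorial comparison argument is still needed. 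One minor simplification for part (1): a common log resolution is not necessary, since $\fk{a}\subseteq\fk{b}$ gives $\mathcal{J}(\fk{a}^\lambda)\subseteq\mathcal{J}(\fk{b}^\lambda)$ directly from the definition of multiplier ideals, yielding the inequality at once; the resolution argument you give is, of course, also valid.
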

If the minimum in (\ref{equ-compute lctq(a)}) is achieved for $E_i$, we say that \emph{$\ord_{E_i}$ computes $\lct^{\fk{q}}(\fk{a})$}. Moreover, $\lct^{\fk{q}}(\fk{a})$ is a birational invariants, that is, if $\varphi\colon X'\to X'$ is a proper birational morphism with $X$ and $X'$ regular, then we have 
\begin{equation*}
    \lct^{\fk{q}}(\fk{a})=\lct^{\fk{q'}}(\fk{a'}),
\end{equation*}
where $\fk{a}'\coloneqq \fk{a}\cdot \mathcal{O}_{X'}$ and $\fk{q'}\coloneqq \fk{q}\cdot \mathcal{O}_{X'}(-K_{X'/X})$.

For a nonzero graded sequence of ideals $\fk{a}_{\bullet}$ and for a subadditive system of ideals $\fk{b}_{\bullet}$, we can define the jumping numbers of $\fk{a}_{\bullet}$ and $\fk{b}_{\bullet}$ with respect to $\fk{q}$, similar to the definitions of $v(\fk{a}_{\bullet})$ and $v(\fk{b}_{\bullet})$ for any $v\in \Val_X$,
\begin{equation*}
\begin{split}
\lct^{\fk{q}}(\fk{a}_{\bullet})\coloneqq  \sup_m m\cdot \lct^{\fk{q}}(\fk{a}_m)=\lim_{m\to \infty}m\cdot\lct^{\fk{q}}(\fk{a}_m)\in (0,+\infty];\\
\lct^{\fk{q}}(\fk{b}_{\bullet})\coloneqq  \inf_t t\cdot \lct^{\fk{q}}(\fk{b}_t)=\lim_{t\to \infty}t\cdot\lct^{\fk{q}}(\fk{b}_t)\in [0,+\infty).
\end{split}   
\end{equation*}
Furthermore, one can prove the following lemma likely to \Cref{lem-JM12 lem1.7}.

\begin{lemma}[cf. {\cite{JM12}}]\label{lem-JM12 lct.equals.min}
Let $\fk{a}_{\bullet}$ be a graded sequence of ideals and $\fk{b}_{\bullet}$ be a subadditive system of ideals $\fk{b}_{\bullet}$. If $\fk{q}$ is a nonzero ideal, then
\begin{equation*}
    \begin{split}
        \lct^{\fk{q}}(\fk{a}_{\bullet})=\inf_E\frac{A(\ord_E)+\ord_E(\fk{q})}{\ord_E(\fk{a}_{\bullet})};\\
         \lct^{\fk{q}}(\fk{b}_{\bullet})=\inf_E\frac{A(\ord_E)+\ord_E(\fk{q})}{\ord_E(\fk{b}_{\bullet})},
    \end{split}
\end{equation*}
where $E$ runs through all divisors over $X$ satisfying $\ord_E(\fk{a}_{\bullet})\neq 0$ and $\ord_E(\fk{b}_{\bullet})\neq 0$, respectively. Moreover, if $\fk{b}_{\bullet}$ is the asymptotic multiplier ideals of $\fk{a}_{\bullet}$, then 
\begin{equation*}
    \lct^{\fk{q}}(\fk{a}_{\bullet})=\lct^{\fk{q}}(\fk{b}_{\bullet})=\min\{\lambda\geq 0\colon\fk{q}\not\subseteq\fk{b}_{\lambda}\}.
\end{equation*}
\end{lemma}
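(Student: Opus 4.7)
The plan is to derive each of the three asymptotic identities from the finite-level formula of \Cref{lem-JM12 lem1.7}, using the elementary monotonicities of $\ord_E(\fk{a}_m)/m$ and $\ord_E(\fk{b}_t)/t$ for the easy direction, and compactness together with density in $\Val_X$ for the reverse.

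For the first identity, fix a divisor $E$ with $\ord_E(\fk{a}_{\bullet})>0$. The submultiplicativity $\fk{a}_p\cdot\fk{a}_q\subseteq\fk{a}_{p+q}$ forces $\ord_E(\fk{a}_m)/m\ge\ord_E(\fk{a}_{\bullet})$ whenever $\fk{a}_m\neq 0$, so by \Cref{lem-JM12 lem1.7}
\[
m\cdot\lct^{\fk{q}}(\fk{a}_m)\le \frac{A(\ord_E)+\ord_E(\fk{q})}{\ord_E(\fk{a}_m)/m}\le \frac{A(\ord_E)+\ord_E(\fk{q})}{\ord_E(\fk{a}_{\bullet})},
\]
and passing to $\sup_m$ and then $\inf_E$ delivers the $\le$ direction. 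For the reverse, for each $m$ \Cref{lem-JM12 lem1.7} supplies a divisor $E_m$ on a log resolution of $\fk{a}_m$ computing $\lct^{\fk{q}}(\fk{a}_m)$. After a scale-invariant normalization (say placing $\ord_{E_m}$ in the slice $\{w\in\Val_X: w(\fk{a}_m)=m\}$), the sequence $\{\ord_{E_m}\}$ sits in a compact subset of $\Val_X$; extracting a subsequential limit $v_\infty\in\Val_X$, lower semicontinuity of $A$ (\Cref{prop-log dis}(3)) together with continuity of $w\mapsto w(\fk{a}_m)$ then yields $\frac{A(v_\infty)+v_\infty(\fk{q})}{v_\infty(\fk{a}_{\bullet})}\le\lct^{\fk{q}}(\fk{a}_{\bullet})$. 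Density of divisorial valuations (\Cref{prop-structure thm of ValX}(2)) finally lets me approximate $v_\infty$ by divisorial valuations $\ord_{F_k}$ without worsening the ratio, completing the divisorial infimum formula.

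The second identity follows the same template, with the controlled-growth estimate $\ord_E(\fk{b}_t)/t\le\ord_E(\fk{b}_{\bullet})$ from \Cref{lem-relation between a and b}(1) replacing the submultiplicativity bound; the key chain is $t\cdot\lct^{\fk{q}}(\fk{b}_t)\le (A(\ord_E)+\ord_E(\fk{q}))/(\ord_E(\fk{b}_t)/t)$, whose right-hand side decreases to $(A(\ord_E)+\ord_E(\fk{q}))/\ord_E(\fk{b}_{\bullet})$ as $t\to\infty$, so taking $\inf_t$ delivers the inequality; the reverse direction uses the same compactness-and-density argument. When $\fk{b}_{\bullet}$ is the asymptotic multiplier-ideal system of $\fk{a}_{\bullet}$, the equality $\lct^{\fk{q}}(\fk{a}_{\bullet})=\lct^{\fk{q}}(\fk{b}_{\bullet})$ is immediate from the first two formulas together with $\ord_E(\fk{a}_{\bullet})=\ord_E(\fk{b}_{\bullet})$ in \Cref{lem-relation between a and b}(2). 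The equality with $\min\{\lambda\ge 0:\fk{q}\not\subseteq\fk{b}_\lambda\}$ follows by unwinding $\fk{b}_t=\cal{J}(\fk{a}_m^{t/m})$ for $m$ sufficiently divisible: $\fk{q}\subseteq\fk{b}_t$ is equivalent to $t/m<\lct^{\fk{q}}(\fk{a}_m)$ for such $m$ by \Cref{lem-JM12 lem1.7}, which occurs precisely when $t<\sup_m m\cdot\lct^{\fk{q}}(\fk{a}_m)=\lct^{\fk{q}}(\fk{a}_{\bullet})$.

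The principal obstacle will be the reverse inequality in the first two identities: since the divisors $E_m$ inhabit different log resolutions, the limit argument must take place inside the ambient space $\Val_X$, and controlling the log discrepancy of the limit valuation via lower semicontinuity before descending to a divisorial valuation via density is the most delicate step. This is precisely where the space-of-valuations machinery assembled in \Cref{section2} (compactness and anti-continuity of the center map, the structure theorem \Cref{prop-structure thm of ValX}, and the semicontinuity of $A$) enters essentially.
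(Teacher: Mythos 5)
The paper does not give its own proof of this lemma (it cites \cite{JM12}), so I will assess your proposal on its own terms and against the analogous mixed-version arguments the paper does write out, namely \Cref{prop-lct.equals.inf.Val.mixed.verb} and \Cref{prop-lct(a.)=lct(b.)}.

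Your $\le$ directions are fine. For the subadditive system, however, the reverse inequality does not need compactness at all: pick, for each $t$, a divisor $E_t$ computing $\lct^{\fk{q}}(\fk{b}_t)$; then since $\ord_{E_t}(\fk{b}_t)/t\le\ord_{E_t}(\fk{b}_{\bullet})$ by definition of the supremum,
\[
t\cdot\lct^{\fk{q}}(\fk{b}_t)=\frac{A(\ord_{E_t})+\ord_{E_t}(\fk{q})}{\ord_{E_t}(\fk{b}_t)/t}\ge\frac{A(\ord_{E_t})+\ord_{E_t}(\fk{q})}{\ord_{E_t}(\fk{b}_{\bullet})}\ge\inf_E\frac{A(\ord_E)+\ord_E(\fk{q})}{\ord_E(\fk{b}_{\bullet})},
\]
and taking $\inf_t$ finishes it. This is exactly the proof of \Cref{prop-lct.equals.inf.Val.mixed.verb}; the $\sup/\inf$ structure favors both directions for subadditive systems, so your compactness machinery there is unnecessary.

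The genuine gap is the $\ge$ direction for the graded sequence. There the monotonicity conspires against you: for a divisor $E_m$ computing $\lct^{\fk{q}}(\fk{a}_m)$ one only knows $\ord_{E_m}(\fk{a}_m)/m\ge\ord_{E_m}(\fk{a}_{\bullet})$, which makes the ratio with $\ord_{E_m}(\fk{a}_{\bullet})$ in the denominator \emph{larger} than $m\cdot\lct^{\fk{q}}(\fk{a}_m)$, the wrong direction. Your compactness-then-density detour does not repair this. Even granting a subsequential limit $v_\infty$ computing $\lct^{\fk{q}}(\fk{a}_{\bullet})$ (which is essentially \Cref{thm-exist.val.comp}), the final step of approximating $v_\infty$ by divisorial valuations $\ord_{F_k}$ ``without worsening the ratio'' fails: $A$ is lower semicontinuous (\Cref{prop-log dis}(3)) and $w\mapsto w(\fk{a}_{\bullet})$ is upper semicontinuous (an infimum of continuous evaluation maps), so $w\mapsto\big(A(w)+w(\fk{q})\big)/w(\fk{a}_{\bullet})$ is lower semicontinuous and density only gives $\liminf_k\text{ratio}(\ord_{F_k})\ge\text{ratio}(v_\infty)$, the opposite of what you need. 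The paper explicitly notes that this divisorial infimum need not be attained, which signals that a pointwise replacement of $v_\infty$ by a divisor cannot work in general. The correct route, mirroring what the paper does for the mixed version, is to transfer to the asymptotic multiplier-ideal system: use $\ord_E(\fk{a}_{\bullet})=\ord_E(\fk{b}_{\bullet})$ from \Cref{lem-relation between a and b}(2), apply the already-proved subadditive formula to $\fk{b}_{\bullet}$, and establish $\lct^{\fk{q}}(\fk{a}_{\bullet})=\lct^{\fk{q}}(\fk{b}_{\bullet})$ via the controlled-growth estimate (this is \Cref{prop-lct(a.)=lct(b.)} with $\lambda=0$). That multiplier-ideal comparison is the missing ingredient in your outline.
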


Here we remind the readers to note that there may not be a divisor $E$ over $X$ that allows the above equations to take the infimum (see \cite[Example 8.5]{JM12}).

We also have the following equations to compute $ \lct^{\fk{q}}(\fk{a}_{\bullet})$ and $ \lct^{\fk{q}}(\fk{b}_{\bullet})$.
\begin{lemma}[{\cite[Section 6.2]{JM12}}]\label{lem-JM12 section6.2}
  Let $\fk{a}_{\bullet}$ be a graded sequence of ideals and $\fk{b}_{\bullet}$ be a subadditive system of ideals $\fk{b}_{\bullet}$, respectively. If $\fk{q}$ is a nonzero ideal, then
\begin{equation*}
    \begin{split}
        \lct^{\fk{q}}(\fk{a}_{\bullet})=\inf_{v\in \Val_X^*}\frac{A(v)+v(\fk{q})}{v(\fk{a}_{\bullet})};\\
         \lct^{\fk{q}}(\fk{b}_{\bullet})=\inf_{\substack{v\in \Val_X^*\\ A(v)<\infty}}\frac{A(v)+v(\fk{q})}{v(\fk{b}_{\bullet})}.
    \end{split}
\end{equation*} 
\end{lemma}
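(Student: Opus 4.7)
The plan is to reduce to the divisorial formula from Lemma \ref{lem-JM12 lct.equals.min} via retraction onto a log resolution. Since every scaled divisorial valuation $\lambda\,\ord_E$ lies in $\Val_X^*$ and all ratios are homogeneous in $v$, Lemma \ref{lem-JM12 lct.equals.min} already gives $\inf_{v}(A(v)+v(\fk{q}))/v(\fk{a}_\bullet) \le \lct^{\fk{q}}(\fk{a}_\bullet)$ and similarly for $\fk{b}_\bullet$. The substance is the opposite inequality: for every $v\in\Val_X^*$ (with $A(v)<\infty$ in the $\fk{b}_\bullet$ case), one must show $A(v)+v(\fk{q}) \ge \lct^{\fk{q}}(\fk{a}_\bullet) \cdot v(\fk{a}_\bullet)$, respectively $A(v)+v(\fk{q}) \ge \lct^{\fk{q}}(\fk{b}_\bullet) \cdot v(\fk{b}_\bullet)$.

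The core step is to prove this pointwise at the level of each $\fk{a}_m$ (resp.\ $\fk{b}_t$). Fix $m$ and choose a log resolution $\pi\colon Y\to X$ of the product $\fk{q}\cdot\fk{a}_m$, yielding a log-smooth pair $(Y,D)$ whose components $D_i$ contain the supports of $\fk{q}\cal{O}_Y$ and $\fk{a}_m\cal{O}_Y$. Let $w \coloneqq r_{Y,D}(v)\in\QM(Y,D)$ be the retraction. Since $(Y,D)$ resolves both $\fk{q}$ and $\fk{a}_m$, \cite[Corollary 4.8]{JM12} gives $w(\fk{q})=v(\fk{q})$ and $w(\fk{a}_m)=v(\fk{a}_m)$, while Proposition \ref{prop-log dis}(1) gives $A(w)\le A(v)$. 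In local coordinates at the center $\eta$ of $w$, writing $\fk{q}\cal{O}_{Y,\eta}=\prod y_i^{q_i}$ and $\fk{a}_m\cal{O}_{Y,\eta}=\prod y_i^{a_{m,i}}$, the quantities $A(w)$, $w(\fk{q})$ and $w(\fk{a}_m)$ are nonnegative linear combinations of the weights $w(D_i)\ge 0$ with coefficients $A(\ord_{D_i})$, $q_i$ and $a_{m,i}$ respectively (using formula \eqref{equ-A(v) for QM(Y,D)} for $A(w)$). Applying \eqref{equ-compute lctq(a)} componentwise yields $A(\ord_{D_i})+q_i \ge \lct^{\fk{q}}(\fk{a}_m)\cdot a_{m,i}$ for every $i$; summing against the weights $w(D_i)$ then gives $A(w)+w(\fk{q}) \ge \lct^{\fk{q}}(\fk{a}_m)\cdot w(\fk{a}_m)$. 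Substituting the retraction identities produces the key pointwise estimate
\[ A(v)+v(\fk{q}) \;\ge\; \lct^{\fk{q}}(\fk{a}_m)\cdot v(\fk{a}_m). \]

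Dividing by $v(\fk{a}_m)/m$ (which we may assume positive, else the conclusion is vacuous) and letting $m\to\infty$ proves the graded-sequence identity by $v(\fk{a}_m)/m\to v(\fk{a}_\bullet)$ and $m\cdot\lct^{\fk{q}}(\fk{a}_m)\to\lct^{\fk{q}}(\fk{a}_\bullet)$. The subadditive case is handled identically: apply the argument to each $\fk{b}_t$ to obtain $A(v)+v(\fk{q})\ge \lct^{\fk{q}}(\fk{b}_t)\cdot v(\fk{b}_t)$, divide by $v(\fk{b}_t)/t$, and let $t\to\infty$, where the hypothesis $A(v)<\infty$ guarantees via Lemma \ref{lem-relation between a and b}(1) that $v(\fk{b}_\bullet)<\infty$ so the limiting product is well-defined. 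The main obstacle is the quasi-monomial step in the middle paragraph: it is there that the log-smooth structure converts the divisor-by-divisor jumping-number inequality into a single valuation-level estimate, and I expect this barycentric summation on $\QM_\eta(Y,D)$ to be the technical crux.
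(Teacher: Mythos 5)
Your proof is essentially correct, and it takes a genuinely different route from the one the paper (and its source \cite{JM12}) uses. You prove the $\fk{a}_\bullet$ identity directly: re-derive the single-ideal formula $\lct^{\fk{q}}(\fk{a}_m)=\inf_v (A(v)+v(\fk{q}))/v(\fk{a}_m)$ via retraction onto a log resolution of $\fk{q}\cdot\fk{a}_m$ (your middle paragraph is, in substance, a re-proof of the paper's \Cref{cor-log.resolution.lct} at $\lambda=0$, i.e.\ \cite[Lemma~6.7]{JM12}), then pass to the limit using $v(\fk{a}_m)/m\ge v(\fk{a}_\bullet)$ and the monotone convergence $m\cdot\lct^{\fk{q}}(\fk{a}_m)\uparrow\lct^{\fk{q}}(\fk{a}_\bullet)$. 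By contrast, the treatment in \cite{JM12}, mirrored in the paper's proofs of \Cref{prop-lct.equals.inf.Val.mixed.verb} and \Cref{cor-lct.equals.inf.Val.mixed.ver}, establishes the $\fk{b}_\bullet$ formula first and then obtains the $\fk{a}_\bullet$ formula indirectly via the asymptotic multiplier ideal system $\fk{b}_t=\mathcal{J}(t\cdot\fk{a}_\bullet)$, invoking the subadditivity theorem ($\lct^{\fk{q}}(\fk{a}_\bullet)=\lct^{\fk{q}}(\fk{b}_\bullet)$) and the identity $v(\fk{a}_\bullet)=v(\fk{b}_\bullet)$ for $A(v)<\infty$. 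Your direct route is shorter and avoids the subadditivity machinery; the indirect route is nevertheless natural in \cite{JM12} because those authors prove and use the $\fk{b}_\bullet$ statement anyway, and the controlled-growth inequality that it buys is needed elsewhere.

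One small but genuine misstep: your final sentence appeals to \Cref{lem-relation between a and b}(1) to conclude $v(\fk{b}_\bullet)<\infty$. That lemma is stated only for the subadditive system of asymptotic multiplier ideals of a graded sequence (equivalently, a system of \emph{controlled growth}), whereas the $\fk{b}_\bullet$ in \Cref{lem-JM12 section6.2} is an arbitrary subadditive system, for which $v(\fk{b}_\bullet)$ can be $+\infty$ even when $A(v)<\infty$. The citation is therefore wrong for the generality you need. The conclusion survives anyway: if $v(\fk{b}_\bullet)=+\infty$ and $A(v)+v(\fk{q})<\infty$, your pointwise estimate $A(v)+v(\fk{q})\ge t\cdot\lct^{\fk{q}}(\fk{b}_t)\cdot v(\fk{b}_t)/t$ forces $t\cdot\lct^{\fk{q}}(\fk{b}_t)\to 0$, i.e.\ $\lct^{\fk{q}}(\fk{b}_\bullet)=0$, and the desired inequality $(A(v)+v(\fk{q}))/v(\fk{b}_\bullet)=0\ge\lct^{\fk{q}}(\fk{b}_\bullet)$ holds. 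So treat this as a case split rather than a consequence of \Cref{lem-relation between a and b}.
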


We say that a valuation $v\in\Val_X^*$ computes $\lct^{\fk{q}}(\fk{a}_{\bullet})$ for a nonzero ideal $\fk{q}$ and a graded sequence of ideals $\fk{a}_{\bullet}$ if $\lct^{\fk{q}}(\fk{a}_{\bullet})=\frac{A(v)+v(\fk{q})}{v(\fk{a}_{\bullet})}$. If $\lct^{\fk{q}}(\fk{a}_{\bullet})=+\infty$, then every valuation $v\in \Val_X^*$ with $A(v)<+\infty$ computes $\lct^{\fk{q}}(\fk{a}_{\bullet})$. For general cases, using the compactness arguments, Jonsson--Musta\c{t}\u{a} obtained the existence of valuation computing the jumping numbers:

\begin{theorem}[{\cite[Theorem 7.3]{JM12}}]\label{thm-exist.val.comp}
Let $\{a_{\bullet}\}$ be a graded sequence of ideals on $X$, and $\fk{q}$ be a
nonzero ideal. Then there exists a valuation $v\in\Val_X^*$ that computes $\lct^{\fk{q}}(\fk{a}_{\bullet})$.
\end{theorem}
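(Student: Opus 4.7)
The plan is to reduce the problem to the subadditive system of asymptotic multiplier ideals $\fk{b}_{\bullet}\coloneqq \mathcal{J}(\fk{a}_{\bullet}^{\bullet})$ and then run a compactness argument on a sublevel set of the log discrepancy function. If $\lct^{\fk{q}}(\fk{a}_{\bullet})=+\infty$, then by \Cref{lem-JM12 section6.2} every valuation in $\Val_X^*$ with finite log discrepancy (e.g.\ any divisorial valuation) computes $\lct^{\fk{q}}(\fk{a}_{\bullet})$, so we may assume $\lct^{\fk{q}}(\fk{a}_{\bullet})<+\infty$. By \Cref{lem-JM12 lct.equals.min} we have $\lct^{\fk{q}}(\fk{a}_{\bullet})=\lct^{\fk{q}}(\fk{b}_{\bullet})$, and by \Cref{lem-relation between a and b}(3) any $v$ with $A(v)<+\infty$ satisfies $v(\fk{a}_{\bullet})=v(\fk{b}_{\bullet})$; hence it suffices to produce $v\in\Val_X^*$ with $A(v)<+\infty$ achieving the infimum
\[
\lct^{\fk{q}}(\fk{b}_{\bullet})=\inf_{\substack{v\in\Val_X^*\\ A(v)<\infty}}\frac{A(v)+v(\fk{q})}{v(\fk{b}_{\bullet})}.
\]

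Next I would take a minimizing sequence $(v_n)\subset\Val_X^*$, and, exploiting the scaling invariance $v\mapsto \lambda v$ of the ratio above, normalize so that $v_n(\fk{b}_{\bullet})=1$ for all $n$. Then $A(v_n)+v_n(\fk{q})\to \lct^{\fk{q}}(\fk{b}_{\bullet})<+\infty$, so the sequence lies in the set $K_C\coloneqq\{v\in\Val_X:A(v)\le C\}$ for some $C>0$. The key input from \cite{JM12}, coming from the identification $\Val_X\cong\varprojlim_{(Y,D)}\QM(Y,D)$ in \Cref{prop-structure thm of ValX} together with the lower semi-continuity of $A$ in \Cref{prop-log dis}(3), is that $K_C$ is compact in $\Val_X$. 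Extract a convergent subsequence $v_n\to v_\infty\in K_C$. Continuity of $v\mapsto v(\fk{q})$ gives $v_\infty(\fk{q})=\lim v_n(\fk{q})$, and lower semi-continuity of $A$ gives $A(v_\infty)\le \liminf A(v_n)$. Assuming for the moment that $v_\infty(\fk{b}_{\bullet})=1$, we then obtain
\[
\frac{A(v_\infty)+v_\infty(\fk{q})}{v_\infty(\fk{b}_{\bullet})}\le \liminf_n\bigl(A(v_n)+v_n(\fk{q})\bigr)=\lct^{\fk{q}}(\fk{b}_{\bullet}),
\]
and the reverse inequality is automatic, so $v_\infty$ computes $\lct^{\fk{q}}(\fk{b}_{\bullet})=\lct^{\fk{q}}(\fk{a}_{\bullet})$.

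The hard part, and the reason we pass from $\fk{a}_{\bullet}$ to $\fk{b}_{\bullet}$, is establishing that $v_\infty(\fk{b}_{\bullet})=1$. A priori $v\mapsto v(\fk{b}_{\bullet})=\sup_{t>0}v(\fk{b}_t)/t$ is only lower semi-continuous as a supremum of the continuous functions $v\mapsto v(\fk{b}_t)/t$, which would allow $v_\infty(\fk{b}_{\bullet})<1$ and could destroy the argument. Here the controlled growth property of asymptotic multiplier ideals from \Cref{lem-relation between a and b}(1),
\[
v(\fk{b}_{\bullet})-\frac{A(v)}{t}\le \frac{v(\fk{b}_t)}{t}\le v(\fk{b}_{\bullet})\qquad(v\in \Val_X,\ t>0),
\]
is decisive: on $K_C$ it shows that $v(\fk{b}_t)/t\to v(\fk{b}_{\bullet})$ uniformly in $v$ as $t\to\infty$, so the restriction of $v\mapsto v(\fk{b}_{\bullet})$ to the compact set $K_C$ is a uniform limit of continuous functions, hence continuous. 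This upgrades semi-continuity to genuine continuity along the sequence and forces $v_\infty(\fk{b}_{\bullet})=\lim v_n(\fk{b}_{\bullet})=1$, completing the argument. The main obstacle is thus really this interplay between the topology on $\Val_X$, the compactness of $K_C$, and the controlled growth property unique to asymptotic multiplier ideals; without the detour through $\fk{b}_{\bullet}$ the analogous continuity for a general graded sequence $\fk{a}_{\bullet}$ simply fails.
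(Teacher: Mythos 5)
The paper cites this theorem from \cite[Theorem 7.3]{JM12} without reproducing a proof, and your argument is the one given in that source: pass to the subadditive system $\fk{b}_\bullet$ of asymptotic multiplier ideals, normalize a minimizing sequence into the sublevel set $K_C=\{A\le C\}$, extract a limit by compactness, and use controlled growth to keep the normalization in the limit. The one step where your justification is too quick is the compactness of $K_C$. Lower semicontinuity of $A$ only makes $K_C$ \emph{closed}, and the structure theorem $\Val_X\cong\varprojlim\QM(Y,D)$ realizes $\Val_X$ inside a product of \emph{non-compact} cones, so these two inputs by themselves do not deliver compactness. What does is that $A$ restricts to a linear form with strictly positive coefficients on each simplicial cone $\QM_\eta(Y,D)\cong\bfR^r_{\ge 0}$, so $\{A\le C\}\cap\QM(Y,D)$ is a finite union of compact simplices; since $A(r_{Y,D}(v))\le A(v)$, the homeomorphism of \Cref{prop-structure thm of ValX} sends $K_C$ onto a closed subset of the Tychonoff-compact product $\prod_{(Y,D)}\{w\in\QM(Y,D):A(w)\le C\}$, which yields compactness. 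This is established in \cite[\S 5]{JM12} and, since it is not among the facts recalled in this paper's preliminaries, it deserves an explicit citation rather than a one-clause derivation. The remaining mechanics are exactly right and match \cite{JM12}: in particular, the observation that the controlled-growth bound $|v(\fk{b}_\bullet)-v(\fk{b}_t)/t|\le A(v)/t\le C/t$ makes $v\mapsto v(\fk{b}_\bullet)$ a uniform limit of continuous functions on $K_C$, hence continuous there, is precisely why one must detour through $\fk{b}_\bullet$ rather than work with $\fk{a}_\bullet$ directly, and is what keeps the limit normalized and nontrivial.
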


The following lemma is from \cite{JM12}, but there are some typos in the statements there, so we demonstrate a proof below.

\begin{lemma}[{\cite[Proposition 7.10]{JM12}}]\label{lem-JM.compute}
    Let $\fk{a}_{\bullet}$ be a graded sequence of ideals on $X$. Assume that $v\in\Val_X^*$ computes $\lct^{\fk{q}}(\fk{a}_{\bullet})<+\infty$. Then 
    \begin{enumerate}
    \item $v$ also computes $\lct^{\fk{q}}(\fk{a}_{\bullet}^v)$;
    \item  $\lct^{\fk{q}}(\fk{a}_{\bullet}^v)=v(\fk{a}_{\bullet})\cdot\lct^{\fk{q}}(\fk{a}_{\bullet})$;
    \item any $v'\in\Val_X^*$ that computes $\lct^{\fk{q}}(\fk{a}_{\bullet}^v)$ also computes $\lct^{\fk{q}}(\fk{a}_{\bullet})$.
    \end{enumerate}    
\end{lemma}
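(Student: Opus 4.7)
My strategy reduces all three statements to two tools already stated in the excerpt: the infimum formula $\lct^{\fk{q}}(\fk{a}_\bullet)=\inf_{w\in\Val_X^*}(A(w)+w(\fk{q}))/w(\fk{a}_\bullet)$ from \Cref{lem-JM12 section6.2}, and the identity $w(\fk{a}_\bullet^v)=\inf_{\fk{b}} w(\fk{b})/v(\fk{b})$ from \Cref{lem-w(a.bullet.v)=inf}. I first prove (1), which gives (2) as a one-line consequence, and then deduce (3) by a symmetric argument.

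The key inequality, which I will use twice, is obtained by applying \Cref{lem-w(a.bullet.v)=inf} with $\fk{b}=\fk{a}_m$ for each $m$ with $v(\fk{a}_m)>0$; dividing by $m$ and letting $m\to\infty$ gives
\begin{equation}\label{eq-plan-cross}
    w(\fk{a}_\bullet^v)\cdot v(\fk{a}_\bullet)\le w(\fk{a}_\bullet)\qquad\text{for every }w\in\Val_X.
\end{equation}
(Here $v(\fk{a}_\bullet)>0$, since $\lct^{\fk{q}}(\fk{a}_\bullet)<+\infty$ together with $v$ computing it forces $v(\fk{a}_\bullet)>0$, so such $m$ exist.) To prove (1), note first that $v(\fk{a}_\bullet^v)=1$ by \Cref{lem-w(a.bullet.v)=inf}, so the upper bound $\lct^{\fk{q}}(\fk{a}_\bullet^v)\le A(v)+v(\fk{q})$ is immediate from \Cref{lem-JM12 section6.2}. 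For the reverse, let $w\in\Val_X^*$ with $w(\fk{a}_\bullet^v)>0$; since $v$ computes $\lct^{\fk{q}}(\fk{a}_\bullet)$, combining the infimum formula with \eqref{eq-plan-cross} yields
\[
A(w)+w(\fk{q})\ge\lct^{\fk{q}}(\fk{a}_\bullet)\cdot w(\fk{a}_\bullet)\ge\lct^{\fk{q}}(\fk{a}_\bullet)\cdot v(\fk{a}_\bullet)\cdot w(\fk{a}_\bullet^v)=(A(v)+v(\fk{q}))\cdot w(\fk{a}_\bullet^v),
\]
and dividing by $w(\fk{a}_\bullet^v)$ and taking the infimum gives $\lct^{\fk{q}}(\fk{a}_\bullet^v)\ge A(v)+v(\fk{q})$. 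Statement (2) is then immediate, since $\lct^{\fk{q}}(\fk{a}_\bullet^v)=A(v)+v(\fk{q})=\lct^{\fk{q}}(\fk{a}_\bullet)\cdot v(\fk{a}_\bullet)$.

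For (3), let $v'\in\Val_X^*$ compute $\lct^{\fk{q}}(\fk{a}_\bullet^v)$; finiteness of $\lct^{\fk{q}}(\fk{a}_\bullet^v)$ (by (2)) forces $v'(\fk{a}_\bullet^v)>0$, and then $v'(\fk{a}_\bullet)>0$ by \eqref{eq-plan-cross} applied to $w=v'$. Using (2) and \eqref{eq-plan-cross},
\[
\frac{A(v')+v'(\fk{q})}{v'(\fk{a}_\bullet)}=\frac{\lct^{\fk{q}}(\fk{a}_\bullet^v)\cdot v'(\fk{a}_\bullet^v)}{v'(\fk{a}_\bullet)}=\lct^{\fk{q}}(\fk{a}_\bullet)\cdot\frac{v(\fk{a}_\bullet)\cdot v'(\fk{a}_\bullet^v)}{v'(\fk{a}_\bullet)}\le\lct^{\fk{q}}(\fk{a}_\bullet),
\]
which, combined with the reverse inequality automatic from \Cref{lem-JM12 section6.2}, shows that $v'$ computes $\lct^{\fk{q}}(\fk{a}_\bullet)$. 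The only delicate point in the whole argument is the bookkeeping around the non-vanishing conditions $v(\fk{a}_\bullet)>0$, $w(\fk{a}_\bullet^v)>0$, and $v'(\fk{a}_\bullet^v)>0$, all of which follow cleanly from the finiteness hypothesis on the relevant jumping numbers; beyond that, the argument is a straightforward chaining of the two basic lemmas, with no further input required.
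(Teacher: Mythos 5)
Your proof is correct and is essentially the same argument the paper gives: both proofs hinge on the cross-ratio inequality $w(\fk{a}_\bullet^v)\cdot v(\fk{a}_\bullet)\le w(\fk{a}_\bullet)$ (which you isolate as \eqref{eq-plan-cross} and derive explicitly from \Cref{lem-w(a.bullet.v)=inf}, while the paper invokes it more tersely in the middle inequality of its chain), and (3) is deduced from (2) plus this inequality in the same way. Your write-up is slightly more careful about the non-degeneracy bookkeeping ($v(\fk{a}_\bullet)>0$, $w(\fk{a}_\bullet^v)>0$, etc.), which the paper leaves implicit, but there is no substantive difference in the route taken.
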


\begin{proof}
    For every $w\in\Val_X^*$, since $v$ computes $\lct^{\fk{q}}(\fk{a}_{\bullet})$, we have
    \begin{equation}\label{eq-v.comp.lct}
        \frac{A(v)+v(\fk{q})}{v(\fk{a}_{\bullet})}=\lct^{\fk{q}}(\fk{a}_{\bullet})\le\frac{A(w)+w(\fk{q})}{w(\fk{a}_{\bullet})}.
    \end{equation}

    To prove (1), we need to prove that for every $w\in\Val_X^*$, it holds that
    \begin{equation}\label{eq-v.comp.lct.2}
        \frac{A(v)+v(\fk{q})}{v(\fk{a}_{\bullet}^v)}\le\frac{A(w)+w(\fk{q})}{w(\fk{a}_{\bullet}^v)}.
    \end{equation}
    We can assume $A(v)+v(\fk{q})>0$. Since $v(\fk{a}_{\bullet}^v)=1$ and it follows from (\ref{eq-v.comp.lct}) that
    \[\frac{A(w)+w(\fk{q})}{A(v)+v(\fk{q})}\ge\frac{w(\fk{a}_{\bullet})}{v(\fk{a}_{\bullet})}\ge w(\fk{a}_{\bullet}^v).\]
    Thus, the inequality (\ref{eq-v.comp.lct.2}) is verified for every $w\in\Val_X^*$, which implies (1).
    
    By (1), we have
    \[\lct^{\fk{q}}(\fk{a}_{\bullet})=\frac{A(v)+v(\fk{q})}{v(\fk{a}_{\bullet})}=\frac{A(v)+v(\fk{q})}{v(\fk{a}_{\bullet}^v)}\cdot \frac{1}{v(\fk{a}_{\bullet})}=\frac{\lct^{\fk{q}}(\fk{a}_{\bullet}^v)}{v(\fk{a}_{\bullet})},\]
    which yields (2).

    Assume that $v'\in\Val_X^*$ computes $\lct^{\fk{q}}(\fk{a}_{\bullet}^v)$. Then we have
    \[\lct^{\fk{q}}(\fk{a}_{\bullet}^v)=\frac{A(v')+v'(\fk{q})}{v'(\fk{a}_{\bullet}^v)}.\]
    The above equality and (2) induce
    \[\lct^{\fk{q}}(\fk{a}_{\bullet})=\frac{\lct^{\fk{q}}(\fk{a}_{\bullet}^v)}{v(\fk{a}_{\bullet})}=\frac{A(v')+v'(\fk{q})}{v'(\fk{a}_{\bullet})}\cdot \frac{v'(\fk{a}_{\bullet})}{v(\fk{a}_{\bullet})\cdot v'(\fk{a}_{\bullet}^v)}\ge \frac{A(v')+v'(\fk{q})}{v'(\fk{a}_{\bullet})}.\]
    This indicates that $v'$ computes $\lct^{\fk{q}}(\fk{a}_{\bullet})$.
\end{proof}

\subsection{Jonsson--Musta\c{t}\u{a}'s conjectures}
In this section, we recall the so-called Jonsson--Musta\c{t}\u{a}'s conjectures posed in \cite{JM12} (see also \cite{JM14}), with some reformulations for simplicity.

\begin{conjecture}[{\cite[Conjecture B]{JM12}}, {\cite[Conjecture C]{JM14}}]\label{conj-JM12.lct.version}
  Let $\fk{a}_{\bullet}$ be a graded sequence of ideals on $X$ with $\lct(\fk{a}_{\bullet})<\infty$.
  \begin{itemize}[leftmargin=1em]
      \item \textbf{Weak version}: there exists a quasi-monomial valuation $v\in \Val_X^*$ that computes $\lct(\fk{a}_{\bullet})$;
      \item \textbf{Strong version}: any valuation $v\in \Val_X^*$ computing $\lct(\fk{a}_{\bullet})$ must be quasi-monomial.
  \end{itemize}
\end{conjecture}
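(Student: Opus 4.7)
The plan is to treat the weak and strong versions separately. For the weak version---which, in the form stated here (the case $\fk{q}=\cal{O}_X$ of the more general \Cref{conj-weak version JM}), is due to Xu \cite{Xu20}---I would begin with \Cref{thm-exist.val.comp} to obtain some $v_0 \in \Val_X^*$ computing $\lct(\fk{a}_\bullet)$, reduce to the case $\fk{a}_\bullet = \fk{a}_\bullet^{v_0}$ via \Cref{lem-JM.compute}(1), and then attempt to replace $v_0$ by a quasi-monomial valuation without increasing the ratio $A(v)/v(\fk{a}_\bullet)$. The natural tool is the retraction $r_{Y,D}\colon \Val_X \to \QM(Y,D)$: by \Cref{prop-log dis}(1) one has $A(r_{Y,D}(v_0)) \le A(v_0)$, and \cite[Corollary 4.8]{JM12} gives $r_{Y,D}(v_0)(\fk{a}_m) = v_0(\fk{a}_m)$ whenever $(Y,D)$ log-resolves $\fk{a}_m$. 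Since no single log-smooth pair simultaneously log-resolves every $\fk{a}_m$, the retracts form only a minimizing net in the profinite limit of \Cref{prop-structure thm of ValX}, and the core step is to extract a quasi-monomial accumulation point via a generic-limit construction that preserves the Abhyankar equality.

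A more conceptual route, aligned with the paper's main theorems, is to recast the problem through Zhou valuations. By \Cref{thm-main thmA}(1) any finite $\lct(\fk{a}_\bullet)$ is computed by some Zhou valuation (taking $\fk{q} = \cal{O}_X$), so if every such Zhou valuation is quasi-monomial then the weak conjecture is immediate---this is precisely the $\fk{q}=\cal{O}_X$ case of \Cref{thm-main thmA}(4). Conversely, given any Zhou valuation $v$, applying the conjecture to the graded sequence $\fk{a}_\bullet^v$ would produce a quasi-monomial minimizer $w$; combining \Cref{lem-w(a.bullet.v)=inf} with the maximality built into \Cref{def-intro Zhou val} should then force $v=w$, yielding the equivalence asserted in \Cref{thm-main thmA}(3). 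This presentation has the merit of isolating precisely what must be verified: maximality in the Zhou sense forces Abhyankar rank.

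For the strong version I would not expect a direct proof, and would regard it as genuinely open in general. The key obstruction is that \Cref{lem-JM.compute}(1) only asserts that a minimizer $v$ also computes $\lct(\fk{a}_\bullet^v)$, but not that $v$ is \emph{maximal} among valuations satisfying $\lct(\fk{a}_\bullet^v) \le 1$; without such a maximality upgrade one cannot directly invoke \Cref{def-intro Zhou val} and the Zhou-valuation machinery to pass from ``minimizer'' to ``quasi-monomial.'' Bridging this gap---showing that every minimizer of the jumping-number ratio is in fact Zhou-maximal---appears to require genuinely new rigidity input beyond what is available in \Cref{lem-JM.compute} and \Cref{prop-log dis}, and is in my view the main obstacle to the strong version.
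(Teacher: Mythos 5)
You have correctly recognized that what you were asked to prove is in fact stated in the paper as a \emph{conjecture}, not a theorem: the paper itself does not prove \Cref{conj-JM12.lct.version}. It records that the weak version is \Cref{thm-Xu} (Xu, cited not reproved), that both versions hold for $\dim X\le 2$ (\Cref{thm-strong version of An}, again cited from \cite{JM12}), and that the strong version is open in general. Your identification of the weak version with Xu's theorem, your Zhou-valuation reformulation, and your verdict that the strong version is genuinely open all match the paper's stance, so there is no ``paper's proof'' for me to compare you against.

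Two remarks on the substance. First, the retraction-and-limit sketch you offer for the weak version would not close: the retracts $r_{Y,D}(v_0)$ are indeed quasi-monomial, but along the inverse system $\varprojlim \QM(Y,D)$ they converge back to $v_0$ itself (\Cref{prop-structure thm of ValX}), and quasi-monomiality is \emph{not} preserved under such limits --- the density of $\QM_X$ in $\Val_X$ means that essentially every valuation, quasi-monomial or not, is such an accumulation point. Moreover $A$ is only lower semicontinuous (\Cref{prop-log dis}(3)), so even if one could pass to a limit, one could not control the log discrepancy from above. This is precisely why Xu's argument instead goes through the minimal model program and K-semistability of log Fano cone singularities, a completely different mechanism. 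Second, your Zhou-valuation equivalence argument is essentially the paper's proof of \Cref{thm-equiv.weak.ZVal=QM}: rescaling a quasi-monomial minimizer $w$ of $\lct^{\fk q}(\fk a^v_\bullet)$ to $w(\fk a^v_\bullet)=1$, invoking \Cref{lem-w(a.bullet.v)=inf} to get $w\ge v$, \Cref{lem-JM.compute}(2) to get $\lct^{\fk q}(\fk a^w_\bullet)=1$, and then maximality of $v$ in $\Val(X;\fk q)$ to force $w=v$; this chain is exactly what underlies \Cref{thm-A(v)=1-v(q)}. Just note that \Cref{cor-ZV.OX.qm} (the $\fk q=\cal O_X$ case of \Cref{thm-main thmA}(4)) is itself derived from Xu's theorem, so the Zhou-valuation route is an elegant equivalence, not an independent proof. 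Your diagnosis of the obstruction to the strong version --- that a minimizer need not be maximal in $\Val(X;\fk q)$, and that upgrading ``minimizer'' to ``Zhou-maximal'' would require new rigidity input --- is an accurate summary of where matters stand.
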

\begin{conjecture}[{\cite[Conjecture C, Conjecture 7.4]{JM12}}, {\cite[Conjecture C']{JM14}}]\label{conj-JM12.jn.version}
  Let $\fk{a}_{\bullet}$ be a graded sequence of ideals on $X$ and $\fk{q}$ a nonzero ideal on $X$ with $\lct^{\fk{q}}(\fk{a}_{\bullet})<\infty$.
  \begin{itemize}[leftmargin=1em]
      \item \textbf{Weak version}: there exists a quasi-monomial valuation $v\in \Val_X^*$ that computes $\lct^{\fk{q}}(\fk{a}_{\bullet})$;
      \item \textbf{Strong version}: any valuation $v\in \Val_X^*$ computing $\lct^{\fk{q}}(\fk{a}_{\bullet})$ must be quasi-monomial.
  \end{itemize}
\end{conjecture}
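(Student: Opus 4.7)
The weak version is known in the special cases $\fk{q} = \cal{O}_X$ (by Xu \cite{Xu20}) and $\dim X \le 2$ (by Jonsson--Musta\c{t}\u{a} \cite{JM12}); both versions remain open in general. My plan attacks the weak version by reducing it to the quasi-monomiality of Zhou valuations as established in Theorem \ref{thm-main thmA}, and then trying to use the algebraic Tian functions of Theorem \ref{thm-main thmB} to force quasi-monomiality.

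First, Theorem \ref{thm-main thmA}(1) will provide a Zhou valuation $\tilde v$ related to $\fk{q}$ that computes $\lct^{\fk{q}}(\fk{a}_{\bullet})$, and Theorem \ref{thm-main thmA}(3) will show that the conjecture is equivalent to every such $\tilde v$ being quasi-monomial. (An alternative direct route: start with $v \in \Val_X^*$ computing $\lct^{\fk{q}}(\fk{a}_{\bullet})$ from Theorem \ref{thm-exist.val.comp}, pass to the associated $\fk{a}_{\bullet}^v$ by Lemma \ref{lem-JM.compute}, and enlarge $v$ within $\{w : \lct^{\fk{q}}(\fk{a}_{\bullet}^w) \le 1\}$ using compactness in the retraction picture of Proposition \ref{prop-structure thm of ValX} together with lower semi-continuity of $A$ from Proposition \ref{prop-log dis}(3) to land in $\tilde v$.) To prove quasi-monomiality of $\tilde v$ I would exploit the Tian-function criterion of Theorem \ref{thm-main thmB}(2): for each nonzero ideal $\fk{q}'$ on $X$, the mixed jumping number $t \mapsto \lct(\fk{q}, t \cdot \fk{q}'; \fk{a}_{\bullet}^{\tilde v})$ is linear on $[0,\infty)$ and differentiable at $t=0$, and the derivative at $0$ is a natural candidate to coincide with $\tilde v(\fk{q}')$. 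By letting $\fk{q}'$ range over the components of successively finer log-smooth models $(Y,D)$ over $X$ one would hope to pin $\tilde v$ down as the quasi-monomial valuation in $\QM(Y,D)$ whose coordinates match these slopes, thereby realising it as an Abhyankar valuation.

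The main obstacle is precisely this last step of extracting a finite-dimensional quasi-monomial presentation from the family of Tian-function slopes, which is the content remaining after Xu's breakthrough. Xu's proof in the $\fk{q} = \cal{O}_X$ case leans on a minimal model program argument that crucially requires the klt setting, and the extra term $\ord_E(\fk{q})$ in the log discrepancy that appears for nontrivial $\fk{q}$ destroys this klt condition, so Xu's argument does not transport directly. A successful proof would plausibly require either an MMP adapted to the mixed-boundary situation, or a new valuative finiteness theorem showing that the Tian-function slopes at a Zhou valuation necessarily factor through a finite-type log-smooth model over $X$. The strong version would require, in addition, an essentially-uniqueness statement for the maximal Zhou enlargement dominating any computing valuation, which is an independent rigidity problem that I do not expect to follow from the present plan.
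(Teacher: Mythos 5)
This statement is labelled as a conjecture, not a theorem: the paper does not prove it, and in fact explicitly notes (end of \S 1.1.2) that the algebraic Jonsson--Musta\c{t}\u{a} conjecture remains open whenever $\fk{q} \neq \cal{O}_X$. What the paper establishes is only an \emph{equivalence} (Theorem \ref{thm-equiv.weak.ZVal=QM}): the weak version of this conjecture holds if and only if every Zhou valuation is quasi-monomial. The two known unconditional cases recorded in Corollaries \ref{cor-ZV.OX.qm} and \ref{cor-dim.le.2} ($\fk{q} = \cal{O}_X$, via Xu, and $\dim X \le 2$) are precisely those you cite. Your proposal correctly reconstructs this reduction and, to your credit, you are candid that it does not close the gap: the final step --- forcing the Zhou valuation $\tilde v$ into some $\QM(Y,D)$ from the family of Tian-function slopes --- is exactly the open problem, and you rightly observe that Xu's MMP argument breaks down once the extra $\ord_E(\fk{q})$ term ruins the klt hypothesis. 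So there is no disagreement of substance with the paper here; you have rediscovered the paper's reformulation rather than proved the conjecture.

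Two smaller remarks. First, your ``alternative direct route'' for producing a maximal (Zhou) enlargement of a computing valuation via compactness and lower semi-continuity of $A$ is not quite the paper's mechanism: Theorem \ref{thm-existence.gr.sys} uses Zorn's lemma on chains in $\Val(X;\fk{q})$, with Noetherianity of $\cal{O}_X$ used to show each $\fk{a}^{\tilde v}_m$ stabilizes along the chain so that the supremum stays in $\Val(X;\fk{q})$. A purely compactness-based argument in $\varprojlim \QM(Y,D)$ would need care: the set $\{w : \lct^{\fk{q}}(\fk{a}^w_\bullet) \le 1\}$ is not obviously compact, and lower semi-continuity of $A$ alone does not give you a maximal element. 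Second, the fact that the Tian-function slope at $t=0$ equals $\tilde v(\fk{q}')$ for a Zhou valuation $\tilde v$ is not merely a ``natural candidate''; it is proved in the paper via Proposition \ref{prop-lim.equals.vq'} and Theorem \ref{thm-ZV.lct.differential}, so you may use it as a known fact in any future attempt. The hard part is purely the final finiteness step you identify.
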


The following theorem shows that, to prove the above Jonsson--Musta\c{t}\u{a}'s conjectures, it suffices to consider the graded sequences associated to valuations in $\Val_X^*$.

\begin{theorem}[{\cite[Theorem 7.7, Theorem 7.8]{JM12}}]\label{thm-TFAEinJM12}
If $v\in \Val_X^*$ is a nontrivial valuation with $A(v)<+\infty$ and $\fk{q}$ is a nonzero ideal on $X$, the following are equivalent:
 \begin{enumerate}
     \item there is a graded sequence of ideals $\fk{a}_{\bullet}$ on $X$ such that $v$ computes the jumping number $\lct^{\fk{q}}(\fk{a}_{\bullet})<\infty$;
     \item there is a subadditive system of ideals $\fk{b}_{\bullet}$ of controlled growth such that $v$ computes $\lct^{\fk{q}}(\fk{b}_{\bullet})<\infty$;
     \item for every $w\in \Val_X$ with $w\geq v$, we have $A(w)+w(\fk{q})\geq A(v)+v(\fk{q})$;
     \item $v$ computes $\lct^{\fk{q}}(\fk{a}_{\bullet}^v)$.
 \end{enumerate}
\end{theorem}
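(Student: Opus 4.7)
The plan is to reduce everything to the asymptotic valuative formula of \Cref{lem-JM12 section6.2} combined with the two key identities $v(\fk{a}_{\bullet}^v)=1$ and $w(\fk{a}_{\bullet}^v)=\inf_{\fk{b}}w(\fk{b})/v(\fk{b})$ from \Cref{lem-w(a.bullet.v)=inf}. The core computation I want to carry out first is the inequality
\begin{equation*}
w(\fk{a}_{\bullet}^v)\le \frac{w(\fk{a}_{\bullet})}{v(\fk{a}_{\bullet})}
\end{equation*}
valid for every $w\in\Val_X$; this will follow by taking the infimum defining $w(\fk{a}_{\bullet}^v)$ over the particular family $\{\fk{a}_m\}_m$ (for which $v(\fk{a}_m)\ge m\cdot v(\fk{a}_{\bullet})$ eventually, making the ratio $w(\fk{a}_m)/v(\fk{a}_m)$ at most $w(\fk{a}_m)/(m\cdot v(\fk{a}_{\bullet}))$, then letting $m\to\infty$). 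I may assume $v(\fk{a}_{\bullet})>0$, since otherwise $\lct^{\fk{q}}(\fk{a}_{\bullet})=\infty$ is excluded.

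For (1), I would unwind what it means for $v$ to compute $\lct^{\fk{q}}(\fk{a}_{\bullet})$: for every $w\in\Val_X^*$,
\begin{equation*}
A(w)+w(\fk{q})\ge \frac{w(\fk{a}_{\bullet})}{v(\fk{a}_{\bullet})}\bigl(A(v)+v(\fk{q})\bigr)\ge w(\fk{a}_{\bullet}^v)\bigl(A(v)+v(\fk{q})\bigr),
\end{equation*}
using the inequality from the previous paragraph. Dividing by $w(\fk{a}_{\bullet}^v)$ and recalling $v(\fk{a}_{\bullet}^v)=1$ gives $(A(w)+w(\fk{q}))/w(\fk{a}_{\bullet}^v)\ge A(v)+v(\fk{q}) = (A(v)+v(\fk{q}))/v(\fk{a}_{\bullet}^v)$, so \Cref{lem-JM12 section6.2} yields $\lct^{\fk{q}}(\fk{a}_{\bullet}^v)=A(v)+v(\fk{q})$ and $v$ achieves it. Statement (2) is then immediate: dividing the identity $\lct^{\fk{q}}(\fk{a}_{\bullet}^v)=A(v)+v(\fk{q})$ by $v(\fk{a}_{\bullet})$ and using the hypothesis on $v$ gives $\lct^{\fk{q}}(\fk{a}_{\bullet}^v)=v(\fk{a}_{\bullet})\cdot\lct^{\fk{q}}(\fk{a}_{\bullet})$.

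For (3), given any $v'$ computing $\lct^{\fk{q}}(\fk{a}_{\bullet}^v)$, I would plug in (2) and then use the inequality $v'(\fk{a}_{\bullet}^v)\le v'(\fk{a}_{\bullet})/v(\fk{a}_{\bullet})$ once more:
\begin{equation*}
\lct^{\fk{q}}(\fk{a}_{\bullet})=\frac{\lct^{\fk{q}}(\fk{a}_{\bullet}^v)}{v(\fk{a}_{\bullet})}=\frac{A(v')+v'(\fk{q})}{v(\fk{a}_{\bullet})\cdot v'(\fk{a}_{\bullet}^v)}\ge \frac{A(v')+v'(\fk{q})}{v'(\fk{a}_{\bullet})}.
\end{equation*}
The reverse inequality is automatic from the valuative characterization in \Cref{lem-JM12 section6.2}, so equality holds and $v'$ computes $\lct^{\fk{q}}(\fk{a}_{\bullet})$.

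The only step requiring any care is establishing $w(\fk{a}_{\bullet}^v)\le w(\fk{a}_{\bullet})/v(\fk{a}_{\bullet})$ cleanly (the rest is bookkeeping). This is essentially forced by \Cref{lem-w(a.bullet.v)=inf}, but one must ensure one is not dividing by zero in degenerate cases; the hypothesis $\lct^{\fk{q}}(\fk{a}_{\bullet})<\infty$ together with $v\in\Val_X^*$ prevents this. No conjectural input is used—only the two cited lemmas—so the argument is self-contained within the preliminaries.
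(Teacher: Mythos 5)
Your proposal does not prove \Cref{thm-TFAEinJM12}; it reproves \Cref{lem-JM.compute}. Look again at what is being claimed: \Cref{thm-TFAEinJM12} asserts the \emph{equivalence of four conditions} (1)--(4), where (3) is the monotonicity statement that every $w\in\Val_X$ with $w\geq v$ satisfies $A(w)+w(\fk{q})\geq A(v)+v(\fk{q})$, and (2) concerns a subadditive system of controlled growth. The items you label ``(1)'', ``(2)'', and ``(3)'' are instead the three conclusions of \Cref{lem-JM.compute}, whose hypothesis (which you take as the starting point of your argument) is that $v$ already computes $\lct^{\fk{q}}(\fk{a}_{\bullet})$. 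Your chain of inequalities, built on $w(\fk{a}_{\bullet}^v)\le w(\fk{a}_{\bullet})/v(\fk{a}_{\bullet})$ via \Cref{lem-w(a.bullet.v)=inf}, is essentially the paper's own proof of that lemma—so what you wrote is correct, but for a different statement.

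What your argument actually establishes, phrased in the theorem's numbering, is the single implication $(1)\Rightarrow(4)$. To prove the theorem you still need $(4)\Rightarrow(1)$, which is immediate by taking $\fk{a}_{\bullet}=\fk{a}_{\bullet}^v$ (and noting $\lct^{\fk{q}}(\fk{a}_{\bullet}^v)\le A(v)+v(\fk{q})<\infty$); the equivalence $(3)\Leftrightarrow(4)$, whose key input is that $w\geq v$ if and only if $w(\fk{a}_{\bullet}^v)\geq 1$ (again from \Cref{lem-w(a.bullet.v)=inf}), so that any $w$ with $w(\fk{a}_{\bullet}^v)>0$ can be rescaled to dominate $v$; and the equivalence with $(2)$, which goes through the subadditive system of asymptotic multiplier ideals of $\fk{a}_{\bullet}^v$ and the controlled-growth estimate of \Cref{lem-relation between a and b}. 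None of these appear in your proposal, and in particular conditions (2) and (3) are never mentioned. Note also that \Cref{thm-TFAEinJM12} is cited from [JM12, Theorems 7.7 and 7.8] and is not proved in the present paper, but that does not rescue the proposal: as written it simply does not establish the four-way equivalence that the theorem asserts.
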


The $2$-dimensional cases of the conjectures are known to be true, and the conjectures are trivial for dimension $1$.

\begin{theorem}[{\cite[Section 9]{JM12}}]\label{thm-strong version of An}
The strong versions of \Cref{conj-JM12.lct.version} and \Cref{conj-JM12.jn.version} are valid for all $X$ with $\dim(X)\leq 2$.
\end{theorem}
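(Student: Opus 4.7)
The plan is to reduce to dimension exactly two, where the conjectures become a question about the valuative tree of Favre--Jonsson. In dimension one, the claim is trivial: if $X$ is a regular Noetherian scheme with $\dim X\le 1$, then every element of $\Val_X^*$ is the DVR attached to a closed point, hence divisorial and in particular quasi-monomial. So suppose $\dim X=2$.

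By \Cref{thm-TFAEinJM12}, a valuation $v\in\Val_X^*$ computes $\lct^{\fk{q}}(\fk{a}_\bullet)$ for some graded sequence if and only if it computes $\lct^{\fk{q}}(\fk{a}_\bullet^v)$, if and only if
\[ A(w)+w(\fk{q})\ge A(v)+v(\fk{q})\quad\text{for every } w\in\Val_X \text{ with } w\ge v. \]
It therefore suffices to show: any $v\in\Val_X^*$ with $A(v)<+\infty$ satisfying the above minimality property must be Abhyankar. I would work locally at $\xi\coloneqq c_X(v)$; the case $\dim\cal{O}_{X,\xi}=1$ is again trivial, so assume $\xi$ is a closed point. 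By the Zariski--Abhyankar classification on a regular local surface, $v$ is either (i) divisorial, (ii) irrational quasi-monomial, (iii) a curve valuation, or (iv) an infinitely singular valuation. Cases (i) and (ii) are already quasi-monomial; case (iii) is excluded because curve valuations have $A(v)=+\infty$. So the only task is to rule out case (iv).

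The core step is to use the tree structure on $\Val_{X,\xi}$, together with the affine/piecewise-affine behavior of $w\mapsto A(w)$ and $w\mapsto w(\fk{q})$ along segments of the tree, to manufacture, for each infinitely singular $v$ with $A(v)<+\infty$, a valuation $w\ge v$ violating the minimality inequality. More precisely, I would realize $v$ as the limit of a strictly increasing sequence of divisorial valuations $v_n\in\QM(Y_n,D_n)$ coming from an infinite chain of blow-ups, parametrize the corresponding segment of the tree by skewness (thinness), and exploit that along this chain the generic multiplicities (the ``$\ord_{E}(\fk{a})$-slopes'') governing the growth of $A$ tend to infinity, whereas the corresponding slopes of $w(\fk{q})$ remain bounded by the multiplicity of $\fk{q}$ at the successive centers. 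Extending the chain ``past'' $v$ therefore produces $w>v$ with
\[ A(w)+w(\fk{q}) \;<\; A(v)+v(\fk{q}), \]
contradicting the minimality and hence ruling out case (iv).

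The main obstacle is precisely this last perturbation step: one must track both $A$ and $w(\fk{q})$ simultaneously along a transfinite sequence of infinitely near points, and show that the ``derivative'' of $A$ along the tree blows up at an infinitely singular valuation while the derivative of $w(\fk{q})$ stays finite. This is what the Favre--Jonsson valuative-tree technology (the Izumi--Tougeron type estimates together with the tree parametrization by skewness and multiplicity) is designed to deliver, and invoking it in the presence of the auxiliary ideal $\fk{q}$ (not just $\fk{q}=\cal{O}_X$) is the delicate point that distinguishes \Cref{conj-JM12.jn.version} from the $\fk{q}=\cal{O}_X$ case of \Cref{conj-JM12.lct.version}.
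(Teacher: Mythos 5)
The paper does not prove this theorem; it is cited from [JM12, Section 9], so I evaluate your reconstruction against the Jonsson--Musta\c{t}\u{a} / Favre--Jonsson argument it refers to.

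Your setup is sound: dimension one is trivial, the reduction to condition (3) of Theorem~\ref{thm-TFAEinJM12} is exactly the right move, and in dimension two the Zariski--Abhyankar classification together with $A(v)=+\infty$ for curve valuations leaves only the infinitely singular case to exclude. The gap is in how you propose to exclude it. An infinitely singular valuation $v$ is an \emph{end} of the Favre--Jonsson valuative tree; there is no normalized valuation strictly dominating it, so ``extending the chain past $v$'' has no meaning and cannot literally produce a $w>v$. The valuations $w\ge v$ other than rescalings of $v$ itself are all of the form $w=cw'$ with $w'$ a normalized valuation elsewhere in the tree and $c\ge 1/w'(\fk a_\bullet^v)$; violating (3) therefore amounts to finding $w'$ (say on the segment $[\nu_{\fk m},v]$) with
$$\frac{A(w')+w'(\fk q)}{w'(\fk a_\bullet^v)}<A(v)+v(\fk q),$$
and this brings $w'(\fk a_\bullet^v)$ into the comparison in an essential way. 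Your slope estimate --- that $A$ has unbounded derivative in skewness towards an infinitely singular end (the generic multiplicity diverges) while $w'(\fk q)$ has bounded derivative --- is a correct ingredient, but as you state it, it only says $A(w')+w'(\fk q)$ climbs steeply towards $v$, which by itself gives nothing. The missing step is to set that steepness against the \emph{bounded} slope of the concave tree potential $w'\mapsto w'(\fk a_\bullet^v)$ (which is controlled since $\nu_{\fk m}(\fk a_\bullet^v)>0$ and $v(\fk a_\bullet^v)=1$ with $A(v)<+\infty$), and conclude that the ratio above is strictly increasing on the final stretch of $[\nu_{\fk m},v]$, hence dips below its value at $v$ for $w'$ slightly below $v$. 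Without this comparison, or an equivalent device tracking $w'(\fk a_\bullet^v)$, the argument does not close; comparing $A$ against $w(\fk q)$ alone cannot rule out the infinitely singular case.
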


In \cite{Xu20}, Xu made a breakthrough on the weak version of \Cref{conj-JM12.lct.version}.
\begin{theorem}[{\cite[Theorem 1.1]{Xu20}}]\label{thm-Xu}
The weak version of \Cref{conj-JM12.lct.version} holds for all $X$. 
\end{theorem}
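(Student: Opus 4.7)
The plan is to reduce to graded sequences attached to a single valuation and then extract a quasi-monomial valuation as a limit of \emph{Kollár components}, whose existence and boundedness must come from the MMP rather than from pure valuation theory. By \Cref{thm-exist.val.comp} there is some $v \in \Val_X^*$ computing $\lct(\fk{a}_\bullet)$, and \Cref{lem-JM.compute} then shows that $v$ also computes $\lct(\fk{a}_\bullet^v)$. Combined with \Cref{thm-TFAEinJM12}, it suffices to produce any quasi-monomial $w$ that computes $\lct(\fk{a}_\bullet^v)$, since \Cref{lem-JM.compute}(3) transports this back to a quasi-monomial valuation computing the original $\lct(\fk{a}_\bullet)$. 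So I may assume $\fk{a}_\bullet = \fk{a}_\bullet^v$ and localize at $\xi = c_X(v)$.

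The next step is to replace $v$ by better behaved divisorial approximants. For each $m \gg 0$, \Cref{lem-JM12 lem1.7} produces a divisorial valuation $\ord_{E_m}$ computing $\lct(\fk{a}_m^v)$, so $A(\ord_{E_m})/\ord_{E_m}(\fk{a}_\bullet^v) \to \lct(\fk{a}_\bullet^v)$. The naive attempt to find a single log-smooth model housing all the $E_m$ typically fails. To cure this, I would replace each $E_m$ by a \emph{Kollár component} $S_m$ over $(X,\xi)$: via dlt modification and a run of the MMP with scaling one extracts $S_m$ as a prime divisor on some model $Y_m \to X$ with $(Y_m, S_m)$ plt, $-S_m$ relatively ample over $X$, and $\ord_{S_m}$ still approximating (up to $o(1)$) the relevant infimum of $A/\ord(\fk{a}_\bullet^v)$. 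By inversion of adjunction, $(S_m, \Delta_{S_m})$ is then a log Fano pair.

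The crux, and the point where any approach stemming solely from the toolbox of \cite{JM12} runs aground, is to prove that the family $\{(S_m, \Delta_{S_m})\}$ is \emph{bounded}. This is where I would invoke \textbf{Birkar's boundedness of complements}, applied in the local setting over $(X,\xi)$, combined with the uniform control $A(\ord_{S_m}) \le \lct(\fk{a}_\bullet^v)\cdot \ord_{S_m}(\fk{a}_\bullet^v)$ which keeps the log discrepancies, and hence the coefficients of $\Delta_{S_m}$, under control. I expect this step to be the genuine obstacle: it is deep algebraic geometry rather than a formal manipulation of valuations, which is precisely why the higher-dimensional case was out of reach at the time of \cite{JM12}.

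Once boundedness is secured, a spreading-out argument produces a single log-smooth pair $(Y, D)$ over $X$ so that infinitely many of the $\ord_{S_m}$ are realized as elements of $\QM(Y, D)$. After normalizing so that $\ord_{S_m}(\fk{a}_\bullet^v) = 1$, compactness of the closed simplicial cones in \Cref{prop-topology of ValX} lets me extract a subsequential limit $w \in \QM(Y, D)$. The continuity of the evaluation maps $\varphi_{\fk{a}_m}$, the continuity of $A$ on $\QM(Y, D)$ from \Cref{prop-log dis}(3), together with the controlled growth statements of \Cref{lem-relation between a and b}, yield $A(w)/w(\fk{a}_\bullet^v) \le \lct(\fk{a}_\bullet^v)$. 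Hence $w$ computes the jumping number, and the theorem follows.
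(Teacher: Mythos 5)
The paper does not prove this statement; it is quoted verbatim from Xu (\cite[Theorem~1.1]{Xu20}) and used as a black box, so there is no internal proof against which to check your argument. What you have written is a plausible high-level reconstruction of Xu's actual strategy: reduce to $\fk{a}_\bullet^v$ via \Cref{thm-exist.val.comp}, \Cref{lem-JM.compute}, and \Cref{thm-TFAEinJM12}; approximate by divisorial valuations; replace these by Koll\'ar components extracted via the MMP; invoke boundedness (Birkar) to pin the components to a fixed model; and pass to a quasi-monomial limit. You also correctly single out boundedness as the decisive input that was unavailable to Jonsson--Musta\c{t}\u{a}.

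That said, as a proof the proposal has two genuine gaps. First, the boundedness step is asserted rather than argued: you need to extract Koll\'ar components $S_m$ that (a) still nearly compute $\lct(\fk{a}_\bullet^v)$ and (b) admit a uniformly bounded complement, and neither the dlt-modification argument nor the precise form of Birkar's theorem you need in the local setting is carried out. This is exactly where Xu's paper does the hard work, and nothing in the toolbox of \cite{JM12} supplies it. Second, the final compactness claim is wrong as stated: by \Cref{prop-topology of ValX} the cones $\QM_\eta(Y,D)\cong\bfR^r_{\ge 0}$ are closed but \emph{not} compact, so normalizing $\ord_{S_m}(\fk{a}_\bullet^v)=1$ does not by itself give a convergent subsequence. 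One must first bound the coordinates, e.g.\ by combining the normalization with the log discrepancy bound $A(\ord_{S_m})\le C$ and an Izumi-type comparison, and only then invoke local compactness of the resulting bounded region together with continuity of $A$ on $\QM(Y,D)$ from \Cref{prop-log dis}. So the outline is the right one, but both the MMP/boundedness core and the limiting argument still need to be filled in before this counts as a proof.
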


\section{Definition and existence of Zhou valuations}\label{section3}

This section is devoted to the introduction of Zhou valuations and their associated cone, along with several elementary propositions related to them.

Let $\fk{q}$ be a nonzero ideal on $X$, and denote
\begin{equation}\label{equ-Val(X;q)}
    \Val(X;\fk{q})\coloneqq \{v\in\Val_X\colon \fk{q}\not\subseteq\cal{J}(\fk{a}_{\bullet}^v)\},
\end{equation}
which is a subset of $\Val_X$, where $\cal{J}(\fk{a}_{\bullet}^v)$ is the asymptotic multiplier ideal of the graded sequence $\fk{a}^v_{\bullet}$. In case that $\fk{q}$ is nonzero, clearly the set $\Val(X;\fk{q})$ is nonempty. By the definition of jumping numbers, one obtains the following description of \(\Val(X; \mathfrak{q})\).

\begin{lemma}\label{lem-Val(X;q)}
For any nonzero ideal $\fk{q}$ on $X$,
   \[\Val(X;\fk{q})=\{v\in\Val_X\colon \lct^{\fk{q}}(\fk{a}_{\bullet}^v)\le 1\}.\]
   Moreover, for every $v\in\Val(X;\fk{q})$, we have $v\le \ord_{\xi}$, where
   \[\ord_{\xi}(f)=\sup\big\{r \colon f\in\fk{m}^r\cdot\cal{O}_{X,\xi}\big\}, \quad \forall\,f\in\cal{O}_{X,\xi},\]
    $\xi=c_X(v)$, and $\fk{m}$ is the ideal defining $\bar{\xi}$ with the reduced scheme structure.
\end{lemma}
\begin{proof}
By \Cref{lem-JM12 lct.equals.min}, we have
\begin{equation}\label{equ-lemma3.1}
    \lct^{\fk{q}}(\fk{a}^v_{\bullet})=\min\{\lambda\ge 0 \colon \fk{q}\not\subseteq\cal{J}(\lambda\cdot\fk{a}^v_{\bullet})\},
\end{equation}
    where $\cal{J}(\lambda\cdot\fk{a}^v_{\bullet})$ is the asymptotic multiplier ideal with coefficient $\lambda$ (see \cite{LarII04}). Thus, it is clear that $\Val(X;\fk{q})$ consists of these $v\in\Val_X$ with $\lct^{\fk{q}}(\fk{a}_{\bullet}^v)\le 1$.

    If $v\in\Val(X;\fk{q})$, then by \Cref{thm-exist.val.comp} we can pick $w\in\Val_X^*$ which computes $\lct^{\fk{q}}(\fk{a}^v_{\bullet})$, that is
    \[1\ge \lct^{\fk{q}}(\fk{a}^v_{\bullet})=\frac{A(w)+w(\fk{q})}{w(\fk{a}^v_{\bullet})}.\]
    After rescaling $w$, we can let $w(\fk{a}^v_{\bullet})=1$, which implies $w\ge v$. Then $A(w)\le 1$. Finally, according to Izumi's inequality (ref. \cite{Izu85} or \cite[Proposition 5.10]{JM12}), we get
    \[v\le w\le A(w)\ord_{c_X(w)}\le\ord_{c_X(w)}\le\ord_{\xi},\]
    due to $c_X(w)\in\overline{c_X(v)}=\bar{\xi}$ by $v\le w$ (ref. \cite[Lemma 4.4]{JM12}).
\end{proof}

Now we give the definition of Zhou valuations on $X$. 

\begin{definition}\label{def-Zhou.val}
    A valuation $v\in \Val_X$ is called a \textbf{Zhou valuation related to $\fk{q}$} if and only if $v$ is a maximal element in the set $\Val(X;\fk{q})$, i.e., there is no $w\in\Val(X;\fk{q})$ such that $w\ge v$ and $w\neq v$.
\end{definition}

 According to Zorn's lemma, the Zhou valuation related to $\fk{q}$ always exists for nonzero $\fk{q}$ (but it can be not unique). More precisely, we have the following theorem.

\begin{theorem}\label{thm-existence.gr.sys}
    Fix a nonzero ideal $\fk{q}$ on $X$. For any graded sequence $\fk{a}_{\bullet}=\{\fk{a}_m\}$ satisfying $\lct^{\fk{q}}(\fk{a}_{\bullet})=1$, there exists a Zhou valuation $v$ related to $\fk{q}$ such that $\fk{a}_m\subseteq\fk{a}_m^{v}$ for each $m\in\bfZ_+$.
\end{theorem}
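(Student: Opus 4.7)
The plan is to reduce the statement to a Zorn's-lemma argument on a cleverly chosen subposet, leveraging the Jonsson--Musta\c{t}\u{a} existence theorem (\Cref{thm-exist.val.comp}) and the TFAE characterization (\Cref{thm-TFAEinJM12}) as the principal inputs. First I would apply \Cref{thm-exist.val.comp} to $(\fk{a}_{\bullet}, \fk{q})$ to obtain $v_0 \in \Val_X^*$ computing $\lct^{\fk{q}}(\fk{a}_{\bullet}) = 1$, and then rescale by setting $v := v_0/v_0(\fk{a}_{\bullet})$. This arrangement yields $v(\fk{a}_{\bullet}) = 1$ (so $\fk{a}_m \subseteq \fk{a}_m^v$ for every $m$), $A(v) + v(\fk{q}) = 1$, and $v$ still computes $\lct^{\fk{q}}(\fk{a}_{\bullet}^v) = 1$ by \Cref{lem-JM.compute}; hence \Cref{thm-TFAEinJM12} ensures that every $w \geq v$ satisfies $A(w) + w(\fk{q}) \geq 1$.

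Next I would Zorn-maximize within the subposet
\[S := \{w \in \Val_X : w \geq v \text{ and } A(w) + w(\fk{q}) = 1\},\]
which contains $v$. For a chain $\{w_i\}$ in $S$, set $w_\infty(f) := \sup_i w_i(f)$; monotonicity of the chain makes this into a valuation, and Noetherianity (finite generation of ideals) implies that this pointwise supremum coincides with the $\tau$-topological limit of the net $\{w_i\}$. Lower semicontinuity of $A$ (\Cref{prop-log dis}) together with continuity of $w \mapsto w(\fk{q})$ then gives $A(w_\infty) + w_\infty(\fk{q}) \leq 1$, while the reverse inequality follows from $w_\infty \geq v$ and \Cref{thm-TFAEinJM12}; hence $w_\infty \in S$, and Zorn delivers a maximal element $v^* \in S$. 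To upgrade this to maximality in the full set $\Val(X;\fk{q})$: given any $w \geq v^*$ with $w \in \Val(X;\fk{q})$, apply \Cref{thm-exist.val.comp} to $\fk{a}_{\bullet}^w$ to get a valuation $u$ computing $\lct^{\fk{q}}(\fk{a}_{\bullet}^w) \leq 1$, and put $u' := u/u(\fk{a}_{\bullet}^w)$. Then $u' \geq w$ by \Cref{lem-w(a.bullet.v)=inf}, $A(u') + u'(\fk{q}) = \lct^{\fk{q}}(\fk{a}_{\bullet}^w) \leq 1$, and the lower bound from $u' \geq v$ places $u'$ in $S$; maximality forces $u' = v^*$, so $v^* \geq w$ and therefore $w = v^*$.

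The main obstacle lies in the chain step: one must verify that the pointwise supremum of an arbitrary (possibly uncountable) chain of valuations is a genuine valuation and agrees with its $\tau$-topological limit, so that the lower semicontinuity of $A$ is applicable. Both points rest on the Noetherian assumption, which reduces the necessary minimax exchanges to the identity $\min \sup = \sup \min$ for finitely many monotone increasing real sequences. A minor secondary point is that the rescaling $u/u(\fk{a}_{\bullet}^w)$ is well-defined because $\lct^{\fk{q}}(\fk{a}_{\bullet}^w) < \infty$ and $A(u) > 0$ (guaranteed for $u \in \Val_X^*$) together force $u(\fk{a}_{\bullet}^w) \in (0, \infty)$.
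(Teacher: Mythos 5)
Your proof is correct, but it takes a genuinely different route from the paper's. The paper performs Zorn's lemma directly on the set $\cal{S}(\fk{a}_{\bullet}) := \{w\in\Val(X;\fk{q})\colon\fk{a}_m\subseteq\fk{a}_m^w,\ \forall m\}$, observing that if $\hat v\in\cal{S}(\fk{a}_{\bullet})$ is maximal and $w\geq\hat v$ lies in $\Val(X;\fk{q})$, then $w\in\cal{S}(\fk{a}_{\bullet})$ automatically (since $w\geq\hat v$ forces $\fk{a}_m\subseteq\fk{a}_m^{\hat v}\subseteq\fk{a}_m^w$), so no upgrade step is needed. For the chain step, the paper never touches the $\tau$-topology or the lower semicontinuity of $A$: by Noetherianity the decreasing sequence $\fk{a}_m^{v_j}$ stabilizes, so $\fk{a}_m^{\tilde v}=\fk{a}_m^{v_{j_m}}$ for some $j_m$, whence $\fk{q}\not\subseteq\cal{J}\bigl(\tfrac1m\fk{a}_m^{\tilde v}\bigr)$ directly from $v_{j_m}\in\Val(X;\fk{q})$. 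You instead Zorn-maximize inside $S:=\{w\geq v\colon A(w)+w(\fk{q})=1\}$, verify the chain step via $\tau$-convergence of the pointwise supremum plus lower semicontinuity of $A$ (\Cref{prop-log dis}) and \Cref{thm-TFAEinJM12}, and then upgrade the $S$-maximality to $\Val(X;\fk{q})$-maximality by one more invocation of \Cref{thm-exist.val.comp}. Both routes are valid; yours is heavier on the topological machinery but has the merit of isolating the numerical invariant $A(w)+w(\fk{q})=1$, which anticipates the characterization $A(v)=1-v(\fk{q})$ proved later in \Cref{thm-A(v)=1-v(q)}, whereas the paper's ideal-stabilization step is shorter and sidesteps $\tau$-convergence and semicontinuity entirely.

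One remark on your chain step: "monotonicity of the chain makes this into a valuation" is not quite sufficient on its own, since a priori $\sup_i w_i(f)$ could be $+\infty$ for nonzero $f$. In your setup this is rescued by the constraint $A(w_i)\leq 1$ inside $S$ together with Izumi's inequality $w_i(f)\leq A(w_i)\cdot\ord_\xi(f)$, which gives a uniform finite bound; in the paper's setup one can argue that if $\tilde v(f)=+\infty$ for some $f\neq 0$ then $(f)\subseteq\fk{a}_m^{\tilde v}$ for all $m$, forcing $m\cdot\lct^{\fk{q}}(\fk{a}_m^{\tilde v})\geq m\cdot\lct^{\fk{q}}\bigl((f)\bigr)\to\infty$, contradicting $\fk{q}\not\subseteq\cal{J}\bigl(\tfrac1m\fk{a}_m^{\tilde v}\bigr)$. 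Similarly, the existence of a center for $w_\infty$ follows from the Noetherian descending chain condition applied to the specializing chain of centers $\overline{\{c_X(w_i)\}}$. These are minor omissions rather than genuine gaps, since the paper glosses the same point, but they are worth making explicit.
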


\begin{proof}
    Denote
    \[\cal{S}(\fk{a}_{\bullet})\coloneqq \big\{w\in\Val(X;\fk{q})\colon \fk{a}_m\subseteq\fk{a}_{m}^w, \ \forall\, m\big\}.\]
    
    First, we prove $\cal{S}(\fk{a}_{\bullet})\neq\emptyset$. By \Cref{thm-exist.val.comp}, we can pick $w\in\Val_X^*$ that computes $\lct^{\fk{q}}(\fk{a}_{\bullet})$, i.e.,
    \[1=\lct^{\fk{q}}(\fk{a}_{\bullet})=\frac{A(w)+w(\fk{q})}{w(\fk{a}_{\bullet})},\]
    and after rescaling $w$, we can let $w(\fk{a}_{\bullet})=1$. This yields $w(\fk{a}_m)\ge m$ by the definition of $w(\fk{a}_{\bullet})$ and then $\fk{a}_m\subseteq \fk{a}_m^w$ for each $m$. In addition, Lemma \ref{lem-JM.compute} (2) shows that $\lct^{\fk{q}}(\fk{a}_{\bullet}^w)=w(\fk{a}_{\bullet})\cdot\lct^{\fk{q}}(\fk{a}_{\bullet})=1$. Now we get $w\in\cal{S}(\fk{a}_{\bullet})$, which shows that the set $\cal{S}(\fk{a}_{\bullet})$ is not empty.
    
    Next, by Zorn's lemma and by noting that for any $w\in \Val(X;\fk{q})$ with $w\geq v$ for some $v\in \cal{S}(\fk{a}_{\bullet})$ we must have $w\in \cal{S}(\fk{a}_{\bullet})$, to prove the existence of the Zhou valuation, we only need to prove that for any increasing chain in $\cal{S}(\fk{a}_{\bullet})$ there is an upper bound belonging to $\cal{S}(\fk{a}_{\bullet})$. Suppose $\{v_j\}_{j\in\bfZ_+}\subseteq\cal{S}(\fk{a}_{\bullet})$ is an increasing chain, i.e. $v_j\le v_{j+1}$ for any $j$. Set
    \[\tilde{v}(\fk{q}')\coloneqq \sup_{j}v_j(\fk{q}'), \quad \forall \text{ ideal } \fk{q}'\subseteq\cal{O}_X.\]
    One can check that $\tilde{v}\in\Val_X$ using \Cref{lem-Val(X;q)}, and clearly $\fk{a}_m\subseteq\fk{a}_m^{\tilde{v}}$ for every $m$. It is left to prove $\fk{q}\not\subseteq \cal{J}(\fk{a}^{\tilde{v}}_{\bullet})$, which is equivalent to 
    \[\fk{q}\not\subseteq \cal{J}\left(\frac{1}{m}\cdot \fk{a}^{\tilde{v}}_m\right), \quad \forall\, m\in\bfZ_+.\]
    Since $\big(\fk{a}_{m}^{v_j}\big)_{j\in\bfZ_+}$ is an increasing sequence of ideals on $X$ with respect to $j$, by Noetherian property, there exists some $j_m$ such that $\fk{a}^{\tilde{v}}_m=\fk{a}^{v_{j_m}}_m$. Meanwhile, it follows from $v_{j_m}\in\Val(X;\fk{q})$ that 
    \[\fk{q}\not\subseteq \cal{J}\left(\frac{1}{m}\cdot \fk{a}^{v_{j_m}}_m\right)=\cal{J}\left(\frac{1}{m}\cdot \fk{a}^{\tilde{v}}_m\right).\]
    Thus, $\tilde{v}\in\Val(X;\fk{q})$ is an upper bound of the increasing chain $\{v_j\}$. Then the proof is done by Zorn's lemma.
\end{proof}

Particularly, when the graded sequence in Theorem \ref{thm-existence.gr.sys} is associated to a valuation, we immediately get the following corollary.

\begin{corollary}\label{cor-existence.valuation}
    For any nonzero ideal $\fk{q}$ on $X$ and any $w\in\Val_X$ with $\lct^{\fk{q}}(\fk{a}_{\bullet}^w)=1$, there exists a Zhou valuation $v$ related to $\fk{q}$ such that $v\ge w$.
\end{corollary}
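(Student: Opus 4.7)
The plan is to derive the corollary directly from \Cref{thm-existence.gr.sys} by specializing to the graded sequence $\fk{a}_{\bullet}^w$ associated to $w$. Since $\lct^{\fk{q}}(\fk{a}_{\bullet}^w) = 1$ by hypothesis, I can apply \Cref{thm-existence.gr.sys} with $\fk{a}_{\bullet} \coloneqq \fk{a}_{\bullet}^w$ to obtain a Zhou valuation $v$ related to $\fk{q}$ such that $\fk{a}_m^w \subseteq \fk{a}_m^v$ for every $m \in \bfZ_+$. The remaining task is to convert this system of inclusions into the comparison $v \geq w$.

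For the conversion, I would first observe that by the very definition of $\fk{a}_m^v$ (the ideal of sections $f$ with $v(f) \geq m$), we have $v(\fk{a}_m^v) \geq m$. Combined with the inclusion $\fk{a}_m^w \subseteq \fk{a}_m^v$, which forces $v(\fk{a}_m^w) \geq v(\fk{a}_m^v)$, this yields $v(\fk{a}_m^w) \geq m$ for every $m$. Dividing by $m$ and taking the limit gives
\[
v(\fk{a}_{\bullet}^w) = \lim_{m \to \infty} \frac{v(\fk{a}_m^w)}{m} \geq 1.
\]

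Finally, I would invoke \Cref{lem-w(a.bullet.v)=inf} applied with the roles $v \mapsto w$ and $w \mapsto v$: the characterization $v \geq w \iff v(\fk{a}_{\bullet}^w) \geq 1$ together with the inequality established above gives exactly $v \geq w$, completing the proof. There is no real obstacle here — the corollary is a direct packaging of \Cref{thm-existence.gr.sys} via \Cref{lem-w(a.bullet.v)=inf}, with the only delicate point being to notice that the graded-sequence inclusions produced by the theorem translate cleanly into the valuation comparison through the standard dictionary between a valuation and its associated graded sequence.
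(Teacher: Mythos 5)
Your proposal is correct and follows essentially the same route as the paper's own proof: apply \Cref{thm-existence.gr.sys} to the graded sequence $\fk{a}_{\bullet}^w$, deduce $v(\fk{a}_{\bullet}^w)\ge 1$ from the inclusions $\fk{a}_m^w\subseteq\fk{a}_m^v$, and then invoke \Cref{lem-w(a.bullet.v)=inf} to conclude $v\ge w$. You merely spell out the intermediate estimate $v(\fk{a}_m^w)\ge v(\fk{a}_m^v)\ge m$ that the paper leaves implicit.
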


\begin{proof}
   Note that $\fk{a}_{m}^w\subseteq\fk{a}_{m}^v$ for each $m$ induces $v(\fk{a}_{\bullet}^w)\ge 1$ and then $v\ge w$. Thus, the needed Zhou valuation comes from Theorem \ref{thm-existence.gr.sys}.
\end{proof}

It is easy to see
\begin{proposition}\label{prop-lct(Zhou val)=1}
    Every Zhou valuation $v$ related to $\fk{q}$ must satisfy $\lct^{\fk{q}}(\fk{a}_{\bullet}^{v})=1$.
\end{proposition}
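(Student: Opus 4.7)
The plan is to argue by contradiction: suppose $v$ is a Zhou valuation related to $\fk{q}$ with $c \coloneqq \lct^{\fk{q}}(\fk{a}_{\bullet}^{v}) < 1$, and then produce a strictly larger valuation $w$ lying in $\Val(X;\fk{q})$, contradicting the maximality of $v$. By \Cref{lem-Val(X;q)}, we already know $c \le 1$, and $c > 0$ because jumping numbers of nonzero graded sequences are strictly positive (cf.\ the definition of $\lct^{\fk{q}}(\fk{a}_{\bullet})$). Note also that $v$ itself must be nontrivial, since the trivial valuation gives $\fk{a}_m^{v}=\cal{O}_X$ for all $m$ and hence $\lct^{\fk{q}}(\fk{a}_{\bullet}^{v})=+\infty$, excluding it from $\Val(X;\fk{q})$.

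The natural candidate is the rescaled valuation $w \coloneqq v/c$. First, I would verify $w \ge v$ and $w \ne v$. Using \Cref{lem-w(a.bullet.v)=inf}, for every nonzero ideal $\fk{b}$ with $v(\fk{b})>0$ we have $w(\fk{b})/v(\fk{b}) = 1/c$, so $w(\fk{a}_{\bullet}^{v}) = 1/c > 1$, which by the same lemma yields $w \ge v$ and $w \ne v$ since $c<1$ and $v$ is nontrivial.

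Next, I would compute $\lct^{\fk{q}}(\fk{a}_{\bullet}^{w})$ by the valuative formula of \Cref{lem-JM12 section6.2}. For each $u \in \Val_X^*$, the identity from \Cref{lem-w(a.bullet.v)=inf} gives
\[
u(\fk{a}_{\bullet}^{w}) \;=\; \inf_{\fk{b}}\frac{u(\fk{b})}{w(\fk{b})} \;=\; c\cdot\inf_{\fk{b}}\frac{u(\fk{b})}{v(\fk{b})} \;=\; c\cdot u(\fk{a}_{\bullet}^{v}),
\]
and therefore
\[
\lct^{\fk{q}}(\fk{a}_{\bullet}^{w}) \;=\; \inf_{u\in\Val_X^*}\frac{A(u)+u(\fk{q})}{u(\fk{a}_{\bullet}^{w})} \;=\; \frac{1}{c}\,\lct^{\fk{q}}(\fk{a}_{\bullet}^{v}) \;=\; 1.
\]
By \Cref{lem-Val(X;q)} this places $w$ in $\Val(X;\fk{q})$, contradicting the maximality of $v$. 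Hence $\lct^{\fk{q}}(\fk{a}_{\bullet}^{v}) = 1$.

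There is no real obstacle here: the argument is essentially the observation that $\fk{a}_{\bullet}^{v}$ scales cleanly under rescaling $v$, combined with the two lemmas from \cite{JM12} cited above. The only minor points to verify carefully are that $c>0$ (so that dividing by $c$ makes sense) and that $v$ is nontrivial (so that $v/c \ne v$); both follow from the hypothesis $v \in \Val(X;\fk{q})$ together with standard positivity of jumping numbers.
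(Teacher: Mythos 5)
Your proposal is correct and takes essentially the same approach as the paper: the paper's proof is exactly the contradiction via the rescaled valuation $v/c$, observing $v/c \ge v$ and $v/c \in \Val(X;\fk{q})$. You have simply filled in the computation $\lct^{\fk{q}}(\fk{a}_{\bullet}^{v/c}) = \tfrac{1}{c}\lct^{\fk{q}}(\fk{a}_{\bullet}^{v})$ via \Cref{lem-w(a.bullet.v)=inf} and \Cref{lem-JM12 section6.2}, and noted the positivity and nontriviality points, all of which the paper leaves implicit.
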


\begin{proof}
    Suppose $\lct^{\fk{q}}(\fk{a}_{\bullet}^{v})=c<1$ for a Zhou valuation $v$ related to $\fk{q}$. Then we have $v/c\in\Val(X;\fk{q})$ and $v/c\ge v$. This contradicts to the definition of Zhou valuation.
\end{proof}

By multiplying by a positive scalar product, we extend the definition of Zhou valuation to a cone space.

\begin{definition}
    For every nonzero ideal $\fk{q}$ on $X$, we denote by $\ZVal_X(\fk{q})$ the set of all Zhou valuations on $X$ related to $\fk{q}$, and denote \textbf{the cone of Zhou valuations} on $X$ by
\[\ZVal_X\coloneqq \{v=cv'\in \Val_X\colon c\in \bfR_{+}, \ v'\in\ZVal_X(\fk{q})\}.\]
\end{definition}

Then \Cref{cor-existence.valuation} directly implies

\begin{proposition}\label{prop-lctqav'=lctqav}
    Let $\fk{q}$ be a nonzero ideal on $X$, and $v\in\Val_X^*$ satisfying $\lct^{\fk{q}}(\fk{a}_{\bullet}^v)<+\infty$. Then there exists $v'\in\ZVal_X$ such that $v'\ge v$ and $\lct^{\fk{q}}(\fk{a}_{\bullet}^{v'})=\lct^{\fk{q}}(\fk{a}_{\bullet}^v)$.
\end{proposition}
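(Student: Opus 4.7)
The plan is to apply \Cref{cor-existence.valuation} after a rescaling of $v$. Set $c\coloneqq \lct^{\fk{q}}(\fk{a}_{\bullet}^v)$, which lies in $(0,+\infty)$ by assumption (positivity follows because $\fk{a}_{\bullet}^v$ is a nonzero graded sequence, cf. the bounds in \Cref{lem-JM12 lct.equals.min}). The key input I would verify first is the scaling identity
\[\lct^{\fk{q}}(\fk{a}_{\bullet}^{\lambda v})=\lambda\cdot \lct^{\fk{q}}(\fk{a}_{\bullet}^v)\quad\text{for every }\lambda>0.\]
To see this, note that \Cref{lem-w(a.bullet.v)=inf} gives
\[w(\fk{a}_{\bullet}^{\lambda v})=\inf_{\fk{b}}\frac{w(\fk{b})}{(\lambda v)(\fk{b})}=\frac{1}{\lambda}\,w(\fk{a}_{\bullet}^{v})\]
for every $w\in\Val_X$, where the infimum runs over ideals $\fk{b}$ with $v(\fk{b})>0$. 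Combined with the valuative formula from \Cref{lem-JM12 section6.2}, the identity follows.

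Next I would set $w\coloneqq v/c$. By the scaling identity, $\lct^{\fk{q}}(\fk{a}_{\bullet}^w)=1$, so \Cref{cor-existence.valuation} supplies a Zhou valuation $v_0$ related to $\fk{q}$ with $v_0\ge w$. Define $v'\coloneqq c\cdot v_0$. Then $v'\in\ZVal_X$ by the definition of the cone of Zhou valuations, and $v'\ge c\cdot w=v$. Finally, applying the scaling identity once more together with \Cref{prop-lct(Zhou val)=1} (which yields $\lct^{\fk{q}}(\fk{a}_{\bullet}^{v_0})=1$), I get
\[\lct^{\fk{q}}(\fk{a}_{\bullet}^{v'})=c\cdot \lct^{\fk{q}}(\fk{a}_{\bullet}^{v_0})=c=\lct^{\fk{q}}(\fk{a}_{\bullet}^v),\]
as required.

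I do not expect any serious obstacle: the argument is essentially a two-line reduction to \Cref{cor-existence.valuation} once the homogeneity of $\lct^{\fk{q}}(\fk{a}_{\bullet}^{v})$ under positive scaling of $v$ is in hand, and that homogeneity is immediate from \Cref{lem-w(a.bullet.v)=inf}.
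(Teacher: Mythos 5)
Your proof is correct and follows the same reduction to \Cref{cor-existence.valuation} that the paper intends: rescale $v$ so that its jumping number becomes~$1$, extract a Zhou valuation above the rescaled valuation, and rescale back. The scaling identity $\lct^{\fk{q}}(\fk{a}_{\bullet}^{\lambda v})=\lambda\cdot\lct^{\fk{q}}(\fk{a}_{\bullet}^v)$ that you isolate is indeed the key homogeneity fact, and your derivation of it from \Cref{lem-w(a.bullet.v)=inf} (giving $w(\fk{a}_{\bullet}^{\lambda v})=\lambda^{-1}w(\fk{a}_{\bullet}^v)$) together with the valuative formula in \Cref{lem-JM12 section6.2} is valid.

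One thing worth flagging: the paper's printed proof rescales in the opposite direction. It asserts $\lct^{\fk{q}}(\fk{a}_{\bullet}^v)\cdot v\in\Val(X;\fk{q})$, picks a Zhou valuation $w\ge\lct^{\fk{q}}(\fk{a}_{\bullet}^v)\cdot v$, and sets $v'=w/\lct^{\fk{q}}(\fk{a}_{\bullet}^v)$. If you trace this through the scaling identity you derived, $\lct^{\fk{q}}(\fk{a}_{\bullet}^{cv})=c^2$ with $c=\lct^{\fk{q}}(\fk{a}_{\bullet}^v)$, so the membership in $\Val(X;\fk{q})$ and the application of \Cref{cor-existence.valuation} only go through when $c=1$, and the resulting $v'$ would have $\lct^{\fk{q}}(\fk{a}_{\bullet}^{v'})=1/c$ rather than $c$. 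Your version, setting $w=v/c$ (so $\lct^{\fk{q}}(\fk{a}_{\bullet}^w)=1$), taking a Zhou valuation $v_0\ge w$, and setting $v'=c\,v_0$, is the consistent implementation of the intended argument, and the final chain $\lct^{\fk{q}}(\fk{a}_{\bullet}^{v'})=c\cdot\lct^{\fk{q}}(\fk{a}_{\bullet}^{v_0})=c$ (using \Cref{prop-lct(Zhou val)=1}) is exactly right.
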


\begin{proof}
    Note that $\lct^{\fk{q}}(\fk{a}_{\bullet}^v)\cdot v\in \Val(X;\fk{q})$. By Corollary \ref{cor-existence.valuation}, there exists a Zhou valuation $w$ related to $\fk{q}$ such that $w\ge \lct^{\fk{q}}(\fk{a}_{\bullet}^v)\cdot v$. Let $v'=w/\lct^{\fk{q}}(\fk{a}_{\bullet}^v)$. Then $v'\in\ZVal_X$ satisfying $v'\ge v$ and $\lct^{\fk{q}}(\fk{a}_{\bullet}^{v'})=\lct^{\fk{q}}(\fk{a}_{\bullet}^v)$.
\end{proof}

\section{Zhou valuations computing jumping numbers}\label{section4}
The goal of this section is to establish the proof of \Cref{thm-main thmA}. First, we compute the log discrepancy of Zhou valuation.

\begin{theorem}\label{thm-A(v)=1-v(q)}
    Let $v$ be a Zhou valuation related to $\fk{q}$. Then the log discrepancy \[A(v)=1-v(\fk{q}).\]
    Moreover, $v$ computes $\lct^{\fk{q}}(\fk{a}_{\bullet}^v)$, and every valuation in $\Val^*_X$ computing $\lct^{\fk{q}}(\fk{a}_{\bullet}^v)$ is a positive scalar product of $v$.
\end{theorem}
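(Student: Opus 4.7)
The plan is to combine the already-established equality $\lct^{\fk{q}}(\fk{a}_{\bullet}^{v})=1$ from \Cref{prop-lct(Zhou val)=1} with the existence theorem \Cref{thm-exist.val.comp} and the rescaling trick enabled by \Cref{lem-w(a.bullet.v)=inf}, squeezing $v$ into a maximal/minimal position.

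First I would record one immediate inequality. By \Cref{lem-JM12 section6.2} applied to $\fk{a}_{\bullet}^{v}$, and using $v(\fk{a}_{\bullet}^{v})=1$ from \Cref{lem-w(a.bullet.v)=inf},
\[
1=\lct^{\fk{q}}(\fk{a}_{\bullet}^{v})\;\le\;\frac{A(v)+v(\fk{q})}{v(\fk{a}_{\bullet}^{v})}=A(v)+v(\fk{q}),
\]
so the hard direction is the reverse bound $A(v)+v(\fk{q})\le 1$ together with the uniqueness-up-to-scaling of computing valuations.

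Next I would invoke \Cref{thm-exist.val.comp} to pick some $w\in\Val_X^{*}$ that computes $\lct^{\fk{q}}(\fk{a}_{\bullet}^{v})=1$; since the jumping number is finite and positive, necessarily $A(w)+w(\fk{q})=w(\fk{a}_{\bullet}^{v})>0$. I would then rescale, setting $w'\coloneqq w/w(\fk{a}_{\bullet}^{v})$, so that $w'(\fk{a}_{\bullet}^{v})=1$ and $A(w')+w'(\fk{q})=1$ (using that $A$ is homogeneous of degree one on $\Val_X$, which follows from \eqref{equ-A(v) for QM(Y,D)} and the definition $A(v)=\sup_{(Y,D)}A(r_{Y,D}(v))$, together with the fact that retractions commute with positive scaling). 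By \Cref{lem-w(a.bullet.v)=inf} the condition $w'(\fk{a}_{\bullet}^{v})\ge 1$ is equivalent to $w'\ge v$. Moreover, from
\[
\lct^{\fk{q}}(\fk{a}_{\bullet}^{w'})\le \frac{A(w')+w'(\fk{q})}{w'(\fk{a}_{\bullet}^{w'})}=\frac{1}{1}=1,
\]
\Cref{lem-Val(X;q)} yields $w'\in\Val(X;\fk{q})$. Maximality of $v$ in $\Val(X;\fk{q})$ then forces $w'=v$, hence $w=w(\fk{a}_{\bullet}^{v})\cdot v$. Plugging back gives $A(v)+v(\fk{q})=1$, that $v$ itself computes $\lct^{\fk{q}}(\fk{a}_{\bullet}^{v})$, and that the chosen $w$ is a positive scalar multiple of $v$. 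Since the argument started from an arbitrary computing $w$, the uniqueness-up-to-scalar statement follows.

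The only genuinely delicate point I foresee is the homogeneity of the log discrepancy under positive scaling: one has to justify $A(cv)=cA(v)$ in the full generality of $\Val_X$, which I would handle by noting that $r_{Y,D}(cv)=c\cdot r_{Y,D}(v)$ on each $\QM(Y,D)$ (since the coordinates $v(D_i)$ scale linearly) and that the formula \eqref{equ-A(v) for QM(Y,D)} is manifestly linear in these coordinates, so passing to the supremum over log-smooth pairs preserves positive homogeneity. Apart from this and the input of \Cref{thm-exist.val.comp}, the proof is essentially a single rescaling argument pivoting on the maximality definition of Zhou valuations.
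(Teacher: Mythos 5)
Your proof is correct and follows essentially the same strategy as the paper: pick a computing valuation via \Cref{thm-exist.val.comp}, rescale so it majorizes $v$ via \Cref{lem-w(a.bullet.v)=inf}, verify membership in $\Val(X;\fk{q})$, and invoke maximality. The only cosmetic difference is that the paper invokes \Cref{lem-JM.compute}(2) to get $\lct^{\fk{q}}(\fk{a}_{\bullet}^{w'})=1$ exactly, whereas you derive the one-sided bound $\lct^{\fk{q}}(\fk{a}_{\bullet}^{w'})\le 1$ directly from \Cref{lem-JM12 section6.2}; both suffice for the application of \Cref{lem-Val(X;q)}.
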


\begin{proof}
    Pick any $w\in\Val^*_X$ that computes $\lct^{\fk{q}}(\fk{a}_{\bullet}^v)$. Then
    \begin{equation}\label{eq-1=lctq.A(w)+w(q)}
        1=\lct^{\fk{q}}(\fk{a}_{\bullet}^v)=\frac{A(w)+w(\fk{q})}{w(\fk{a}_{\bullet}^v)}.
    \end{equation}
    After rescaling, we may assume $w(\fk{a}_{\bullet}^v)=1$. Then Lemma \ref{lem-JM.compute} implies $\lct^{\fk{q}}(\fk{a}_{\bullet}^w)=1$, and thus $w\in\Val(X;\fk{q})$. It follows from $w(\fk{a}_{\bullet}^v)=1$ that $w\ge v$. Since $v$ is maximal in $\Val(X;\fk{q})$, we get $w=v$. Consequently, $v$ computes $\lct^{\fk{q}}(\fk{a}_{\bullet}^v)$, and $A(v)+v(\fk{q})=1$ by (\ref{eq-1=lctq.A(w)+w(q)}). In addition, from the above we can know that every valuation in $\Val^*_X$ computing $\lct^{\fk{q}}(\fk{a}_{\bullet}^v)$ is a positive scalar product of $v$. The proof is done.
\end{proof}

By \Cref{thm-A(v)=1-v(q)}, for every $v\in \ZVal_X$ we have $A(v)<+\infty$, and we repeatedly use this fact in the paper. 

\begin{corollary}\label{cor-ZV.computes}
    For every nonzero ideal $\fk{q}$ and graded sequence of ideals $\fk{a}_{\bullet}$ on $X$ with $\lct^{\fk{q}}(\fk{a}_{\bullet})<+\infty$, there exists a Zhou valuation $v$ related to $\fk{q}$ computing $\lct^{\fk{q}}(\fk{a}_{\bullet})$.
\end{corollary}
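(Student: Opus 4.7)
The plan is to start from a valuation that computes $\lct^{\fk{q}}(\fk{a}_{\bullet})$ (whose existence is ensured by Jonsson--Musta\c{t}\u{a}'s theorem), enlarge it inside $\Val(X;\fk{q})$ to a Zhou valuation, and then verify that the computation property survives the enlargement. Three ingredients already in place make this almost automatic: \Cref{thm-exist.val.comp} guarantees that \emph{some} valuation computes $\lct^{\fk{q}}(\fk{a}_{\bullet})$, \Cref{cor-existence.valuation} lifts any element of $\Val(X;\fk{q})$ to a Zhou valuation dominating it, and \Cref{thm-A(v)=1-v(q)} pins down the log discrepancy of every Zhou valuation through the identity $A(v)+v(\fk{q})=1$.

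Concretely, I first invoke \Cref{thm-exist.val.comp} to pick $w\in\Val_X^*$ computing $\lct^{\fk{q}}(\fk{a}_{\bullet})$. Since $\lct^{\fk{q}}(\fk{a}_{\bullet})\in(0,+\infty)$ one has $w(\fk{a}_{\bullet})>0$, so after rescaling $w$ by a positive constant (which leaves the computing ratio invariant) I can arrange $w(\fk{a}_{\bullet})=1/\lct^{\fk{q}}(\fk{a}_{\bullet})$. By \Cref{lem-JM.compute}(2) this gives $\lct^{\fk{q}}(\fk{a}_{\bullet}^w)=w(\fk{a}_{\bullet})\cdot\lct^{\fk{q}}(\fk{a}_{\bullet})=1$, whence $w\in\Val(X;\fk{q})$ by \Cref{lem-Val(X;q)}. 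Then \Cref{cor-existence.valuation} produces a Zhou valuation $v$ related to $\fk{q}$ with $v\geq w$.

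It remains to confirm that $v$ itself computes $\lct^{\fk{q}}(\fk{a}_{\bullet})$, which I would do by a squeeze argument. \Cref{thm-A(v)=1-v(q)} yields $A(v)+v(\fk{q})=1$, while $v\geq w$ forces $v(\fk{a}_{\bullet})\geq w(\fk{a}_{\bullet})$. Combining these with the lower bound from \Cref{lem-JM12 section6.2} gives
\[\lct^{\fk{q}}(\fk{a}_{\bullet})\leq\frac{A(v)+v(\fk{q})}{v(\fk{a}_{\bullet})}=\frac{1}{v(\fk{a}_{\bullet})}\leq\frac{1}{w(\fk{a}_{\bullet})}=\lct^{\fk{q}}(\fk{a}_{\bullet}),\]
so all inequalities collapse to equalities and $v$ computes $\lct^{\fk{q}}(\fk{a}_{\bullet})$. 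The substantive content has already been absorbed into \Cref{thm-A(v)=1-v(q)}, whose proof rested on the maximality definition of Zhou valuations; what remains here is mere bookkeeping with rescalings, and I anticipate no further obstacle.
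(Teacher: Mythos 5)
Your proposal is correct and takes essentially the same approach as the paper: start from a valuation computing $\lct^{\fk{q}}(\fk{a}_{\bullet})$ (via \Cref{thm-exist.val.comp}), rescale it to land in $\Val(X;\fk{q})$, dominate it by a Zhou valuation via \Cref{cor-existence.valuation}, and exploit $A(v)+v(\fk{q})=1$ from \Cref{thm-A(v)=1-v(q)}. The only difference is in the last step: the paper routes the verification through the auxiliary graded sequence $\fk{a}_{\bullet}^w$, showing $v$ computes $\lct^{\fk{q}}(\fk{a}_{\bullet}^w)$ and then invoking \Cref{lem-JM.compute}(3) to transfer this back to $\fk{a}_{\bullet}$, whereas you bypass that intermediate sequence entirely and close with a direct squeeze on $\lct^{\fk{q}}(\fk{a}_{\bullet})$ using $v\geq w \Rightarrow v(\fk{a}_{\bullet})\geq w(\fk{a}_{\bullet})$; your version is marginally more streamlined but conceptually identical.
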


\begin{proof}
    Pick any $w\in\Val^*_X$ that computes $\lct^{\fk{q}}(\fk{a}_{\bullet})$. We may assume $w(\fk{a}_{\bullet})=1$. By Lemma \ref{lem-JM.compute}, $\lct^{\fk{q}}(\fk{a}_{\bullet}^w)=\lct^{\fk{q}}(\fk{a}_{\bullet})$, and $w$ computes $\lct^{\fk{q}}(\fk{a}_{\bullet}^w)$. Then $w/\lct^{\fk{q}}(\fk{a}_{\bullet})\in\Val(X;\fk{q})$, and we can take a Zhou valuation related to $\fk{q}$ such that $v\ge w/\lct^{\fk{q}}(\fk{a}_{\bullet})$. The above and Theorem \ref{thm-A(v)=1-v(q)} indicate
    \[\lct^{\fk{q}}(\fk{a}_{\bullet}^w)=\lct^{\fk{q}}(\fk{a}_{\bullet})=\frac{A(v)+v(\fk{q})}{1/\lct^{\fk{q}}(\fk{a}_{\bullet})}\ge\frac{A(v)+v(\fk{q})}{v(\fk{a}_{\bullet}^w)}.\]
    Thus, $v$ computes $\lct^{\fk{q}}(\fk{a}_{\bullet}^w)$, which is followed by that $v$ computes $\lct^{\fk{q}}(\fk{a}_{\bullet})$; see Lemma \ref{lem-JM.compute} again.
\end{proof}

Notably, if $v\in\Val_X$ satisfies $\lct^{\fk{q}}(\fk{a}_{\bullet}^v)=1$ and only positive scalars of $v$ compute $\lct^{\fk{q}}(\fk{a}_{\bullet}^v)$, then $v$ is a Zhou valuation related to $\fk{q}$. Indeed, if $v$ were not such a valuation, then by \Cref{cor-ZV.computes}, there would exist a Zhou valuation $\tilde{v}$ related to $\mathfrak{q}$ that computes $\lct^{\mathfrak{q}}(\mathfrak{a}_{\bullet}^v)$. We would then have $\tilde{v} = v$, contradicting our assumption.

\begin{corollary}
    If $\fk{a}_{\bullet}$ is a graded sequence of ideals, then for every nonzero ideal $\fk{q}$ with $\lct^{\fk{q}}(\fk{a}_{\bullet})<+\infty$ we have
    \[\lct^{\fk{q}}(\fk{a}_{\bullet})=\min_{v\in\ZVal_X}\frac{A(v)+v(\fk{q})}{v(\fk{a}_{\bullet})}.\]
\end{corollary}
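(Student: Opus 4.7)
The plan is to show that the right-hand side is achieved by a Zhou valuation and that it cannot be smaller than $\lct^{\fk{q}}(\fk{a}_{\bullet})$, so the infimum is attained and equals the jumping number. This is essentially a combination of Corollary~\ref{cor-ZV.computes} with the valuative formula for jumping numbers in Lemma~\ref{lem-JM12 section6.2}.

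First, I would establish the inequality $\lct^{\fk{q}}(\fk{a}_{\bullet}) \le \frac{A(v)+v(\fk{q})}{v(\fk{a}_{\bullet})}$ for every $v \in \ZVal_X$. Observe that every element of $\ZVal_X$ is of the form $cv'$ with $c > 0$ and $v'$ a Zhou valuation related to some nonzero ideal, hence nontrivial; so $\ZVal_X \subseteq \Val_X^*$. The inequality then follows immediately from Lemma~\ref{lem-JM12 section6.2}, which expresses $\lct^{\fk{q}}(\fk{a}_{\bullet})$ as the infimum of $(A(w)+w(\fk{q}))/w(\fk{a}_{\bullet})$ over $w \in \Val_X^*$. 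Note that the quotient is invariant under positive rescaling of the valuation, so it is well-defined on the cone.

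Next, I would invoke Corollary~\ref{cor-ZV.computes} to produce a Zhou valuation $v_0$ related to $\fk{q}$ that computes $\lct^{\fk{q}}(\fk{a}_{\bullet})$, i.e.
\[
\lct^{\fk{q}}(\fk{a}_{\bullet}) = \frac{A(v_0)+v_0(\fk{q})}{v_0(\fk{a}_{\bullet})}.
\]
Since $v_0 \in \ZVal_X(\fk{q}) \subseteq \ZVal_X$, this realizes the infimum, so the infimum is actually a minimum and equals $\lct^{\fk{q}}(\fk{a}_{\bullet})$. There is no real obstacle here; the entire content lies in the existence of a computing Zhou valuation already proved above, and the corollary is a formal packaging of it together with Jonsson--Musta\c{t}\u{a}'s valuative expression for $\lct^{\fk{q}}(\fk{a}_{\bullet})$.
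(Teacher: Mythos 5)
Your proof is correct and follows exactly the route the paper takes: the one-sided inequality comes from the valuative formula in Lemma~\ref{lem-JM12 section6.2} applied to the cone $\ZVal_X\subseteq\Val_X^*$, and the attainment of the minimum comes from Corollary~\ref{cor-ZV.computes}. The paper states this as an immediate consequence of those two results, which is what you have spelled out.
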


\begin{proof}
    This is a direct consequence of Corollary \ref{cor-ZV.computes} and \Cref{lem-JM12 section6.2}.
\end{proof}

We present a simple and special example to motivate the relationship between Zhou valuations and quasi-monomial valuations.
\begin{example}\label{ex-Zhou.val}
    Let $\fk{a}$ and $\fk{q}$ be nonzero ideals on $X$, where $\fk{a}\neq \cal{O}_X$. We may assume $\lct^{\fk{q}}(\fk{a})=1$. Set $\fk{a}_{\bullet}=\{\fk{a}_m\}$ with $\fk{a}_m=\fk{a}^m$. Then $v(\fk{a}_{\bullet})=v(\fk{a})$ for each $v\in\Val_X$. We demonstrate all the Zhou valuations $w$ related to $\fk{q}$ with $\fk{a}_m\subseteq \fk{a}_m^{w}$ for every $m$ in the following.   
    
    Pick a log-smooth pair $(Y,E)$ over $X$ which gives a log resolution of $\fk{a}\cdot\fk{q}$. It is known (see \cite[Lemma 6.7]{JM12}) that 
    \[\lct^{\fk{q}}(\fk{a})=\min_{v\in\Val_X^*}\frac{A(v)+v(\fk{q})}{v(\fk{a})},\]
    where the equality is achieved for $v$ if and only if $v \in \QM(Y,E)$ and $\ord_{E_i}$ computes $\lct^{\fk{q}}(\fk{a})$ for every irreducible component $E_i$ of $E$ for which $v(E_i)>0$.

    Note that any Zhou valuation $w$ related to $\fk{q}$ with $\fk{a}_m\subseteq \fk{a}_m^w$ for each $m$ (equivalent to $w(\fk{a})\ge 1$) must compute $\lct^{\fk{q}}(\fk{a})$, since (by Theorem \ref{thm-A(v)=1-v(q)})
    \[\lct^{\fk{q}}(\fk{a})=1=A(w)+w(\fk{q})\ge\frac{A(w)+w(\fk{q})}{w(\fk{a})}.\]
    It follows that $w\in\QM(Y,E)$ and satisfies the above conditions. More precisely, if we let $w=\val_{\alpha}\in\QM(Y,E)$, with $\alpha=(\alpha_1,\ldots,\alpha_r)\in\bfR_{\ge 0}$ and $\alpha_i=\val_{\alpha}(E_i)$, then for each $i$ with $\alpha_i>0$, $\ord_{E_i}$ must compute $\lct^{\fk{q}}(\fk{a})$. Also, we have $\val_{\alpha}(\fk{a})=1$ since $\lct^{\fk{q}}(\fk{a}_{\bullet}^{w})=1$ (Lemma \ref{lem-JM.compute}). 
    
    Conversely, we can show that any $\val_{\alpha}\in \QM(Y,E)$ with
    \[\{i\colon \alpha_i\coloneqq \val_{\alpha}(E_i)\neq 0\}\subseteq\{i\colon \ord_{E_i} \text{ computes } \lct^{\fk{q}}(\fk{a})\}\]
    and $\val_{\alpha}(\fk{a})=1$ is a Zhou valuation related to $\fk{q}$, i.e. every valuation that computes $\lct^{\fk{q}}(\fk{a})$ is Zhou valuation related to $\fk{q}$ (with some rescaling). Otherwise, let $v'\ge \val_{\alpha}$ be a Zhou valuation related to $\fk{q}$. Then $v'(\fk{a})\ge v_{\alpha}(\fk{a})=1$. Repeating the above arguments, we can find that $v'\in \QM(Y,E)$ and $v'=\val_{\alpha}$ eventually.
    
\end{example}

Noting that in case $\fk{q}=\cal{O}_X$, we have the following result, which is based Xu's solution (see \Cref{thm-Xu}) to the weak version of Jonsson--Musta\c{t}\u{a}'s conjecture.

\begin{corollary}\label{cor-ZV.OX.qm}
    Any Zhou valuation related to $\cal{O}_X$ is quasi-monomial.
\end{corollary}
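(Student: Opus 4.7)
My plan is to combine Xu's theorem (\Cref{thm-Xu}) with the uniqueness-up-to-scaling statement already established in \Cref{thm-A(v)=1-v(q)}.

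Let $v$ be a Zhou valuation related to $\cal{O}_X$. First I would invoke \Cref{prop-lct(Zhou val)=1} to record that $\lct(\fk{a}_{\bullet}^v) = \lct^{\cal{O}_X}(\fk{a}_{\bullet}^v) = 1 < +\infty$. Now this is exactly the hypothesis of \Cref{conj-JM12.lct.version} (weak version) for the graded sequence $\fk{a}_{\bullet}^v$, and by \Cref{thm-Xu} that conjecture is known unconditionally. Hence there exists a quasi-monomial valuation $w \in \Val_X^*$ that computes $\lct(\fk{a}_{\bullet}^v)$.

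Next I would feed this $w$ into the last assertion of \Cref{thm-A(v)=1-v(q)}, which states that every valuation in $\Val_X^*$ computing $\lct^{\cal{O}_X}(\fk{a}_{\bullet}^v)$ is a positive scalar multiple of $v$. Therefore $w = c\, v$ for some $c > 0$, and since positive scalar multiples of quasi-monomial valuations are again quasi-monomial, we conclude that $v = w/c$ is quasi-monomial.

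There is essentially no obstacle here beyond invoking the two imported inputs: \Cref{thm-Xu} provides the existence of \emph{some} quasi-monomial computing valuation, and the rigidity in \Cref{thm-A(v)=1-v(q)} promotes this to information about $v$ itself. The only subtlety worth double-checking is that $v$ really computes $\lct(\fk{a}_{\bullet}^v)$ (so that the uniqueness clause applies), but this is precisely the middle assertion of \Cref{thm-A(v)=1-v(q)}. No use of the conjectured quasi-monomial property for nontrivial $\fk{q}$ is needed, which is consistent with the statement being restricted to $\fk{q} = \cal{O}_X$.
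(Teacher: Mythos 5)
Your proof is correct and takes essentially the same approach as the paper: invoke Xu's theorem to produce a quasi-monomial valuation computing $\lct(\fk{a}_{\bullet}^v)$, then use the rigidity clause of \Cref{thm-A(v)=1-v(q)} to identify it with a positive scalar multiple of $v$. You are only slightly more explicit than the paper in spelling out (via \Cref{prop-lct(Zhou val)=1}) that $\lct(\fk{a}_{\bullet}^v)=1<+\infty$, so Xu's theorem applies.
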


\begin{proof}
  Let $v$ be a Zhou valuation related to $\cal{O}_X$. By \cite[Theorem 1.1]{Xu20}, there exists a quasi-monomial valuation $w\in\Val_X^*$ computing $\lct(\fk{a}_{\bullet}^v)$. Thus, by \Cref{thm-A(v)=1-v(q)}, there exists $\lambda>0$ such that $v=\lambda\cdot w$, which implies that $v$ is also quasi-monomial.
\end{proof}

For the general cases, we can see that the weak version of Jonsson--Musta\c{t}\u{a}'s conjecture is equivalent to that every Zhou valuation is quasi-monomial, where we denote by $\QM_X$ the set of quasi-monomial valuations on $X$ in the following.

\begin{theorem}\label{thm-equiv.weak.ZVal=QM}
    The following statements are equivalent:
\begin{enumerate}
    \item For every nonzero ideal $\fk{q}$ and graded sequence of ideals $\fk{a}_{\bullet}$ on $X$ with $\lct^{\fk{q}}(\fk{a}_{\bullet})<+\infty$, there exists a quasi-monomial valuation $v$ which computes $\lct^{\fk{q}}(\fk{a}_{\bullet})$;
    \item $\ZVal_X\subseteq\QM_X$.
\end{enumerate}
\end{theorem}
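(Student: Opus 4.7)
The plan is to establish the two directions using the results already assembled, with no heavy new input needed.

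For the direction $(2) \Rightarrow (1)$, I would simply invoke \Cref{cor-ZV.computes}: given any nonzero ideal $\fk{q}$ and graded sequence $\fk{a}_\bullet$ with $\lct^{\fk{q}}(\fk{a}_\bullet) < +\infty$, there exists a Zhou valuation $v$ related to $\fk{q}$ that computes $\lct^{\fk{q}}(\fk{a}_\bullet)$. The hypothesis $\ZVal_X \subseteq \QM_X$ then forces $v$ to be quasi-monomial, which is exactly what (1) demands.

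For the direction $(1) \Rightarrow (2)$, I would start with an arbitrary $v \in \ZVal_X$. By definition of the cone of Zhou valuations, $v = c \cdot v'$ for some $c > 0$ and some $v' \in \ZVal_X(\fk{q})$ attached to a nonzero ideal $\fk{q}$. By \Cref{prop-lct(Zhou val)=1}, $\lct^{\fk{q}}(\fk{a}_\bullet^{v'}) = 1 < +\infty$, so hypothesis (1) supplies a quasi-monomial valuation $w \in \Val_X^*$ that computes this jumping number. Now \Cref{thm-A(v)=1-v(q)} states that every valuation computing $\lct^{\fk{q}}(\fk{a}_\bullet^{v'})$ is a positive scalar multiple of $v'$, so $w = \lambda v'$ for some $\lambda > 0$. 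Since the class of quasi-monomial valuations is closed under positive rescaling (the cones $\QM_\eta(Y,D)$ are invariant under scaling of the weight vector $\alpha$), it follows that $v' = w/\lambda$ is quasi-monomial, and therefore $v = cv'$ is quasi-monomial as well.

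Neither direction presents a serious obstacle; the equivalence is essentially a packaging of \Cref{cor-ZV.computes} and \Cref{thm-A(v)=1-v(q)}. The only subtle point worth double-checking is the closure of $\QM_X$ under positive scalar multiplication, which is immediate from the definition $\val_\alpha(f) = \min\{\langle\alpha,\beta\rangle : c_\beta \neq 0\}$ (rescaling $\alpha$ by $1/\lambda$ rescales $\val_\alpha$ by $1/\lambda$). I would include this short remark explicitly so the passage from ``$\lambda v'$ is quasi-monomial'' to ``$v'$ is quasi-monomial'' is fully justified.
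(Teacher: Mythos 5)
Your proof is correct and follows essentially the same route as the paper: both directions hinge on \Cref{cor-ZV.computes} for $(2)\Rightarrow(1)$ and on \Cref{thm-A(v)=1-v(q)} (the scalar-multiple rigidity of valuations computing $\lct^{\fk{q}}(\fk{a}_\bullet^{v'})$) for $(1)\Rightarrow(2)$. The only difference is cosmetic — you spell out the scaling $v=cv'$ to handle general elements of the cone $\ZVal_X$ and explicitly note that $\QM_X$ is stable under positive rescaling, a point the paper leaves implicit.
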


\begin{proof}
    $(1) \Rightarrow (2)$: Pick any Zhou valuation $v$ related to some nonzero $\fk{q}$. Then $\lct^{\fk{q}}(\fk{a}_{\bullet}^v)=1<+\infty$, and there exists a quasi-monomial valuation $w$ computing $\lct^{\fk{q}}(\fk{a}_{\bullet}^v)$ by the assumption. Theorem \ref{thm-A(v)=1-v(q)} indicates $w=cv$ for some $c\in\bfR_+$, which shows that $v\in\QM_X$ is a quasi-monomial valuation.

    $(2) \Rightarrow (1)$: For every nonzero ideal $\fk{q}$ and graded sequence of ideals $\fk{a}_{\bullet}$ on $X$ with $\lct^{\fk{q}}(\fk{a}_{\bullet})<+\infty$, by Corollary \ref{cor-ZV.computes}, there exists a Zhou valuation $v$ related to $\fk{q}$ that computes $\lct^{\fk{q}}(\fk{a}_{\bullet})$. If $\ZVal_X\subseteq\QM_X$, then $v$ is also quasi-monomial.
\end{proof}

Especially, by the solution of the 2-dimensional case of Jonsson--Musta\c{t}\u{a}'s conjecture (see \Cref{thm-strong version of An}), Theorem \ref{thm-equiv.weak.ZVal=QM} induces the following result when $\dim X\le 2$.

\begin{corollary}\label{cor-dim.le.2}
    If $\dim X\le 2$, then $\ZVal_X\subseteq\QM_X$.
\end{corollary}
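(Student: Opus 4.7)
The plan is to deduce this immediately from the two inputs already recorded in the excerpt: the equivalence established in Theorem \ref{thm-equiv.weak.ZVal=QM}, and the two-dimensional solution of Jonsson--Musta\c{t}\u{a}'s conjecture recalled in Theorem \ref{thm-strong version of An}. There is no need to enter the valuative tree machinery of \cite{FJ04,JM12} directly; everything has been packaged into these two statements.

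Concretely, I would first invoke Theorem \ref{thm-strong version of An}, which asserts that when $\dim X \le 2$ the strong version of \Cref{conj-JM12.jn.version} holds, that is, for every nonzero ideal $\fk{q}$ and every graded sequence $\fk{a}_{\bullet}$ with $\lct^{\fk{q}}(\fk{a}_{\bullet}) < +\infty$, \emph{any} valuation $v \in \Val_X^*$ computing $\lct^{\fk{q}}(\fk{a}_{\bullet})$ is quasi-monomial. By \Cref{thm-exist.val.comp} there exists some $v \in \Val_X^*$ computing $\lct^{\fk{q}}(\fk{a}_{\bullet})$, so in particular condition (1) of Theorem \ref{thm-equiv.weak.ZVal=QM} is satisfied in dimension $\le 2$: there exists a quasi-monomial valuation computing $\lct^{\fk{q}}(\fk{a}_{\bullet})$.

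Applying the equivalence $(1) \Leftrightarrow (2)$ of Theorem \ref{thm-equiv.weak.ZVal=QM} then yields $\ZVal_X \subseteq \QM_X$, which is precisely the conclusion.

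Since the entire argument is a two-line citation chain, there is no genuine obstacle; the only thing to be careful about is that Theorem \ref{thm-strong version of An} is stated for the \emph{strong} version, which trivially implies the \emph{weak} version required by Theorem \ref{thm-equiv.weak.ZVal=QM}(1). One could alternatively bypass Theorem \ref{thm-equiv.weak.ZVal=QM} and argue directly: by \Cref{prop-lct(Zhou val)=1}, any $v \in \ZVal_X(\fk{q})$ has $\lct^{\fk{q}}(\fk{a}_{\bullet}^v) = 1 < \infty$, and by \Cref{thm-A(v)=1-v(q)} this Zhou valuation itself computes $\lct^{\fk{q}}(\fk{a}_{\bullet}^v)$, so the strong two-dimensional case of Theorem \ref{thm-strong version of An} forces $v$ to be quasi-monomial, which then passes to the cone $\ZVal_X$ by positive rescaling.
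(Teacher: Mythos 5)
Your proposal is correct and follows essentially the same route as the paper, which derives \Cref{cor-dim.le.2} directly from \Cref{thm-equiv.weak.ZVal=QM} together with the $2$-dimensional solution of Jonsson--Musta\c{t}\u{a}'s conjecture recorded in \Cref{thm-strong version of An}. Your alternative, more direct argument via \Cref{prop-lct(Zhou val)=1} and \Cref{thm-A(v)=1-v(q)} is also valid, but it is not the one the paper uses.
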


\section{A mixed version of jumping numbers}\label{section5}

In this section, we introduce a mixed version of jumping numbers, which is different from the usual so-called mixed jumping numbers (cf. \cite{LarII04}). We also extend the results on jumping numbers from \Cref{subsetion-Log discrepancy and jumping numbers}, such as \Cref{lem-JM12 lem1.7}, \Cref{lem-JM12 lct.equals.min}, and \Cref{lem-JM.compute}, to their mixed counterparts.

Let $\fk{q}, \fk{q}'$ be ideals and $\fk{a}$ be a nonzero ideal on $X$. Let $\lambda\in (-\varepsilon_0, +\infty)$, where $\varepsilon_0>0$ so small, and the reason to take the $\varepsilon_0$ is that the cases $\lambda<0$ are very important in the later discussions of the present paper. Denote
\begin{equation}\label{equ-lct(q,labdamq',a)}
 \lct(\fk{q}, \lambda\cdot\fk{q}';\fk{a})=\lct^{{\fk{q}'}^{\lambda}}(\fk{q}, \fk{a})\coloneqq \inf_E\frac{A(\ord_E)+\ord_E(\fk{q})+\lambda\cdot\ord_E(\fk{q}')}{\ord_E(\fk{a})},   
\end{equation}
where the infimum is over all divisors $E$ over $X$ with $\ord_E(\fk{a})>0$, and we assume that $\lct(\fk{q}, \lambda\cdot\fk{q}';\fk{a})$ is positive for $\lambda\in (-\varepsilon_0, +\infty)$. Clearly, this notation coincides with $\lct^{{\fk{q}\cdot\fk{q}'}^k}(\fk{a})$ when $\lambda=k\in\bfZ_{\ge 0}$. Moreover, similar to \Cref{lem-JM12 lem1.7}, one can get the following lemma.

\begin{lemma}\label{lem-log.resolution.lct}
    If $\pi\colon Y\to X$ is a log resolution of the nonzero ideal $\fk{a}\cdot \fk{q}\cdot \fk{q}'$, and if $\fk{a}\cdot\cal{O}_Y=\cal{O}_Y(-\sum_{i}\alpha_iE_i)$ and $K_{Y/X}=\sum_i\kappa_iE_i$, then for the ideals $\fk{q}$, $\fk{q}'$ and $\lambda\in (-\varepsilon_0,+\infty)$,
    \begin{flalign*}
     \begin{split}
         \lct(\fk{q},\lambda\cdot\fk{q}';\fk{a})&=\min_{\alpha_i>0}\frac{\kappa_i+1+\ord_{E_i}(\fk{q})+\lambda\cdot\ord_{E_i}(\fk{q}')}{\alpha_i}\\
         &=\min_{\ord_{E_i}(\fk{a})>0} \frac{A(\ord_{E_i})+\ord_{E_i}(\fk{q})+\lambda\cdot\ord_{E_i}(\fk{q}')}{\ord_{E_i}(\fk{a})}.
     \end{split}
    \end{flalign*} 
 \end{lemma}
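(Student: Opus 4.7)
The plan is to adapt the proof of \cite[Lemma 1.7]{JM12} (our \Cref{lem-JM12 lem1.7}) to the mixed setting with a parameter $\lambda$ that may be slightly negative. Two reductions carry the argument: first, from an arbitrary divisor $E$ over $X$ to a quasi-monomial valuation on the log-smooth pair $(Y,D)$ with $D=\sum_i E_i$; and second, from a quasi-monomial valuation in $\QM(Y,D)$ down to the vertices $\ord_{E_i}$.

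For the first reduction I would apply the retraction $r_{Y,D}\colon\Val_X\to\QM(Y,D)$. Given a divisor $E$ over $X$ appearing in the infimum, put $w\coloneqq r_{Y,D}(\ord_E)\in\QM(Y,D)$. Because $\pi$ is a log resolution of $\fk{a}\cdot\fk{q}\cdot\fk{q}'$, each of these three ideals becomes monomial in the $E_i$'s after pullback to $Y$, so by \cite[Corollary 4.8]{JM12}
\[ w(\fk{a})=\ord_E(\fk{a}),\qquad w(\fk{q})=\ord_E(\fk{q}),\qquad w(\fk{q}')=\ord_E(\fk{q}'). \]
Combined with $A(w)\le A(\ord_E)$ from \Cref{prop-log dis}(1), the ratio defining $\lct(\fk{q},\lambda\cdot\fk{q}';\fk{a})$ in (\ref{equ-lct(q,labdamq',a)}) evaluated at $\ord_E$ dominates the one at $w$, regardless of the sign of $\lambda$ (the $\lambda\cdot\ord(\fk{q}')$ contribution agrees on both sides). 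Density of divisorial valuations in $\QM(Y,D)$ (\Cref{prop-structure thm of ValX}) together with continuity then let me replace the infimum in (\ref{equ-lct(q,labdamq',a)}) by one over $v\in\QM(Y,D)$ with $v(\fk{a})>0$.

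For the second reduction, on each simplicial cone $\QM_\eta(Y,D)$ write $v=v_\beta$ with $\beta=(\beta_i)\in\bfR_{\ge 0}^r$. By (\ref{equ-A(v) for QM(Y,D)}) and the monomiality of the three ideals, all of $A$, $v(\fk{q})$, $v(\fk{q}')$ and $v(\fk{a})$ are $\bfR$-linear in $\beta$, so
\[ \frac{A(v_\beta)+v_\beta(\fk{q})+\lambda\cdot v_\beta(\fk{q}')}{v_\beta(\fk{a})}=\frac{\sum_i\beta_i\bigl(\kappa_i+1+\ord_{E_i}(\fk{q})+\lambda\cdot\ord_{E_i}(\fk{q}')\bigr)}{\sum_i\beta_i\alpha_i}. \]
A standard convex-combination argument then identifies the infimum of this ratio of non-negative linear forms (with positive denominator) with the minimum of the per-coordinate ratios, indexed by those $i$ with $\alpha_i>0$, which is the stated right-hand side. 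Equality is realized by concentrating $\beta$ at the minimizing index, i.e.\ by a (scalar multiple of a) single $\ord_{E_i}$.

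The one subtle point, and the main (mild) obstacle, is the range of $\lambda$: when $\lambda<0$ a per-coordinate numerator $\kappa_i+1+\ord_{E_i}(\fk{q})+\lambda\cdot\ord_{E_i}(\fk{q}')$ could a priori be negative, which would break the convex-combination step (specifically the bound of the numerator on indices $i$ with $\alpha_i=0$ but $\beta_i>0$). This is precisely where the standing positivity assumption on $\lct(\fk{q},\lambda\cdot\fk{q}';\fk{a})$ for $\lambda\in(-\varepsilon_0,+\infty)$ is used: only finitely many $E_i$ appear in the fixed log resolution, so after shrinking $\varepsilon_0$ if necessary each per-coordinate numerator stays non-negative throughout the range, and the argument goes through uniformly in $\lambda$.
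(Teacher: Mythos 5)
Your argument is correct and is in substance the same as the paper's: the retraction $w=r_{Y,D}(\ord_E)$ you use is precisely the quasi-monomial valuation the paper builds by hand, its cone coordinates being the pullback multiplicities $\ord_E(f^*E_i)$, and the reduction from $\QM_\eta(Y,D)$ to the vertices is the same convex-combination inequality on a ratio of linear forms. The paper saves the retraction-map language for \Cref{cor-log.resolution.lct} and runs the lemma itself through a direct pullback computation along $f\colon Y'\to Y$; your observation that one may need to shrink $\varepsilon_0$ so that the numerators at indices with $\alpha_i=0$ stay non-negative is a reasonable way to treat a sign point that the paper's own proof brushes aside by assuming $\alpha_i\neq 0$ ``for convenience,'' at the mild cost of narrowing the stated range of~$\lambda$.
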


 \begin{proof}
Let $E_i$ be an irreducible component of $V(\fk{a}\cdot \mathcal{O}_Y)$. Then we have the following two equations:
  \begin{enumerate}
    \item $\alpha_i=\ord_{E_i}(\fk{a})$;
    \item $A(\ord_{E_i})\coloneqq 1+\ord_{E_i}(K_{Y/X})=1+\kappa_i$.
\end{enumerate}
Therefore, we have the following inequality due to (\ref{equ-lct(q,labdamq',a)})
\[\lct(\fk{q},\lambda\cdot\fk{q}';\fk{a})\leq \min_{\alpha_i>0}\frac{\kappa_i+1+\ord_{E_i}(\fk{q})+\lambda\cdot\ord_{E_i}(\fk{q}')}{\alpha_i}.\]

In the following, we prove the inverse inequality. Note that for each divisor $E$, $\ord_E(\fk{a})> 0$ if and only if $c_X(E)\in V(\fk{a})$. For any divisorial valuation $\nu$, we can find a divisor $E$ such that they define the same divisorial valuation $\nu=\nu(E,Y')$ up to a scaling, and there is a proper birational morphism $f\colon Y'\to Y$ over $X$. So $\ord_E(\fk{a})> 0$ if and only if $c_Y(E)\in V(\fk{a}\cdot \mathcal{O}_Y)$. Suppose $c_Y(E)\in E_1 \cap \cdots \cap E_j$ and $c_Y(E)\notin E_1\cap \cdots \cap E_j\cap E_{j+1}$ with $\alpha_i\neq 0$ for all $1\leq i\leq j+1$ for convenience. Suppose further $\fk{q}\cdot \mathcal{O}_Y=\cal{O}_Y(-\sum_{i}t_iE_i)$ and $\fk{q}'\cdot \mathcal{O}_Y=\cal{O}_Y(-\sum_{i}s_iE_i)$. Then we have 
\begin{equation*}
\begin{split}
A(\ord_{E})&\coloneqq 1+\ord_{E}(K_{Y'/X})=1+\ord_{E}(K_{Y'/Y})+\ord_{E}(f^*(K_{Y/X}))  \\ 
&\geq 1+\ord_E(f^*E_1)+\cdots +\ord_E(f^*E_j)-1+\ord_{E}\big(f^*(\sum_i\kappa_iE_i)\big)\\
&= \sum_{1\leq i\leq j}(1+\kappa_i)\ord_E(f^*E_i)\\
&=\sum_{1\leq i\leq j} A(\ord_{E_i})\ord_E(f^*E_i).
\end{split}
\end{equation*}
In addition, we have the following equations:
\begin{equation*}
    \begin{split}
        \ord_E(\fk{a})&=\ord_E\Big(f^*(\sum_i \alpha_iE_i)\Big)=\sum_{1\leq i\leq j}\alpha_i\ord_E(f^*E_i);\\
    \ord_E(\fk{q})&=\ord_E\Big(f^*(\sum_i t_iE_i)\Big)=\sum_{1\leq i\leq j}t_i\ord_E(f^*E_i);\\
    \ord_E(\fk{q}')&=\ord_E\Big(f^*(\sum_i s_iE_i)\Big)=\sum_{1\leq i\leq j}s_i\ord_E(f^*E_i).
    \end{split}
\end{equation*}
Combining these relationships, we get
\begin{equation*}
\begin{split}
    \frac{A(\ord_E)+\ord_E(\fk{q})+\lambda\cdot\ord_E(\fk{q}')}{\ord_E(\fk{a})}&\geq \min_{1\leq i\leq j}\frac{A(\ord_{E_i})+\ord_{E_i}(\fk{q})+\lambda\cdot\ord_{E_i}(\fk{q}')}{\ord_{E_i}(\fk{a})}\\ 
    &\geq \min_{\alpha_i>0}\frac{\kappa_i+1+\ord_{E_i}(\fk{q})+\lambda\cdot\ord_{E_i}(\fk{q}')}{\alpha_i}.
\end{split}
\end{equation*}
The proof is done.
\end{proof}

Then we can obtain the following corollary which is similar to \cite[Lemma 6.7]{JM12}.

\begin{corollary}[cf. {\cite[Lemma 6.7]{JM12}}]\label{cor-log.resolution.lct}
    For each fixed $\lambda\in(-\varepsilon_0,+\infty)$, we have
    \begin{equation}\label{eq-lct.a.single.ideal.equal.min}
        \lct(\fk{q}, \lambda\cdot\fk{q}';\fk{a})=\min_{v\in\Val_X^*}\frac{A(v)+v(\fk{q})+\lambda\cdot v(\fk{q}')}{v(\fk{a})}.
    \end{equation}
    Suppose that $\fk{a}\neq \cal{O}_X$ and $(Y,E)$ is a log-smooth pair over $X$ giving a log resolution of $\fk{a}\cdot\fk{q}\cdot\fk{q}'$. Then equality in (\ref{eq-lct.a.single.ideal.equal.min}) is achieved if and only if $v\in\QM(Y,E)$ and $\ord_{E_i}$ computes $\lct(\fk{q},\lambda\cdot\fk{q}';\fk{a})$ for every irreducible component $E_i$ of $E$ for which $v(E_i)>0$.
\end{corollary}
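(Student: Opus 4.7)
The plan is to reduce the infimum over $\Val_X^*$ to an infimum over $\QM(Y,E)$ via the retraction map, then identify the minimum on each simplicial cone with the divisorial minimum already computed in \Cref{lem-log.resolution.lct}.

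First I would fix a log-smooth pair $(Y,E)$ over $X$ giving a log resolution of $\fk{a}\cdot\fk{q}\cdot\fk{q}'$. For any $v\in\Val_X^*$, set $w=r_{Y,E}(v)$. Since $(Y,E)$ is a log resolution of each of the three ideals, \cite[Corollary 4.8]{JM12} (recalled in the preliminaries) gives $w(\fk{a})=v(\fk{a})$, $w(\fk{q})=v(\fk{q})$, and $w(\fk{q}')=v(\fk{q}')$, while \Cref{prop-log dis}(1) gives $A(w)\le A(v)$. Combining these yields
\begin{equation*}
\frac{A(v)+v(\fk{q})+\lambda v(\fk{q}')}{v(\fk{a})} \;\geq\; \frac{A(w)+w(\fk{q})+\lambda w(\fk{q}')}{w(\fk{a})},
\end{equation*}
so the infimum in \eqref{eq-lct.a.single.ideal.equal.min} is realized on $\QM(Y,E)$.

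Next, for a quasi-monomial valuation $w=v_\alpha\in\QM_\eta(Y,E)$ with $\alpha_i=v_\alpha(E_i)$, the stalks $\fk{a}\cdot\cal{O}_{Y,\eta}$, $\fk{q}\cdot\cal{O}_{Y,\eta}$, $\fk{q}'\cdot\cal{O}_{Y,\eta}$ are principal monomials in the local parameters $y_i$ at $\eta$, so $w(\fk{a})=\sum_i \alpha_i\ord_{E_i}(\fk{a})$ and similarly for $\fk{q}$, $\fk{q}'$, while $A(w)=\sum_i \alpha_i A(\ord_{E_i})$ by \eqref{equ-A(v) for QM(Y,D)}. Hence
\begin{equation*}
\frac{A(w)+w(\fk{q})+\lambda w(\fk{q}')}{w(\fk{a})} \;=\; \frac{\sum_i \alpha_i\bigl(A(\ord_{E_i})+\ord_{E_i}(\fk{q})+\lambda\ord_{E_i}(\fk{q}')\bigr)}{\sum_i \alpha_i \ord_{E_i}(\fk{a})}.
\end{equation*}
Applying the elementary mediant inequality $\sum \alpha_i a_i/\sum \alpha_i b_i\ge \min_i a_i/b_i$ (for $\alpha_i\ge 0$, $b_i>0$) together with \Cref{lem-log.resolution.lct}, the right-hand side is bounded below by $\lct(\fk{q},\lambda\cdot\fk{q}';\fk{a})$; and the reverse inequality is immediate since \Cref{lem-log.resolution.lct} exhibits a divisorial valuation $\ord_{E_{i_0}}$ realizing the minimum.

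For the equality characterization, I would observe that equality in the retraction step forces $A(v)=A(r_{Y,E}(v))$, equivalent to $v\in\QM(Y,E)$ by \Cref{prop-log dis}(1); and equality in the mediant step forces every index $i$ with $\alpha_i>0$ (that is, every $E_i$ with $v(E_i)>0$) to realize the minimum in \Cref{lem-log.resolution.lct}, i.e.\ $\ord_{E_i}$ computes $\lct(\fk{q},\lambda\cdot\fk{q}';\fk{a})$. The one point requiring care is when $\lambda<0$: for components $E_i$ with $\ord_{E_i}(\fk{a})=0$ but $\alpha_i>0$, the mediant inequality still requires $A(\ord_{E_i})+\ord_{E_i}(\fk{q})+\lambda\ord_{E_i}(\fk{q}')\ge 0$. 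Since $(Y,E)$ has only finitely many components, this non-negativity can be guaranteed by shrinking $\varepsilon_0$, which is consistent with the positivity assumption on $\lct(\fk{q},\lambda\cdot\fk{q}';\fk{a})$; this bookkeeping is the only nontrivial obstacle, and once addressed the remainder of the argument is essentially parallel to the proof of \cite[Lemma 6.7]{JM12}.
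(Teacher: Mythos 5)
Your proposal is correct and follows essentially the same route as the paper's own proof: reduce to $\QM(Y,E)$ via the retraction map (using $A(r_{Y,E}(v))\le A(v)$ and equality of the ideal evaluations because $(Y,E)$ is a log resolution), then apply the weighted-mediant inequality on each simplicial cone and feed it back into \Cref{lem-log.resolution.lct}. You are in fact slightly more careful than the paper about the $\lambda<0$ bookkeeping — namely the need for $A(\ord_{E_i})+\ord_{E_i}(\fk{q})+\lambda\ord_{E_i}(\fk{q}')\ge 0$ on components with $\ord_{E_i}(\fk{a})=0$, which the paper subsumes in its standing assumption that $\varepsilon_0$ is chosen small enough — and your resolution (shrink $\varepsilon_0$ using finiteness of components) is correct.
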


\begin{proof}
By \Cref{lem-log.resolution.lct}, it suffices to show that 
\begin{equation}\label{equ-cor 5.2}
  \lct(\fk{q}, \lambda\cdot\fk{q}';\fk{a})\leq \min_{v\in\Val_X^*}\frac{A(v)+v(\fk{q})+\lambda\cdot v(\fk{q}')}{v(\fk{a})}.  
\end{equation}
Let $\pi\colon Y\to X$ be a log resolution of $\fk{a}\cdot\fk{q}\cdot\fk{q}'$ with some exceptional effective simple normal crossing divisor $E$. Consider the {retraction map} $r_{Y,E}\colon \Val_X\to \QM(Y,E)$. Then we have $v(*)=r_{Y,E}v(*)$ for $*\in \{\fk{a},\fk{q},\fk{q}'\}$ by \cite[Corollary 4.8]{JM12}, and we have $A(v)\geq A(r_{Y,E}(v))$ by \Cref{prop-log dis}. Therefore,
  \begin{equation*}
  \min_{v\in\QM(Y,E)}\frac{A(v)+v(\fk{q})+\lambda\cdot v(\fk{q}')}{v(\fk{a})}\leq \min_{v\in\Val_X^*}\frac{A(v)+v(\fk{q})+\lambda\cdot v(\fk{q}')}{v(\fk{a})}.  
\end{equation*}  
Let $\{E_i\}$ be the set of smooth irreducible components of $E$. Then for every $v\in \QM(Y,E)$, we have $A(v)=\sum_iv(E_i)\cdot A(\ord_{E_i})$ by (\ref{equ-A(v) for QM(Y,D)}), and for all $*\in \{\fk{a},\fk{q},\fk{q}'\}$ we have $v(*)=\sum_iv(E_i)\cdot \ord_{E_i}(*)$ by \Cref{prop-topology of ValX}(1). Thus, 
\begin{equation*}
  \min_{v\in\QM(Y,E)}\frac{A(v)+v(\fk{q})+\lambda\cdot v(\fk{q}')}{v(\fk{a})}\geq   \min_{i} \frac{A(\ord_{E_i})+\ord_{E_i}(\fk{q})+\lambda\cdot\ord_{E_i}(\fk{q}')}{\ord_{E_i}(\fk{a})}.
\end{equation*}
We get (\ref{equ-cor 5.2}) by applying \Cref{lem-log.resolution.lct}. 

Moreover, if $v$ achieves the equality in (\ref{equ-cor 5.2}), we must have $A(v) =A(r_{Y,E}(v))$ (that is $v\in \QM(Y,E)$ by \Cref{prop-log dis} again) and every $E_i$ satisfying $v(E_i)>0$ computes $ \lct(\fk{q}, \lambda\cdot\fk{q}';\fk{a})$. 
\end{proof}

Now we can define the \textbf{mixed version of jumping numbers} associated to a graded sequence of ideals in this article as follows. Let $\fk{q}, \fk{q}'$ be nonzero ideals on $X$. For any nonzero graded sequence $\fk{a}_{\bullet}=\{\fk{a}_m\}$ of ideals, it can be verified that
\[\lct(\fk{q}, \lambda\cdot\fk{q}';\fk{a}_{\bullet})\coloneqq\lim_{m\to\infty}m\cdot\lct(\fk{q}, \lambda\cdot\fk{q}';\fk{a}_m)=\sup_{m} m\cdot\lct(\fk{q}, \lambda\cdot\fk{q}';\fk{a}_m)\in (0,+\infty]\]
is well-defined for any $\lambda\in (-\varepsilon_0,+\infty)$ with $\varepsilon_0$ small enough. Similarly, for any subadditive system of ideals $\fk{b}_{\bullet}=\{\fk{b}_t\}$ satisfying $\lct^{\fk{q}}(\fk{b}_{\bullet})>0$, we can also define the mixed version of jumping numbers associated to this subadditive sequence by
\[\lct(\fk{q}, \lambda\cdot\fk{q}';\fk{b}_{\bullet})\coloneqq\lim_{t\to\infty}t\cdot\lct(\fk{q}, \lambda\cdot\fk{q}';\fk{b}_t)=\inf_{t} t\cdot\lct(\fk{q}, \lambda\cdot\fk{q}';\fk{b}_t)\in [0,+\infty)\]
for any $\lambda\in (-\varepsilon_0,+\infty)$. The following results are in the spirit of \Cref{lem-JM12 lct.equals.min}.

\begin{proposition}[{\cite[Proposition 2.8]{JM12}}]\label{prop-lct.equals.inf.Val.mixed.verb}
If $\fk{b}_{\bullet}$ is a subadditive system of ideals, then 
\begin{equation*}
\begin{split}
   \lct(\fk{q}, \lambda\cdot\fk{q}';\fk{b}_{\bullet})&=\inf_{E}\frac{A(\ord_E)+\ord_E(\fk{q})+\lambda\cdot \ord_E(\fk{q}')}{\ord_E(\fk{b}_{\bullet})}\\
&=\inf_{v\in\Val_X^*}\frac{A(v)+v(\fk{q})+\lambda\cdot v(\fk{q}')}{v(\fk{b}_{\bullet})},  
\end{split}   
\end{equation*}
where $E$ is over all divisors over $X$.
\end{proposition}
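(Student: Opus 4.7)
My strategy is to sandwich $\lct(\fk{q},\lambda\cdot\fk{q}';\fk{b}_{\bullet})$ between the two infima appearing in the statement, using the single-ideal version \Cref{cor-log.resolution.lct} as the workhorse. Since the divisorial valuations form a subset of $\Val_X^*$, the infimum over divisors $E$ always dominates the infimum over valuations $v$. So it suffices to prove the upper bound by all valuations and the lower bound by divisorial valuations; the middle inclusion will then force equality throughout the chain.

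For the upper bound, I would fix $v\in\Val_X^*$ with $v(\fk{b}_{\bullet})>0$ and apply \Cref{cor-log.resolution.lct} to each single ideal $\fk{b}_t$, obtaining
\[
t\cdot\lct(\fk{q},\lambda\cdot\fk{q}';\fk{b}_t)\le \frac{A(v)+v(\fk{q})+\lambda\cdot v(\fk{q}')}{v(\fk{b}_t)/t}.
\]
Passing to the limit $t\to\infty$ (using that $v(\fk{b}_t)/t\nearrow v(\fk{b}_{\bullet})$ for a subadditive system), this yields
\[
\lct(\fk{q},\lambda\cdot\fk{q}';\fk{b}_{\bullet})\le \frac{A(v)+v(\fk{q})+\lambda\cdot v(\fk{q}')}{v(\fk{b}_{\bullet})}.
\]
Taking the infimum over $v$ gives the desired inequality $\lct(\fk{q},\lambda\cdot\fk{q}';\fk{b}_{\bullet})\le \inf_{v\in\Val_X^*}(\cdots)$; valuations with $A(v)=+\infty$ contribute only trivially.

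For the lower bound, invoking \Cref{cor-log.resolution.lct} once more, for each $t$ with $\fk{b}_t\neq\cal{O}_X$ I would pick a divisor $E_t$ over $X$ that computes $\lct(\fk{q},\lambda\cdot\fk{q}';\fk{b}_t)$, so that
\[
t\cdot\lct(\fk{q},\lambda\cdot\fk{q}';\fk{b}_t)=\frac{A(\ord_{E_t})+\ord_{E_t}(\fk{q})+\lambda\cdot\ord_{E_t}(\fk{q}')}{\ord_{E_t}(\fk{b}_t)/t}.
\]
Since $\ord_{E_t}(\fk{b}_t)/t\le \ord_{E_t}(\fk{b}_{\bullet})$, enlarging the denominator decreases the fraction (provided the numerator is positive), so
\[
t\cdot\lct(\fk{q},\lambda\cdot\fk{q}';\fk{b}_t)\ge \frac{A(\ord_{E_t})+\ord_{E_t}(\fk{q})+\lambda\cdot\ord_{E_t}(\fk{q}')}{\ord_{E_t}(\fk{b}_{\bullet})}\ge \inf_E\frac{A(\ord_E)+\ord_E(\fk{q})+\lambda\cdot\ord_E(\fk{q}')}{\ord_E(\fk{b}_{\bullet})}.
\]
Taking $\inf_t$ on the left then gives $\lct(\fk{q},\lambda\cdot\fk{q}';\fk{b}_{\bullet})\ge \inf_E(\cdots)$, closing the sandwich.

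The only genuine subtlety I anticipate is justifying positivity of the numerator $A(\ord_{E_t})+\ord_{E_t}(\fk{q})+\lambda\cdot\ord_{E_t}(\fk{q}')$ when $\lambda<0$, which is what allows the monotonicity in the denominator to transfer correctly to the fraction. This is precisely why the definition restricts to $\lambda\in(-\varepsilon_0,+\infty)$ with $\varepsilon_0$ small enough to keep the single-ideal mixed lct positive; under that standing hypothesis every divisor computing $\lct(\fk{q},\lambda\cdot\fk{q}';\fk{b}_t)$ automatically has positive numerator, so the estimate above is legitimate. Everything else is bookkeeping inherited from the non-mixed case of \Cref{lem-JM12 lct.equals.min}.
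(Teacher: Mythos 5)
Your proposal is correct and takes essentially the same approach as the paper: both directions of the sandwich rely on the single-ideal identity from \Cref{cor-log.resolution.lct} (equivalently \Cref{lem-log.resolution.lct}), passing to the limit via $v(\fk{b}_t)/t\nearrow v(\fk{b}_{\bullet})$, and the positivity of the numerator guaranteed by the standing assumption on $\varepsilon_0$. The only cosmetic difference is that the paper proves the divisorial equality in both directions and then remarks the valuation version follows similarly, whereas you close the chain $\lct\le\inf_v\le\inf_E\le\lct$ by proving one inequality per side; this is a marginally tidier arrangement but not a different argument.
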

\begin{proof}
Let us first show 
\begin{equation}\label{equ1-lct(b.)}
\lct(\fk{q}, \lambda\cdot\fk{q}';\fk{b}_{\bullet})=\inf_{E}\frac{A(\ord_E)+\ord_E(\fk{q})+\lambda\cdot \ord_E(\fk{q}')}{\ord_E(\fk{b}_{\bullet})}.    
\end{equation}
On the one hand, we have 
\[m\cdot \lct(\fk{q},\lambda\cdot\fk{q}';\fk{b}_m)\leq \frac{A(\ord_E)+\ord_E(\fk{q})+\lambda\cdot \ord_E(\fk{q}')}{\ord_E(\fk{b}_m)/m}\]
for every divisor $E$ over $X$ by \Cref{lem-log.resolution.lct}. Taking direct limits of $m$, we get
\[\lct(\fk{q},\lambda\cdot\fk{q}';\fk{b}_{\bullet})\coloneqq\lim_{m\to\infty}m\cdot\lct(\fk{q}, \lambda\cdot\fk{q}';\fk{b}_m)\leq \frac{A(\ord_E)+\ord_E(\fk{q})+\lambda\cdot \ord_E(\fk{q}')}{\ord_E(\fk{b}_{\bullet})}\]
for every $E$, and further gives the inequality
\[\lct(\fk{q}, \lambda\cdot\fk{q}';\fk{b}_{\bullet})\leq \inf_{E}\frac{A(\ord_E)+\ord_E(\fk{q})+\lambda\cdot \ord_E(\fk{q}')}{\ord_E(\fk{b}_{\bullet})}.\]
On the other hand, for any given $m$, we can find a divisor $E$ over $X$ such that
\[\lct(\fk{q},\lambda\cdot\fk{q}';\fk{b}_m)=\frac{A(\ord_E)+\ord_E(\fk{q})+\lambda\cdot \ord_E(\fk{q}')}{\ord_E(\fk{b}_{m})}\]
by \Cref{lem-log.resolution.lct} again. Then 
\begin{equation*}
\begin{split}
m\cdot\lct(\fk{q},\lambda\cdot\fk{q}';\fk{b}_m)&=\frac{A(\ord_E)+\ord_E(\fk{q})+\lambda\cdot \ord_E(\fk{q}')}{\ord_E(\fk{b}_{m})/m}\\    
&\geq \frac{A(\ord_E)+\ord_E(\fk{q})+\lambda\cdot \ord_E(\fk{q}')}{\ord_E(\fk{b}_{\bullet})}\\
&\geq \inf_{E}\frac{A(\ord_E)+\ord_E(\fk{q})+\lambda\cdot \ord_E(\fk{q}')}{\ord_E(\fk{b}_{\bullet})}
\end{split}
\end{equation*}
by applying the equations $\ord_E(\fk{b}_{\bullet})=\sup_m\ord_E(\fk{b}_{m})/m$. Thus, we have proven (\ref{equ1-lct(b.)}). Similarly, one can prove 
\[\lct(\fk{q}, \lambda\cdot\fk{q}';\fk{b}_{\bullet})=\inf_{v\in\Val_X^*}\frac{A(v)+v(\fk{q})+\lambda\cdot v(\fk{q}')}{v(\fk{b}_{\bullet})}\]
by \Cref{cor-log.resolution.lct}.
\end{proof}
\begin{proposition}[{\cite[Proposition 2.14]{JM12}}]\label{prop-lct(a.)=lct(b.)}
Let $\fk{a}_{\bullet}$ be a graded sequence of ideals and $\fk{q}, \fk{q}'$ be ideals on $X$. 
Then $\lct(\fk{q},\lambda\cdot\fk{q}';\fk{a}_{\bullet})=\lct(\fk{q},\lambda\cdot\fk{q}';\fk{b}_{\bullet})$, where $\fk{b}_{\bullet}$ is the subadditive system given by the asymptotic multiplier ideals of $\fk{a}_{\bullet}$.    
\end{proposition}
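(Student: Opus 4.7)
The plan is to adapt the proof of \cite[Proposition 2.14]{JM12} to the mixed setting, proving the two inequalities by different methods.

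For the direction $\lct(\fk{q},\lambda\cdot\fk{q}';\fk{a}_{\bullet})\leq\lct(\fk{q},\lambda\cdot\fk{q}';\fk{b}_{\bullet})$, I would use the inclusion $\fk{a}_m\subseteq\cal{J}(\fk{a}_{\bullet}^m)=\fk{b}_m$ that comes from the definition of asymptotic multiplier ideals. By the divisorial formula in \Cref{lem-log.resolution.lct} applied on a common log resolution, this inclusion gives $\ord_E(\fk{a}_m)\geq\ord_E(\fk{b}_m)$ for every divisor $E$, and hence $\lct(\fk{q},\lambda\cdot\fk{q}';\fk{a}_m)\leq\lct(\fk{q},\lambda\cdot\fk{q}';\fk{b}_m)$ for every $m$. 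Multiplying by $m$ and taking $m\to\infty$ yields the inequality.

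For the reverse inequality, the plan is to combine the valuation formula in \Cref{prop-lct.equals.inf.Val.mixed.verb} for $\fk{b}_{\bullet}$ with a compactness argument modeled on the proof of \Cref{thm-exist.val.comp}. First assume $\lct(\fk{q},\lambda\cdot\fk{q}';\fk{a}_{\bullet})<+\infty$, as otherwise the statement is trivial by the first step. For each $m$, \Cref{cor-log.resolution.lct} supplies a divisorial valuation $v_m$ computing $\lct(\fk{q},\lambda\cdot\fk{q}';\fk{a}_m)$, which I normalize so that $v_m(\fk{a}_m)=m$; then the identity
\[m\cdot\lct(\fk{q},\lambda\cdot\fk{q}';\fk{a}_m)=A(v_m)+v_m(\fk{q})+\lambda\cdot v_m(\fk{q}')\]
shows that the right-hand side stays uniformly bounded as $m\to\infty$. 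A compactness argument then extracts a subsequential limit $v_{m_j}\to v_\infty$ in $\Val_X$; the normalization $v_m(\fk{a}_m)=m$ together with the subadditivity $\fk{a}_{qk+r}\supseteq\fk{a}_k^q\cdot\fk{a}_r$ (for fixed $k$) forces $v_\infty(\fk{a}_{\bullet})\geq 1$, and then the lower semi-continuity of $A$ (\Cref{prop-log dis}(3)) combined with continuity of $v\mapsto v(\fk{q}),v(\fk{q}')$ produces the chain
\[\lct(\fk{q},\lambda\cdot\fk{q}';\fk{a}_{\bullet})\geq A(v_\infty)+v_\infty(\fk{q})+\lambda\cdot v_\infty(\fk{q}')\geq\frac{A(v_\infty)+v_\infty(\fk{q})+\lambda\cdot v_\infty(\fk{q}')}{v_\infty(\fk{a}_{\bullet})}\geq\lct(\fk{q},\lambda\cdot\fk{q}';\fk{b}_{\bullet}),\]
where the last inequality invokes \Cref{prop-lct.equals.inf.Val.mixed.verb} together with $v_\infty(\fk{a}_{\bullet})=v_\infty(\fk{b}_{\bullet})$ from \Cref{lem-relation between a and b}(3).

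The main obstacle will be executing the compactness step cleanly: identifying a compact subset of $\Val_X$ containing all $v_m$ (likely by passing to a common log resolution and applying \Cref{prop-structure thm of ValX}), verifying $v_\infty(\fk{a}_{\bullet})\geq 1$ from the normalization, and ensuring $A(v_\infty)<+\infty$ so that the second inequality in the chain is genuine. For $\lambda\geq 0$ these are immediate from the nonnegativity of each summand, but for $\lambda<0$ one must also exploit the positivity $A(\ord_E)+\ord_E(\fk{q})+\lambda\ord_E(\fk{q}')>0$ intrinsic to the definition of $\lct$ to control $v_m(\fk{q}')$ in terms of $A(v_m)+v_m(\fk{q})$.
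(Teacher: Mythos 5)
Your first inequality is fine and matches the paper's. For the reverse inequality, however, your route diverges from the paper's and contains a genuine gap. The paper follows JM12's proof of Proposition 2.14: it uses the controlled-growth inequality (\cite[Lemma 2.15]{JM12}), $\ord_E(\fk{a}_m)/m < \ord_E(\fk{b}_t)/t + A(\ord_E)/t$, applied to a divisor $E$ computing $\lct(\fk{q},\lambda\cdot\fk{q}';\fk{a}_m)$, and then shows the resulting error term $\lct(\fk{q},\lambda\cdot\fk{q}';\fk{a}_{\bullet})\cdot A(\ord_E)/\ord_E(\fk{b}_t)\to 0$. This is a direct, finite estimate requiring no compactness. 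Your proposal instead re-runs the compactness argument underlying \Cref{thm-exist.val.comp}, which is a heavier tool than needed here and which you do not actually carry out.

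The concrete gaps are as follows. First, you never produce a compact subset of $\Val_X$ containing the $v_m$; the suggestion of ``passing to a common log resolution'' cannot work, since $v_m$ is built from a log resolution of $\fk{a}_m\cdot\fk{q}\cdot\fk{q}'$, which changes with $m$, and \Cref{prop-structure thm of ValX} by itself gives only the inverse-limit description of $\Val_X$, not compactness. The correct ingredient would be a compactness statement of the form ``$\{v\in\Val_X : A(v)\le M,\ v(\fk{a}_k)\ge k/2\}$ is compact'' for a \emph{fixed} ideal $\fk{a}_k$ (this is where the normalization $v_m(\fk{a}_m)=m$ and the superadditivity estimate $v_m(\fk{a}_k)/k\to 1$ would feed in), but this is not in the proposal. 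Second, $\Val_X$ is not first-countable, so one should extract a convergent \emph{subnet} rather than a subsequence; the limit arguments (lower semicontinuity of $A$, continuity of $v\mapsto v(\fk{q})$, $v\mapsto v(\fk{q}')$) do go through for nets, but the wording papers over this. Third, for $\lambda<0$ the uniform bound on $A(v_m)$ does not follow directly from the bound on $A(v_m)+v_m(\fk{q})+\lambda v_m(\fk{q}')$; you correctly flag this but the resolution—namely that positivity of $\lct$ at $\lambda=-\varepsilon_0$ gives $v_m(\fk{q}')\le (A(v_m)+v_m(\fk{q}))/\varepsilon_0$, hence $A(v_m)+v_m(\fk{q})\le C/(1+\lambda/\varepsilon_0)$—needs to be written out. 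In short, the approach is plausible but shifts the entire difficulty onto a compactness step you explicitly leave open, whereas the paper's proof avoids compactness entirely.
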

\begin{proof}
First we have  $\lct(\fk{q}, \lambda\cdot\fk{q}';\fk{a}_m)\leq \lct(\fk{q}, \lambda\cdot\fk{q}';\fk{b}_m)$ since $\fk{a}_m\subseteq \fk{b}_m$. Multiplying by $m$ and letting $m$ go to infinity, it gives $\lct(\fk{q},\fk{q}';\fk{a}_{\bullet})\leq \lct(\fk{q},\fk{q}';\fk{b}_{\bullet})$. Moreover, if $\lct(\fk{q},\fk{q}';\fk{a}_{\bullet})=+\infty$, then we have $\lct(\fk{q},\lambda\cdot\fk{q}';\fk{a}_{\bullet})=\lct(\fk{q},\lambda\cdot\fk{q}';\fk{b}_{\bullet})=+\infty$ already.

Now, we may assume that $\lct(\fk{q},\lambda\cdot\fk{q}';\fk{a}_{\bullet})<+\infty$. For the reverse inequality, for a given $t>0$, let us choose $m$ divisible enough such that we have $\fk{b}_t=\mathcal{J}(\fk{a}_m^{t/m})$. Then by \cite[Lemma 2.15]{JM12}, 
\begin{equation}\label{inequ-JM12-Lem2.15}
\frac{\ord_E(\fk{a}_m)}{m}<\frac{\ord_E(\fk{b}_t)}{t}+\frac{A(\ord_E)}{t}   
\end{equation}
for any divisor $E$ over $X$. Pick a divisor $E$ computing $\lct(\fk{q},\lambda\cdot\fk{q}';\fk{a}_m)>0$. Then we have $A(\ord_E)+\ord_E(\fk{q})+\lambda\cdot\ord_E(\fk{q}')>0$ and $\ord_E(\fk{a}_m)>0$. Multiplying both sides of formula (\ref{inequ-JM12-Lem2.15}) by $A(\ord_E)+\ord_E(\fk{q})+\lambda\cdot\ord_E(\fk{q}')>0$. We get the following inequality
\begin{equation*}
\begin{split}
  &\frac{\ord_ E(\fk{a}_m)}{m} (A(\ord_E)+\ord_E(\fk{q})+\lambda\cdot\ord_E(\fk{q}'))\\
  <&\frac{\ord_E(\fk{b}_t)}{t} (A(\ord_E)+\ord_E(\fk{q})+\lambda\cdot\ord_E(\fk{q}'))\\
  &+\frac{A(\ord_E)}{t} (A(\ord_E)+\ord_E(\fk{q})+\lambda\cdot\ord_E(\fk{q}')).   
\end{split}   
\end{equation*}
Note that    
\begin{equation*}
\begin{split}
 &\frac{A(\ord_E)}{t}\cdot(A(\ord_E)+\ord_E(\fk{q})+\lambda\cdot\ord_E(\fk{q}'))\\
 =&m\cdot\lct (\fk{q},\lambda\cdot \fk{q}';\fk{a}_m)\cdot \frac{A(\ord_E)\ord_E(\fk{a}_m)}{mt}\\   
 \leq& \lct(\fk{q},\lambda\cdot\fk{q}';\fk{a}_{\bullet})\cdot \frac{A(\ord_E)\ord_E(\fk{a}_m)}{mt}.
\end{split}
\end{equation*}
Combining the above two inequalities, we get 
\begin{equation}\label{inequ-lcta=lctb}
\begin{split}
 &t\cdot{\ord_E(\fk{a}_m)}(A(\ord_E)+\ord_E(\fk{q})+\lambda\cdot\ord_E(\fk{q}'))\\
<&m\cdot\ord_E(\fk{b}_t)\big(A(\ord_E)+\ord_E(\fk{q})+\lambda\cdot\ord_E(\fk{q}')\big)\\
&+\lct(\fk{q},\lambda\cdot\fk{q}';\fk{a}_{\bullet})\cdot {A(\ord_E)\cdot\ord_E(\fk{a}_m)}.  
\end{split}    
\end{equation}

We now claim that $\ord_E(\fk{b}_t)\neq 0$ for large enough $t$. Otherwise, we have 
\begin{equation*}
    1\leq \frac{A(\ord_E)+\ord_E(\fk{q})+\lambda\cdot\ord_E(\fk{q}')}{A(\ord_E)}<\frac{\lct(\fk{q},\lambda\cdot\fk{q}';\fk{a}_{\bullet})}{t}
\end{equation*}
by (\ref{inequ-lcta=lctb}), and this is contradictory to our assumption that $\lct(\fk{q},\lambda\cdot\fk{q}';\fk{a}_{\bullet})<+\infty$. Therefore, dividing by $\ord_E(\fk{a}_m)\cdot\ord_E(\fk{b}_t)$ for (\ref{inequ-lcta=lctb}) for large enough $t$, we obtain
\begin{equation}\label{inequ-lcta=lctbII}
\begin{split}
     &t\cdot\lct(\fk{q},\lambda\cdot\fk{q}';\fk{b}_t)\\
     \leq & \frac{A(\ord_E)+\ord_E(\fk{q})+\lambda\cdot \ord_E(\fk{q}')}{\ord_E(\fk{b}_{t})/t}\\
     \leq & \frac{A(\ord_E)+\ord_E(\fk{q})+\lambda\cdot \ord_E(\fk{q}')}{\ord_E(\fk{a}_{m})/m}+\lct(\fk{q},\lambda\cdot\fk{q}';\fk{a}_{\bullet})\cdot \frac{A(\ord_E)}{\ord_E(\fk{b}_t)}\\
     = & m\cdot\lct(\fk{q},\lambda\cdot\fk{q}';\fk{a}_m)+\lct(\fk{q},\lambda\cdot\fk{q}';\fk{a}_{\bullet})\cdot \frac{A(\ord_E)}{\ord_E(\fk{b}_t)}\\
     \leq & \lct(\fk{q},\lambda\cdot\fk{q}';\fk{a}_{\bullet})+\lct(\fk{q},\lambda\cdot\fk{q}';\fk{a}_{\bullet})\cdot \frac{A(\ord_E)}{\ord_E(\fk{b}_t)}.
\end{split}
\end{equation}    
Note that 
\begin{equation*}
    \lim_{t\to\infty} \frac{A(\ord_E)}{\ord_E(\fk{b}_t)}=\lim_{t\to\infty} \frac{A(\ord_E)}{\ord_E(\fk{b}_t)/t}\cdot \frac{1}{t}=\frac{A(\ord_E)}{\ord_E(\fk{b}_{\bullet})}\lim_{t\to\infty}\frac{1}{t}=0,
\end{equation*}
since $\ord_E(\fk{b}_{\bullet})\geq \ord_E(\fk{b}_t)/t>0$. Letting $t$ goes to infinity in (\ref{inequ-lcta=lctbII}), we get 
\begin{equation*}
    \lct(\fk{q},\lambda\cdot\fk{q}';\fk{b}_{\bullet})\leq \lct(\fk{q},\lambda\cdot\fk{q}';\fk{a}_{\bullet})\qedhere.
\end{equation*}
\end{proof}

\begin{corollary}[cf. {\cite[Corollary 6.8]{JM12}}]\label{cor-lct.equals.inf.Val.mixed.ver}
Let $\fk{a}_{\bullet}$ be a graded sequence of ideals and $\fk{q}, \fk{q}'$ be ideals on $X$. Then
    \[\lct(\fk{q}, \lambda\cdot\fk{q}';\fk{a}_{\bullet})=\inf_{v\in\Val_X^*}\frac{A(v)+v(\fk{q})+\lambda\cdot v(\fk{q}')}{v(\fk{a}_{\bullet})},\]
    for each $\lambda\in (-\varepsilon_0,+\infty)$ such that $\lct(\fk{q}, \lambda\cdot\fk{q}';\fk{a}_{\bullet})>0$.    
\end{corollary}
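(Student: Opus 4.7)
The plan is to reduce the claim to the companion formula for the subadditive system of asymptotic multiplier ideals $\fk{b}_{\bullet}$ of $\fk{a}_{\bullet}$, already supplied by \Cref{prop-lct.equals.inf.Val.mixed.verb}, and then to swap $v(\fk{b}_{\bullet})$ for $v(\fk{a}_{\bullet})$ inside the infimum. Chaining \Cref{prop-lct(a.)=lct(b.)} with \Cref{prop-lct.equals.inf.Val.mixed.verb} will give
\[
  \lct(\fk{q},\lambda\cdot\fk{q}';\fk{a}_{\bullet})
  =\lct(\fk{q},\lambda\cdot\fk{q}';\fk{b}_{\bullet})
  =\inf_{v\in\Val_X^*}\frac{A(v)+v(\fk{q})+\lambda\cdot v(\fk{q}')}{v(\fk{b}_{\bullet})},
\]
so only the last denominator needs to be changed.

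To carry out that change I would split the infimum over $\Val_X^*$ according to the finiteness of $A(v)$. For $v\in\Val_X^*$ with $A(v)<+\infty$, \Cref{lem-relation between a and b}\,(3) provides the key identity $v(\fk{a}_{\bullet})=v(\fk{b}_{\bullet})$, so the two ratios coincide on this subset. For $v\in\Val_X^*$ with $A(v)=+\infty$, the values $v(\fk{q})$ and $v(\fk{q}')$ remain finite (any nonzero ideal contains some nonzero $f$ with $v(f)<+\infty$), so the numerator equals $+\infty$ and such valuations contribute $+\infty$ to both infima; valuations with $v(\fk{a}_{\bullet})=0$ or $v(\fk{b}_{\bullet})=0$ may be treated the same way. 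The assumption $\lct(\fk{q},\lambda\cdot\fk{q}';\fk{a}_{\bullet})>0$ will force the numerators that actually influence the infimum to be positive, heading off any sign issue arising from the possibility $\lambda<0$.

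Putting these observations together yields
\[
  \inf_{v\in\Val_X^*}\frac{A(v)+v(\fk{q})+\lambda\cdot v(\fk{q}')}{v(\fk{a}_{\bullet})}
  =\inf_{\substack{v\in\Val_X^*\\ A(v)<+\infty}}\frac{A(v)+v(\fk{q})+\lambda\cdot v(\fk{q}')}{v(\fk{b}_{\bullet})}
  =\inf_{v\in\Val_X^*}\frac{A(v)+v(\fk{q})+\lambda\cdot v(\fk{q}')}{v(\fk{b}_{\bullet})},
\]
and the first display then identifies this common value with $\lct(\fk{q},\lambda\cdot\fk{q}';\fk{a}_{\bullet})$. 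I do not expect a serious obstacle here: all of the analytic substance already sits in \Cref{prop-lct(a.)=lct(b.)}, \Cref{prop-lct.equals.inf.Val.mixed.verb}, and \Cref{lem-relation between a and b}\,(3), and the remaining work is precisely the case-splitting bookkeeping described above. The one subtlety that requires a moment of care is to confirm that the infinite-numerator and zero-denominator cases are genuinely harmless for the infimum; the dichotomy in $A(v)$ handles both cleanly.
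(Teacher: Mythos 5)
Your proposal takes essentially the same route as the paper: both chain \Cref{prop-lct(a.)=lct(b.)} with \Cref{prop-lct.equals.inf.Val.mixed.verb} to reduce to the infimum with $v(\fk{b}_{\bullet})$ in the denominator, then replace $v(\fk{b}_{\bullet})$ by $v(\fk{a}_{\bullet})$ via their equality whenever $A(v)<\infty$ (from \Cref{lem-relation between a and b}\,(3), i.e.\ \cite[Proposition 6.2]{JM12}). The paper leaves implicit the case-split on $A(v)=\infty$ (and the zero-denominator cases) that you spell out; your extra bookkeeping is sound but is detail, not a genuinely different argument.
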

   
\begin{proof}
By \Cref{prop-lct.equals.inf.Val.mixed.verb} and \Cref{prop-lct(a.)=lct(b.)}, we have 
\begin{equation*}
   \lct(\fk{q}, \lambda\cdot\fk{q}';\fk{a}_{\bullet})=\lct(\fk{q}, \lambda\cdot\fk{q}';\fk{b}_{\bullet})=\inf_{v\in\Val_X^*}\frac{A(v)+v(\fk{q})+\lambda\cdot v(\fk{q}')}{v(\fk{b}_{\bullet})}.
\end{equation*}
We finish our proof by noting that $v(\fk{b}_{\bullet})=v(\fk{a}_{\bullet})$ whenever $A(v)<\infty$, which is given by \cite[Proposition 6.2]{JM12}.
\end{proof}

\begin{lemma}\label{lem-JM.compute.mixed.ver}
    Assume that $v\in\Val_X^*$ computes $\lct(\fk{q}, \lambda\cdot\fk{q}';\fk{a}_{\bullet})<+\infty$. Then 
\begin{enumerate}
    \item $v$ also computes $\lct(\fk{q}, \lambda\cdot\fk{q}';\fk{a}_{\bullet}^v)$;
    \item $\lct(\fk{q}, \lambda\cdot\fk{q}';\fk{a}_{\bullet}^v)=v(\fk{a}_{\bullet})\cdot\lct(\fk{q}, \lambda\cdot\fk{q}';\fk{a}_{\bullet})$;
    \item any $v'\in\Val_X^*$ that computes $\lct(\fk{q}, \lambda\cdot\fk{q}';\fk{a}_{\bullet}^v)$ also computes $\lct(\fk{q}, \lambda\cdot\fk{q}';\fk{a}_{\bullet})$.
\end{enumerate}
\end{lemma}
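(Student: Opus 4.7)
The plan is to adapt the proof of \Cref{lem-JM.compute} mechanically, replacing the numerator $A(w)+w(\fk{q})$ by $A(w)+w(\fk{q})+\lambda\cdot w(\fk{q}')$ throughout, and using \Cref{cor-lct.equals.inf.Val.mixed.ver} in place of \Cref{lem-JM12 section6.2} to realize the mixed jumping number as an infimum over $\Val_X^*$. The two other ingredients, both already at our disposal, are the identity $v(\fk{a}_{\bullet}^v)=1$ from \Cref{lem-w(a.bullet.v)=inf} and the derived inequality
\[
w(\fk{a}_{\bullet}^v)\cdot v(\fk{a}_{\bullet})\le w(\fk{a}_{\bullet}),
\]
which follows by applying \Cref{lem-w(a.bullet.v)=inf} to each $\fk{a}_m$ and passing to the limit $m\to\infty$ (the same computation is used in the proof of \Cref{lem-JM.compute}).

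For (1), the assumption that $v$ computes the mixed jumping number gives, for every $w\in\Val_X^*$,
\[
\frac{A(v)+v(\fk{q})+\lambda\cdot v(\fk{q}')}{v(\fk{a}_{\bullet})}=\lct(\fk{q},\lambda\cdot\fk{q}';\fk{a}_{\bullet})\le\frac{A(w)+w(\fk{q})+\lambda\cdot w(\fk{q}')}{w(\fk{a}_{\bullet})}.
\]
Both sides are strictly positive (this is where one uses the standing assumption that $\lct(\fk{q},\lambda\cdot\fk{q}';\fk{a}_{\bullet})>0$), so we may rearrange to obtain
\[
\frac{A(w)+w(\fk{q})+\lambda\cdot w(\fk{q}')}{A(v)+v(\fk{q})+\lambda\cdot v(\fk{q}')}\ge\frac{w(\fk{a}_{\bullet})}{v(\fk{a}_{\bullet})}\ge w(\fk{a}_{\bullet}^v).
\]
Combined with $v(\fk{a}_{\bullet}^v)=1$, this yields the desired inequality with $\fk{a}_{\bullet}^v$ in the denominator for every $w\in\Val_X^*$, and (1) follows from \Cref{cor-lct.equals.inf.Val.mixed.ver}.

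For (2), the computation is a direct substitution using $v(\fk{a}_{\bullet}^v)=1$:
\[
\lct(\fk{q},\lambda\cdot\fk{q}';\fk{a}_{\bullet})=\frac{A(v)+v(\fk{q})+\lambda\cdot v(\fk{q}')}{v(\fk{a}_{\bullet})}=\frac{1}{v(\fk{a}_{\bullet})}\cdot\frac{A(v)+v(\fk{q})+\lambda\cdot v(\fk{q}')}{v(\fk{a}_{\bullet}^v)}=\frac{\lct(\fk{q},\lambda\cdot\fk{q}';\fk{a}_{\bullet}^v)}{v(\fk{a}_{\bullet})},
\]
where the last equality uses (1). For (3), suppose $v'$ computes $\lct(\fk{q},\lambda\cdot\fk{q}';\fk{a}_{\bullet}^v)$. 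Combining (2) with $v'(\fk{a}_{\bullet}^v)\cdot v(\fk{a}_{\bullet})\le v'(\fk{a}_{\bullet})$ gives
\[
\lct(\fk{q},\lambda\cdot\fk{q}';\fk{a}_{\bullet})=\frac{A(v')+v'(\fk{q})+\lambda\cdot v'(\fk{q}')}{v'(\fk{a}_{\bullet}^v)\cdot v(\fk{a}_{\bullet})}\ge\frac{A(v')+v'(\fk{q})+\lambda\cdot v'(\fk{q}')}{v'(\fk{a}_{\bullet})},
\]
so $v'$ computes $\lct(\fk{q},\lambda\cdot\fk{q}';\fk{a}_{\bullet})$ by \Cref{cor-lct.equals.inf.Val.mixed.ver}.

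The only point requiring genuine attention, as opposed to bookkeeping, is the sign issue introduced by allowing $\lambda\in(-\varepsilon_0,0)$: we need to know that the quantities $A(w)+w(\fk{q})+\lambda\cdot w(\fk{q}')$ which appear as denominators after rearrangement are strictly positive. This is exactly guaranteed by the positivity assumption built into the definition of the mixed jumping number ($\lct(\fk{q},\lambda\cdot\fk{q}';\fk{a}_{\bullet})>0$ for $\lambda\in(-\varepsilon_0,+\infty)$) together with the infimum formula from \Cref{cor-lct.equals.inf.Val.mixed.ver}; once this is checked, the three claims follow verbatim from the argument used for \Cref{lem-JM.compute}.
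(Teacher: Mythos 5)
Your proposal is correct and is exactly the approach the paper intends: the paper's own proof of this lemma is a one-line remark that, with \Cref{cor-lct.equals.inf.Val.mixed.ver} in hand, one repeats the argument of \Cref{lem-JM.compute} verbatim with $A(w)+w(\fk{q})$ replaced by $A(w)+w(\fk{q})+\lambda\cdot w(\fk{q}')$. Your handling of the positivity issue for $\lambda<0$ (needed to rearrange the inequality in (1)) is the right thing to flag and is indeed covered by the standing assumption that $\lct(\fk{q},\lambda\cdot\fk{q}';\fk{a}_{\bullet})>0$ built into the definition of the mixed jumping number.
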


\begin{proof}
    With the help of \Cref{cor-lct.equals.inf.Val.mixed.ver}, this lemma can be proved by the same way of proving Lemma \ref{lem-JM.compute}.
\end{proof}

\section{Tian functions of valuations}\label{section6}
This section is devoted to the proof of \Cref{thm-main thmB} (1), following the introduction of the Tian function. Let $\fk{q}, \fk{q}'$ be ideals on $X$, and $\fk{a}_{\bullet}=\{\fk{a}_m\}$ a nonzero graded sequence of ideals on $X$. We have the following lemma.

\begin{lemma}\label{lem-lct.concave}
    If $\lct(\fk{q},\fk{q}';\fk{a}_{\bullet})<+\infty$, then the function
    \[(-\varepsilon_0,+\infty)\ni t \longmapsto \lct(\fk{q}, t\cdot\fk{q}'; \fk{a}_{\bullet})\]
    is concave and increasing, where $\varepsilon_0$ is a sufficiently small positive real number.
\end{lemma}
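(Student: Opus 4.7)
The plan is to derive both properties directly from the valuative description of the mixed jumping number given in Corollary~\ref{cor-lct.equals.inf.Val.mixed.ver} by exhibiting $t\mapsto \lct(\fk{q}, t\cdot\fk{q}'; \fk{a}_{\bullet})$ as the pointwise infimum of a family of affine functions with non-negative slopes.

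First, I would invoke Corollary~\ref{cor-lct.equals.inf.Val.mixed.ver} to write, for every $t\in(-\varepsilon_0,+\infty)$,
\[
\lct(\fk{q}, t\cdot\fk{q}'; \fk{a}_{\bullet}) \;=\; \inf_{v\in\Val_X^*}\frac{A(v)+v(\fk{q})+t\cdot v(\fk{q}')}{v(\fk{a}_{\bullet})},
\]
where only those $v$ with $v(\fk{a}_{\bullet})>0$ contribute a finite value (the remaining ones yield $+\infty$ and can be discarded). For each such $v$, the map
\[
g_v(t)\;\coloneqq\;\frac{A(v)+v(\fk{q})}{v(\fk{a}_{\bullet})}\;+\;t\cdot\frac{v(\fk{q}')}{v(\fk{a}_{\bullet})}
\]
is affine in $t$, and its slope $v(\fk{q}')/v(\fk{a}_{\bullet})$ is non-negative because $\fk{q}'\subseteq\cal{O}_X$ forces $v(\fk{q}')\ge 0$.

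Next, I would observe that the pointwise infimum of any family of affine (hence concave) functions is concave, and the pointwise infimum of any family of non-decreasing functions is non-decreasing. Applying these two classical facts to the family $\{g_v\}_{v\in\Val_X^*}$ gives concavity and monotonicity of $t\mapsto \lct(\fk{q}, t\cdot\fk{q}'; \fk{a}_{\bullet})$ on the domain $(-\varepsilon_0,+\infty)$. The hypothesis $\lct(\fk{q},\fk{q}'; \fk{a}_{\bullet})<+\infty$ ensures the infimum is finite (so not identically $+\infty$), and the choice of $\varepsilon_0$ guarantees positivity, so the expression genuinely defines a real-valued function on the prescribed interval.

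There is no substantive obstacle here; the only care needed is the book-keeping that valuations with $v(\fk{a}_{\bullet})=0$ (or with $A(v)=+\infty$) do not spoil the representation, which is handled by the convention that such terms equal $+\infty$ in the infimum.
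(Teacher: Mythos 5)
Your proof is correct, but takes a genuinely different (and slightly cleaner) route than the paper. The paper works at the finite level: for each fixed $m$, it writes $\lct(\fk{q},t\cdot\fk{q}';\fk{a}_m)$ as an infimum of linear functions of $t$ over divisors $E$ (the definition (5.1)), concludes that each $m\cdot\lct(\fk{q},t\cdot\fk{q}';\fk{a}_m)$ is concave and increasing, and then passes to the limit $m\to\infty$, relying on the fact that a pointwise limit of concave increasing functions is concave and increasing. You instead apply Corollary~\ref{cor-lct.equals.inf.Val.mixed.ver} directly to the asymptotic object, expressing $t\mapsto\lct(\fk{q},t\cdot\fk{q}';\fk{a}_\bullet)$ as a single infimum of affine functions $g_v$ with non-negative slopes over $v\in\Val_X^*$. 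This bypasses the limiting argument entirely. The trade-off is that you invoke the asymptotic valuative formula of Corollary~\ref{cor-lct.equals.inf.Val.mixed.ver} (which requires $\lct(\fk{q},t\cdot\fk{q}';\fk{a}_\bullet)>0$ on the whole interval, guaranteed by the choice of $\varepsilon_0$ in Section~\ref{section5}), whereas the paper needs only the elementary finite-$m$ formula plus a limit; since that corollary is proved before this lemma, there is no circularity, and your book-keeping remark about discarding $v$ with $v(\fk{a}_\bullet)=0$ or $A(v)=+\infty$ correctly handles the degenerate terms.
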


\begin{proof}
    In fact, for each $t$ and $m\in\bfZ_{+}$,
    \begin{flalign*}\label{eq-lctqkamv}
        \begin{split}
            \lct(\fk{q}, t\cdot\fk{q}';\fk{a}_{m})=\inf_E\frac{A(\ord_E)+\ord_E(\fk{q})+\ord_E(\fk{q}')\cdot t}{\ord_E(\fk{a}_{m})},
        \end{split}
    \end{flalign*}
    where the infimum is over all divisors $E$ over $X$ with $\ord_E(\fk{a}_m)>0$. Then the function $\lct(\fk{q}, t\cdot\fk{q}';\fk{a}_{m})$ is concave and increasing in $t$ as the infimum of a family of linear functions, for each fixed $m\in\bfZ_+$. It follows that the function $\lct(\fk{q}, t\cdot\fk{q}';\fk{a}_{\bullet})=\lim\limits_{m\to\infty} m\cdot \lct(\fk{q}, t\cdot\fk{q}';\fk{a}_{m})$ is also concave and increasing.
\end{proof}

The function $ t \longmapsto \lct(\fk{q}, t\cdot\fk{q}'; \fk{a}_{\bullet})$ is an algebro-geometric analogue of the so-called \emph{Tian function} named in \cite{BGMY23}, so we will also call the function by \textbf{Tian function}. When the graded sequence is associated to a valuation of finite log discrepancy, we have

\begin{proposition}\label{prop-lim.equals.vq'}
    Let $v\in\Val_X^*$ be a valuation with $A(v)<+\infty$. Then for any nonzero ideals $\fk{q}$, $\fk{q}'$, the limit
    \[\lim_{t\to +\infty}\frac{\lct(\fk{q},t\cdot\fk{q}';\fk{a}_{\bullet}^v)}{t}=v(\fk{q}').\]
\end{proposition}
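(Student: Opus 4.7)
The plan is to sandwich the quantity $\lct(\fk{q}, t\cdot\fk{q}';\fk{a}_{\bullet}^v)/t$ between matching upper and lower bounds by invoking the variational formula of \Cref{cor-lct.equals.inf.Val.mixed.ver}, which rewrites the mixed jumping number as
\[
\lct(\fk{q}, t\cdot\fk{q}';\fk{a}_{\bullet}^v)
=\inf_{w\in\Val_X^*}\frac{A(w)+w(\fk{q})+t\cdot w(\fk{q}')}{w(\fk{a}_{\bullet}^v)}.
\]

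For the upper bound, I will substitute the valuation $w=v$ itself. By \Cref{lem-w(a.bullet.v)=inf} we have $v(\fk{a}_{\bullet}^v)=1$, and combined with the hypothesis $A(v)<+\infty$ this gives
\[
\lct(\fk{q}, t\cdot\fk{q}';\fk{a}_{\bullet}^v)\le A(v)+v(\fk{q})+t\cdot v(\fk{q}'),
\]
so that $\limsup_{t\to\infty}\lct(\fk{q}, t\cdot\fk{q}';\fk{a}_{\bullet}^v)/t\le v(\fk{q}')$.

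For the lower bound, I will discard the (non-negative) terms $A(w)\ge 0$ and $w(\fk{q})\ge 0$ in the numerator of each fraction in the infimum, reducing the problem to showing that $w(\fk{q}')/w(\fk{a}_{\bullet}^v)\ge v(\fk{q}')$ uniformly in $w\in\Val_X^*$ with $w(\fk{a}_{\bullet}^v)>0$. This is exactly the content of \Cref{lem-w(a.bullet.v)=inf} applied to the ideal $\fk{b}=\fk{q}'$ (in the case $v(\fk{q}')>0$): the formula $w(\fk{a}_{\bullet}^v)=\inf_\fk{b}w(\fk{b})/v(\fk{b})$ implies $w(\fk{a}_{\bullet}^v)\le w(\fk{q}')/v(\fk{q}')$, hence $w(\fk{q}')/w(\fk{a}_{\bullet}^v)\ge v(\fk{q}')$. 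Taking the infimum over $w$ and dividing by $t$ then yields $\lct(\fk{q}, t\cdot\fk{q}';\fk{a}_{\bullet}^v)/t\ge v(\fk{q}')$ for every $t>0$, and the degenerate case $v(\fk{q}')=0$ is immediate. Combining the two bounds gives the desired limit.

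The argument presents no serious obstacle; the essential point is simply that the lower bound on the ratio $w(\fk{q}')/w(\fk{a}_{\bullet}^v)$ furnished by \Cref{lem-w(a.bullet.v)=inf} coincides exactly with the slope $v(\fk{q}')$ produced by the canonical upper bound $w=v$, forcing the limit to exist and equal $v(\fk{q}')$.
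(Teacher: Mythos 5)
Your proof is correct. The upper bound is exactly the paper's (substituting $w=v$, using $v(\fk{a}_{\bullet}^v)=1$ and $A(v)<+\infty$); the difference is in the lower bound. The paper builds an auxiliary graded sequence $\fk{c}_m=\fk{q}'^{\lceil m/v(\fk{q}')\rceil}\subseteq\fk{a}_m^v$, observes $w(\fk{c}_{\bullet})=w(\fk{q}')/v(\fk{q}')$, and applies \Cref{cor-lct.equals.inf.Val.mixed.ver} to $\fk{c}_{\bullet}$ to get $\lct(\fk{q},k\cdot\fk{q}';\fk{c}_{\bullet})\ge k\,v(\fk{q}')$, whereas you apply \Cref{cor-lct.equals.inf.Val.mixed.ver} once to $\fk{a}_{\bullet}^v$ itself and then read the needed inequality $w(\fk{a}_{\bullet}^v)\le w(\fk{q}')/v(\fk{q}')$ directly off \Cref{lem-w(a.bullet.v)=inf} with $\fk{b}=\fk{q}'$. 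These are the same estimate in substance — the paper's $\fk{c}_{\bullet}$ is precisely a geometric realization of the test ideal $\fk{b}=\fk{q}'$ in Lemma \ref{lem-w(a.bullet.v)=inf} — but your route is shorter, avoids the auxiliary construction, and also sidesteps the paper's preliminary appeal to concavity (\Cref{lem-lct.concave}) for existence of the limit, since your bounds $v(\fk{q}')\le\lct(\fk{q},t\cdot\fk{q}';\fk{a}_{\bullet}^v)/t\le v(\fk{q}')+\big(A(v)+v(\fk{q})\big)/t$ hold for every $t>0$ and already pinch the limit.
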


\begin{proof}
    Lemma \ref{lem-lct.concave} shows the limit
    \[\mu\coloneqq\lim_{t\to +\infty}\frac{\lct(\fk{q},t\cdot\fk{q}';\fk{a}_{\bullet}^v)}{t}\]
    exists. First, we prove $\mu\le v(\fk{q}')$. For each $k\in\bfZ_+$, it holds that
    \[\lct(\fk{q},k\cdot\fk{q}';\fk{a}^v_{\bullet})=\lct^{\fk{q}\cdot{\fk{q}'}^k}(\fk{a}^v_{\bullet})\le\frac{A(v)+v(\fk{q})+k v(\fk{q}')}{v(\fk{a}^v_{\bullet})}=A(v)+v(\fk{q})+kv(\fk{q}'),\]
    which implies $\mu\le v(\fk{q}')$ since $A(v)<+\infty$.

    Next, we verify $\mu\ge v(\fk{q}')$. We may assume $v(\fk{q}')>0$. Set
    \[\fk{c}_{\bullet}=\{\fk{c}_m\}\coloneqq\left\{\fk{q}'^{\lceil m/v(\fk{q}')\rceil}\right\}_{m=1}^{\infty},\]
    which is a graded sequence of ideals. Then we have $\fk{c}_m\subseteq\fk{a}_{m}^{v}$ for each $m$. Thus, $\lct(\fk{q}, k\cdot\fk{q}';\fk{c}_{\bullet})\leq \lct(\fk{q}, k\cdot\fk{q}'; \fk{a}^v_{\bullet})<+\infty$ for every nonnegative integer $k$. By the definition of mixed jumping numbers and \Cref{cor-lct.equals.inf.Val.mixed.ver}, we get
    \begin{flalign*}
        \begin{split}
            \mathrm{lct}(\fk{q}, k\cdot\fk{q}';\fk{c}_{\bullet})&=\inf_{w\in \Val_X^*}\frac{A(w)+w(\fk{q})+k\cdot w(\fk{q}')}{w(\fk{c}_{\bullet})}\\
            &=\inf_{w\in \Val_X^*}\frac{A(w)+w(\fk{q})+k\cdot w(\fk{q}')}{w(\fk{q}')/v(\fk{q}')}\ge k\cdot v(\fk{q}').
        \end{split}
    \end{flalign*}
    It follows that
    \[\mu=\lim_{k\to\infty}\frac{\mathrm{lct}(\fk{q}, k\cdot\fk{q}';\fk{a}^{v}_{\bullet})}{k}\ge \liminf_{k\to\infty}\frac{\mathrm{lct}(\fk{q}, k\cdot\fk{q}';\fk{c}_{\bullet})}{k}\ge v(\fk{q}').\]

    The proof is complete.
\end{proof}

If additionally the valuation computes the jumping number, we can verify that for any nonzero ideal $\fk{q}'$ the Tian function is linear on the positive half-axis.

\begin{proposition}\label{prop-compute.linear}
    Let $\fk{q}$ and $\fk{q}'$ be two nonzero ideals on $X$, and $v\in\Val_X^*$ a valuation computing $\lct^{\fk{q}}(\fk{a}^v_{\bullet})<+\infty$. Then the Tian function
    \[t\longmapsto\lct(\fk{q}, t\cdot\fk{q}';\fk{a}_{\bullet}^v)\]
    is linear on $[0,+\infty)$.
\end{proposition}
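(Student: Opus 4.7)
My plan is to exploit three ingredients already established: concavity of the Tian function (Lemma \ref{lem-lct.concave}), the asymptotic slope computation (Proposition \ref{prop-lim.equals.vq'}), and the linear upper bound coming from Corollary \ref{cor-lct.equals.inf.Val.mixed.ver} applied to the valuation $v$ itself. Set
\[
f(t)\coloneqq \lct(\fk{q}, t\cdot \fk{q}'; \fk{a}_{\bullet}^v),
\]
which, by Lemma \ref{lem-lct.concave}, is concave on $(-\varepsilon_0,+\infty)$. Since $\lct^{\fk{q}}(\fk{a}_{\bullet}^v)<+\infty$ and $v$ computes it, we have $A(v)<+\infty$, and together with $v(\fk{a}_{\bullet}^v)=1$ this gives
\[
f(0)=\lct^{\fk{q}}(\fk{a}_{\bullet}^v)=A(v)+v(\fk{q}).
\]

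Next I would produce the linear upper bound. Applying Corollary \ref{cor-lct.equals.inf.Val.mixed.ver} to the specific valuation $v$ yields, for every $t\geq 0$,
\[
f(t)=\inf_{w\in \Val_X^*}\frac{A(w)+w(\fk{q})+t\cdot w(\fk{q}')}{w(\fk{a}_{\bullet}^v)}\;\leq\; \frac{A(v)+v(\fk{q})+t\cdot v(\fk{q}')}{v(\fk{a}_{\bullet}^v)}=f(0)+t\cdot v(\fk{q}').
\]
So $f$ lies below the affine function $L(t)\coloneqq f(0)+t\cdot v(\fk{q}')$ and agrees with it at $t=0$.

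The remaining step is to show $f(t)\geq L(t)$ for $t\geq 0$, and this is where concavity plus the asymptotic slope close the argument. For a concave function on $[0,+\infty)$ the chord slope $(f(t)-f(0))/t$ is nonincreasing in $t>0$, so
\[
\frac{f(t)-f(0)}{t}\;\geq\; \lim_{s\to +\infty}\frac{f(s)-f(0)}{s}\;=\; \lim_{s\to +\infty}\frac{f(s)}{s}\;=\; v(\fk{q}'),
\]
where the last equality is Proposition \ref{prop-lim.equals.vq'} (and $f(0)$ is finite, so the $-f(0)/s$ term vanishes in the limit). Rearranging gives $f(t)\geq f(0)+t\cdot v(\fk{q}')=L(t)$. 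Combined with the reverse inequality above, $f(t)=L(t)$ on $[0,+\infty)$, which is the desired linearity.

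I do not anticipate a serious obstacle: the whole argument is a squeeze between a concave function tangent to its linear majorant at the origin and sharing its slope at infinity. The only minor point to keep track of is that $A(v)<+\infty$ is indeed available (it follows from $\lct^{\fk{q}}(\fk{a}_{\bullet}^v)<+\infty$ and $v(\fk{q})\geq 0$), which is precisely the hypothesis needed to invoke Proposition \ref{prop-lim.equals.vq'}.
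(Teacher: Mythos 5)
Your proof is correct, and it takes a genuinely different route from the paper's. The paper invokes the Jonsson--Musta\c{t}\u{a} characterization (\Cref{thm-TFAEinJM12}) to argue that $v\ge v$ implies $A(w)+w(\fk{q}\cdot{\fk{q}'}^k)\ge A(v)+v(\fk{q}\cdot{\fk{q}'}^k)$ for every $w\ge v$ and every integer $k\ge 0$, which shows $v$ computes each $\lct^{\fk{q}\cdot{\fk{q}'}^k}(\fk{a}_{\bullet}^v)$ and hence $\T(k)=A(v)+v(\fk{q})+kv(\fk{q}')$ explicitly at integer points; concavity then forces linearity between the integers. You instead set up a squeeze: the linear majorant $L(t)=A(v)+v(\fk{q})+t\cdot v(\fk{q}')$ follows from plugging $w=v$ into the infimum formula of \Cref{cor-lct.equals.inf.Val.mixed.ver}, and the matching minorant follows from concavity (the chord slope from the origin is nonincreasing, hence bounded below by the asymptotic slope) together with \Cref{prop-lim.equals.vq'}, which identifies that asymptotic slope as exactly $v(\fk{q}')$. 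Both arguments are complete; the paper's yields the extra information that $v$ computes the entire family of jumping numbers $\lct^{\fk{q}\cdot{\fk{q}'}^k}(\fk{a}_{\bullet}^v)$, whereas yours is a more self-contained convex-geometry argument that bypasses \Cref{thm-TFAEinJM12} at the cost of leaning on \Cref{prop-lim.equals.vq'} (which the paper proves beforehand anyway, so this is an available and legitimate tool).
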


\begin{proof}
    Denote $\T(t)=\lct(\fk{q}, t\cdot\fk{q}';\fk{a}_{\bullet}^v)$ for simplicity, where $t\in\bfR_{\ge 0}$. Due to the concavity of $\T(t)$ given by \Cref{lem-lct.concave}, we only need to prove that for each $k\in\bfZ_{\ge 0}$, $\T(k)$ is linear in $k$.

    Since $v$ computes $\lct^{\fk{q}}(\fk{a}^v_{\bullet})<+\infty$, we have $A(v)<+\infty$, and for each $w\in\Val_X$ with $w\ge v$, it holds that $A(w)+w(\fk{q})\ge A(v)+v(\fk{q})$ by \Cref{thm-TFAEinJM12}. Thus, for such $w$, it also holds that $A(w)+w(\fk{q}\cdot{\fk{q}'}^k)\ge A(v)+v(\fk{q}\cdot{\fk{q}'}^k)$ for each $k$. This shows that $v$ also computes $\lct^{\fk{q}\cdot{\fk{q}'}^k}(\fk{a}_{\bullet}^v)$ due to \Cref{thm-TFAEinJM12}, which implies
    \[\T(k)=\lct^{\fk{q}\cdot{\fk{q}'}^k}(\fk{a}_{\bullet}^v)=A(v)+v(\fk{q}\cdot{\fk{q}'}^k)=A(v)+v(\fk{q})+kv(\fk{q}'), \quad \forall k\in\bfZ_{\ge 0},\]
    which is linear in $k$. The proof is done.
\end{proof}

In fact, the linearity of Tian functions conversely implies that the valuation computes the jumping number. To prove this fact, we first present a technical lemma.

\begin{lemma}\label{lem-f(t).linear.on.T.+infty}
    Let $\fk{q}$ be a nonzero ideal on $X$, and let $v\in\Val_X^*$ be a valuation with $A(v)<+\infty$. If there exists a nonzero ideal $\fk{q}'$ such that $\T(t)=\lct(\fk{q},t\cdot\fk{q}';\fk{a}_{\bullet}^v)$ is linear in $t\in[t_0,+\infty)$ for some $t_0\ge 0$, then there exists $w\in\ZVal_X$ such that the following statements hold:
    \begin{enumerate}
        \item $w\ge v$;
        \item $w(\fk{q}')=v(\fk{q}')$;
        \item $\lct(\fk{q}, t_0\cdot\fk{q}'; \fk{a}_{\bullet}^v)=\lct(\fk{q}, t_0\cdot\fk{q}'; \fk{a}_{\bullet}^w)=A(w)+w(\fk{q})+t_0\cdot w(\fk{q}')$.
    \end{enumerate}
\end{lemma}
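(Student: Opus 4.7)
The plan is to exhibit $w$ directly as a rescaled Zhou valuation that computes the jumping number at some integer $k > t_0$, and to leverage the linearity of $\T(t) \coloneqq \lct(\fk{q}, t\cdot\fk{q}'; \fk{a}_{\bullet}^v)$ on $[t_0,+\infty)$ to force $w(\fk{q}') = v(\fk{q}')$ exactly, thereby avoiding any limit argument that would risk leaving the cone $\ZVal_X$.

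By \Cref{prop-lim.equals.vq'}, $\lim_{t\to+\infty}\T(t)/t = v(\fk{q}')$, so linearity on $[t_0,+\infty)$ gives $\T(t) = \T(t_0) + (t-t_0)\,v(\fk{q}')$ for all $t \ge t_0$. Pick any integer $k > t_0$. Since $\T(k) < +\infty$, \Cref{cor-ZV.computes} produces a Zhou valuation $w_k$ related to the nonzero ideal $\fk{q}\cdot{\fk{q}'}^k$ that computes $\lct^{\fk{q}\cdot{\fk{q}'}^k}(\fk{a}_{\bullet}^v) = \T(k)$. By \Cref{prop-lct(Zhou val)=1} and \Cref{thm-A(v)=1-v(q)}, $A(w_k) + w_k(\fk{q}) + k\cdot w_k(\fk{q}') = 1$, and \Cref{lem-JM.compute.mixed.ver}(2) then yields $w_k(\fk{a}_{\bullet}^v) = 1/\T(k)$. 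Set $w \coloneqq \T(k)\cdot w_k \in \ZVal_X$; then $w(\fk{a}_{\bullet}^v) = 1$, so $w \ge v$ by \Cref{lem-w(a.bullet.v)=inf}, and $A(w) + w(\fk{q}) + k\cdot w(\fk{q}') = \T(k)$.

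To pin down $w(\fk{q}')$, apply the infimum formula \Cref{cor-lct.equals.inf.Val.mixed.ver} at $t_0$ to the valuation $w$:
\[
\T(t_0) \le \frac{A(w)+w(\fk{q})+t_0\cdot w(\fk{q}')}{w(\fk{a}_{\bullet}^v)} = \T(k) - (k-t_0)\cdot w(\fk{q}') = \T(t_0) - (k-t_0)\bigl(w(\fk{q}') - v(\fk{q}')\bigr),
\]
so $(k-t_0)(w(\fk{q}') - v(\fk{q}')) \le 0$. Combined with $k > t_0$ and $w \ge v$ (which gives $w(\fk{q}') \ge v(\fk{q}')$), this forces $w(\fk{q}') = v(\fk{q}')$. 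Then the displayed line is an equality, so $A(w) + w(\fk{q}) + t_0\cdot w(\fk{q}') = \T(t_0)$, meaning that $w$ computes $\lct(\fk{q}, t_0\cdot\fk{q}';\fk{a}_{\bullet}^v)$. Finally, \Cref{lem-JM.compute.mixed.ver}(2) gives $\lct(\fk{q}, t_0\cdot\fk{q}';\fk{a}_{\bullet}^w) = w(\fk{a}_{\bullet}^v)\cdot\T(t_0) = \T(t_0)$, completing assertions (1)--(3).

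The main obstacle I anticipated was preserving membership in $\ZVal_X$: a natural compactness argument taking limits of Zhou valuations $\tilde w_k$ with $\tilde w_k(\fk{q}') \to v(\fk{q}')$ would produce a candidate whose Zhouness may be lost, since the cone is only shown to be dense, not closed. The resolution is that the linearity hypothesis, evaluated against a \emph{single} integer $k > t_0$, already forces $w(\fk{q}') = v(\fk{q}')$ exactly for the appropriately rescaled Zhou valuation $w = \T(k)\cdot w_k$, so no limit is needed.
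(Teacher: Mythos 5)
Your proof is correct and follows essentially the same strategy as the paper's: fix an integer $k > t_0$, produce a rescaled Zhou valuation $w$ related to $\fk{q}\cdot{\fk{q}'}^k$ that computes $\lct(\fk{q}, k\cdot\fk{q}'; \fk{a}_{\bullet}^v)$ with $w(\fk{a}_{\bullet}^v)=1$, and exploit the linearity of $\T$ on $[t_0,+\infty)$ together with $A(w)+w(\fk{q})+k\,w(\fk{q}')=\T(k)$ to force $w(\fk{q}')=v(\fk{q}')$. The only cosmetic difference is that the paper introduces $\wt{\T}(t)=\lct(\fk{q},t\cdot\fk{q}';\fk{a}_{\bullet}^w)$ and runs a concavity comparison $\wt{\T}\geq\T$ with equality at $k$, whereas you evaluate the infimum formula for $\T(t_0)$ at $w$ directly and close the chain algebraically; both routes are valid and deliver the same equalities.
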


\begin{proof}
    Pick $N\in\bfZ_{+}$ with $N>t_0$. Then by \Cref{prop-lctqav'=lctqav} there exists a Zhou valuation $\tilde{w}$ related to $\fk{q}\cdot{\fk{q}'}^N$ such that $w\coloneqq \T(N) \tilde{w}\ge v$ and
    \[\lct(\fk{q}, N\cdot\fk{q}'; \fk{a}_{\bullet}^w)=\lct^{\fk{q}\cdot{\fk{q}'}^N}(\fk{a}_{\bullet}^w)=\lct^{\fk{q}\cdot{\fk{q}'}^N}(\fk{a}_{\bullet}^v)=\lct(\fk{q},N\cdot \fk{q}';\fk{a}_{\bullet}^v)=\T(N).\]
    Denote $\wt{\T}(t)=\lct(\fk{q}, t\cdot\fk{q}'; \fk{a}_{\bullet}^w)$, where $t\in [0,+\infty)$. Note $\wt{\T}(t)\ge \T(t)$ since $w\ge v$. Since $\wt{\T}$ is concave on $(0,+\infty)$ by \Cref{lem-lct.concave}, $\wt{\T}(N)=\T(N)$, and $\T$ is linear on $[t_0,+\infty)$, it follows that $\wt{\T}\equiv \T$ on $[t_0,+\infty)$. Then since $A(v)<+\infty$, we get $\wt{T}(t_0)=\T(t_0)$ and 
    \[w(\fk{q}')=\lim_{t\to +\infty}\frac{\wt{\T}(t)}{t}=\lim_{t\to +\infty} \frac{\T(t)}{t}=v(\fk{q}'),\]
    according to Proposition \ref{prop-lim.equals.vq'}.

    What remains to be proved for $w$ is $\lct(\fk{q}, t_0\cdot\fk{q}'; \fk{a}_{\bullet}^w)=A(w)+w(\fk{q})+t_0\cdot w(\fk{q}')$. Since $\tilde{w}$ is a Zhou valuation related to $\fk{q}\cdot {\fk{q}'}^N$, due to \Cref{thm-A(v)=1-v(q)} we have
    \[\wt{\T}(N)=\T(N)=A(w)+w(\fk{q}\cdot {\fk{q}'}^N)=A(w)+w(\fk{q})+N\cdot w(\fk{q}').\]
    Thus,
    \begin{flalign*}
        \begin{split}
            \wt{\T}(t_0)=\T(t_0)=&\T(N)-(N-t_0)\cdot v(\fk{q}')\\
            =&A(w)+w(\fk{q})+N\cdot w(\fk{q}')- (N-t_0)\cdot w(\fk{q}')\\
            =&A(w)+w(\fk{q})+t_0\cdot w(\fk{q}').
        \end{split}
    \end{flalign*}
    The proof is complete.
\end{proof}

Now we prove the following proposition, which is a converse of Proposition \ref{prop-compute.linear}. And this completes the proof of \Cref{thm-main thmB} (1).

\begin{proposition}\label{prop-linear.imply.compute}
    Let $\fk{q}$ be a nonzero ideal on $X$ and $v\in\Val_X^*$ with $A(v)<+\infty$. If the function
    \[t \longmapsto \lct(\fk{q}, t\cdot\fk{q}' ; \fk{a}_{\bullet}^v)\]
    is linear in $t\in [0,+\infty)$ for every nonzero ideal $\fk{q}'$ on $X$, then $v$ computes $\lct^{\fk{q}}(\fk{a}_{\bullet}^v)$.
\end{proposition}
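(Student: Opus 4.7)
The plan is to apply Lemma~\ref{lem-f(t).linear.on.T.+infty} at $t_0 = 0$ with well-chosen ideals $\fk{q}'$, thereby building a net of valuations converging to $v$ in $\Val_X$ while having constant log discrepancy, and then to invoke the lower semicontinuity of $A$. Without loss of generality we assume $\ell \coloneqq \lct^{\fk{q}}(\fk{a}_{\bullet}^v) < +\infty$, since otherwise $v$ trivially computes it (as $A(v) < +\infty$; see the discussion preceding Theorem~\ref{thm-exist.val.comp}).

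The key trick is to apply the lemma with $\fk{q}'$ taken to be a product of ideals. Given any finite family of nonzero ideals $\fk{b}_1 = \fk{q}, \fk{b}_2, \ldots, \fk{b}_k$ on $X$, I set $\fk{q}'_{\fk{b}} \coloneqq \fk{b}_1\fk{b}_2\cdots\fk{b}_k$ and apply Lemma~\ref{lem-f(t).linear.on.T.+infty} with $t_0 = 0$ (valid by the linearity hypothesis) to obtain $u_{\fk{b}} \in \ZVal_X$ with $u_{\fk{b}} \geq v$, $u_{\fk{b}}(\fk{q}'_{\fk{b}}) = v(\fk{q}'_{\fk{b}})$, and $A(u_{\fk{b}}) + u_{\fk{b}}(\fk{q}) = \ell$. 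Because valuations are additive on products, $u_{\fk{b}}(\fk{q}'_{\fk{b}}) = \sum_i u_{\fk{b}}(\fk{b}_i)$ and likewise for $v$; combined with the pointwise inequality $u_{\fk{b}}(\fk{b}_i) \geq v(\fk{b}_i)$, this forces $u_{\fk{b}}(\fk{b}_i) = v(\fk{b}_i)$ for every $i$. In particular $u_{\fk{b}}(\fk{q}) = v(\fk{q})$, so $A(u_{\fk{b}}) = \ell - v(\fk{q})$, a constant independent of $\fk{b}$.

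As $\fk{b}$ ranges over finite families of nonzero ideals containing $\fk{q}$, directed by inclusion, the net $(u_{\fk{b}})$ converges to $v$ in $\Val_X$: the topology is generated by the evaluation maps $w \mapsto w(\fk{a})$, and by construction $u_{\fk{b}'}(\fk{a}) = v(\fk{a})$ whenever $\fk{a} \in \fk{b}'$. Lower semicontinuity of $A$ (Proposition~\ref{prop-log dis}(3)) then yields $A(v) \leq \liminf_{\fk{b}} A(u_{\fk{b}}) = \ell - v(\fk{q})$, i.e.\ $A(v) + v(\fk{q}) \leq \ell$. Combined with the standard reverse inequality $\ell \leq (A(v) + v(\fk{q}))/v(\fk{a}_{\bullet}^v) = A(v) + v(\fk{q})$ (using $v(\fk{a}_{\bullet}^v) = 1$ from Lemma~\ref{lem-w(a.bullet.v)=inf}), we obtain equality and conclude that $v$ computes $\lct^{\fk{q}}(\fk{a}_{\bullet}^v)$. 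The only step requiring real insight is the product trick in choosing $\fk{q}'_{\fk{b}}$, which converts the one-ideal equality supplied by Lemma~\ref{lem-f(t).linear.on.T.+infty} into simultaneous equality on each factor; once this is observed the rest is a mechanical application of additivity and semicontinuity.
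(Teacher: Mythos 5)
Your proof is correct, and it takes a genuinely different route from the paper's. Both approaches ultimately rest on Lemma~\ref{lem-f(t).linear.on.T.+infty}, but the paper argues by contradiction: assuming $\lct^{\fk{q}}(\fk{a}_{\bullet}^v) < A(v) + v(\fk{q})$, it chooses a log-smooth pair $(Y,E)$ so that the retraction $v' = r_{Y,E}(v)$ nearly realizes $A(v)$, writes the local coordinates as quotients $y_i = a_i/b_i$, and feeds $\fk{q}' = \fk{q}\cdot(b_1\cdots b_N)$ into the lemma, producing a $w \in \ZVal_X$ that dominates $v'$ on $\QM(Y,E)$; the comparison $A(w) \geq A(v')$ then yields a contradiction. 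Your argument replaces the hands-on coordinate construction with a soft topological one: the product trick converts the single equality $u_{\fk{b}}(\fk{q}'_{\fk{b}}) = v(\fk{q}'_{\fk{b}})$ into simultaneous equalities on the factors, giving a net $(u_{\fk{b}}) \subset \ZVal_X$ with constant $A(u_{\fk{b}}) = \ell - v(\fk{q})$ that converges to $v$ in $\Val_X$, and lower semicontinuity of $A$ (\Cref{prop-log dis}(3)) finishes the job. What your approach buys is that it avoids choosing a log resolution and local coordinates entirely, making the argument cleaner and arguably more robust; what the paper's approach buys is a concrete, constructive picture of how the comparison $A(w) \geq A(r_{Y,E}(v))$ arises. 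One very small inaccuracy in your write-up: the remark that $v$ ``trivially computes'' $\lct^{\fk{q}}(\fk{a}_{\bullet}^v)$ when this quantity is $+\infty$ is moot here, because $\lct^{\fk{q}}(\fk{a}_{\bullet}^v) \leq A(v) + v(\fk{q}) < +\infty$ automatically once $A(v) < +\infty$, so that case never arises.
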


\begin{proof}
Assume that $v$ does not compute $\lct^{\fk{q}}(\fk{a}_{\bullet}^v)\in (0,+\infty)$, i.e.,
    \[\lct^{\fk{q}}(\fk{a}_{\bullet}^v)< A(v)+v(\fk{q}),\]
    and we prove the desired result by contradiction. 
    
    Without loss of generality, we assume $\lct^{\fk{q}}(\fk{a}_{\bullet}^v)=1$. By the definition of log-discrepancy, there exists a log-smooth pair $(Y,E)$ over $X$ such that $v'\coloneqq r_{Y,E}(v)$ satisfies
    \[A(v')+v'(\fk{q})=A(v)+v(\fk{q})-\varepsilon>1.\] 

    After replacing $X$ by an open neighborhood of the center $c_X(v')$ of $v'$ on $X$, we may assume that $X = \Spec R$ is affine. Since $v'=r_{Y,E}(v)$, there exist algebraic local coordinates $y_1,\ldots,y_N$ at $c_Y(v')$ with respect to which $v'$ is monomial and $E_i =\{y_i=0\}$, $1\le i\le N$. Write $y_i=a_i/b_i$ with $a_i,b_i\in R$ nonzero. Set
    \[\fk{q}'=\fk{q}\cdot(b_1\cdots b_N).\]
    Then according to Lemma \ref{lem-f(t).linear.on.T.+infty}, we can find some $w\in\ZVal_X$ such that $w\ge v$, $w(\fk{q}')=v(\fk{q}')$, and
    \[1=\lct^{\fk{q}}(\fk{a}_{\bullet}^v)=\lct^{\fk{q}}(\fk{a}_{\bullet}^w)=A(w)+w(\fk{q}).\]
    It follows that $w(\fk{q})=v(\fk{q})\ge v'(\fk{q})$, and $w(b_i)=v(b_i)$ for $1\le i\le N$. Thus,
    \[w(E_i)=w(a_i)-v(b_i)\ge v(a_i)-v(b_i)=v(E_i)=v'(E_i), \quad 1\le i\le N.\]
    Then $A(w)\ge A(r_{Y,E}(w))\ge A(v')$ due to (\ref{equ-A(v) for QM(Y,D)}). However,
    \[1=A(w)+w(\fk{q})\ge A(v')+v'(\fk{q})>1,\]
    which is a contradiction. The proof of that $v$ computes $\lct^{\fk{q}}(\fk{a}^v_{\bullet})$ is done. 
\end{proof}

\section{Enlarging a graded sequence}\label{section7}
This section provides a natural generalization of \cite[Section 7.4]{JM12} to the context of mixed jumping numbers. In addition, it serves as preparation for the proof of \Cref{thm-main thmB} (2).

Let $\fk{a}_{\bullet}$ be a graded sequence, and $\fk{q}'$ be a nonzero ideal on $X$. Note that for every $\beta>0$, if we set
\[\fk{c}_m\coloneqq \sum_{i=0}^m \fk{a}_i\cdot {\fk{q}'}^{\lceil \beta(m-i) \rceil}, \quad \forall\, m\in\bfZ_{\ge 0},\]
then $\fk{c}_{\bullet}=\{\fk{c}_m\}$ is a graded sequence such that $\fk{c}_m\supseteq \fk{a}_m\cup{\fk{q}'}^{\lceil \beta m \rceil}$ for each $m\in\bfZ_{\ge 0}$. This is a way to enlarge a graded sequence (see \cite[Section 7.4]{JM12}). 	We first establish a result for mixed jumping numbers, which is the key lemma to prove the differentiability part of \Cref{thm-main thmB} (2).
\begin{lemma}\label{lem-lct.c.equal.lct.a}
    Let $\fk{q}, \fk{q}'$ be two nonzero ideals, and $\fk{a}_{\bullet}$ a graded sequence of ideals. Let $\beta>0$ and denote by $\fk{c}_{\bullet}=\{\fk{c}_m\}$ the graded sequence given by
    \[\fk{c}_m=\sum_{i=0}^m\fk{a}_{i}\cdot{\fk{q}'}^{\lceil\beta (m-i)\rceil}, \quad m\in\bfZ_{\ge 0}.\]
     Given $\lambda\in\bfR$, if
    \[0<\lct(\fk{q}, (\lambda-\beta d)\cdot\fk{q}'; \fk{a}_{\bullet})+d\le \lct(\fk{q}, \lambda\cdot\fk{q}'; \fk{a}_{\bullet})<+\infty\]
    for every sufficiently small $d>0$, then $\lct(\fk{q}, \lambda\cdot\fk{q}'; \fk{c}_{\bullet})=\lct(\fk{q}, \lambda\cdot\fk{q}'; \fk{a}_{\bullet})$.
\end{lemma}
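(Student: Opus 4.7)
Denote $L(\mu) := \lct(\fk{q}, \mu\cdot\fk{q}'; \fk{a}_{\bullet})$ and $L'(\mu) := \lct(\fk{q}, \mu\cdot\fk{q}'; \fk{c}_{\bullet})$. The inclusion $\fk{a}_m \subseteq \fk{c}_m$ together with the monotonicity of jumping numbers (\Cref{lem-JM12 lem1.7}(1)) immediately gives $L(\lambda) \le L'(\lambda)$, so the actual content is the reverse inequality $L'(\lambda) \le L(\lambda)$. My first task is to compute the asymptotic invariant of $\fk{c}_{\bullet}$ along any $v \in \Val_X^*$. Since the valuation of an ideal sum is the minimum of the valuations,
\[v(\fk{c}_m) = \min_{0 \le i \le m}\bigl\{v(\fk{a}_i) + \lceil \beta(m-i)\rceil\, v(\fk{q}')\bigr\}.\]
Dividing by $m$ and using the estimate $v(\fk{a}_i) \ge i\cdot v(\fk{a}_{\bullet})$ (valid since $v(\fk{a}_{\bullet}) = \inf_j v(\fk{a}_j)/j$) together with the endpoint upper bounds at $i=0$ and $i=m$, the fact that an affine function on $[0,1]$ attains its minimum at an endpoint yields
\[v(\fk{c}_{\bullet}) = \min\{v(\fk{a}_{\bullet}),\ \beta\cdot v(\fk{q}')\}.\]
By \Cref{cor-lct.equals.inf.Val.mixed.ver} this rewrites
\[L'(\lambda) = \inf_{v\in\Val_X^*} \frac{A(v)+v(\fk{q})+\lambda\cdot v(\fk{q}')}{\min\{v(\fk{a}_{\bullet}),\,\beta\cdot v(\fk{q}')\}}.\]

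Next I would exploit the hypothesis to produce good test valuations. For each sufficiently small $d>0$, pick $v_d \in \Val_X^*$ that computes $L(\lambda-\beta d)$ (this relies on the mixed-jumping-number analogue of \Cref{thm-exist.val.comp}, which follows by the same compactness argument as in \cite{JM12}). Expanding
\[\frac{A(v_d)+v_d(\fk{q})+\lambda\cdot v_d(\fk{q}')}{v_d(\fk{a}_{\bullet})} = L(\lambda-\beta d) + \beta d\cdot\frac{v_d(\fk{q}')}{v_d(\fk{a}_{\bullet})},\]
bounding the left-hand side from below by $L(\lambda)$ via the infimum characterization and applying the hypothesis $L(\lambda)\ge L(\lambda-\beta d)+d$, I obtain $v_d(\fk{q}')/v_d(\fk{a}_{\bullet}) \ge 1/\beta$, that is, $v_d(\fk{a}_{\bullet})\le \beta\cdot v_d(\fk{q}')$. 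Therefore $v_d(\fk{c}_{\bullet})=v_d(\fk{a}_{\bullet})$, so plugging $v_d$ into the infimum formula for $L'(\lambda)$ yields
\[L'(\lambda) \le L(\lambda-\beta d) + \beta d\cdot\frac{v_d(\fk{q}')}{v_d(\fk{a}_{\bullet})}.\]

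The main obstacle is to pass to the limit $d\to 0^+$ in this last estimate. The first summand converges to $L(\lambda)$ by continuity of the concave function $L$ on the open interval $(-\varepsilon_0,+\infty)$. For the second, observe that the affine function $\mu \mapsto [A(v_d)+v_d(\fk{q})+\mu\cdot v_d(\fk{q}')]/v_d(\fk{a}_{\bullet})$ passes through $(\lambda-\beta d, L(\lambda-\beta d))$ and majorizes $L$ at every $\mu$ (again by the infimum definition applied at each $\mu$), so its slope $v_d(\fk{q}')/v_d(\fk{a}_{\bullet})$ is a supergradient of the concave function $L$ at $\lambda-\beta d$, hence bounded above by the finite left-derivative $L_-'(\lambda-\beta d)$. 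Since $L$ is concave on an open interval containing $\lambda$, these left-derivatives remain uniformly bounded in a small neighborhood of $\lambda$, so $\beta d\cdot v_d(\fk{q}')/v_d(\fk{a}_{\bullet})\to 0$, giving $L'(\lambda)\le L(\lambda)$ in the limit.
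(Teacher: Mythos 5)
Your overall strategy mirrors the paper's: show $v(\fk{c}_{\bullet})=\min\{v(\fk{a}_{\bullet}),\,\beta\cdot v(\fk{q}')\}$, use the hypothesis to force $\beta\cdot v(\fk{q}')\ge v(\fk{a}_{\bullet})$ for (near-)minimizers at the shifted parameter $\lambda-\beta d$, then let $d\to 0^+$. There are two places where you depart from the paper, one of which is a genuine gap.

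The gap is your appeal to an exact computing valuation $v_d$ for $\lct(\fk{q},(\lambda-\beta d)\cdot\fk{q}';\fk{a}_\bullet)$, which you justify as "the mixed-jumping-number analogue of \Cref{thm-exist.val.comp}." That theorem is stated only for the ordinary jumping number $\lct^{\fk{q}}(\fk{a}_\bullet)$. For $\mu\ge 0$ rational, $\lct(\fk{q},\mu\cdot\fk{q}';\fk{a}_\bullet)$ is a rescaled ordinary jumping number of $\fk{q}^q\cdot{\fk{q}'}^{p}$, so the compactness argument transfers. But in the intended application ($\lambda=0$, so $\lambda-\beta d<0$) the coefficient of $\fk{q}'$ is negative, and the objective $v\mapsto\bigl(A(v)+v(\fk{q})+\mu v(\fk{q}')\bigr)/v(\fk{a}_\bullet)$ no longer has the automatic lower bound and sublevel-set compactness that the proof of \Cref{thm-exist.val.comp} relies on; you would need a separate argument (e.g. an Izumi-type bound $v(\fk{q}')\le C\cdot A(v)$ to control the negative term). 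The paper deliberately avoids this by taking, for each $d$, a valuation $w$ with $w(\fk{a}_\bullet)=1$ that realizes the infimum only up to an error $\delta(d)\cdot d$ with $\delta(d)\to 0^+$ — something that always exists by the infimum characterization. Your argument goes through verbatim with such near-minimizers: the inequality $\beta\cdot v_d(\fk{q}')/v_d(\fk{a}_\bullet)\ge 1$ becomes $\ge 1-\delta(d)$, and $v_d(\fk{c}_\bullet)\ge (1-\delta(d))v_d(\fk{a}_\bullet)$ suffices for the limit.

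The second departure is harmless but worth noting: you bound $L'(\lambda)$ directly, which produces the error term $\beta d\cdot v_d(\fk{q}')/v_d(\fk{a}_\bullet)$, and you then control it by observing that the slope $v_d(\fk{q}')/v_d(\fk{a}_\bullet)$ is a supergradient of the concave function $L$ at $\lambda-\beta d$, hence uniformly bounded near $\lambda$. That argument is correct, but it requires this extra concavity observation. The paper instead bounds $L'(\lambda-\beta d)$ (the Tian function of $\fk{c}_\bullet$ evaluated at the shifted parameter) and then lets $d\to 0^+$ using continuity of both Tian functions on the open interval, which never produces an error term with an a priori uncontrolled slope. The paper's route is slightly cleaner on this point; yours is a perfectly valid alternative once the near-minimizer fix is made.
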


\begin{proof}
    Since $\fk{c}_m\supseteq\fk{a}_m$ for each $m$, we have $\lct(\fk{q}, \lambda\cdot\fk{q}'; \fk{c}_{\bullet})\ge\lct(\fk{q}, \lambda\cdot\fk{q}'; \fk{a}_{\bullet})$. For each $v\in\Val_X^*$,
    \begin{flalign*}
        \begin{split}
    v(\fk{c}_m)&=v\left(\sum_{i=0}^m\fk{a}_{i}\cdot{\fk{q}'}^{\lceil \beta(m-i)\rceil}\right)\\
            &=\min_i\big\{v(\fk{a}_i)+\left\lceil \beta(m-i) \right\rceil\cdot v(\fk{q}')\big\}\\
            &=\min\big\{\beta\cdot v(\fk{q}') m, v(\fk{a}_m)\big\}+O(1),
        \end{split}
    \end{flalign*}
    which shows
    \begin{equation}\label{eq-v.c.bullet}
        v(\fk{c}_{\bullet})=\min\big\{\beta\cdot v(\fk{q}'), v(\fk{a}_{\bullet})\big\}.
    \end{equation}
    
    For every sufficiently small $d>0$, pick any $w\in\Val_X^*$ such that $w(\fk{a}_{\bullet})=1$ and
    \[\lct(\fk{q}, (\lambda-\beta d)\cdot\fk{q}'; \fk{a}_{\bullet})=A(w)+w(\fk{q})+(\lambda-\beta d) \cdot w(\fk{q}')-\delta(d)\cdot d,\]
    where $\delta(d)\to 0^+$ when $d\to 0^+$. Then 
    \begin{flalign*}
        \begin{split}
           &A(w)+w(\fk{q})+(\lambda-\beta d)\cdot w(\fk{q}')-\delta(d)\cdot d\\
           =& \lct(\fk{q}, (\lambda-\beta d)\cdot\fk{q}'; \fk{a}_{\bullet})\\
           \le& \lct(\fk{q}, \lambda\cdot\fk{q}'; \fk{a}_{\bullet})-d\\
            \le& \frac{A(w)+w(\fk{q})+\lambda \cdot w(\fk{q}')}{w(\fk{a}_{\bullet})}-d\\
            =& A(w)+w(\fk{q})+\lambda \cdot w(\fk{q}')-d,
        \end{split}
    \end{flalign*}
    yielding $\beta\cdot w(\fk{q}')\ge 1-\delta(d)$. Combining with (\ref{eq-v.c.bullet}), we get $w(\fk{c}_{\bullet})\ge 1-\delta(d)$. It follows that
    \begin{flalign*}
        \begin{split}
            \lct(\fk{q}, (\lambda-\beta d)\cdot\fk{q}'; \fk{c}_{\bullet})&\le \frac{A(w)+w(\fk{q})+(\lambda-\beta d)\cdot w(\fk{q}')}{w(\fk{c}_{\bullet})}\\
            &\le\frac{A(w)+w(\fk{q})+(\lambda-\beta d) \cdot w(\fk{q}')}{1-\delta(d)}\\
            &=\frac{\lct(\fk{q}, (\lambda-\beta d)\cdot\fk{q}'; \fk{a}_{\bullet})+\delta(d)\cdot d}{1-\delta(d)}.
        \end{split}
    \end{flalign*}
    Let $d\to 0^+$, and by the continuity of Tian functions (see Lemma \ref{lem-lct.concave}), we obtain $\lct(\fk{q}, \lambda\cdot\fk{q}'; \fk{c}_{\bullet})\le\lct(\fk{q}, \lambda\cdot\fk{q}'; \fk{a}_{\bullet})$.
    
    Consequently, $\lct(\fk{q}, \lambda\cdot\fk{q}'; \fk{c}_{\bullet})=\lct(\fk{q}, \lambda\cdot\fk{q}'; \fk{a}_{\bullet})$.
\end{proof}

Moreover, for Zhou valuations, we have the following

\begin{lemma}\label{lem-beta.ge.vq'}
    Let $v$ be a Zhou valuation related to a nonzero ideal $\fk{q}$, and $\fk{q}'$ a nonzero ideal. Assume that a graded sequence $\fk{c}_{\bullet}=\{\fk{c}_m\}$ of ideals satisfying
\[\fk{c}_m=\sum_{i=0}^m\fk{a}^v_{i}\cdot{\fk{q}'}^{\lceil\beta (m-i)\rceil}, \quad m\in\bfZ_{\ge 0},\]
    where $\beta$ is a positive constant. Then we have $\lct^{\fk{q}}(\fk{c}_{\bullet})=1$ if $\beta\ge 1/v(\fk{q}')$, and $\lct^{\fk{q}}(\fk{c}_{\bullet})>1$ if $\beta<1/v(\fk{q}')$.
\end{lemma}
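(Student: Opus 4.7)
The plan is built on the identity
$$w(\fk{c}_\bullet)=\min\{\beta\cdot w(\fk{q}'),\,w(\fk{a}_\bullet^v)\}$$
for every $w\in\Val_X^*$, which is established inside the proof of \Cref{lem-lct.c.equal.lct.a} at \eqref{eq-v.c.bullet}. Together with the valuative formula \Cref{lem-JM12 section6.2}, this rewrites
$$\lct^{\fk{q}}(\fk{c}_\bullet)=\inf_{w\in\Val_X^*}\frac{A(w)+w(\fk{q})}{\min\{\beta\cdot w(\fk{q}'),\,w(\fk{a}_\bullet^v)\}}.$$
As a free lower bound valid in both cases, taking $i=m$ in the defining sum gives $\fk{a}_m^v\subseteq\fk{c}_m$, so \Cref{lem-JM12 lem1.7}(1) combined with \Cref{prop-lct(Zhou val)=1} yields $\lct^{\fk{q}}(\fk{c}_\bullet)\ge\lct^{\fk{q}}(\fk{a}_\bullet^v)=1$.

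In the case $\beta\ge 1/v(\fk{q}')$ I would just evaluate at $w=v$. Since $v(\fk{a}_\bullet^v)=1$ by \Cref{lem-w(a.bullet.v)=inf} and $\beta v(\fk{q}')\ge 1$, the identity forces $v(\fk{c}_\bullet)=1$; combined with $A(v)+v(\fk{q})=1$ from \Cref{thm-A(v)=1-v(q)}, this delivers $\lct^{\fk{q}}(\fk{c}_\bullet)\le 1$, matching the lower bound.

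For the case $\beta<1/v(\fk{q}')$ I would argue by contradiction. Suppose $\lct^{\fk{q}}(\fk{c}_\bullet)=1$; by \Cref{thm-exist.val.comp} there is $w'\in\Val_X^*$ computing it, so
$$A(w')+w'(\fk{q})=w'(\fk{c}_\bullet)\le w'(\fk{a}_\bullet^v).$$
On the other hand, $\lct^{\fk{q}}(\fk{a}_\bullet^v)=1$ forces $A(w')+w'(\fk{q})\ge w'(\fk{a}_\bullet^v)$, so equality holds and $w'$ also computes $\lct^{\fk{q}}(\fk{a}_\bullet^v)$. The rigidity part of \Cref{thm-A(v)=1-v(q)} then gives $w'=cv$ for some $c>0$, whence $w'(\fk{c}_\bullet)=c\,\min\{\beta v(\fk{q}'),1\}=c\beta v(\fk{q}')<c=w'(\fk{a}_\bullet^v)$, contradicting the equality just derived.

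The only real obstacle is the strict inequality in the second case, and it is handled precisely by the rigidity statement of \Cref{thm-A(v)=1-v(q)}: without knowing that every $w'\in\Val_X^*$ computing $\lct^{\fk{q}}(\fk{a}_\bullet^v)$ must be a positive scalar multiple of $v$, one could not pin down $w'(\fk{c}_\bullet)$ exactly and extract the contradiction from $\beta v(\fk{q}')<1$. This is where the maximality of $v$ in $\Val(X;\fk{q})$ is really used.
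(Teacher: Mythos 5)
Your proof is correct but takes a genuinely different route from the paper's. The paper's argument is purely ideal-theoretic and does not touch the $\min$ identity \eqref{eq-v.c.bullet}: for $\beta\ge 1/v(\fk{q}')$ it simply observes that ${\fk{q}'}^{\lceil\beta m\rceil}\subseteq\fk{a}_m^v$, forcing $\fk{c}_m=\fk{a}_m^v$; and for $\beta<1/v(\fk{q}')$ it supposes $\lct^{\fk{q}}(\fk{c}_{\bullet})=1$, invokes \Cref{thm-existence.gr.sys} to obtain a Zhou valuation $w$ related to $\fk{q}$ with $\fk{c}_m\subseteq\fk{a}_m^w$, concludes $w\ge v$ hence $w=v$ by maximality, so $\fk{c}_m=\fk{a}_m^v$ again, which then contradicts $\beta<1/v(\fk{q}')$. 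You instead route both cases through the valuative formula of \Cref{lem-JM12 section6.2} together with the $\min$ identity, and in the second case you rely on the existence of a computing valuation (\Cref{thm-exist.val.comp}) and the rigidity clause of \Cref{thm-A(v)=1-v(q)} — which packages the same maximality of $v$ in $\Val(X;\fk{q})$ — in place of \Cref{thm-existence.gr.sys}. Both arguments are sound; the paper's is more elementary and self-contained, while yours reuses the already-proved rigidity statement and makes explicit how the $\min$ structure of $w(\fk{c}_{\bullet})$ drives the contradiction.
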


\begin{proof}
    Note that we have $\fk{c}_m\supseteq \fk{a}_m^v\cup{\fk{q}'}^{\lceil \beta m \rceil}$ for all $m\in\bfZ_{\ge 0}$. Since $\fk{a}^v_m\subseteq\fk{c}_m$ for each $m$, we have $\lct^{\fk{q}}(\fk{c}_{\bullet})\ge\lct^{\fk{q}}(\fk{a}^v_{\bullet})=1$.

    If $\beta\ge 1/v(\fk{q}')$, then $v\left({\fk{q}'}^{\lceil \beta m \rceil}\right)=\lceil \beta m \rceil\cdot v(\fk{q}')\ge m$, which implies ${\fk{q}'}^{\lceil \beta m \rceil}\subseteq \fk{a}_m^v$ and thus $\fk{a}_{m}^v=\fk{c}_m$ for each $m$. It is clear that $\lct^{\fk{q}}(\fk{c}_{\bullet})=1$ for this situation.

    Next we assume $\lct^{\fk{q}}(\fk{c}_{\bullet})=1$. Then Theorem \ref{thm-existence.gr.sys} shows that there exists a Zhou valuation $w$ related to $\fk{q}$ satisfying $\fk{c}_m\subseteq\fk{a}_{m}^w$ for every $m$. This implies $\fk{a}_m^v\subseteq\fk{a}_m^w$ for each $m$, and then $w\ge v$. The fact that $v$ is a Zhou valuation related to $\fk{q}$ indicates $w=v$. We get $\fk{a}_m^v=\fk{c}_m$ for each $m$. It follows that $v\left({\fk{q}'}^{\lceil \beta m \rceil}\right) \ge m$, which gives $\beta\ge 1/v(\fk{q}')$. 
    Consequently, we have $\lct^{\fk{q}}(\fk{c}_{\bullet})>1$ if $\beta< 1/v(\fk{q}')$.
\end{proof}


\section{Tian functions of Zhou valuations}\label{section8}
In this section, we prove \Cref{thm-main thmB} (2) by enlarging graded sequences, as in the previous section. First, we prove that a valuation being a Zhou valuation will imply the linearity on $[0,+\infty)$ and the differentiability at $t=0$ of Tian functions. 

\begin{theorem}\label{thm-ZV.lct.differential}
    Let $\fk{q}$ and $\fk{q}'$ be two nonzero ideals on $X$, and $v\in\Val_X^*$ a Zhou valuation related to $\fk{q}$. Then the function
    \[(-\varepsilon_0,+\infty)\ni t\longmapsto\lct(\fk{q}, t\cdot\fk{q}';\fk{a}_{\bullet}^v)\]
    is linear on $[0,+\infty)$, and differentiable at $t=0$.
\end{theorem}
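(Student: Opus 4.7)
The plan is to treat the two assertions separately: linearity on $[0,+\infty)$ falls out immediately from earlier results, while differentiability at $t=0$ is the substantive content and will come from a contradiction pitting Lemma~\ref{lem-lct.c.equal.lct.a} against Lemma~\ref{lem-beta.ge.vq'} via the enlargement construction.

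For linearity, note first that $\lct^{\fk{q}}(\fk{a}_{\bullet}^v)=1$ by Proposition~\ref{prop-lct(Zhou val)=1}, and that $v$ computes this jumping number by Theorem~\ref{thm-A(v)=1-v(q)}. Proposition~\ref{prop-compute.linear} then immediately gives that $\T(t)\coloneqq\lct(\fk{q},t\cdot\fk{q}';\fk{a}_{\bullet}^v)$ is linear on $[0,+\infty)$. Since $\T(0)=1$ and the asymptotic slope is $v(\fk{q}')$ by Proposition~\ref{prop-lim.equals.vq'}, this forces $\T(t)=1+t\,v(\fk{q}')$ for $t\ge 0$, and in particular $\T'_+(0)=v(\fk{q}')$.

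For differentiability at $0$, observe that Lemma~\ref{lem-lct.concave} makes $\T$ concave on $(-\varepsilon_0,+\infty)$, so the left derivative $\mu\coloneqq \T'_-(0)$ exists, is finite (since $0$ is an interior point and $\T$ is bounded above by $1$ on a left neighborhood of $0$), and satisfies $\mu\ge \T'_+(0)=v(\fk{q}')$ by concavity. Suppose for contradiction that $\mu>v(\fk{q}')$, and set $\beta\coloneqq 1/\mu$, so that $0<\beta<1/v(\fk{q}')$ (with the convention $1/0=+\infty$). The supporting-line estimate for the concave function $\T$ at $0$ with slope $\mu$ yields
\[\T(-\beta d)\le \T(0)-\mu\cdot\beta d=1-d\]
for every sufficiently small $d>0$, and $\T(-\beta d)>0$ by continuity of $\T$ at $0$. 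Hence the hypothesis of Lemma~\ref{lem-lct.c.equal.lct.a} holds with $\lambda=0$, giving $\lct^{\fk{q}}(\fk{c}_{\bullet})=\lct^{\fk{q}}(\fk{a}_{\bullet}^v)=1$ for the enlarged graded sequence $\fk{c}_m=\sum_{i=0}^{m}\fk{a}_i^v\cdot{\fk{q}'}^{\lceil\beta(m-i)\rceil}$. On the other hand, since $\beta<1/v(\fk{q}')$, Lemma~\ref{lem-beta.ge.vq'} forces $\lct^{\fk{q}}(\fk{c}_{\bullet})>1$, a contradiction. Therefore $\mu=v(\fk{q}')$ and $\T$ is differentiable at $0$.

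The main obstacle is aligning the left slope of the Tian function with the enlargement parameter: the choice $\beta=1/\mu$ is tailored so that the concavity bound reads precisely $\T(-\beta d)+d\le \T(0)$, which is exactly the input required by Lemma~\ref{lem-lct.c.equal.lct.a}, while the strict inequality $\mu>v(\fk{q}')$ is precisely what places $\beta$ in the regime where Lemma~\ref{lem-beta.ge.vq'} outputs $\lct^{\fk{q}}(\fk{c}_{\bullet})>1$. Once these two lemmas are lined up this way, the contradiction is immediate, and the argument works uniformly in both cases $v(\fk{q}')>0$ and $v(\fk{q}')=0$.
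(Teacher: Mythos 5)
Your proof is correct and follows essentially the same strategy as the paper's: linearity from Theorem~\ref{thm-A(v)=1-v(q)} together with Proposition~\ref{prop-compute.linear}, and differentiability by combining the enlargement Lemma~\ref{lem-lct.c.equal.lct.a} with Lemma~\ref{lem-beta.ge.vq'} via the parameter choice $\beta = 1/\T'_-(0)$. The only difference is organizational: the paper argues directly that $\lct^{\fk{q}}(\fk{c}_\bullet)=1$ forces $\gamma\ge 1/v(\fk{q}')$ and hence $\T'_-(0)\le v(\fk{q}')$, while you run the same two lemmas as a contradiction against the supposition $\T'_-(0)>v(\fk{q}')$; your phrasing has the mild advantage of handling the edge case $v(\fk{q}')=0$ without any notational strain.
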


\begin{proof}[Proof of Theorem \ref{thm-ZV.lct.differential}]
    Denote $\T(t)=\lct(\fk{q}, t\cdot\fk{q}';\fk{a}_{\bullet}^v)$ for $t\in (-\varepsilon_0,+\infty)$. By Theorem \ref{thm-A(v)=1-v(q)} and Proposition \ref{prop-compute.linear}, the linearity of $\T(t)$ on $[0,+\infty)$ holds true. In the following we prove that $\T(t)$ is differentiable at $t=0$.

    Due to the monotonicity and concavity of $\T(t)$, we have that $\T'_{+}(0)$ and $\T'_{-}(0)$ exist, and $0\le \T'_+(0)\le \T'_-(0)$. It suffices to prove $\T'_+(0)\ge \T'_-(0)$. We may assume $\T'_-(0)>0$. Denote $\gamma\coloneqq 1/\T'_-(0)$. Then for every sufficiently small $d>0$, we have
    \[\T\left(-\gamma d\right)+d\le \T(0).\]
    Set $\fk{c}_{\bullet}=\{\fk{c}_m\}$, where
    \[\fk{c}_m=\sum_{i=0}^m\fk{a}^v_{i}\cdot{\fk{q}'}^{\lceil \gamma (m-i)\rceil}, \quad m\in\bfZ_{\ge 0}.\]
    Now \Cref{prop-lct(Zhou val)=1} and \Cref{lem-lct.c.equal.lct.a}  yield that $\lct^{\fk{q}}(\fk{c}_{\bullet})=\lct^{\fk{q}}(\fk{a}_{\bullet}^v)=1$, and it follows from \Cref{lem-beta.ge.vq'} that
    \[\gamma\ge 1/v(\fk{q}') \Rightarrow \T'_{-}(0)\le v(\fk{q}').\]
    Proposition \ref{prop-compute.linear} shows $\T'_{+}(0)=v(\fk{q}')$. Thus, we get $\T'_+(0)\ge \T'_-(0)$, which completes the proof of that $\T(t)$ is differentiable at $t=0$.
\end{proof}

Conversely, we have the following result. 

\begin{proposition}\label{prop-converse.differential.ZVal}
    Let $\fk{q}$ be a nonzero ideal on $X$. If $v\in\Val_X^*$ with $A(v)<+\infty$ satisfying that the function
    \begin{flalign*}
        \begin{split}
            \T_{\fk{q}'}(t)\colon (-\varepsilon_{\fk{q}'},+\infty)&\longrightarrow \bfR_{\ge 0}\\
            t&\longmapsto \lct(\fk{q}, t\cdot\fk{q}';\fk{a}_{\bullet}^v)
        \end{split}
    \end{flalign*}
    is differentiable at $t=0$ for every nonzero ideal $\fk{q}'$ on $X$, then
    \[\fk{q}'\longmapsto \frac{\mathrm{d} \T_{\fk{q}'}}{\mathrm{d} t}\Big|_{t=0}\]
    is a valuation on $X$ belonging to $\ZVal_X$. If additionally assume that $\T_{\fk{q}'}(t)$ is linear on $[0,+\infty)$ for every ideal $\fk{q}'$ on $X$ and $\lct^{\fk{q}}(\fk{a}_{\bullet}^v)=1$, then $v$ is a Zhou valuation related to $\fk{q}$. 
\end{proposition}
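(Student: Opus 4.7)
The plan is to deduce Part (1) by identifying the candidate $w\colon \fk{q}'\mapsto \T'_{\fk{q}'}(0)$ with a specific Zhou valuation attached to $v$, and then to obtain Part (2) as a near-immediate consequence. First, the standing finiteness $A(v)<+\infty$ together with $v(\fk{a}_{\bullet}^v)=1$ gives $c\coloneqq \lct^{\fk{q}}(\fk{a}_{\bullet}^v)\le A(v)+v(\fk{q})<+\infty$, so \Cref{cor-ZV.computes} produces a Zhou valuation $\tilde v\in \ZVal_X(\fk{q})$ computing $c$. By \Cref{lem-JM.compute}(2) we have $\tilde v(\fk{a}_{\bullet}^v)=1/c$, and after rescaling I set $u_{\star}\coloneqq c\,\tilde v\in \ZVal_X$, which satisfies $u_{\star}(\fk{a}_{\bullet}^v)=1$ and $A(u_{\star})+u_{\star}(\fk{q})=c$ (so $u_{\star}$ still computes $c$).

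The heart of Part (1) is a two-sided tangent comparison. For every nonzero ideal $\fk{q}'$, plugging $u_{\star}$ into the valuation infimum of \Cref{cor-lct.equals.inf.Val.mixed.ver} yields the supporting inequality
\[\T_{\fk{q}'}(t)\;\le\;\frac{A(u_{\star})+u_{\star}(\fk{q})+t\,u_{\star}(\fk{q}')}{u_{\star}(\fk{a}_{\bullet}^v)}\;=\;c+t\,u_{\star}(\fk{q}')\]
on $(-\varepsilon_{\fk{q}'},+\infty)$, with equality at $t=0$. Dividing through by $t$ on each side of $0$ produces $\T'_{\fk{q}'}(0^+)\le u_{\star}(\fk{q}')\le \T'_{\fk{q}'}(0^-)$; the differentiability hypothesis at $0$ collapses these to $w(\fk{q}')=u_{\star}(\fk{q}')$. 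Specializing to principal ideals $(f)$ identifies $w$ with $u_{\star}$ as functions on $K(X)^{*}$, so $w=u_{\star}$ is a valuation lying in $\ZVal_X$ and inherits additivity and multiplicativity from $u_{\star}$.

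For Part (2) the additional hypotheses combine with Part (1) painlessly. Linearity of $\T_{\fk{q}'}$ on $[0,+\infty)$ gives the right derivative $\T'_{\fk{q}'}(0^+)=v(\fk{q}')$, and the standing two-sided differentiability at $0$ upgrades this to $\T'_{\fk{q}'}(0)=v(\fk{q}')$, hence $w=v$ on every ideal. Combining with the identification $w=u_{\star}$ from Part (1), and noting that $c=1$ forces $u_{\star}=\tilde v$, we conclude $v=u_{\star}=\tilde v\in \ZVal_X(\fk{q})$. The only delicate point throughout is the two-sided tangent argument above: it is essential that the Tian function is genuinely defined on a small interval to the \emph{left} of $0$, since that is precisely what lets differentiability at $0$ pin down $u_{\star}(\fk{q}')$ — and thereby the optimizing valuation — uniquely. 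I do not foresee any further obstacle.
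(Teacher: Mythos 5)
Your proof is correct, and it takes a genuinely different route from the paper's. Both constructions are morally the same: you build $u_\star = c\,\tilde v$ with $u_\star\ge v$, $u_\star(\fk{a}_\bullet^v)=1$, and $A(u_\star)+u_\star(\fk{q})=c$, which plays exactly the role of the paper's $w\in\ZVal_X$ with $w\ge v$ and $\lct^{\fk{q}}(\fk{a}_\bullet^w)=\lct^{\fk{q}}(\fk{a}_\bullet^v)$ (supplied by \Cref{prop-lctqav'=lctqav}). Where the two arguments diverge is in how they pin down $\T'_{\fk{q}'}(0)$. The paper defines $\wt{\T}_{\fk{q}'}(t)=\lct(\fk{q},t\fk{q}';\fk{a}_\bullet^w)\ge\T_{\fk{q}'}(t)$ with equality at $t=0$, invokes (a variant of) \Cref{thm-ZV.lct.differential} — whose proof relies on the enlargement machinery of \Cref{section7} — to get $\wt{\T}'_{\fk{q}'}(0)=w(\fk{q}')$, and then concludes by the fact that two concave functions, ordered globally, touching at a point, differentiable there, must share the derivative at that point. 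You instead plug $u_\star$ directly into the valuation-infimum formula of \Cref{cor-lct.equals.inf.Val.mixed.ver}, obtaining the affine upper bound $\T_{\fk{q}'}(t)\le c+t\,u_\star(\fk{q}')$ on a two-sided interval around $0$, with equality at $t=0$; the differentiability hypothesis then squeezes $\T'_{\fk{q}'}(0^+)\le u_\star(\fk{q}')\le\T'_{\fk{q}'}(0^-)$ into an equality. This bypasses \Cref{thm-ZV.lct.differential} entirely and stays within the tools of \Cref{section2}--\Cref{section5}, which buys a more self-contained argument for this particular proposition. Your handling of Part (2), using linearity on $[0,+\infty)$ and \Cref{prop-lim.equals.vq'} to get $\T'_{\fk{q}'}(0^+)=v(\fk{q}')$ and then $c=1$ to force $u_\star=\tilde v$, matches the paper's final step.
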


\begin{proof}
    Let $w\in\ZVal_X$ with $\lct^{\fk{q}}(\fk{a}_{\bullet}^v)=\lct^{\fk{q}}(\fk{a}_{\bullet}^w)$ and $w\ge v$, given by \Cref{prop-lctqav'=lctqav}, and the associated Zhou valuation of $w$ is related to $\fk{q}$.
    Denote
    \[\wt{\T}_{\fk{q}'}(t)=\lct(\fk{q},t\cdot\fk{q}';\fk{a}_{\bullet}^w),\]
    for each nonzero ideal $\fk{q}'$ on $X$. Then $\wt{\T}_{\fk{q}'}\ge \T_{\fk{q}'}$, and by (a variation of) \Cref{thm-ZV.lct.differential}, $\wt{\T}_{\fk{q}'}$ is differentiable at $t=0$ with
    \[\frac{\mathrm{d} \wt{\T}_{\fk{q}'}}{\mathrm{d} t}\Big|_{t=0}=w(\fk{q}').\]
    Since $\T_{\fk{q}'}$ is concave and differentiable at $t=0$, it must hold that
    \[\frac{\mathrm{d} \T_{\fk{q}'}}{\mathrm{d} t}\Big|_{t=0}=\frac{\mathrm{d} \wt{\T}_{\fk{q}'}}{\mathrm{d} t}\Big|_{t=0}=w(\fk{q}').\]
    It follows that $\fk{q}'\mapsto\frac{\mathrm{d} \T_{\fk{q}'}}{\mathrm{d} t}\Big|_{t=0}$ is a Zhou valuation (equals to $w$). If additionally we have that $\T_{\fk{q}'}(t)$ is linear on $[0,+\infty)$ for every ideal $\fk{q}'$ on $X$ and $\lct^{\fk{q}}(\fk{a}_{\bullet}^v)=1$, then by \Cref{prop-lim.equals.vq'},
    \[\frac{\mathrm{d} \T_{\fk{q}'}}{\mathrm{d} t}\Big|_{t=0}=v(\fk{q}')\quad \text{and} \quad \lct^{\fk{q}}(\fk{a}_{\bullet}^w)=\lct^{\fk{q}}(\fk{a}_{\bullet}^v)=1,\]
    and thus $w$ can be chosen as a Zhou valuation related to $\fk{q}$. Consequently, $v=w$ is also a Zhou valuation related to $\fk{q}$.
\end{proof}

\section{Denseness of the cone of Zhou valuations}\label{section9}

We demonstrate that the cone of Zhou valuations is dense in the cone of valuations. A similar statement can be found in \cite[Section 1.4]{BGY23}, and our proof here is almost the same of there.

\begin{lemma}\label{lem-vq.=.inf.tildevq}
    Let $v\in\Val_X^{*}$ with $A(v)<+\infty$. Then for every nonzero ideal $\fk{q}$ on $X$,
    \[v(\fk{q})=\inf\big\{\tilde{v}(\fk{q})\colon \tilde{v}\in\ZVal_X, \ \tilde{v}\ge v\big\}.\] 
\end{lemma}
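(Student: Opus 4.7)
The inequality $v(\fk{q}) \le \inf\{\tilde v(\fk{q}) : \tilde v \in \ZVal_X,\ \tilde v \ge v\}$ is immediate, since $\tilde v \ge v$ forces $\tilde v(\fk{q}) \ge v(\fk{q})$. The real content is the reverse inequality, which I plan to obtain by constructing, for each $n \in \bfZ_+$, an element $\tilde v_n \in \ZVal_X$ with $\tilde v_n \ge v$ and $\tilde v_n(\fk{q}) \le v(\fk{q}) + A(v)/n$; letting $n \to \infty$ then finishes the argument.

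The construction uses the powers $\fk{q}^n$. Set $c_n \coloneqq \lct^{\fk{q}^n}(\fk{a}_{\bullet}^v)$. Taking $v$ itself as a test valuation in the formula of \Cref{lem-JM12 section6.2}, and using $v(\fk{a}_{\bullet}^v) = 1$ from \Cref{lem-w(a.bullet.v)=inf}, one obtains the crucial bound
\[c_n \;\le\; A(v) + n\,v(\fk{q}) \;<\; +\infty.\]
A direct application of \Cref{lem-w(a.bullet.v)=inf} to the rescaled valuation $v/c_n$ shows $\lct^{\fk{q}^n}(\fk{a}_{\bullet}^{v/c_n}) = 1$, so \Cref{cor-existence.valuation} produces a Zhou valuation $w_n$ related to $\fk{q}^n$ with $w_n \ge v/c_n$. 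Setting $\tilde v_n \coloneqq c_n w_n$ then gives an element of $\ZVal_X$ satisfying $\tilde v_n \ge v$.

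The estimate on $\tilde v_n(\fk{q})$ now comes from \Cref{thm-A(v)=1-v(q)}, which gives $A(w_n) + w_n(\fk{q}^n) = 1$; since $A$ is positively homogeneous, rescaling yields
\[A(\tilde v_n) + n\,\tilde v_n(\fk{q}) = A(\tilde v_n) + \tilde v_n(\fk{q}^n) = c_n,\]
and using $A(\tilde v_n) \ge 0$ we conclude
\[\tilde v_n(\fk{q}) \;\le\; \frac{c_n}{n} \;\le\; v(\fk{q}) + \frac{A(v)}{n}.\]

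The plan is essentially short; the main point to keep in mind is that the rescaling factor $c_n$ grows at most linearly in $n$, so that the error term $A(v)/n$ in the last estimate decays to zero. The hypothesis $A(v) < +\infty$ is essential, both in bounding $c_n$ and in ensuring this decay.
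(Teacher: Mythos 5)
Your proof is correct and takes essentially the same route as the paper: construct Zhou valuations related to $\fk{q}^n$ for growing $n$, rescale so they dominate $v$, and use $A(v)<+\infty$ plus \Cref{thm-A(v)=1-v(q)} to bound $\tilde v_n(\fk{q})$ by $v(\fk{q})+A(v)/n$. The paper's proof is the same argument with $\gamma(k)$ in place of your $c_n$.
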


\begin{proof}
  It suffices to verify that 
  \[v(\fk{q})\geq \inf\big\{\tilde{v}(\fk{q})\colon \tilde{v}\in\ZVal_X, \ \tilde{v}\ge v\big\}.\]
  For every $k\in\mathbf{Z}_+$, we have $\gamma(k)\coloneqq\lct^{\fk{q}^k}(\fk{a}_{\bullet}^v)<+\infty$, since $A(v)<+\infty$. Then for every given $k$ we have $\lct^{\fk{q}^k}(\fk{a}_{\bullet}^{v/\gamma(k)})=1$, and there exists a Zhou valuation $\tilde{v}_k$ related to $\fk{q}^k$ such that 
  \begin{enumerate}
      \item[(1)] $\frac{v}{\gamma(k)}\leq \tilde{v}_k$ by \Cref{lem-Val(X;q)} and \Cref{cor-existence.valuation};
      \item[(2)] $\tilde{v}_k(\fk{q}^k)=1-A(\tilde{v}_k)\leq 1$ by \Cref{thm-A(v)=1-v(q)}.
  \end{enumerate}
  Let $\hat{v}_k\coloneqq \gamma(k)\cdot\tilde{v}_k$. Then $\hat{v}_k\in \ZVal_X$ with $\hat{v}_k\geq v$, and 
  \[v(\fk{q})\leq \hat{v}_k(\fk{q})=\frac{\gamma(k)}{k}\cdot\tilde{v}_k(\fk{q}^k)\leq \frac{\lct^{\fk{q}^k}(\fk{a}_{\bullet}^v)}{k}\leq v(\fk{q})+\frac{A(v)}{k},\]
  where the last inequality is given by \Cref{lem-JM12 section6.2}. Due to $A(v)<+\infty$, we obtain
  \[v(\fk{q})\geq \limsup_{k\to\infty} \hat{v}_k(\fk{q})\geq \inf\big\{\tilde{v}(\fk{q})\colon \tilde{v}\in\ZVal_X, \ \tilde{v}\ge v\big\},\]
  and finish the proof.
\end{proof}

\begin{theorem}\label{thm-cone of Zhou dense}
    The cone of the Zhou valuations $\ZVal_X$ on $X$ is dense in $\Val_X$.
\end{theorem}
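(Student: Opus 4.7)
The plan is to combine Proposition \ref{prop-structure thm of ValX}(2), which says divisorial valuations are dense in $\Val_X$, with a vector-valued sharpening of the construction already carried out in the proof of Lemma \ref{lem-vq.=.inf.tildevq}. Since basic open neighborhoods in $\Val_X$ have the form $U = \{w\in\Val_X : |w(\fk{a}_i) - v(\fk{a}_i)| < \varepsilon, \ i=1,\ldots,n\}$ for finitely many nonzero ideals $\fk{a}_1,\ldots,\fk{a}_n$, it suffices to find some $\hat{v}\in\ZVal_X$ lying in any such $U$.

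My first move is to reduce to the case $A(v)<+\infty$. Given $U$ as above, Proposition \ref{prop-structure thm of ValX}(2) lets me choose a divisorial valuation $v'$ in the smaller neighborhood $\{w : |w(\fk{a}_i) - v(\fk{a}_i)|<\varepsilon/2\}$. Divisorial valuations automatically satisfy $A(v')<+\infty$, so the burden becomes approximating $v'$ from above (in the ordering $\ge$) by an element of $\ZVal_X$ that is $\varepsilon/2$-close on each of the finitely many test ideals $\fk{a}_i$.

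The key step is then to run the construction from the proof of Lemma \ref{lem-vq.=.inf.tildevq} on the single auxiliary ideal $\fk{q} := \fk{a}_1\cdot\fk{a}_2\cdots\fk{a}_n$. Explicitly, for each $k\in\bfZ_+$ set $\gamma(k) := \lct^{\fk{q}^k}(\fk{a}_\bullet^{v'})$, which is finite by $A(v')<+\infty$ together with Lemma \ref{lem-JM12 section6.2}; by Corollary \ref{cor-existence.valuation} there is a Zhou valuation $\tilde v_k$ related to $\fk{q}^k$ with $\tilde v_k \ge v'/\gamma(k)$, and $\hat v_k := \gamma(k)\tilde v_k$ belongs to $\ZVal_X$, dominates $v'$, and satisfies $\hat v_k(\fk{q}) \le v'(\fk{q}) + A(v')/k$ exactly as in Lemma \ref{lem-vq.=.inf.tildevq}.

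The final step exploits multiplicativity: $v'(\fk{q}) = \sum_{i=1}^n v'(\fk{a}_i)$ and $\hat v_k(\fk{q}) = \sum_{i=1}^n \hat v_k(\fk{a}_i)$, while each summand $\hat v_k(\fk{a}_i) - v'(\fk{a}_i)$ is nonnegative because $\hat v_k \ge v'$. Hence the estimate on $\hat v_k(\fk{q})-v'(\fk{q})$ forces $0\le \hat v_k(\fk{a}_i) - v'(\fk{a}_i) \le A(v')/k$ for every single $i$. Taking $k$ so large that $A(v')/k < \varepsilon/2$ and combining with $|v'(\fk{a}_i)-v(\fk{a}_i)|<\varepsilon/2$ places $\hat v_k$ in $U$. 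The main obstacle I anticipate is precisely the jump from one ideal (which is all Lemma \ref{lem-vq.=.inf.tildevq} provides) to finitely many; the trick of folding them into a single product and then using $\hat v_k\ge v'$ to propagate the bound componentwise is what resolves it, and the case $A(v)=+\infty$ is dispatched by the preliminary divisorial approximation.
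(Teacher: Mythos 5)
Your proposal is correct and follows essentially the same route as the paper's own proof: reduce to valuations of finite log discrepancy via density of (quasi-monomial or divisorial) valuations, fold the finitely many test ideals into a single product $\fk{q}=\fk{q}_1\cdots\fk{q}_r$, invoke the approximation from Lemma \ref{lem-vq.=.inf.tildevq} to get $\hat v\in\ZVal_X$ with $\hat v\ge v'$ and $\hat v(\fk{q})$ close to $v'(\fk{q})$, and then use multiplicativity together with $\hat v\ge v'$ to distribute the bound to each factor. The only cosmetic difference is that you cite Proposition \ref{prop-structure thm of ValX}(2) (divisorial density) where the paper uses part (1) (quasi-monomial density), and you unfold the construction inside Lemma \ref{lem-vq.=.inf.tildevq} rather than invoking the lemma as a black box; neither changes the substance.
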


\begin{proof}
   Note that for every quasi-monomial valuation $v$ we have $A(v)<+\infty$. Therefore, by \Cref{prop-structure thm of ValX}(1), it suffices to show that for every $v\in \Val_X^*$ with $A(v)<+\infty$,  for any finite fixed non-trivial and nonzero ideals $\fk{q}_1,\cdots,\fk{q}_r$ ($r\in \bfZ_+$), and for every given integer $n\in \mathbf{Z}_+$, there exists a valuation $\tilde{v}\in \ZVal_X$ such that 
  \[\max_{1\leq i\leq r}|\tilde{v}(\fk{q}_i)-v(\fk{q}_i)|<\frac{1}{n}.\]
  Set $\fk{q}\coloneqq \fk{q}_1\cdots \fk{q}_r$. Then according to Lemma \ref{lem-vq.=.inf.tildevq}, for any $\epsilon>0$, we can find a valuation $\tilde{v}\in \ZVal_X$ with ${v}\leq \tilde{v}$ and $\tilde{v}(\fk{q})< v(\fk{q})+\epsilon$. Then we can choose $\epsilon$ to be small enough so that $|\tilde{v}(\fk{q}_i)-v(\fk{q}_i)|<\frac{1}{n}$ for all $i\in \{1,\cdots,r\}$.
\end{proof}

\section{Singularities of graded sequences and Zhou valuations}\label{section10}

One important application of Zhou weights in \cite{BGMY23} is to characterize the singularities of plurisubharmonic functions, which recovers and generalizes a part of the main result of \cite{BFJ08}, while the Zhou valuations are also used to characterize the division relation of germs of holomorphic functions in \cite{BGMY23}. In this section, we show that the algebraic Zhou valuations can be used to characterize the singularities of graded sequences of ideals. Our result is as follows:

\begin{theorem}
    Let $\fk{a}_{\bullet}=\{\fk{a}_m\}_{m\ge 1}$, $\fk{a}'_{\bullet}=\{\fk{a}'_{m}\}_{m\ge 1}$ be two nonzero graded sequences of ideals on $X$. Then the following statements are equivalent:
\begin{enumerate}
    \item $\mathcal{J}(t\cdot\fk{a}_{\bullet})\subseteq \mathcal{J}(t\cdot\fk{a}'_{\bullet})$ for every $t>0$.
    \item $v(\fk{a}_{\bullet})\ge v(\fk{a}'_{\bullet})$ for every $v\in\Val_X$ with $A(v)<+\infty$.
    \item $v(\fk{a}_{\bullet})\ge v(\fk{a}'_{\bullet})$ for every $v\in\ZVal_X$.
\end{enumerate} 
\end{theorem}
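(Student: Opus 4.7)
The plan is to establish the cycle $(1)\Rightarrow(2)\Rightarrow(3)\Rightarrow(2)\Rightarrow(1)$; only the last two implications require substantive work. For $(1)\Rightarrow(2)$, let $\fk{b}_{\bullet}$ and $\fk{b}'_{\bullet}$ denote the asymptotic multiplier ideals of $\fk{a}_{\bullet}$ and $\fk{a}'_{\bullet}$. Condition (1) reads $\fk{b}_t\subseteq\fk{b}'_t$ for every $t>0$, which forces $v(\fk{b}_t)\ge v(\fk{b}'_t)$ for every valuation $v$; dividing by $t$ and letting $t\to\infty$ yields $v(\fk{b}_{\bullet})\ge v(\fk{b}'_{\bullet})$. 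For $A(v)<+\infty$, \Cref{lem-relation between a and b}(3) identifies $v(\fk{a}_{\bullet})=v(\fk{b}_{\bullet})$ and $v(\fk{a}'_{\bullet})=v(\fk{b}'_{\bullet})$, establishing (2). The implication $(2)\Rightarrow(3)$ is immediate since every $v\in\ZVal_X$ satisfies $A(v)<+\infty$ by \Cref{thm-A(v)=1-v(q)}.

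For $(3)\Rightarrow(2)$, the essential tool is \Cref{lem-vq.=.inf.tildevq}, which expresses $v(\fk{q})=\inf\{\tilde v(\fk{q})\colon \tilde v\in\ZVal_X,\ \tilde v\ge v\}$ for any $v$ with $A(v)<+\infty$ and any nonzero ideal $\fk{q}$. Applying this with $\fk{q}=\fk{a}_m$ for each nonzero $\fk{a}_m$, dividing by $m$, and using the trivial commutativity of iterated infima,
\[
v(\fk{a}_{\bullet})=\inf_m\frac{v(\fk{a}_m)}{m}=\inf_m\inf_{\tilde v\ge v}\frac{\tilde v(\fk{a}_m)}{m}=\inf_{\tilde v\ge v}\tilde v(\fk{a}_{\bullet}),
\]
and analogously $v(\fk{a}'_{\bullet})=\inf_{\tilde v\ge v}\tilde v(\fk{a}'_{\bullet})$. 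Condition (3) gives $\tilde v(\fk{a}_{\bullet})\ge \tilde v(\fk{a}'_{\bullet})$ pointwise on $\ZVal_X$, which is preserved under taking infima, so $v(\fk{a}_{\bullet})\ge v(\fk{a}'_{\bullet})$.

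Finally, for $(2)\Rightarrow(1)$, I would route through jumping numbers. By \Cref{lem-JM12 lct.equals.min}, $\lct^\fk{q}(\fk{a}_{\bullet})=\min\{\lambda\ge 0\colon\fk{q}\not\subseteq\fk{b}_\lambda\}$, so $\fk{q}\subseteq\fk{b}_t$ forces $\lct^\fk{q}(\fk{a}_{\bullet})>t$. Thus, once one shows that (2) implies $\lct^\fk{q}(\fk{a}_{\bullet})\le\lct^\fk{q}(\fk{a}'_{\bullet})$ for every nonzero $\fk{q}$, specializing to the nonzero ideal $\fk{q}=\fk{b}_t$ yields $\lct^\fk{q}(\fk{a}'_{\bullet})>t$ and hence $\fk{b}_t\subseteq\fk{b}'_t$. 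The jumping number inequality follows from $\lct^\fk{q}(\fk{a}_{\bullet})=\inf_v(A(v)+v(\fk{q}))/v(\fk{a}_{\bullet})$ in \Cref{lem-JM12 section6.2}: the infimum is effectively taken over $v$ with $A(v)<+\infty$ and $v(\fk{a}_{\bullet})>0$, on which (2) supplies $v(\fk{a}_{\bullet})\ge v(\fk{a}'_{\bullet})$, and so the ratio is bounded above by its counterpart for $\fk{a}'_{\bullet}$. The main obstacle is in fact $(3)\Rightarrow(2)$, the only implication genuinely invoking the theory of Zhou valuations developed here; once \Cref{lem-vq.=.inf.tildevq} is in hand, the argument reduces to a clean commutation of infima.
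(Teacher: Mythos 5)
Your proof is correct and takes a genuinely different route from the paper's at two points, even though both proofs ultimately rest on the same foundational results about Zhou valuations.

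For $(1)\Rightarrow(2)$, your argument is considerably more economical than the paper's. You observe that $(1)$ is precisely $\fk{b}_t\subseteq\fk{b}'_t$ for all $t$, pass directly to $v(\fk{b}_{\bullet})\ge v(\fk{b}'_{\bullet})$, and invoke \Cref{lem-relation between a and b}(3). The paper instead fixes a threshold $k$, sets $\fk{q}_k=\mathcal{J}(k\cdot\fk{a}_{\bullet})$, uses the subadditivity theorem to deduce $\fk{q}_k^p\supseteq\fk{a}_{pk}$ for divisible $p$, and then performs a double limiting argument in $p$ and $k$. Your route buys brevity at no cost; both use only standard facts about asymptotic multiplier ideals.

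For the closing of the cycle, the paper proves $(3)\Rightarrow(1)$ directly by contradiction: assuming $(1)$ fails, it extracts a witnessing ideal $\fk{q}$, invokes \Cref{cor-ZV.computes} to obtain a Zhou valuation computing $\lct^{\fk{q}}(\fk{a}'_{\bullet})$, and contradicts the strict inequality $\lct^{\fk{q}}(\fk{a}'_{\bullet})<\lct^{\fk{q}}(\fk{a}_{\bullet})$. You instead factor this into $(3)\Rightarrow(2)\Rightarrow(1)$. Your $(3)\Rightarrow(2)$ is a clean exchange-of-infima argument built on \Cref{lem-vq.=.inf.tildevq}, and your $(2)\Rightarrow(1)$ then uses only the valuative formula for $\lct^{\fk{q}}$ (\Cref{lem-JM12 section6.2}) together with the characterization $\lct^{\fk{q}}(\fk{a}_{\bullet})=\min\{\lambda\colon\fk{q}\not\subseteq\fk{b}_\lambda\}$ applied to $\fk{q}=\fk{b}_t$. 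This decomposition has the merit of isolating the Zhou-valuation input entirely within $(3)\Rightarrow(2)$; your $(2)\Rightarrow(1)$ step is Zhou-free and holds for purely formal reasons. The paper's direct argument is also correct but blends the jumping-number manipulation with the appeal to the Zhou-valuation computing the jumping number. The one point to state explicitly in your writeup (you use it implicitly) is that $\fk{q}\subseteq\fk{b}_t$ gives the \emph{strict} inequality $\lct^{\fk{q}}(\fk{a}_{\bullet})>t$ because $\lct^{\fk{q}}(\fk{a}_{\bullet})$ is realized as a genuine minimum $\min\{\lambda\colon\fk{q}\not\subseteq\fk{b}_\lambda\}$, at which membership already fails.
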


\begin{proof}
    Obviously, we have (2) $\Rightarrow$ (3). Then it suffices to prove (1) $\Rightarrow$ (2) and (3) $\Rightarrow$ (1).

    (1) $\Rightarrow$ (2): Without loss of generality, we assume $v(\fk{a}'_{\bullet})>0$. Fix $k\in\bfZ_{\ge 0}$ first. Denote $\fk{q}_k\coloneqq \mathcal{J}(k\cdot\fk{a}_{\bullet})$. Since $\fk{q}_k\subseteq \mathcal{J}(k\cdot\fk{a}'_{\bullet})$, we have
    \begin{equation}\label{eq-lct.qk.>k}
        \frac{A(v)+v(\fk{q}_k)}{v(\fk{a}'_{\bullet})}\ge\lct^{\fk{q}_k}(\fk{a}'_{\bullet})>k.
    \end{equation}
     On the other hand, we can find an integer $p_k>0$ such that
    \[\mathcal{J}(k\cdot\fk{a}_{\bullet})=\mathcal{J}\Big(\frac{k}{pk}\cdot\fk{a}_{pk}\Big)=\mathcal{J}\Big(\frac{1}{p}\cdot\fk{a}_{pk}\Big)\]
    for all $p\ge p_k$ (cf. \cite[Prop. 11.1.18]{LarII04}). The subadditivity property of multiplier ideals (see \cite[Theorem A.2]{JM12} or \cite[Theorem 9.5.17]{LarII04}) implies
    \[\fk{q}_k^p=\mathcal{J}\Big(\frac{1}{p}\cdot\fk{a}_{pk}\Big)^p\supseteq \mathcal{J}\Big(\frac{p}{p}\cdot\fk{a}_{pk}\Big)=\mathcal{J}(\fk{a}_{pk})\supseteq \fk{a}_{pk},\]
    yielding that
    \[v(\fk{q}_k)\le \frac{1}{p}v(\fk{a}_{pk})\quad \text{for} \quad \forall p\ge p_k.\]  
    Combining with (\ref{eq-lct.qk.>k}) we get
    \[v(\fk{a}'_{\bullet})<\frac{A(v)+v(\fk{q}_k)}{k}\le \frac{A(v)}{k}+\frac{1}{pk}v(\fk{a}_{pk}).\]
    Let $p\to +\infty$, and then it follows that
    \[v(\fk{a}'_{\bullet})\le \frac{A(v)}{k}+v(\fk{a}_{\bullet}).\]
    Finally, we let $k\to +\infty$, and thus getting $v(\fk{a}'_{\bullet})\le v(\fk{a}_{\bullet})$.

    (3) $\Rightarrow$ (1): If $(1)$ is not true, then there exist $t_0>0$ and a nonzero ideal $\fk{q}$ on $X$ such that $\fk{q}\subseteq\mathcal{J}(t_0\cdot\fk{a}_{\bullet})$ and $\fk{q}\not\subseteq\mathcal{J}(t_0\cdot\fk{a}'_{\bullet})$. Thus,
    \[t_1\coloneqq\lct^{\fk{q}}(\fk{a}'_{\bullet})\leq t_0<\lct^{\fk{q}}(\fk{a}_{\bullet}).\]
    
    Since $\fk{a}_{\bullet}'$ is nonzero, we have $t_1>0$. By Corollary \ref{cor-ZV.computes}, there exists a Zhou valuation related to $\fk{q}$ with $v(\fk{a}_{\bullet}')=1/t_1$, computing $\lct^{\fk{q}}(\fk{a}_{\bullet}')$. Then it follows from the assumption of (3) that
    \[\lct^{\fk{q}}(\fk{a}_{\bullet})\le\frac{A(v)+v(\fk{q})}{v(\fk{a}_{\bullet})}\le\frac{1}{v(\fk{a}_{\bullet}')}=t_1,\]
    which contradicts to $t_1<\lct^{\fk{q}}(\fk{a}_{\bullet})$. Thus, (3) $\Rightarrow$ (1).
    
    The proof is complete.
\end{proof}

The following proposition is a characterization of the asymptotic multiplier ideals via Zhou valuations which is an analogue of \cite[Corollary 1.17]{BGMY23}.

\begin{proposition}
    For every nonzero ideal $\fk{q}\subseteq \mathcal{O}_X$, $\fk{a}_{\bullet}$ a graded sequence of ideals on $X$, and $\lambda>0$, we have $\fk{q}\subseteq\mathcal{J}(\lambda\cdot\fk{a}_{\bullet})$ if and only if $v(\fk{a}_{\bullet})<\frac{1}{\lambda}$ for every Zhou valuation $v$ related to $\fk{q}$ on $X$.
\end{proposition}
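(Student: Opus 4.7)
The proof plan hinges on the translation $\fk{q}\subseteq\mathcal{J}(\lambda\cdot\fk{a}_{\bullet})\iff \lct^{\fk{q}}(\fk{a}_{\bullet})>\lambda$ provided by \Cref{lem-JM12 lct.equals.min} (together with the standard right-continuity of the asymptotic multiplier ideal in the parameter, which shows the infimum in the definition of $\lct^{\fk{q}}(\fk{a}_{\bullet})$ is attained). So the proposition reduces to the equivalence
\[\lct^{\fk{q}}(\fk{a}_{\bullet})>\lambda\quad \Longleftrightarrow\quad v(\fk{a}_{\bullet})<\tfrac{1}{\lambda}\text{ for every Zhou valuation }v\text{ related to }\fk{q}.\]

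For the forward direction, the plan is to feed an arbitrary Zhou valuation $v$ related to $\fk{q}$ into the valuative formula $\lct^{\fk{q}}(\fk{a}_{\bullet})\le (A(v)+v(\fk{q}))/v(\fk{a}_{\bullet})$ from \Cref{lem-JM12 section6.2}, and then invoke the identity $A(v)+v(\fk{q})=1$ from \Cref{thm-A(v)=1-v(q)}. Rearranging gives $v(\fk{a}_{\bullet})\le 1/\lct^{\fk{q}}(\fk{a}_{\bullet})<1/\lambda$, exactly as required.

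For the backward direction, I will argue by contradiction: suppose $\lct^{\fk{q}}(\fk{a}_{\bullet})\le \lambda$. Because $\fk{q}$ and $\fk{a}_{\bullet}$ are nonzero, $\lct^{\fk{q}}(\fk{a}_{\bullet})>0$, and in particular it is finite. Then \Cref{cor-ZV.computes} supplies a Zhou valuation $v$ related to $\fk{q}$ that computes $\lct^{\fk{q}}(\fk{a}_{\bullet})$, namely $\lct^{\fk{q}}(\fk{a}_{\bullet})=(A(v)+v(\fk{q}))/v(\fk{a}_{\bullet})=1/v(\fk{a}_{\bullet})$, using \Cref{thm-A(v)=1-v(q)} once more. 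This forces $v(\fk{a}_{\bullet})=1/\lct^{\fk{q}}(\fk{a}_{\bullet})\ge 1/\lambda$, contradicting the hypothesis.

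There is no real obstacle here: both directions are short once one applies \Cref{cor-ZV.computes} and \Cref{thm-A(v)=1-v(q)}. The only thing to be careful about is the boundary behavior converting $\fk{q}\subseteq\mathcal{J}(\lambda\cdot\fk{a}_{\bullet})$ into the strict inequality $\lct^{\fk{q}}(\fk{a}_{\bullet})>\lambda$ (i.e., checking that $\fk{q}\not\subseteq\mathcal{J}(\lct^{\fk{q}}(\fk{a}_{\bullet})\cdot\fk{a}_{\bullet})$), and the trivial case $\lct^{\fk{q}}(\fk{a}_{\bullet})=+\infty$, which makes both sides of the equivalence automatic.
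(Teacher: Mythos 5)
Your proposal is correct and takes essentially the same approach as the paper: both translate $\fk{q}\subseteq\mathcal{J}(\lambda\cdot\fk{a}_{\bullet})$ into the strict inequality $\lct^{\fk{q}}(\fk{a}_{\bullet})>\lambda$ via \Cref{lem-JM12 lct.equals.min}, and then combine $A(v)+v(\fk{q})=1$ for Zhou valuations (\Cref{thm-A(v)=1-v(q)}) with the valuative formula/existence of a computing Zhou valuation (\Cref{lem-JM12 section6.2}, \Cref{cor-ZV.computes}) to rewrite $\lct^{\fk{q}}(\fk{a}_{\bullet})$ as $\min_{v\in\ZVal_X(\fk{q})}1/v(\fk{a}_{\bullet})$. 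The paper compresses this into a chain of biconditionals, whereas you split it into two implications, but the content is identical.
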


\begin{proof}
    This proposition holds since
    \begin{flalign*}
        \begin{split}
            \fk{q}\subseteq\mathcal{J}(\lambda\cdot\fk{a}_{\bullet})\quad\Longleftrightarrow&\quad \lct^{\fk{q}}(\fk{a}_{\bullet})>\lambda\\
            \Longleftrightarrow &\quad \min_{v\in\ZVal_X(\fk{q})}\frac{A(v)+v(\fk{q})}{v(\fk{a}_{\bullet})}>\lambda\\
            \Longleftrightarrow&\quad\min_{v\in\ZVal_X(\fk{q})}\frac{1}{v(\fk{a}_{\bullet})}>\lambda\\    \Longleftrightarrow&\quad\max_{v\in\ZVal_X(\fk{q})}v(\fk{a}_{\bullet})<\frac{1}{\lambda},
        \end{split}
    \end{flalign*}
where we recall that $\ZVal_X(\fk{q})$ denotes the set of Zhou valuations related to $\fk{q}$ on~$X$.
\end{proof}

\section{Analytic correspondence}\label{section11}

Since the ideal of algebraic Zhou valuations actually comes from the \emph{analytic Zhou valuations}, it is natural to consider whether one can reduce the algebraic Zhou valuations to the original analytic Zhou valuations on some special schemes. 

In \cite{JM14}, the authors gave the ways to link the objects in algebraic and analytic settings. We recall some constructions and observations from \cite{JM14}. Let $U$ be a complex manifold, and let $V$ be the \emph{germ} of a complex submanifold at a point $x$ in $U$. Denote by $\cal{O}_{x,V}$ the localization of $\cal{O}_{x}$ along the ideal $I_V$, where $\cal{O}_x$ denotes the ring of germs of \emph{holomorphic} functions at $x$. Then $\cal{O}_{x,V}$ is an excellent regular domain with maximal ideal $\fk{m}_{x,V}$. Set
\[\lct_{x,V}(\fk{q},\lambda\cdot\fk{q}';\fk{a}_{\bullet})\coloneqq \lct(\fk{q}\cdot\cal{O}_{x,V},\lambda\cdot(\fk{q}'\cdot\cal{O}_{x,V});\fk{a}_{\bullet}\cdot\cal{O}_{x,V}),\]
and
\[\lct_{x,V}(\fk{q},\lambda\cdot\fk{q}';\fk{b}_{\bullet})\coloneqq \lct(\fk{q}\cdot\cal{O}_{x,V},\lambda\cdot(\fk{q}'\cdot\cal{O}_{x,V});\fk{b}_{\bullet}\cdot\cal{O}_{x,V}),\]
where $\fk{q},\fk{q}'$ are ideals of $\cal{O}_x$, $\fk{a}_{\bullet}$ is a graded sequence of ideals on $X$, and $\fk{b}_{\bullet}$ is a subadditive system of ideals on $X$. Then
\[\lct_{x,V}(\fk{q},\fk{q}';\fk{a}_{\bullet})=\inf_{v}\frac{A(v)+v(\fk{q})+\lambda\cdot v(\fk{q}')}{v(\fk{a}_{\bullet})}\]
and
\[\lct_{x,V}(\fk{q},\fk{q}';\fk{b}_{\bullet})=\inf_{v}\frac{A(v)+v(\fk{q})+\lambda\cdot v(\fk{q}')}{v(\fk{b}_{\bullet})},\]
where the infimums are over all (quasi-monomial, or Zhou) valuations of $\cal{O}_{x,V}$ by \Cref{thm-cone of Zhou dense}. In this section, we only care about the case $V=\{x\}$.

Let $\varphi$ be a plurisubharmonic (psh for short) function on a complex manifold $U$. Then the \emph{multiplier ideal sheaf} $\cal{J}(\varphi)$ is a sheaf on $U$ such that at any point $x\in U$ the stalk is defined by
\begin{equation}\label{equ-MIS}
    \cal{J}(\varphi)_x\coloneqq \big\{f\in\cal{O}_x\colon |f|^2e^{-2\varphi} \text{ is locally integrable at } x\big\}.
\end{equation}
The sheaf $\cal{J}(\varphi)$ is a coherent sheaf (\cite{Nad89,Nad90}). For non-zero ideals $\fk{q},\fk{q}'$ on $U$ and $\lambda\in\bfR$, define the \emph{mixed jumping number} of $\varphi$ at $x$ relative to $\fk{q},\fk{q}'$ and $\lambda$ by
\[c^{\fk{q},\lambda\cdot\fk{q}'}_{x}(\varphi)\coloneqq \sup\big\{c\ge 0\colon |\fk{q}|^2|\fk{q}'|^{2\lambda}e^{-2c\varphi} \text{ is locally integrable at } x\big\},\]
where it is the \emph{classical jumping number} if $\fk{q}'=\cal{O}_U$ or $\lambda=0$.

The following proposition is \cite[Proposition 3.12]{JM14} after some slight changes.

\begin{proposition}[\cite{JM14}]\label{prop-jm.equa.lct}
    Let $x$ be any point in $U$, and $V$ be the germ of a proper complex submanifold at $x$. Let $\varphi$ be a psh function on $U$, and set
    \[\fk{b}_t\coloneqq\cal{J}(t\varphi), \ t>0,\]
    which is a subadditive sequence of ideals on $U$. The following statements hold.
    \begin{enumerate}
    \item For nonzero ideals $\fk{q},\fk{q}'\subseteq\cal{O}_x$ and $\lambda\in\bfR$, we have
    \[c_{x}^{\fk{q},\lambda\cdot\fk{q}'}(\varphi)=\lct_x(\fk{q},\lambda\cdot\fk{q}';\fk{b}_{\bullet}).\]

    \item The subadditive sequence $\fk{b}_{\bullet}\cdot\cal{O}_{x,V}$ has controlled growth.

    \item If $\varphi=\log|\fk{a}|+O(1)$ near $o$ for some nonzero ideal $\fk{a}\subseteq\cal{O}_x$, then $\fk{b}_t=\cal{J}(t\cdot\fk{a})$ for every $t>0$. Therefore, we have $c_{x}^{\fk{q},\lambda\cdot\fk{q}'}(\varphi)=\lct_x(\fk{q},\lambda\cdot\fk{q}';\fk{a})$ for this case.
    \end{enumerate}
\end{proposition}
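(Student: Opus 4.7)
The plan is to dispatch the three parts in the order (3), (2), (1). For (3), since $\varphi = \log|\fk{a}| + O(1)$ near $x$, the weight $e^{-2t\varphi}$ is comparable to $|\fk{a}|^{-2t}$ up to a locally bounded positive factor, so the analytic multiplier ideal $\cal{J}(t\varphi)$ coincides with the algebraic multiplier ideal $\cal{J}(\fk{a}^t)$; hence $\fk{b}_t = \cal{J}(\fk{a}^t)$ for every $t>0$. The asserted equality $c_x^{\fk{q},\lambda\cdot\fk{q}'}(\varphi) = \lct_x(\fk{q},\lambda\cdot\fk{q}';\fk{a})$ then follows by combining (1) applied to $\fk{b}_\bullet$ with \Cref{prop-lct(a.)=lct(b.)}, which identifies the mixed jumping number of the graded sequence $\fk{a}_m = \fk{a}^m$ with that of its associated subadditive system $\fk{b}_\bullet$.

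For (2), I would fix a log resolution $\pi\colon Y \to U$ of a neighborhood of $x$ on which a given divisor $E$ over $U$ appears as a prime component. Since $f\in\cal{J}(t\varphi)_x$ iff $|f|^2 e^{-2t\varphi}$ is locally $L^1$ near $x$, pulling back through $\pi$ and using the Jacobian contribution $K_{Y/U}$ shows that generic integrability along $E$ forces $\ord_E(f) + A(\ord_E) > t\cdot\nu_E(\varphi)$, where $\nu_E(\varphi)\coloneqq \ord_E(\varphi\circ\pi)$ is the generic Lelong number of $\varphi$ along $E$. Minimising over $f\in\fk{b}_t$ yields the strict bound $\ord_E(\fk{b}_t) > t\cdot\nu_E(\varphi) - A(\ord_E)$. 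A standard Demailly-type approximation then gives $\nu_E(\varphi) = \lim_t \ord_E(\fk{b}_t)/t = \ord_E(\fk{b}_\bullet)$, which is precisely the controlled growth condition.

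For (1), the main claim, I plan to express both sides as an infimum over divisorial valuations and identify them term by term. On the algebraic side, \Cref{prop-lct.equals.inf.Val.mixed.verb} already gives
\[\lct_x(\fk{q},\lambda\cdot\fk{q}';\fk{b}_\bullet) = \inf_E \frac{A(\ord_E) + \ord_E(\fk{q}) + \lambda\cdot\ord_E(\fk{q}')}{\ord_E(\fk{b}_\bullet)}.\]
On the analytic side, fix a log resolution $\pi\colon Y \to U$ simultaneously resolving $\fk{q}\cdot\fk{q}'$, so that $\pi^{-1}\fk{q}$ and $\pi^{-1}\fk{q}'$ become locally monomial. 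On a chart centred at a generic point of a prime component $E = \{z_1 = 0\}$, the pulled-back density $|\mathrm{Jac}_\pi|^{2}\cdot|\pi^*\fk{q}|^2\,|\pi^*\fk{q}'|^{2\lambda}\,e^{-2c\,\varphi\circ\pi}$ factors as $|z_1|^{2(A(\ord_E)-1+\ord_E(\fk{q})+\lambda\cdot\ord_E(\fk{q}')-c\cdot\nu_E(\varphi))}$ times a locally bounded factor, whose radial integral converges iff $c < (A(\ord_E)+\ord_E(\fk{q})+\lambda\cdot\ord_E(\fk{q}'))/\nu_E(\varphi)$. Substituting $\nu_E(\varphi) = \ord_E(\fk{b}_\bullet)$ from (2) then matches the analytic and algebraic formulas.

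The main obstacle is passing from the divisor-by-divisor necessary condition to sufficiency, namely showing that if $c$ is strictly less than the algebraic infimum then the full analytic integral actually converges, and conversely that the critical value is genuinely attained. This is where the strong openness theorem of Guan--Zhou \cite{GZ15soc} enters: it gives $\cal{J}_+(c\varphi) = \cal{J}(c\varphi)$, so that the critical value $c = c_x^{\fk{q},\lambda\cdot\fk{q}'}(\varphi)$ is an honest jump of the subadditive system $\fk{b}_\bullet$ and is therefore detected by a single divisor through \Cref{prop-lct.equals.inf.Val.mixed.verb}. For non-integer $\lambda$, one observes that on the log resolution $\pi^*\fk{q}'$ is a monomial times a non-vanishing unit, so $|\pi^*\fk{q}'|^{2\lambda}$ is a well-defined positive measurable function and the fractional-exponent analysis above carries through unchanged.
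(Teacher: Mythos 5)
Your treatment of (3) coincides with the paper's: both deduce $\fk{b}_t = \cal{J}(t\cdot\fk{a})$ from the comparison of weights and then reduce to (1) together with \Cref{prop-lct(a.)=lct(b.)}. For (2) you sketch a Lelong-number argument, whereas the paper simply cites \cite[Proposition~3.12]{JM14}; your outline is plausible but would need the identity $\nu_E(\varphi)=\ord_E(\fk{b}_\bullet)$, which is itself a consequence of Demailly's approximation plus Ohsawa--Takegoshi, so you are not genuinely avoiding those tools.

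The real problem is in (1). Your computation on a log resolution, where you assert that the pulled-back density $|\mathrm{Jac}_\pi|^{2}|\pi^*\fk{q}|^2|\pi^*\fk{q}'|^{2\lambda}e^{-2c\,\varphi\circ\pi}$ factors as $|z_1|^{2(A(\ord_E)-1+\ord_E(\fk{q})+\lambda\ord_E(\fk{q}')-c\,\nu_E(\varphi))}$ times a locally bounded factor, is false for a general psh $\varphi$. Only when $\varphi$ has analytic singularities (i.e.\ $\varphi = \tfrac{c}{2}\log|\fk{a}|+O(1)$) does $\varphi\circ\pi$ become log-monomial on a log resolution; for arbitrary psh $\varphi$ there is no finite modification $Y\to U$ on which $e^{-2c\varphi\circ\pi}$ is comparable to a monomial in the boundary coordinates, and the Lelong number $\nu_E(\varphi)$ alone does not determine integrability even in dimension one (e.g.\ $\log|z|$ and $\log|z|+\log(-\log|z|)$ have the same Lelong number but opposite integrability of $e^{-2\varphi}$). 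Passing from divisorial Lelong numbers to the genuine integrability threshold is precisely the content of the (strong) openness conjecture; invoking Guan--Zhou at the end does not repair the false factorization, it only relocates the gap. In effect your argument for (1) is circular, because establishing it for a general psh $\varphi$ is equivalent, up to bookkeeping, to the very results one would like to apply.

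The paper avoids this by using Demailly's Bergman approximants $\varphi_p$: these \emph{do} have analytic singularities, so (3) applies to them directly, and the Ohsawa--Takegoshi extension theorem supplies the two-sided comparison $\varphi\le\varphi_p+\tfrac{C}{p}$ and $\varphi_p\le\varphi$ (up to the normalization), giving
\[
\bigl(p\cdot\lct_x(\fk{q},\lambda\cdot\fk{q}';\fk{b}_p)\bigr)^{-1}
\le c_x^{\fk{q},\lambda\cdot\fk{q}'}(\varphi)^{-1}
\le \bigl(p\cdot\lct_x(\fk{q},\lambda\cdot\fk{q}';\fk{b}_p)\bigr)^{-1}+\frac{1}{p},
\]
after which letting $p\to\infty$ closes the argument without ever needing a resolution of $\varphi$ itself. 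If you want to keep your divisorial viewpoint, you would need to interpolate this approximation step; otherwise the proof of (1) as written does not go through.
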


\begin{proof}
Here we follow the arguments from \cite[Proposition 3.12]{JM14}, where the statement (2) has been proved.

To prove (1), we use Demailly's approximation theorem (see \cite{Dem92, AMAG}). Take a pseudoconvex domain $B\subseteq U$ containing $x$ and for every $p\geq 1$, define 
\begin{equation*}
    \varphi_p\coloneqq \frac{1}{p}\sup\left\{\log|f|\colon \int_B|f|^2e^{-2p\varphi}\leq 1\right\}.
\end{equation*}
Then $\varphi_p$ is psh on $B$. By the Ohsawa-Takegoshi $L^2$ extension theorem (\cite{OT87}), there exists a constant $C$ not depending on the choices of $\varphi$ and $p$ such that 
\[\varphi\leq \varphi_p+\frac{C}{p}\]
on $B$. Therefore, for any nonzero ideals $\fk{q}$ and $\fk{q}'$ of $\mathcal{O}_x$, we have 
\begin{equation*}
\begin{split}
    (p\cdot \lct_x(\fk{q},\lambda\cdot \fk{q}';\fk{b}_p))^{-1}&=c_x^{\fk{q},\lambda\cdot \fk{q}'}(\varphi_p)^{-1}\leq c_x^{\fk{q},\lambda\cdot \fk{q}'}(\varphi)^{-1}\\
    &\leq c_x^{\fk{q},\lambda\cdot \fk{q}'}(\varphi_p)^{-1}+\frac{1}{p}\\
    &=(p\cdot \lct_x(\fk{q},\lambda\cdot \fk{q}';\fk{b}_p))^{-1}+\frac{1}{p}
\end{split}
\end{equation*}
for every $p\geq 1$, where the two equalities follow from \cite[(3.12)]{JM14} in the general case.   Letting $p$ go to infinity and applying the equation before \Cref{prop-lct.equals.inf.Val.mixed.verb}, we get
\[c_{x}^{\fk{q},\lambda\cdot\fk{q}'}(\varphi)=\lct_x(\fk{q},\lambda\cdot\fk{q}';\fk{b}_{\bullet}).\]

It remains to prove (3). This is given by the following 
\begin{equation*}
    \fk{b}_t\coloneqq\cal{J}(t\varphi)=\cal{J}(t\log|\fk{a}|)=\cal{J}(\log|\fk{a}^t|)=\cal{J}(t\cdot\fk{a}),
\end{equation*}
where the last equation follows from \cite[Proposition 1.7]{DK01}. Finally, we finish our proof by applying \Cref{prop-jm.equa.lct}(1) and \Cref{prop-lct(a.)=lct(b.)}.
\end{proof}

\subsection{The analytic correspondence of algebraic Zhou valuations}

Let $o$ be the origin in $\bfC^n$. Denote by $\cal{O}_o$ the ring of germs of holomorphic functions at $o$, and by $\fk{m}$ the maximal ideal of $\cal{O}_o$. As mentioned above, $\cal{O}_o$ is a Noetherian regular excellent domain of characteristic zero. For the scheme $X=\Spec\cal{O}_o$, the valuations on $X$ can be regarded as the valuations on the ring $\cal{O}_o$. Especially, the following theorem shows that the algebraic Zhou valuations on $\cal{O}_o$ and the analytic Zhou valuations defined on $\cal{O}_o$ in \cite{BGMY23} (see also \cite{BGY23} for a characterization) basically coincide.

\begin{theorem}\label{thm-Zhou.valuation.coincide}
    Let $\fk{q}\subseteq\cal{O}_o$ be a nonzero ideal. Then a valuation $v$ on $\cal{O}_o$ is an algebraic Zhou valuation related to $\fk{q}$ with $v(\fk{m})>0$ if and only if $v$ is an analytic Zhou valuation related to $|\fk{q}|^2$.
\end{theorem}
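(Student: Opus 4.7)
The plan is to prove both directions by establishing an explicit correspondence between algebraic Zhou valuations $v$ with $v(\fk{m})>0$ and local Zhou weights related to $|\fk{q}|^2$. For the forward direction, starting from an algebraic Zhou valuation $v$ related to $\fk{q}$, I would associate to $v$ the plurisubharmonic germ
\[\Phi_v^{\star}\coloneqq\Bigl(\sup_m \tfrac{1}{m}\log|\fk{a}_m^v|\Bigr)^{*}\]
near $o$, where $|\fk{a}_m^v|$ is the modulus with respect to a finite generating set and the star denotes upper semicontinuous regularization. The hypothesis $v(\fk{m})>0$ forces $\fk{m}^{\lceil m/v(\fk{m})\rceil}\subseteq \fk{a}_m^v$ for every $m$, so $\Phi_v^{\star}$ has analytic singularities no worse than $\tfrac{1}{v(\fk{m})}\log|z|$, which secures clause~(1) of \Cref{def-Zhou.weight.number.val} for sufficiently large $N_0$.

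The main work of the forward direction is to verify the remaining Zhou-weight conditions together with the identity $\sigma(\log|f|,\Phi_v^{\star})=v(f)$. For clause~(2), I would invoke \Cref{prop-jm.equa.lct}(1) to identify $c_o^{\fk{q}}(\Phi_v^{\star})$ with $\lct_o(\fk{q};\fk{b}_{\bullet})$ where $\fk{b}_t=\cal{J}(t\Phi_v^{\star})$, then compare $\fk{b}_{\bullet}$ with the asymptotic multiplier ideals of $\fk{a}_{\bullet}^v$ by construction of $\Phi_v^{\star}$; combined with $\lct^{\fk{q}}(\fk{a}_{\bullet}^v)=1$ from \Cref{prop-lct(Zhou val)=1}, this yields $c_o^{\fk{q}}(\Phi_v^{\star})\le 1$, hence non-integrability of $|\fk{q}|^2 e^{-2\Phi_v^{\star}}$. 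The identity $\sigma(\log|f|,\Phi_v^{\star})=v(f)$ is proved by a double inequality: for $\sigma\ge v(f)$, one uses $f\in\fk{a}_{\lceil v(f)\rceil}^v$ to obtain $|f|\le C\,|\fk{a}_{\lceil v(f)\rceil}^v|$ near $o$ and hence $\log|f|\le v(f)\cdot\Phi_v^{\star}+O(1)$; the reverse inequality follows from \Cref{lem-w(a.bullet.v)=inf} applied to the principal ideal $(f)$ together with the defining supremum of $\Phi_v^{\star}$. Clause~(3)---the analytic maximality of $\Phi_v^{\star}$---is the central difficulty: given $\varphi'\ge\Phi_v^{\star}+O(1)$ with $|\fk{q}|^2 e^{-2\varphi'}$ non-integrable, I would set $v'(f)\coloneqq\sigma(\log|f|,\varphi')$ and show via \Cref{prop-jm.equa.lct} that $v'\in\Val(X;\fk{q})$ with $v'\ge v$; algebraic maximality of $v$ in \Cref{def-Zhou.val} then forces $v'=v$, after which the coincidence of Zhou numbers pins down $\varphi'=\Phi_v^{\star}+O(1)$.

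For the converse, given an analytic Zhou valuation $v$ arising from a Zhou weight $\Phi$ related to $|\fk{q}|^2$, the characterization furnished by \cite{BGY23} gives $\lct^{\fk{q}}(\fk{a}_{\bullet}^v)\le 1$, placing $v$ in $\Val(X;\fk{q})$. Algebraic maximality then follows by symmetry of the construction: for any $w\in\Val(X;\fk{q})$ with $w\ge v$, the forward construction applied to $w$ produces $\Phi_w^{\star}\ge \Phi+O(1)$ that is still a candidate Zhou weight related to $|\fk{q}|^2$, the analytic maximality of $\Phi$ forces $\Phi_w^{\star}=\Phi+O(1)$, and the already established number correspondence $\sigma(\log|f|,\Phi_w^{\star})=w(f)=\sigma(\log|f|,\Phi)=v(f)$ yields $w=v$. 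The principal obstacle throughout is clause~(3) of the forward construction: every other step is essentially a translation between algebraic and analytic jumping numbers via \Cref{prop-jm.equa.lct}, but the passage from non-integrability of $|\fk{q}|^2 e^{-2\varphi'}$ to a quantitative bound on the graded sequence $\fk{a}_{\bullet}^{v'}$ is precisely where the strong openness property is invoked, matching the authors' remark that strong openness enters only in the last step.
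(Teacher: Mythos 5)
Your broad outline -- build a maximal psh weight $\Phi_v^{\star}$ from $v$, identify its jumping number with $\lct^{\fk{q}}(\fk{a}^v_{\bullet})$ via \Cref{prop-jm.equa.lct}, and prove maximality by passing back to the valuation space -- is the right skeleton, and several individual pieces (clause (1), the inequality $\sigma(\log|f|,\Phi_v^{\star})\ge v(f)$) are fine. However there are genuine gaps that keep this from closing.

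The most serious one is in your verification of clause (3). You propose to set $v'(f)\coloneqq\sigma(\log|f|,\varphi')$ and then argue $v'\in\Val(X;\fk{q})$ with $v'\ge v$. But $\sigma(\log|\cdot|,\varphi')$ is not automatically a valuation: the relative type to a maximal psh germ satisfies superadditivity $\sigma(\log|fg|,\varphi')\ge\sigma(\log|f|,\varphi')+\sigma(\log|g|,\varphi')$ for free, but the reverse inequality (tropical multiplicativity) is exactly the nontrivial theorem of \cite{BGMY23} whose proof rests on the prior existence theory of Zhou weights. The whole point of the paper's argument is to avoid assuming that Zhou weights induce valuations; you reintroduce that assumption through the back door, so the argument is circular. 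The paper circumvents this by replacing your $v'$ with a genuine valuation constructed algebraically: it forms the subadditive system $\fk{b}_t=\cal{J}(t\psi)_o$, invokes \cite[Proposition~2.1]{JM14} to obtain a valuation $w$ that computes $\lct^{\fk{q}}(\fk{b}_{\bullet})$, checks $w\in\Val(X;\fk{q})$ and $w\ge v$ directly through ideal inclusions, and only then uses maximality of $v$. Your sketch does not supply a substitute for this mechanism.

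A second substantive gap is the reverse inequality $\sigma(\log|f|,\Phi_v^{\star})\le v(f)$. You wave at \Cref{lem-w(a.bullet.v)=inf} applied to $(f)$, but that lemma computes $w(\fk{a}_{\bullet}^v)$, and it is not clear how it controls the relative type of $\log|f|$ to a psh weight. In the paper this is precisely where the machinery of Sections 5--8 enters: \Cref{lem-valuation.corre.psh.general}(4) relates the one-sided derivative $\T'_-(0)$ of the Tian function to $\sigma(\log|\fk{q}'|,\Phi_v^{\star})$, and \Cref{thm-ZV.lct.differential} (differentiability of the Tian function at $t=0$ for a Zhou valuation) delivers $\T'_-(0)=\T'_+(0)=v(\fk{q}')$, which squeezes the two inequalities together. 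Without that input there is no route from the definition of $\Phi_v^{\star}$ to the upper bound on the relative type. Relatedly, your converse direction invokes the identity $\sigma(\log|f|,\Phi_w^{\star})=w(f)$ for an \emph{arbitrary} $w\in\Val(X;\fk{q})$ with $w\ge v$, but this identity is established only for Zhou valuations; the paper instead first passes to a Zhou valuation $w\ge\tilde v$ and runs the comparison of weights from there.

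Finally, a smaller misattribution: you locate the use of strong openness inside clause~(3) ("the passage from non-integrability of $|\fk{q}|^2e^{-2\varphi'}$ to a quantitative bound on $\fk{a}_{\bullet}^{v'}$"). In the paper strong openness is used only once, to pass from $c_o^{\fk{q}}(\Phi_v^{\star})=1$ (clause~(b)) to actual non-integrability of $|\fk{q}|^2e^{-2\Phi_v^{\star}}$, i.e.\ to establish clause~(2); the verification of clause~(3) is purely an interplay of controlled growth, Demailly approximation, and the algebraic maximality of $v$. Also note a minor slip: $f\in\fk{a}^v_{\lceil v(f)\rceil}$ is false when $v(f)\notin\bfZ$; one must take powers $f^k\in\fk{a}^v_{\lfloor kv(f)\rfloor}$ and let $k\to\infty$.
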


The algebraic Zhou valuations related to $\fk{q}$ are defined by \Cref{def-Zhou.val}, and the definition of analytic Zhou valuations will be recalled in the following.

\subsubsection{Zhou weights and Zhou valuations}

We first recall the Zhou weights and Zhou valuations introduced in \cite{BGMY23} (with some slight reformulations; see also \cite{BGY23}).

\begin{definition}[{\cite[Definition 1.2, and Definition 1.18]{BGMY23}}]\label{def-analytic Zhou weight}
    Let $\fk{q}\subseteq \cal{O}_o$ be a nonzero ideal. A psh germ $\Phi$ is called a \emph{Zhou weight} related to $|\fk{q}|^2$, if the following statements hold:
    \begin{enumerate}
        \item There exists a constant $N>0$ such that $\Phi\ge N\log|z|+O(1)$ near $o$;
        \item $|\fk{q}|^2e^{-2\Phi}$ is not integrable near $o$;
        \item For any psh germ $\Psi$ satisfying that $|\fk{q}|^2e^{-2\Psi}$ is not integrable near $o$ and $\Psi\ge \Phi+O(1)$ near $o$, we must have $\Psi=\Phi+O(1)$ near $o$.
    \end{enumerate}
\end{definition}

The \emph{cone of Zhou weights} consists of all psh germs $\Phi'$ at $o$ such that there exists some constant $c>0$ and a Zhou weight $\Phi$ related to $|\fk{q}|^2$ for some nonzero ideal $\fk{q}\subseteq\cal{O}_o$, such that $\Phi'=c\Phi+O(1)$ near $o$. In addition, one can also replace a Zhou weight by a \emph{maximal psh germ with the same singular type}, i.e., there exists a psh germ $\wt{\Phi}=\Phi+O(1)$, such that $(\ddc\wt{\Phi})^n\equiv 0$ on $U\setminus\{o\}$, where $U$ is an open neighborhood of $o$; see \cite[Proposition 1.12]{BGMY23}. For a maximal weight $\varphi$ near $o$, the \emph{relative type} introduced by Rashkovskii in \cite{Rash06} is defined as:
\[\sigma(u,\varphi)\coloneqq \sup\{c\ge 0\colon u\le c\varphi+O(1) \text{ near } o\},\]
where $u$ is any psh germ at $o$, and the maximality of $\varphi$ indicates (see \cite[Proposition 3.4]{Rash06})
\[u\le \sigma(u,\varphi)\varphi+O(1) \quad \text{near } o.\]
In particular, the relative types of Zhou weights are called \emph{Zhou numbers} in \cite{BGMY23}. 

Now let $v$ be a valuation on the ring $\cal{O}_o$.

\begin{definition}[\cite{BGMY23}]\label{def-analytic Zhou valuation}
    We call $v$ an \emph{analytic Zhou valuation} related to $|\fk{q}|^2$, where $\fk{q}\subseteq\cal{O}_o$ is a nonzero ideal, if there exists a Zhou weight $\Phi$ related to $\fk{q}$ such that
    \[v(f)=\sigma(\log|f|,\Phi), \quad \forall\,(f,o)\in\cal{O}_o.\]
\end{definition}

Non-trivially, it was shown in \cite[Corollary 1.9]{BGMY23} that every Zhou weight can be associated with a Zhou valuation on $\cal{O}_o$ by establishing an integral expression form of Zhou numbers, where the proof involves a lot of analysis techniques. 


\subsubsection{Algebraic and analytic Zhou valuations}

Now we turn back to prove Theorem \ref{thm-Zhou.valuation.coincide}. Before presenting the proof, let us briefly compare the logical structure of our proof with the proofs in \cite{BGMY23}. The authors of \cite{BGMY23} first introduced the notion of Zhou weight. They then proved that the Zhou numbers (the relative types to the Zhou weight) of the psh germs $u=\log|f|$ satisfy the definition of valuations (see \cite[Corollary 1.7]{BGMY23}), which leads to the notion of Zhou valuations. However, in our article, since we directly define what are Zhou valuations in the valuation space (see \Cref{def-intro Zhou val}), we must prove that, for each (algebraic) Zhou valuation, there exists a psh weight, which should be a Zhou weight, such that the relative type to the weight is compatible with the (algebraic) Zhou valuation. Although the construction of the weight in our proof is very similar with the constructions in \cite{BGMY23, BGY23}, but the proofs in \cite{BGMY23, BGY23} strongly depend on the fact that, \emph{Zhou weights admit Zhou valuations}, which will not be assumed in our proof.

In addition, we will avoid using analytical methods as much as possible in the proof. For example, the strong openness property of multiplier ideal sheaves (\cite{GZ15soc}; see also \cite{Hiep14,Lem17}) will only be used to deduce ``$|\fk{q}|^2e^{-2\Phi}$ is not integrable near $o$" from ``$c_o^{\fk{q}}(\Phi)= 1$". Meanwhile, a \emph{lower semi-continuity property of jumping numbers} (see \cite[Proposition 1.8]{GZ15eff}; see also \cite{Hiep14}) is needed, where a special case, that is the lower semi-continuity property of complex singularity exponents, was proved in \cite[Main Theorem 0.2]{DK01} by Demailly's approximation and the Ohsawa--Takegoshi $L^2$ extension theorem.

We start from a general valuation on $\Spec\cal{O}_o$. We will give a maximal psh weight associated with the valuation and establish some properties of the weight.

Denote the unit ball centered at $o$ in $\bfC^n$ by $\bfB(o,1)$.

\begin{lemma}\label{lem-valuation.corre.psh.general}
    Let $v$ be a valuation on $\cal{O}_o$ with $v(\fk{m})>0$ and $A(v)<+\infty$. For every $z \in \bfB(o,1)$, set
    \begin{equation}\label{eq-Phi.v.star}
        \Phi_{v}(z)\coloneqq\sup\Big\{\frac{\log|f(z)|}{v(f)}\colon f\in\cal{O}\big(\bfB(o,1)\big), \, f(o)= 0, \, f\not\equiv 0, \, \sup_{\bfB(o,1)}|f|\le 1\Big\}.
    \end{equation}
    Then the upper semi-continuous regularization $\Phi^{\star}_{v}$ of $\Phi_v$ is a negative psh function on $\bfB(o,1)$ satisfying the following statements:
    \begin{enumerate}
        \item There exist $C_1,C_2>0$ such that $C_1\log|z|\le \Phi_{v}^{\star}\le C_2\log|z|$ near $o$.
        \item The psh function $\Phi_v^{\star}$ is locally bounded and maximal on $\bfB(o,1)\setminus\{o\}$, and $e^{\Phi_v^{\star}}$ is continuous on $\bfB(o,1)$. 
        \item If $\fk{q},\fk{q}'\subseteq\cal{O}_o$ are nonzero ideals, and $\lambda\in(-\varepsilon_0,0]\subseteq\bfR$ with $\varepsilon_0>0$ sufficiently close to $0$, then
        \[\lct(\fk{q},\lambda\cdot\fk{q}';\fk{a}^v_{\bullet})=c_o^{\fk{q},\lambda\cdot\fk{q}'}(\Phi_v^{\star}).\]

        Especially, $\lct^{\fk{q}}(\fk{a}_{\bullet}^v)=c_o^{\fk{q}}(\Phi_v^{\star})$ if $\lambda=0$.
        \item If $\fk{q},\fk{q}'\subseteq\cal{O}_o$ are nonzero ideals, then the concave function (see Lemma \ref{lem-lct.concave})
        \[\T(t)=\lct(\fk{q}, t\cdot \fk{q}' ; \fk{a}^v_{\bullet}), \quad t\in (-\varepsilon_0, +\infty)\]
        satisfies that $\T'_-(0)\ge \sigma(\log|\fk{q}'|,\Phi_v^{\star})$, where $\T'_-(0)$ denotes the left derivative of $\T$ at $0$, and $\sigma(\,\cdot\,, \Phi_v^{\star})$ is the relative type to $\Phi_v^{\star}$.
    \end{enumerate}
\end{lemma}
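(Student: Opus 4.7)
My overall plan is to establish the four properties of $\Phi_v^\star$ in order, with part (3)---the analytic--algebraic matching of mixed jumping numbers---being the principal obstacle. Throughout, let $\cal{F}_v$ denote the family of negative psh germs $\log|f|/v(f)$ appearing in \eqref{eq-Phi.v.star}, so $\Phi_v^\star$ is a negative psh function on $\bfB(o,1)$. For the upper bound in (1), any admissible $f$ with multiplicity $k=\mathrm{mult}_o(f)$ satisfies $|f(z)|\le |z|^k$ on $\bfB(o,1)$ by a Schwarz-type estimate along complex lines through $o$, while $f\in\fk{m}^k$ gives $v(f)\ge k\cdot v(\fk{m})>0$; combining yields $\log|f(z)|/v(f)\le \log|z|/v(\fk{m})$, so $C_2=1/v(\fk{m})$ works. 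The lower bound is obtained by testing against the rescaled coordinate functions $z_i/R$: this gives $\Phi_v(z)\ge \max_i \log|z_i|/v(z_i)+O(1)\ge \log|z|/M+O(1)$ with $M=\max_i v(z_i)<\infty$, so $C_1=1/M$ suffices.

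For (2), Choquet's lemma extracts a countable subfamily of $\cal{F}_v$ whose envelope has the same upper semi-continuous regularization as $\Phi_v$. Local boundedness of $\Phi_v^\star$ on $\bfB(o,1)\setminus\{o\}$ and the continuity of $e^{\Phi_v^\star}$ at $o$ both follow from the two-sided bound in (1). The family $\cal{F}_v$ is closed under rational powers ($\log|f^N|/v(f^N)=\log|f|/v(f)$), which makes $\Phi_v^\star$ the extremal psh envelope attached to the valuation data $\{(f,v(f))\}$; a standard balayage argument (replacing $\Phi_v^\star$ on a small ball $B\Subset \bfB(o,1)\setminus\{o\}$ by the Perron--Bremermann solution with its boundary values, and observing that the replacement still dominates every $\log|f|/v(f)$ on $\partial B$ and hence inside) yields $(\ddc\Phi_v^\star)^n\equiv 0$ on $\bfB(o,1)\setminus\{o\}$. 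Continuity of $e^{\Phi_v^\star}$ away from $o$ then follows from Bedford--Taylor regularity for bounded maximal psh functions.

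Part (3) is the heart of the argument and the principal obstacle. I would apply \Cref{prop-jm.equa.lct} to $\varphi=\Phi_v^\star$, identifying $c_o^{\fk{q},\lambda\cdot\fk{q}'}(\Phi_v^\star)$ with $\lct_o(\fk{q},\lambda\cdot\fk{q}';\fk{b}_\bullet)$, where $\fk{b}_t=\cal{J}(t\Phi_v^\star)_o$. In view of the valuative formula (\Cref{cor-lct.equals.inf.Val.mixed.ver}), it suffices to check that $\fk{b}_\bullet$ and $\fk{a}^v_\bullet$ carry the same mixed jumping number data at $o$. One direction is clean: the defining inequality $\log|f|\le v(f)\Phi_v^\star$ for $f\in\cal{F}_v$ gives $\log|\fk{a}^v_m|\le m\Phi_v^\star+O(1)$ after normalizing the generators, hence $\cal{J}(t\cdot\fk{a}^v_\bullet)_o\subseteq \cal{J}(t\Phi_v^\star)_o$ for all $t>0$. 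For the reverse comparison I plan to use a Demailly-type approximation: the ideals $\fk{c}_p$ generated by holomorphic $f$ on $\bfB(o,1)$ with $\int|f|^2 e^{-2p\Phi_v^\star}<\infty$ satisfy $\frac{1}{p}\log|\fk{c}_p|=\Phi_v^\star+O(1/p)$, and combining $A(v)<\infty$ with the envelope bound I expect $\fk{c}_p\subseteq \fk{a}^v_{\lceil p(1-\epsilon_p)\rceil}$ for some $\epsilon_p\to 0$; the lower semi-continuity of jumping numbers from \cite{GZ15eff} then allows passage to the limit. The restriction $\lambda\le 0$ in the statement is natural here: the factor $|\fk{q}'|^{2\lambda}$ with $\lambda\le 0$ only reinforces (rather than competes against) the integrability estimates used in the approximation. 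Controlling the gap $\epsilon_p$ via $A(v)<\infty$ is where I expect the real difficulty to lie.

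For (4), I would combine (3) with the definition of relative type. Fix $c<\sigma(\log|\fk{q}'|,\Phi_v^\star)$; by definition of the relative type to the maximal weight $\Phi_v^\star$, we have $\log|\fk{q}'|\le c\Phi_v^\star+O(1)$ near $o$. For $t<0$ the sign flip gives $|\fk{q}'|^{2t}\ge C' e^{2tc\Phi_v^\star}$, whence
\[
|\fk{q}|^2|\fk{q}'|^{2t}e^{-2\gamma\Phi_v^\star}\ge C'|\fk{q}|^2 e^{-2(\gamma-tc)\Phi_v^\star}
\]
near $o$. Whenever $\gamma-tc>\T(0)=c_o^{\fk{q}}(\Phi_v^\star)$ the right-hand side fails to be locally integrable at $o$, hence so does the left, forcing $\gamma\ge\T(t)$ by (3). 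Letting $\gamma\downarrow\T(0)+tc$ yields $\T(t)\le\T(0)+tc$; dividing by $t<0$ produces $(\T(t)-\T(0))/t\ge c$, and passing to the limits $t\to 0^-$ and then $c\uparrow\sigma(\log|\fk{q}'|,\Phi_v^\star)$ gives $\T'_-(0)\ge\sigma(\log|\fk{q}'|,\Phi_v^\star)$.
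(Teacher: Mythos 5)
Your part (1) has two sign errors that trace to using the wrong direction of Izumi's inequality. For the upper bound $\Phi_v^\star\le C_2\log|z|$: you cite only $v(f)\ge k\cdot v(\fk{m})$ with $k=\ord_o f$, but since $\log|f(z)|\le k\log|z|<0$ and this gives a \emph{lower} bound on the denominator, the chain $\log|f(z)|/v(f)\le k\log|z|/v(f)\ge k\log|z|/(k v(\fk{m}))$ breaks (the two inequalities point opposite ways). The paper instead uses the nontrivial half of Izumi's inequality, $v(f)\le A(v)\ord_o(f)$, which gives $\log|f(z)|/v(f)\le k\log|z|/(A(v)k)=\log|z|/A(v)$; this is also the only place the hypothesis $A(v)<+\infty$ enters part (1), and it does not appear in your sketch. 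Symmetrically, your lower-bound constant $1/M$ with $M=\max_i v(z_i)$ is too small: testing $z=(\epsilon,0,\ldots,0)$ when $v(z_1)<M$ already violates $\Phi_v^\star\ge \log|z|/M$. The correct constant is $C_1=1/v(\fk{m})=1/\min_i v(z_i)$.

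The reverse comparison in (3) is where your route diverges from the paper's and is not carried through. You propose to show $\fk{c}_p\coloneqq\cal{J}(p\Phi_v^\star)\subseteq\fk{a}^v_{\lceil p(1-\epsilon_p)\rceil}$, i.e., that $\int|f|^2e^{-2p\Phi_v^\star}<\infty$ forces $v(f)\gtrsim p$. But the envelope only furnishes the inequality $\log|f|\le v(f)\Phi_v^\star+O(1)$, which bounds $v(f)$ from \emph{below} by the relative type; obtaining an upper bound on $v(f)$ from the singularity of $\Phi_v^\star$ is essentially the content of what you are trying to prove (it is equivalent to $\sigma(\log|f|,\Phi_v^\star)\le v(f)$), so the sketch is circular without a further idea, and you explicitly flag this as ``where I expect the real difficulty to lie.'' The paper's argument avoids this entirely: via Choquet's lemma it writes $\Phi_v^\star=(\sup_j \log|f_j|/v(f_j))^\star$, replaces each $f_j$ by a power so that $v(f_j)>1$ is increasing with $v(f_j)/v(f_{j-1})\to\infty$, and then exhibits an explicit elementary bound $\varphi_j\coloneqq\max_{k\le j}\log|f_k|/v(f_k)\le \frac{1}{m_j}\log|\fk{a}^v_{m'_j}|+O(1)$ with $m'_j/m_j\to1$, after which the lower semi-continuity of mixed jumping numbers (valid for $\lambda\le 0$) gives $c_o^{\fk{q},\lambda\fk{q}'}(\Phi_v^\star)=\sup_j c_o^{\fk{q},\lambda\fk{q}'}(\varphi_j)\le\lct(\fk{q},\lambda\fk{q}';\fk{a}^v_\bullet)$. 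Your parts (2) and (4) are fine: (2) is the standard balayage/closure-under-powers argument, and (4) is the same sign-flip computation as the paper, phrased with $c<\sigma$ rather than invoking Rashkovskii's maximality at $c=\sigma$.
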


\begin{proof}
    Clearly $\Phi_v^{\star}$ is a negative psh function on $\bfB(o,1)$, and $\Phi_v^{\star}$ a.e. equals to $\Phi_v$ (cf. \cite[Proposition 4.24]{CADG}). Denote 
    \[\cal{H}\coloneqq \Big\{f\in\cal{O}\big(\bfB(o,1)\big)\colon f(o)= 0, \, f\not\equiv 0, \, \sup_{\bfB(o,1)}|f|\le 1\Big\}.\]
    
        (1) Since $A(v)<+\infty$ and $v(\fk{m})>0$, according to the Izumi's inequality (ref. \cite{Izu85} or \cite[Proposition 5.10]{JM12}), we have
        \[v(\fk{m})\ord_o(f)\leq v(f)\leq A(v)\ord_o(f)\]
        for every $f\in \mathcal{H}$.
        On the one hand, we have
        \[\Phi^{\star}_v(z)\ge \Phi_v(z)\ge\max_{1\le i\le n}\frac{\log|z_i|}{v(z_i)}\ge\frac{1}{v(\fk{m})} \max_{1\le i\le n}\log|z_i| \ge C_1\log|z|\]
        for some constant $C_1>0$, where $z=(z_1,\cdots,z_n)$.
        On the other hand, for every $f\in\cal{H}$,
        \[\frac{\log|f(z)|}{v(f)}\le\frac{\ord_o(f)\log|z|}{v(f)}\le\frac{\ord_o(f)\log|z|}{A(v)\ord_o(f)}=\frac{1}{A(v)}\log|z|,\]
        which implies $\Phi_v\le C_2\log|z|$ with $C_2\coloneqq 1/A(v)$ and thus $\Phi_v^{\star}\le C_2\log|z|$. The first statement holds.
        
        (2) It follows from (1) that $\Phi_v^{\star}$ is locally bounded outside $o$ and $e^{\Phi_v^{\star}}$ is continuous. The standard arguments give the maximality of $\Phi_v^{\star}$.
        
         (3) Since $A(v)<+\infty$, we have $\lct(\fk{q},\lambda\cdot\fk{q}';\fk{a}_{\bullet}^v)<+\infty$. By the coherence of ideal sheaves on $\bfC^n$, the germs of functions in $\cal{H}$ at $o$ generate every nonzero ideal of $\cal{O}_o$. Thus, we have
        \begin{equation}\label{eq-Phi.v.star.ge.q/m.log.amv}
            \Phi_v^{\star}\ge \frac{1}{m}\log|\fk{a}^v_m|+O(1) \quad \text{near } o,
        \end{equation}
        for every $m\ge 1$. Since 
        \begin{equation}\label{eq-m.lct.equal.coqlambdaq'}
            m\cdot\lct(\fk{q},\lambda\cdot\fk{q}';\fk{a}_{m}^v)=m\cdot c_o^{\fk{q},\lambda\cdot\fk{q}'}\left(\log|\fk{a}_m^v|\right)=c_o^{\fk{q},\lambda\cdot\fk{q}'}\left(\frac{1}{m}\log|\fk{a}_m^v|\right)
        \end{equation}
        by \Cref{prop-jm.equa.lct}(3), letting $m\to +\infty$, it follows from (\ref{eq-Phi.v.star.ge.q/m.log.amv}) that 
        \[\lct(\fk{q},\lambda\cdot\fk{q}';\fk{a}_{\bullet}^v)\le c_o^{\fk{q},\lambda\cdot\fk{q}'}(\Phi_v^{\star}).\]
        
        On the other hand, Choquet's Lemma (cf. \cite[Proposition 4.23]{CADG}) shows that there exists a sequence $\phi_j=\log|f_j|/v(f_j)$ with $f_j\in\cal{H}$ such that the $\Phi_v^{\star}=(\sup_j \phi_j)^{\star}$. Since $f^k\in \cal{H}$ when $f\in \cal{H}$ for $k\in\bfZ_+$ and $v(f^k)=kv(f)$, after replacing $f_j$ with some power of $f_j$, we may assume that 
        \begin{enumerate}
            \item $v(f_j)>1$ for all $j\geq 1$;
            \item $v(f_{j+1})>v(f_j)$ for all $j$ and $v(f_j)\to +\infty$ when $j\to +\infty$;
            \item  $v(f_j)/v(f_{j-1})\to +\infty$ when $j\to +\infty$.
        \end{enumerate}
        Let $\varphi_j(z)\coloneqq\max\limits_{1\le k\le j}\phi_k(z)$. Then $\varphi_j$ increasingly a.e. converges to $\Phi_{v}^{\star}$, which implies $\sup\limits_{j\ge 1}c_o^{\fk{q},\lambda\fk{q}'}(\varphi_j)=c_o^{\fk{q},\lambda\cdot\fk{q}'}(\Phi_v^{\star})$ when $\lambda\le 0$, due to the lower semi-continuity property of jumping numbers (see \cite[Proposition 1.8]{GZ15eff}).
        
        Denote
        \[m_j=\lceil v(f_j)+v(f_{j-1})\rceil, \quad m'_j=\lceil v(f_j)\rceil-1, \quad \forall\, j\ge 1,\]
        and
        \[p_{k,j}=\left\lceil \frac{m'_j}{v(f_k)}\right\rceil, \quad k=1,\ldots, j,\]
        which is a positive integer such that $g_{k,j}\coloneqq f_k^{p_{k,j}}\in\fk{a}_{m'_j}^v$ for each $k\leq j$. Observe that when $1\le k\le j-1$, 
        \begin{equation*}
            \begin{aligned}
                v(g_{k,j})=p_{k,j}v(f_k)&\le\left(\frac{m'_j}{v(f_k)}+1\right)v(f_k)\\
                &=m'_j+v(f_k)\le v(f_j)+v(f_{j-1})\le m_j,
            \end{aligned}
        \end{equation*}
        and $v(g_{j,j})=v(f_j)\le m_j$. It follows that
        \begin{equation*}
            \begin{aligned}
                \varphi_j&=\max_{1\le k\le j}\frac{\log|f_k|}{v(f_k)}=\max_{1\le k\le j}\frac{\log|g_{k,j}|}{v(g_{k,j})}\\
                &\le\frac{1}{m_j}\max_{1\le k\le j}\log|g_{k,j}|\le \frac{1}{m_j}\log|\fk{a}_{m_j'}^v|+O(1)\\
                &=\frac{m'_j}{m_j}\cdot \left(\frac{1}{m'_j}\log|\fk{a}_{m'_j}^v|+O(1)\right) \quad \text{near } o,
            \end{aligned}
        \end{equation*}
        for every $j\ge 1$. Hence, since $\frac{m'_j}{m_{j}}\to 1$ as $j\to +\infty$, according to (\ref{eq-m.lct.equal.coqlambdaq'}), we obtain when $\lambda\in (-\varepsilon_0,0]$,
        \[\lct(\fk{q},\lambda\cdot\fk{q}';\fk{a}_{\bullet}^v)=\sup_{j\ge 1}c_o^{\fk{q},\lambda\cdot\fk{q}'}\left(\frac{1}{m'_j}\log|\fk{a}_{m'_j}^v|\right)\ge \sup_{j\ge 1}c_o^{\fk{q},\lambda\fk{q}'}(\varphi_j)=c_o^{\fk{q},\lambda\cdot\fk{q}'}(\Phi_v^{\star}).\]
        Eventually, we get that the third statement holds.

        (4) By Statement (1), the relative type
        \[\sigma\coloneqq \sigma(\log|\fk{q}'|, \Phi_v^{\star})\in (0,+\infty).\]
        Since $\Phi_v^{\star}$ is maximal, we have
        \[\log|\fk{q}'|\le \sigma\Phi_v^{\star}+O(1) \quad \text{near } o.\]
        For $\mu>0$ sufficiently small and any $c>0$, it follows that
        \[\int_{V} |\fk{q}|^2|\fk{q}'|^{-2\mu}e^{-2c\Phi_v^{\star}}=\int_V|\fk{q}|^2e^{-2\mu\log|\fk{q}'|-2c\Phi_v^{\star}}\ge C_0\int_V|\fk{q}|^2e^{-2(c+\mu\sigma)\Phi_v^{\star}},\]
        for any neighborhood $V$ of $o$ and $C_0>0$. Thus,
        \[c_o^{\fk{q}}(\Phi_v^{\star})-\mu\sigma\ge c_o^{\fk{q},-\mu\cdot\fk{q}'}(\Phi_v^{\star}),\]
        which implies
        \[\T(-\mu)=\lct(\fk{q},-\mu\cdot\fk{q}';\fk{a}_{\bullet}^v)=c_o^{\fk{q},-\mu\cdot\fk{q}'}(\Phi_v^{\star})\le c_o^{\fk{q}}(\Phi_v^{\star})-\mu\sigma= \T(0)-\mu\sigma\]
        by applying Statement (3). According to the concavity and monotonicity of the function $\T(t)$, we can verify that $\T'_-(0)$ exists and $\T'_-(0)\ge \sigma$.

    The proof is complete.
\end{proof}

For Zhou valuations on $\cal{O}_o$, we can prove that the function $\Phi_v^{\star}$ satisfies more properties as demonstrated in the following proposition.

\begin{proposition}\label{prop-alg.ZVal.to.Zhou.weight}
Let $\fk{q}\subseteq\cal{O}_o$ be a nonzero ideal, and $v$ be a valuation on $\Spec \cal{O}_o$. If $v$ is an algebraic Zhou valuation related to $\fk{q}$ with $v(\fk{m})>0$, then the maximal psh function $\Phi^{\star}_v$ associated to $v$ defined as which in Lemma \ref{lem-valuation.corre.psh.general} on the unit ball $\bfB(o,1)$ additionally satisfies the following statements:
\begin{enumerate}
    \item The jumping number $c_o^{\fk{q}}(\Phi_v^{\star})=1$.
    \item The relative type of $\Phi_v^{\star}$ is \emph{compatible} with $v$, i.e.,
    \[\sigma(\log|\fk{q}'|,\Phi_v^{\star})=v(\fk{q}'), \quad \text{for any ideal } \fk{q}'\subseteq\cal{O}_o.\]
    \item For every psh germ $\psi$ at $o$ with $c_o^{\fk{q}}(\psi)=1$ and $\psi\ge \Phi_v^{\star}+O(1)$ near $o$, it must hold that $\psi=\Phi_v^{\star}+O(1)$ near $o$.
\end{enumerate}
\end{proposition}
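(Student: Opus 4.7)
My plan is to prove (1), (2), (3) in sequence, using the sup definition of $\Phi_v^\star$ and the Tian function machinery from earlier sections.

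Part (1) is immediate from what is already assembled. Proposition \ref{prop-lct(Zhou val)=1} gives $\lct^{\fk{q}}(\fk{a}_\bullet^v)=1$, and specializing Lemma \ref{lem-valuation.corre.psh.general}(3) at $\lambda=0$ yields $c_o^{\fk{q}}(\Phi_v^\star)=\lct^{\fk{q}}(\fk{a}_\bullet^v)=1$. For Part (2), the inequality $\sigma(\log|\fk{q}'|,\Phi_v^\star)\ge v(\fk{q}')$ is extracted directly from the supremum (\ref{eq-Phi.v.star}): rescaling generators $g_1,\dots,g_r$ of $\fk{q}'$ to lie in $\cal{H}$ gives $\log|g_i|\le v(g_i)\Phi_v^\star\le v(\fk{q}')\Phi_v^\star$ on $\bfB(o,1)$ (using $\Phi_v^\star\le 0$ and $v(g_i)\ge v(\fk{q}')$), so $\log|\fk{q}'|=\max_i\log|g_i|+O(1)\le v(\fk{q}')\Phi_v^\star+O(1)$. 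The reverse inequality combines Lemma \ref{lem-valuation.corre.psh.general}(4), which gives $\T'_-(0)\ge\sigma(\log|\fk{q}'|,\Phi_v^\star)$ for the Tian function $\T(t)=\lct(\fk{q},t\fk{q}';\fk{a}_\bullet^v)$, with Theorem \ref{thm-ZV.lct.differential} and Proposition \ref{prop-compute.linear} (together with $A(v)+v(\fk{q})=1$ from \Cref{thm-A(v)=1-v(q)}), which force $\T$ to be linear on $[0,+\infty)$ with $\T(t)=1+tv(\fk{q}')$ and differentiable at $0$; hence $\T'_-(0)=\T'_+(0)=v(\fk{q}')$.

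Part (3) is the crux. The plan is to build a valuation $w^*$ from $\psi$, show by maximality of the Zhou valuation that $w^*=v$, and then translate the resulting numerical identity back into the geometric comparison $\psi\le\Phi_v^\star+O(1)$. Set $\fk{b}_t\coloneqq\cal{J}(t\psi)$; by Proposition \ref{prop-jm.equa.lct}(1)--(2), $\fk{b}_\bullet$ is a subadditive system of controlled growth with $\lct^{\fk{q}}_o(\fk{b}_\bullet)=c_o^{\fk{q}}(\psi)=1$, where the strong openness property enters exactly to guarantee $|\fk{q}|^2e^{-2\psi}$ is non-integrable near $o$. The hypothesis $\psi\ge\Phi_v^\star+O(1)$ combined with the bound $\Phi_v^\star\ge\frac{1}{m}\log|\fk{a}_m^v|+O(1)$ from (\ref{eq-Phi.v.star.ge.q/m.log.amv}) yields $m\psi\ge\log|\fk{a}_m^v|+O(1)$, hence $\fk{b}_m\supseteq\cal{J}(\fk{a}_m^v)\supseteq\fk{a}_m^v$ for every $m\ge 1$. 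Adapting the compactness argument of \cite[Theorem 7.3]{JM12} to subadditive systems of controlled growth (or equivalently applying it to an auxiliary graded sequence sandwiched between $\fk{a}_m^v$ and $\fk{b}_m$), one finds $w^*\in\Val_X^*$ computing $\lct^{\fk{q}}_o(\fk{b}_\bullet)$, normalized to $w^*(\fk{b}_\bullet)=1$ and $A(w^*)+w^*(\fk{q})=1$. From $\fk{a}_m^v\subseteq\fk{b}_m$ it follows that $w^*(\fk{a}_\bullet^v)\ge w^*(\fk{b}_\bullet)=1$, so $w^*\ge v$ by Lemma \ref{lem-w(a.bullet.v)=inf}, while $\lct^{\fk{q}}(\fk{a}_\bullet^{w^*})\le (A(w^*)+w^*(\fk{q}))/w^*(\fk{a}_\bullet^{w^*})=1$ places $w^*$ in $\Val(X;\fk{q})$. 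Maximality of the Zhou valuation $v$ forces $w^*=v$, and thus $v(\fk{b}_\bullet)=1$.

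To close, I plan to identify the numerical invariant $v(\fk{b}_\bullet)$ with the relative type $\sigma(\psi,\Phi_v^\star)$---the generalized Lelong number of $\psi$ along $v$. For any $c<\sigma(\psi,\Phi_v^\star)$, the inclusion $\cal{J}(t\psi)\subseteq\cal{J}(tc\Phi_v^\star)$ (using $\psi\le c\Phi_v^\star+O(1)$) forces $v(\fk{b}_t)\ge v(\cal{J}(tc\Phi_v^\star))$, and analyzing $\cal{J}(tc\Phi_v^\star)$ through Choquet approximations together with the controlled growth of asymptotic multiplier ideals (Lemma \ref{lem-relation between a and b}) one compares $v(\cal{J}(tc\Phi_v^\star))/t$ to $c\cdot v(\fk{a}_\bullet^v)=c$; combined with the reverse Kollár-type bound $\sigma(\psi,\Phi_v^\star)\le c_o^{\fk{q}}(\Phi_v^\star)/c_o^{\fk{q}}(\psi)=1$, this will pin down $\sigma(\psi,\Phi_v^\star)=1$. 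By Lemma \ref{lem-valuation.corre.psh.general}(2) the function $\Phi_v^\star$ is maximal, so the supremum in the relative type is achieved (Rashkovskii), giving $\psi\le\Phi_v^\star+O(1)$; combined with the hypothesis, $\psi=\Phi_v^\star+O(1)$. The main obstacle is precisely this last identification: bridging the algebraic $v(\fk{b}_\bullet)$ with the analytic $\sigma(\psi,\Phi_v^\star)$ through the Choquet approximation of $\Phi_v^\star$ is delicate, and is the point where the methods diverge most sharply from the valuation-theoretic arguments used in the earlier sections.
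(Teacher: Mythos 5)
Parts (1) and (2) of your proposal match the paper's proof exactly: (1) from \Cref{lem-valuation.corre.psh.general}(3) at $\lambda=0$ together with \Cref{prop-lct(Zhou val)=1}, and (2) by sandwiching $v(\fk{q}')\le\sigma(\log|\fk{q}'|,\Phi_v^{\star})\le\T'_-(0)=\T'_+(0)=v(\fk{q}')$ via \Cref{lem-valuation.corre.psh.general}(4), \Cref{thm-ZV.lct.differential} and \Cref{prop-compute.linear}. For part (3), your setup through the identity $v(\fk{b}_\bullet)=1$ also agrees with the paper (which invokes \cite[Proposition~2.1]{JM14} for the existence of a minimizing valuation of the subadditive system rather than adapting \cite[Theorem~7.3]{JM12}, but these amount to the same thing).

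However, your closing argument has the inequality running the wrong way. What must be shown is $\psi\le\Phi_v^{\star}+O(1)$, i.e.\ $\sigma(\psi,\Phi_v^{\star})\ge 1$. From $\psi\le c\Phi_v^{\star}+O(1)$ for $c<\sigma(\psi,\Phi_v^{\star})$ one gets $\cal{J}(t\psi)\subseteq\cal{J}(tc\Phi_v^{\star})$, hence $v(\fk{b}_t)\ge v(\cal{J}(tc\Phi_v^{\star}))$, and the limit comparison yields $v(\fk{b}_\bullet)\ge\sigma(\psi,\Phi_v^{\star})$; together with $v(\fk{b}_\bullet)=1$ this gives $\sigma(\psi,\Phi_v^{\star})\le 1$, which is the same direction as your ``reverse Koll\'ar-type bound'' and is not what is needed. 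The step that actually closes the argument runs the approximation the other way: from controlled growth and $v(\fk{b}_\bullet)=1$, $A(v)=1-v(\fk{q})\le 1$, one has $v(\fk{b}_j)/j\ge 1-1/j$; part (2) and maximality of $\Phi_v^{\star}$ give $\frac{1}{j}\log|\fk{b}_j|\le\frac{v(\fk{b}_j)}{j}\Phi_v^{\star}+O(1)\le\frac{j-1}{j}\Phi_v^{\star}+O(1)$; and Demailly's approximation (or Ohsawa--Takegoshi) gives $\psi\le\frac{1}{j}\log|\cal{J}(j\psi)_o|+O(1)=\frac{1}{j}\log|\fk{b}_j|+O(1)$. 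Chaining these yields $\sigma(\psi,\Phi_v^{\star})\ge\frac{j-1}{j}\to 1$, which is the missing lower bound. Also, your remark that strong openness is needed to get non-integrability of $|\fk{q}|^2e^{-2\psi}$ is out of place here: part (3) assumes $c_o^{\fk{q}}(\psi)=1$ as a hypothesis and does not need that non-integrability; strong openness enters only later, in the proof of \Cref{thm-Zhou.valuation.coincide}.
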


\begin{proof}
    The statement (3) in Lemma \ref{lem-valuation.corre.psh.general} and $\lct^{\fk{q}}(\fk{a}_{\bullet}^v)=1$ given by \Cref{prop-lct(Zhou val)=1} indicate that $c_o^{\fk{q}}(\Phi_v^{\star})=\lct^{\fk{q}}(\fk{a}_{\bullet}^v)=1$. 

    For the function $\T(t)=\lct(\fk{q},t\cdot\fk{q}';\fk{a}_{\bullet}^v)$:
    \begin{enumerate}
        \item[(a)] The Statement (4) in Lemma \ref{lem-valuation.corre.psh.general} shows $\T'_-(0)\ge\sigma(\log|\fk{q}',\Phi_v^{\star})$;
        \item[(b)] Since $v$ is a Zhou valuation related to $\fk{q}$, Theorem \ref{thm-ZV.lct.differential} implies $\T'_-(0)=\T'_+(0)=v(\fk{q}')$;
        \item[(c)] The definition of $\Phi_v^{\star}$ gives $\sigma(\log|\fk{q}'|,\Phi_v^{\star})\ge v(\fk{q}')$.
    \end{enumerate}
    Thus, $\sigma(\log|\fk{q}'|,\Phi_v^{\star})=v(\fk{q}')$ for every nonzero ideal $\fk{q}'\subseteq\cal{O}_o$.

    Now we prove the third statement. Suppose that the psh germ $\psi$ at $o$ satisfies $c_o^{\fk{q}}(\psi)=1$ and $\psi\ge \Phi_v^{\star}+O(1)$ near $o$. Denote
    \[\fk{b}_t=\cal{J}(t\psi)_o, \quad t\in (0,+\infty).\]
    Note that $\fk{b}_{\bullet}=\{\fk{b}_t\}_t$ is a subadditive system of ideals in $\cal{O}_o$ (cf. \cite{DEL00}). According to \Cref{prop-jm.equa.lct}, we have $\lct^{\fk{q}}(\fk{b}_{\bullet})=c_o^{\fk{q}}(\psi)=1$, and $\fk{b}_{\bullet}$ has controlled growth.

    Since $\psi\ge\Phi_v^{\star}+O(1)$ near $o$, we can verify that $\fk{b}_j\supseteq\fk{a}^v_j$ for every $j\in\bfZ_+$. In fact, by (\ref{eq-Phi.v.star.ge.q/m.log.amv}), for every $N\in \bfZ_+$,
    \[\psi\ge\frac{1}{N}\log|\fk{a}_N^v|+O(1) \quad \text{near } o,\]
    which implies
    \[\fk{b}_j=\cal{J}(j\psi)_o\supseteq\cal{J}\left(\frac{j}{N}\log|\fk{a}_N^v|\right)_o=\cal{J}\left(\frac{j}{N}\cdot\fk{a}_N^v\right)=\cal{J}(j\cdot\fk{a}_{\bullet}^v)\]
   by \Cref{prop-jm.equa.lct}(3), where $N\gg 1$ is divisible enough. Especially, we have $\fk{a}_j^v\subseteq\cal{J}(j\cdot\fk{a}_{\bullet}^v)\subseteq \fk{b}_j$.

    Since $v(\fk{m})>0$ and $\fk{a}_j^v\subseteq\fk{b}_j$ for every $j$, there exists some positive integer $p$ such that $\fk{m}^{pj}\subseteq \fk{b}_j$ for every $j\in\bfZ_+$. Now according to \cite[Proposition 2.1]{JM14}, we can find a valuation $w$ on $\Spec \cal{O}_o$ with $A(w)<+\infty$ which computes $\lct^{\fk{q}}(\fk{b}_{\bullet})$. After rescaling, we can assume that $w(\fk{b}_{\bullet})=1$. Then it follows from \Cref{lem-JM12 section6.2} that
    \[\lct^{\fk{q}}(\fk{a}_{\bullet}^{w})\le A(w)+w(\fk{q})=\lct^{\fk{q}}(\fk{b}_{\bullet})=1.\]
    Since $\fk{a}_j^v\subseteq\fk{b}_j$ for every $j$, we have $w(\fk{a}_{\bullet}^v)\ge w(\fk{b}_{\bullet})=1$, which implies $w\ge v$ by \Cref{lem-w(a.bullet.v)=inf}. Now, according to the assumption that $v$ is an algebraic Zhou valuation related to $\fk{q}$, we get $w=v$, which indicates that $v(\fk{b}_{\bullet})=w(\fk{b}_{\bullet})=1$. Then it follows from \cite[Proposition 6.5]{JM12} and \Cref{prop-jm.equa.lct}(2) that
    \[\frac{v(\fk{b}_{t})}{t}\ge v(\fk{b}_{\bullet})-\frac{A(v)}{t}\ge 1-\frac{1}{t}, \quad \forall t>0,\]
    where $A(v)=1-v(\fk{q})\le 1$ by \Cref{thm-A(v)=1-v(q)}. It implies that $$\sigma(\log|\fk{b}_{j}|,\Phi_v^{\star})=v(\fk{b}_j)\ge j-1$$ when $j\in\bfZ_{\ge 2}$ by the statement (2), i.e.,
    \[\frac{1}{j}\log\big|\cal{J}(j\psi)_o\big|=\frac{1}{j}\log|\fk{b}_j|\le\frac{j-1}{j}\Phi_v^{\star}+O(1) \quad \text{near } o,\]
    for every $j\gg 1$. Finally, we apply Demailly's approximation theorem (or the Ohsawa-Takegoshi $L^2$ extension theorem) to $\psi$ to get
    \[\psi\le \frac{1}{j}\log\big|\cal{J}(j\psi)_o\big|+O(1)\le\frac{j-1}{j}\Phi_v^{\star}+O(1) \quad \text{near } o,\]
    which implies
    \[\sigma(\psi,\Phi_v^{\star})\ge \frac{j-1}{j}\to 1, \quad j\to+\infty.\]
    Hence, $\psi\le\Phi_v^{\star}+O(1)$ near $o$, and thus $\psi=\Phi_v^{\star}+O(1)$ near $o$.

    The proof is complete.
\end{proof}

Now we prove Theorem \ref{thm-Zhou.valuation.coincide}.

\begin{proof}[Proof of Theorem \ref{thm-Zhou.valuation.coincide}]
    We first assume that $v$ is an algebraic Zhou valuation on $\Spec\cal{O}_o$ with $v(\fk{m})>0$. Then \Cref{lem-valuation.corre.psh.general} and Proposition \ref{prop-alg.ZVal.to.Zhou.weight} show that there exists a maximal psh germ $\Phi$ at $o$ such that the following statements hold:
    \begin{enumerate}
        \item[(a)] There exist positive constants $C_1, C_2$ such that $C_1\log|z|\le\Phi\le C_2\log|z|$ near $o$.
        \item[(b)] $c_o^{\fk{q}}(\Phi)=1$.
        \item[(c)] For every psh germ $\psi$ at $o$ with $c_o^{\fk{q}}(\psi)=1$ and $\psi\ge\Phi+O(1)$ near $o$, it must be $\psi=\Phi+O(1)$ near $o$.
        \item[(d)] $\sigma(\log|\fk{q}'|,\Phi)=v(\fk{q}')$ for every ideal $\fk{q}'\subseteq\cal{O}_o$.
    \end{enumerate} 
    The statement (b) and the strong openness property (\cite{GZ15soc}) show that $|\fk{q}|^2e^{-2\Phi}$ is not integrable near $o$. Now we can see that the statements (a) (b) and (c) actually tell that $\Phi$ is a Zhou weight related to $|\fk{q}|^2$ near $o$; see \Cref{def-analytic Zhou weight}. Then it follows from the statement (d) that, restricted to $\cal{O}_o$, $v$ is an analytic Zhou valuation related to $|\fk{q}|^2$ by \Cref{def-analytic Zhou valuation}.

    Next, we prove the converse side. In fact, the proof is quite similar with the proof of the statement (3) of \Cref{prop-alg.ZVal.to.Zhou.weight}.
    
    Let $v$ be a valuation on $\Spec \cal{O}_o$, such that $v$ is an analytic Zhou valuation related to $|\fk{q}|^2$. By \Cref{def-analytic Zhou weight} and \Cref{def-analytic Zhou valuation} again, we assume that $\Psi$ is a local Zhou weight (and maximal) associated to the valuation $v$, i.e.,
    \begin{equation}\label{eq-Psi.relative type}
        \sigma(\log|f|,\Psi)=v(f), \quad \forall (f,o)\in\cal{O}_o.
    \end{equation}

    Set
    \[\fk{b}_t=\cal{J}(t\Psi)_o, \quad t\in (0,+\infty).\]
    Then $\fk{b}_{\bullet}=\{\fk{b}_t\}_t$ is a subadditive system of ideals in $\cal{O}_o$ with controlled growth and $\lct^{\fk{q}}(\fk{b}_{\bullet})=c_o^{\fk{q}}(\Psi)=1$, according to \Cref{prop-jm.equa.lct}. Since $\Psi\ge C\log|z|+O(1)$ near $o$ for $C\coloneqq 1/\sigma(\log|z|,\Psi)=1/v(\fk{m})>0$, we can see that there exists a positive integer $p$ such that $\fk{m}^{pj}\subseteq \fk{b}_j$ for every $j>0$. Then it follows from \cite[Proposition~2.1]{JM14} again that there exists a valuation $\tilde{v}$ on $\Spec \cal{O}_o$ with $A(\tilde{v})<+\infty$ which computes $\lct^{\fk{q}}(\fk{b}_{\bullet})$. After rescaling, we let $\tilde{v}(\fk{b}_{\bullet})=1$. Then $\lct^{\fk{q}}(\fk{a}_{\bullet}^{\tilde{v}})\le A(\tilde{v})+\tilde{v}(\fk{q})=\lct^{\fk{q}}(\fk{b}_{\bullet})=1$ and thus $\tilde{v}\in \Val(X;\fk{q})$; see \Cref{lem-JM12 section6.2} and \Cref{lem-Val(X;q)}.
    
    Let $w\ge \tilde{v}$ be an algebraic Zhou valuation on $\Spec\cal{O}_o$ related to $\fk{q}$, and then it suffices to show $w=v$. Define the function $\Phi_w^{\star}$ associated with $w$ by (\ref{eq-Phi.v.star}), where
    \[\Phi_{w}(z)\coloneqq\sup\Big\{\frac{\log|f(z)|}{w(f)}\colon f\in\cal{O}\big(\bfB(o,1)\big), \, f(o)= 0, \, f\not\equiv 0, \, \sup_{\bfB(o,1)}|f|\le 1\Big\}.\]
    Then $c_o^{\fk{q}}(\Phi_w^{\star})=1$ by \Cref{prop-alg.ZVal.to.Zhou.weight}(1). According to \Cref{prop-alg.ZVal.to.Zhou.weight}(2) and (\ref{eq-Psi.relative type}), to prove $w=v$, we only need to prove $\Psi=\Phi_w^{\star}+O(1)$ near $o$. Because $\Psi$ is a Zhou weight related to $|\fk{q}|^2$, it suffices to show $\Phi_w^{\star}\ge \Psi+O(1)$ near $o$.

    Since $w\ge \tilde{v}$ and $\tilde{v}(\fk{b}_{\bullet})=1$, we have $w(\fk{b}_{\bullet})\ge 1$. Then it follows from \cite[Proposition 6.5]{JM12} (or \Cref{lem-relation between a and b} (1)) and \Cref{prop-jm.equa.lct}(2) that
    \[\frac{w(\fk{b}_{t})}{t}\ge w(\fk{b}_{\bullet})-\frac{A(w)}{t}\ge 1-\frac{1}{t}, \quad \forall t>0,\]
    as $A(w)=1-w(\fk{q})\le 1$. Using \Cref{prop-alg.ZVal.to.Zhou.weight} and Demailly's approximation theorem, we get
    \[\Phi^{\star}_w\ge \frac{1}{w(\fk{b}_j)}\log|\fk{b}_j|+O(1)\ge \frac{j}{j-1}\Psi+O(1) \quad \text{near } o,\]
    for every $j>1$. This implies that the relative type
    \[\sigma(\Psi,\Phi_w^{\star})\ge 1-\frac{1}{j} \to 1 \quad \text{as } j\to+\infty.\]
    Consequently, we get $\Psi\le\Phi_w^{\star}+O(1)$ near $o$, and the proof is complete.
\end{proof}

We explain in the following corollaries and examples that how we can prove some other results in \cite{BGMY23} which are not mentioned above by algebraic methods. 

Corollary \ref{cor-Zhou.weight.to.Zhou.val} is a very nontrivial result of the Zhou weights, established in \cite{BGMY23} by giving an integral expression for the Zhou numbers (cf. \cite[Theorem 1.6]{BGMY23}), which highly relies on the analytic methods, and we can reprove it using the same arguments in the proofs of \Cref{prop-alg.ZVal.to.Zhou.weight} and Theorem \ref{thm-Zhou.valuation.coincide}.

\begin{corollary}[{\cite[Corollary 1.9]{BGMY23}}]\label{cor-Zhou.weight.to.Zhou.val}
    {If $\Phi$ is a Zhou weight related to some nonzero ideal $\fk{q}\subseteq\cal{O}_o$, then there exists a valuation $v$ on $\cal{O}_o$ such that
\[v(\fk{q}')=\sigma(\log|\fk{q}'|, \Phi), \quad \text{$\forall$ nonzero ideal } \fk{q}'\subseteq\cal{O}_o.\]}
\end{corollary}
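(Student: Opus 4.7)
My plan is to construct the valuation $v$ by passing through the algebraic Zhou valuations developed earlier in the paper, and then show that the maximal psh germ attached to such a valuation coincides with $\Phi$ up to $O(1)$ near $o$. The first step will be to form the subadditive system $\fk{b}_t \coloneqq \cal{J}(t\Phi)_o$, $t > 0$. By \Cref{prop-jm.equa.lct}, $\fk{b}_\bullet$ has controlled growth and satisfies $\lct^{\fk{q}}(\fk{b}_\bullet) = c_o^{\fk{q}}(\Phi) = 1$; the last equality uses that $|\fk{q}|^2 e^{-2\Phi}$ is non-integrable (item (2) of \Cref{def-analytic Zhou weight}) together with the strong openness property. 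Item (1) of \Cref{def-analytic Zhou weight} gives $\fk{m}^{pj} \subseteq \fk{b}_j$ for some integer $p > 0$ and every $j$, so \cite[Proposition 2.1]{JM14} supplies a valuation $\tilde{v}$ on $\Spec \cal{O}_o$ with $A(\tilde{v}) < +\infty$ computing $\lct^{\fk{q}}(\fk{b}_\bullet)$. After rescaling to $\tilde{v}(\fk{b}_\bullet) = 1$ we obtain $A(\tilde{v}) + \tilde{v}(\fk{q}) = 1$, whence $\lct^{\fk{q}}(\fk{a}_\bullet^{\tilde{v}}) \le 1$ and $\tilde{v} \in \Val(X;\fk{q})$. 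Then \Cref{cor-existence.valuation} enlarges $\tilde{v}$ to an algebraic Zhou valuation $v \ge \tilde{v}$ related to $\fk{q}$, with $v(\fk{m}) \ge \tilde{v}(\fk{m}) > 0$.

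Next I would invoke \Cref{lem-valuation.corre.psh.general} and \Cref{prop-alg.ZVal.to.Zhou.weight} to attach to $v$ the maximal negative psh germ $\Phi_v^\star$, which satisfies $c_o^{\fk{q}}(\Phi_v^\star) = 1$ and $\sigma(\log|\fk{q}'|, \Phi_v^\star) = v(\fk{q}')$ for every nonzero ideal $\fk{q}' \subseteq \cal{O}_o$. The identity to be proved, $v(\fk{q}') = \sigma(\log|\fk{q}'|, \Phi)$, therefore reduces to the comparison $\Phi = \Phi_v^\star + O(1)$ near $o$, since relative types only depend on the singular type of the reference weight.

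The core step is $\Phi \le \Phi_v^\star + O(1)$. I plan to combine three ingredients: (i) the controlled growth bound $v(\fk{b}_j) \ge j - 1$ for $j \ge 2$, which follows from $v(\fk{b}_\bullet) \ge 1$, $A(v) = 1 - v(\fk{q}) \le 1$ (by \Cref{thm-A(v)=1-v(q)}), and the estimate $v(\fk{b}_t)/t \ge v(\fk{b}_\bullet) - A(v)/t$; (ii) the definitional bound $\Phi_v^\star \ge \tfrac{1}{v(\fk{b}_j)}\log|\fk{b}_j| + O(1)$ coming from the formula for $\Phi_v$ and the fact that $\fk{b}_j$ is generated by bounded holomorphic functions; and (iii) Demailly's approximation theorem applied to $\Phi$, which yields $\Phi \le \tfrac{1}{j}\log|\fk{b}_j| + O(1/j)$ near $o$. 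Chaining (i)--(iii) produces $\Phi \le \tfrac{j-1}{j}\Phi_v^\star + O(1)$, and letting $j \to \infty$ gives $\sigma(\Phi, \Phi_v^\star) \ge 1$, i.e.\ $\Phi \le \Phi_v^\star + O(1)$. Since $c_o^{\fk{q}}(\Phi_v^\star) = 1$, the strong openness property forces $|\fk{q}|^2 e^{-2\Phi_v^\star}$ to be non-integrable near $o$, so the maximality axiom (3) of \Cref{def-analytic Zhou weight} applied to the Zhou weight $\Phi$ upgrades the inequality to the equality $\Phi_v^\star = \Phi + O(1)$.

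The hardest part will be precisely this analytic-to-algebraic bridge: Demailly's approximation, the controlled growth of asymptotic multiplier ideals, and (via the defining maximality of the Zhou weight) the strong openness property must all be deployed simultaneously, mirroring the converse direction of the proof of \Cref{thm-Zhou.valuation.coincide}. Once the two weights are matched modulo $O(1)$, the equality of relative types, and hence the conclusion, is immediate.
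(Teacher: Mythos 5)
Your proposal is correct and follows essentially the same route as the paper: pass to the subadditive system $\fk{b}_t = \cal{J}(t\Phi)_o$, extract from it a finite-log-discrepancy valuation $\tilde v$ via \cite[Prop.~2.1]{JM14}, enlarge to an algebraic Zhou valuation $v\ge\tilde v$, build $\Phi_v^{\star}$ via \Cref{lem-valuation.corre.psh.general} and \Cref{prop-alg.ZVal.to.Zhou.weight}, and then match $\Phi_v^{\star}$ with $\Phi$ modulo $O(1)$ by the controlled-growth/Demailly-approximation chain plus the maximality axiom. The paper compresses the middle steps into ``with the same arguments used in the proofs of \Cref{prop-alg.ZVal.to.Zhou.weight}(3) and \Cref{thm-Zhou.valuation.coincide}''; you have simply unpacked them, and your unpacking is correct.

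One small misattribution worth fixing: the equality $c_o^{\fk{q}}(\Phi)=1$ does not need the strong openness property. Axiom (2) of \Cref{def-analytic Zhou weight} gives $c_o^{\fk{q}}(\Phi)\le 1$, while the maximality axiom (3) together with the unboundedness of $\Phi$ near $o$ (forced by axiom (2)) gives $c_o^{\fk{q}}(\Phi)\ge 1$: if $|\fk{q}|^2e^{-2c\Phi}$ were non-integrable for some $c<1$, then $c\Phi\ge\Phi+O(1)$ would force $c\Phi=\Phi+O(1)$, i.e.\ $\Phi$ bounded, contradicting axiom (2). Strong openness is only genuinely used later, exactly where you apply it, to convert $c_o^{\fk{q}}(\Phi_v^{\star})=1$ into non-integrability of $|\fk{q}|^2e^{-2\Phi_v^{\star}}$. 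This does not affect the validity of the argument.
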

\begin{proof}
   Let $\fk{b}_{\bullet}=\{\fk{b}_{t}\}_{t>0}=\{\cal{J}(t\Phi)_o\}_{t>0}$ be the subadditive system associated with the Zhou weight $\Phi$. Note that there exists $N>0$ such that $\Phi\ge N\log|z|+O(1)$ near $o$. Then using \cite[Proposition~2.1]{JM14}, we can find a valuation $w$ on $\cal{O}_o$ which computes $\lct^{\fk{q}}(\fk{b}_{\bullet})$, and $w(\fk{b}_{\bullet})=1$. Let $v\ge w$ be a Zhou valuation on $\cal{O}_o$ related to $\fk{q}$, and $\Phi_v^{\star}$ be the Zhou weight associated with $v$ related to $|\fk{q}|^2$. With the same arguments used in the proofs of \Cref{prop-alg.ZVal.to.Zhou.weight} (3) and also Theorem \ref{thm-Zhou.valuation.coincide}, we can get $\Phi_v^{\star}\ge \Phi+O(1)$, and thus $\Phi_v^{\star}=\Phi+O(1)$ since both $\Phi_v^{\star}$ and $\Phi$ are Zhou weights related to $|\fk{q}|^2$. Therefore, we have that $v(\fk{q}')=\sigma(\log|\fk{q}'|, \Phi)$  for every nonzero ideal $\fk{q}'\subseteq \cal{O}_o$. 
\end{proof}

\begin{corollary}\label{cor-Zhou.weight.tame}
{All Zhou weights are tame.}
\end{corollary}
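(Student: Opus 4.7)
The plan is to reduce tameness to the correspondence that has just been established in \Cref{thm-Zhou.valuation.coincide}. Let $\Phi$ be a Zhou weight related to $|\fk{q}|^2$ for some nonzero ideal $\fk{q}\subseteq\cal{O}_o$. First I would apply \Cref{cor-Zhou.weight.to.Zhou.val} to obtain a valuation $v$ on $\cal{O}_o$ with $v(\fk{q}')=\sigma(\log|\fk{q}'|,\Phi)$ for every nonzero $\fk{q}'\subseteq\cal{O}_o$; by \Cref{def-analytic Zhou valuation} this $v$ is an analytic Zhou valuation related to $|\fk{q}|^2$, and the ``only if'' half of \Cref{thm-Zhou.valuation.coincide} then turns $v$ into an algebraic Zhou valuation related to $\fk{q}$ (with $v(\fk{m})>0$ forced by the lower bound on $\Phi$ from \Cref{def-analytic Zhou weight}(1)).

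Next I would invoke \Cref{prop-alg.ZVal.to.Zhou.weight} to form the maximal psh germ $\Phi_v^{\star}$ attached to $v$ on $\bfB(o,1)$. Since $c_o^{\fk{q}}(\Phi_v^{\star})=1$ and $\sigma(\log|\fk{q}'|,\Phi_v^{\star})=v(\fk{q}')=\sigma(\log|\fk{q}'|,\Phi)$ for every nonzero $\fk{q}'$, the maximality clause \Cref{def-analytic Zhou weight}(3) forces $\Phi=\Phi_v^{\star}+O(1)$ near $o$, so it is enough to check tameness for $\Phi_v^{\star}$. This is where the algebraic description pays off: by its definition \eqref{eq-Phi.v.star}, $\Phi_v^{\star}$ is the upper-semicontinuous regularization of a supremum of ratios $\log|f|/v(f)$ over holomorphic $f$ on $\bfB(o,1)$ vanishing at $o$. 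Setting $\fk{b}_m\coloneqq \cal{J}(m\Phi_v^{\star})_o$, the arguments already contained in the proofs of \Cref{prop-alg.ZVal.to.Zhou.weight} and \Cref{thm-Zhou.valuation.coincide} yield the sandwich
\[\tfrac{1}{m}\log|\fk{a}_m^v|+O(1)\ \le\ \Phi_v^{\star}\ \le\ \tfrac{1}{m}\log|\fk{b}_m|+O(1),\]
the left inequality being \eqref{eq-Phi.v.star.ge.q/m.log.amv} and the right one being Demailly's Ohsawa--Takegoshi-based approximation; the controlled-growth estimate $v(\fk{b}_m)\ge m-1$ extracted in the proof of \Cref{thm-Zhou.valuation.coincide}, together with the relation $\sigma(\log|\fk{b}_m|,\Phi_v^{\star})=v(\fk{b}_m)$ from \Cref{prop-alg.ZVal.to.Zhou.weight}(2), then reverses the right-hand inequality up to a uniform constant, which I would take as the defining property of tameness.

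The main obstacle, as I see it, is to verify that all the $O(1)$ terms in the sandwich are genuinely uniform in $m$. This requires combining the uniform Ohsawa--Takegoshi constant on the fixed ball $\bfB(o,1)$, the controlled-growth estimate $v(\fk{b}_m)/m\to 1$ with explicit rate $O(1/m)$, and the resulting bound $\log|\fk{b}_m|\le (m-1)\Phi_v^{\star}+O(1)$. Once these ingredients are assembled, tameness follows without further work, and one obtains as a byproduct an explicit analytic-type approximation of the Zhou weight $\Phi$ by the psh germs $\tfrac{1}{m}\log|\fk{a}_m^v|$.
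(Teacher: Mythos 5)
Your reduction to $\Phi_v^\star$ via \Cref{cor-Zhou.weight.to.Zhou.val} and \Cref{thm-Zhou.valuation.coincide} is exactly what the paper does, but from there the routes diverge. You extract the estimate $v(f)\ge t-C$ for $(f,o)\in\cal{J}(t\Phi_v^\star)_o$ by going through the subadditive system $\fk{b}_t=\cal{J}(t\Phi_v^\star)_o$: establish $v(\fk{b}_\bullet)\ge 1$ (implicitly, via the ``$w=v$'' maximality argument from the proof of \Cref{prop-alg.ZVal.to.Zhou.weight}(3)) and then invoke controlled growth to get $v(\fk{b}_t)\ge t-A(v)$. The paper instead gets the same estimate in one line: since $v$ is a Zhou valuation, $c_o^f(\Phi_v^\star)=\lct^{(f)}(\fk{a}_\bullet^v)\le A(v)+v(f)=1-v(\fk{q})+\sigma(\log|f|,\Phi_v^\star)$ by \Cref{lem-valuation.corre.psh.general}(3), \Cref{lem-JM12 section6.2} (with $v(\fk{a}_\bullet^v)=1$), and \Cref{thm-A(v)=1-v(q)}; the inequality $v(f)>t-(1-v(\fk{q}))$ for $(f,o)\in\cal{J}(t\Phi_v^\star)_o$ then falls out immediately, and maximality of $\Phi_v^\star$ together with $\sigma(\log|f|,\Phi_v^\star)=v(f)$ finishes. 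Both arguments are correct, but the paper's avoids re-deriving $v(\fk{b}_\bullet)=1$ and the controlled-growth bound, which are heavier machinery than needed here.

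One correction to the concern you raise at the end: uniformity of the $O(1)$ terms in $m$ (or in $t$) is \emph{not} required. Tameness, as defined in the proof of \Cref{cor-Zhou.weight.tame}, only asks for a uniform constant $C>0$ such that for each $t$ and each germ $f\in\cal{J}(t\Phi)_o$ one has $\log|f|\le(t-C)\Phi+O(1)$ near $o$; the $O(1)$ is permitted to depend on $t$ and $f$. So once you have $v(f)\ge t-C$ for a uniform $C$ (namely $C=A(v)=1-v(\fk{q})$), the maximality of $\Phi_v^\star$ does the rest with no further uniformity to check. The ``main obstacle'' you identify is therefore not an obstacle at all, and the sandwich $\tfrac{1}{m}\log|\fk{a}_m^v|+O(1)\le\Phi_v^\star\le\tfrac{1}{m}\log|\fk{b}_m|+O(1)$, while true, is more than is needed for tameness.
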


\begin{proof}
This is a consequence of \cite[Theorem 1.10]{BGMY23}, which was proved by analytic approach there. We give an algebraic proof in the following.

Recall that a psh weight $\Phi$ is said to be \emph{tame}, if there exists a constant $C>0$ such that for every $t>0$ and every $(f,o)\in\cal{I}(t\varphi)_o$, $\log|f|\le (t-C)\Phi+O(1)$ holds near $o$. In fact, observing that for a Zhou valuation $v$ related to a nonzero ideal $\fk{q}$ and the associated Zhou weight $\Phi_v^{\star}$,
\begin{equation*}
    \begin{aligned}
        c_o^f(\Phi_v^{\star})&=\lct^{(f)}(\fk{a}_{\bullet}^v)\le\frac{A(v)+v(f)}{v(\fk{a}_{\bullet}^v)}=1-v(\fk{q})+v(f)\\
        &=1-v(\fk{q})+\sigma(\log|f|,\Phi_v^{\star}),
    \end{aligned}
\end{equation*}
we can thus directly conclude that the Zhou weights are tame using \Cref{thm-Zhou.valuation.coincide}, \Cref{cor-Zhou.weight.to.Zhou.val} and the fact that $\Phi_v^{\star}$ is maximal.
\end{proof}

\begin{example}[cf. {\cite[Example 1.5 \& 1.20]{BGMY23}}]\label{ex-Zhou.weight}
Let $(k_1,\ldots,k_n)\in\bfZ_+^n$, and $\Phi(z)\coloneqq\max_{1\le i\le n}\frac{1}{\alpha_i}\log|z_i|$ be a psh function on $\Delta^n\subseteq \bfC^n$, where $\alpha=(\alpha_1,\ldots,\alpha_n)\in\bfR_{>0}^n$ satisfying $\sum_{i=1}^n{\alpha_i}k_i=1$. Then $\Phi$ is a Zhou weight related to $|\fk{q}|^2$ near $o$ for $\fk{q}=(z_1^{k_1-1}\cdots z_n^{k_n-1})$.

\begin{proof}
The proof of a special case of this example in \cite{BGMY23} relies on the \emph{concavity property of minimal $L^2$ integrals} (cf. \cite{Guan19,Guan20,GY24}), which somehow is a deep analytic result. We give a mainly algebraic proof here.

Let $v_{\alpha}$ be the monomial valuation defined on $\cal{O}_o$ by
\[v_{\alpha}(f)\coloneqq \min\{\langle \alpha, \beta\rangle \colon c_{\beta}\neq 0\}, \quad \forall\,f(z)=\sum\limits_{\beta\in\bfZ^n_{\ge 0}}c_{\beta}z^{\beta}\in \cal{O}_{\mathbf{C}^n, o}.\]
Then $v_{\alpha}$ is a quasi-monomial adapted to $(X,H)=(\Spec\cal{O}_o,H_1+\cdots+H_n)$ with $H_i=V(z_i)$, and $A(v_{\alpha})=\sum_{i=1}^n\alpha_i$ (cf. \cite[Section 8]{JM14}). 
Set
\[\fk{a}\coloneqq (z_1^{k_1}\cdots z_n^{k_n}).\]
We have $\lct^{\fk{q}}(\fk{a})=1$. In fact, on the one hand, we have 
\[\lct^{\fk{q}}(\fk{a})\leq \frac{A(v_{\alpha})+v_{\alpha}(\fk{q})}{v_{\alpha}(\fk{a})}=\frac{\sum_{i=1}^n\alpha_i+\sum_{i=1}^n\alpha_i(k_i-1)}{\sum_{i=1}^n\alpha_ik_i}=1.\]
On the other hand, we have \[\lct(\fk{a})=\inf_{w\in \Val_X^*}\frac{A(w)+w(\fk{q})}{w(\fk{a})}\]
by \Cref{lem-JM12 section6.2}. Consider the retraction map $r_{X,H}\colon \Val_X\to \QM(X,H)$. Then $\bar{w}\coloneqq r_{X,H}(w)$ is a monomial valuation. We have $w(\fk{q})=\bar{w}(\fk{q})$, $w(\fk{a})=\bar{w}(\fk{a})$ and $A(w)\ge A(\bar{w})$, which implies
\[\frac{A(w)+w(\fk{q})}{w(\fk{a})}\ge\frac{A(\bar{w})+\bar{w}(\fk{q})}{\bar{w}(\fk{a})}=\frac{\sum_{i=1}^n\bar{w}_i+\sum_{i=1}^n\bar{w}_i(k_i-1)}{\sum_{i=1}^n\bar{w}_ik_i}=1,\]
and thus $\lct^{\fk{q}}(\fk{a})\ge 1$, where $\bar{w}_i=\bar{w}(z_i)$.

Now we can conclude that $\lct^{\fk{q}}(\fk{a})$ is computed by the valuation $v_{\alpha}$, and $v_{\alpha}(\fk{a})=\sum_{i=1}^n\alpha_ik_i=1$. Therefore, by \Cref{ex-Zhou.val}, $v_{\alpha}$ is a Zhou valuation related to $\fk{q}$, and it is easy to check that $\Phi$ is a psh weight whose relative type (which actually is the Kiselman number; see \cite{Kis94}) is compatible with $v_{\alpha}$. Hence, $\Phi$ is a Zhou weight related to $|\fk{q}|^2$ near $o$.
\end{proof}
\end{example}

There are also several results obtained in \cite{BGY23} based on the notion of ``$\scr{A}$'' introduced there. According to \Cref{thm-A(v)=1-v(q)} and \Cref{thm-Zhou.valuation.coincide}, it is easy to see that the notion ``$\scr{A}(v)$'' defined in \cite{BGY23} in fact coincides with $A(v)$, the log-discrepancy, for all (analytic) Zhou valuations $v$ on $\cal{O}_o$. Consequently, we can replace all the ``$\scr{A}$'' by ``$A$'' in \cite{BGY23}, which will provide some corollaries, but we do not demonstrate for detail in the present paper.

\subsection*{Acknowledgments} 
The authors would like to thank Mattias Jonsson, Zhitong Mi, Xun Sun, Zheng Yuan and Xiangyu Zhou for helpful discussions. The second named author was supported by National Key R$\&$D Program of China 2021YFA1003100, NSFC-11825101 and NSFC-12425101. The third named author was supported by the European Union’s Horizon 2020 research and innovation programme under grant agreement No. 948066 (ERC-StG RationAlgic).

\appendix \section{The two-dimensional case}

After an early preprint version of the present paper was released, Jonsson asked the authors the following question in a private communication:

\begin{question}
    Can one prove that every quasi-monomial valuation is a Zhou valuation related to some ideal $\fk{q}$?
\end{question}

Until now, we have not been able to answer this question in general. However, by using the valuative tree theory developed by Favre and Jonsson (cf. \cite{FJ04,FJ05a,FJ05b}), we can show that every nontrivial quasi-monomial valuation on a \emph{$2$-dim} regular excellent scheme $X$ over $\bfQ$ is a Zhou valuation related to some ideal $\fk{q}$ (up to rescaling), i.e., $\ZVal_X=\QM_X\setminus\{\text{trivial valuation}\}$ for $\dim X\le 2$ (see \Cref{cor-dim.le.2}).

Let $v$ be a nontrivial quasi-monomial valuation on $X$. If $\xi=c_X(v)$ is not a closed point, then $\xi$ is a generic point of an irreducible curve. Since $\dim\mathcal{O}_{X,\xi}=1$, we can see $v=\lambda\cdot \ord_{\xi}$ for some constant $\lambda>0$. Of course, such valuations must be Zhou valuations by Example \ref{ex-Zhou.val}. Hence, we only need to consider the case that $\xi$ is a closed point. According to \cite[Proposition 7.12 \& Lemma 3.10]{JM12} and Cohen's structure theorem, we may assume $X=\Spec \widehat{\mathcal{O}_{X,\xi}}=\Spec k[[x,y]]$ for $\xi=(0,0)$ and a field $k$ of characteristic $0$. Furthermore, by \cite[Proposition 7.13 \& Lemma~3.11]{JM12}, it suffices to consider the case that $k$ is algebraically closed. After rescaling, we can normalized $v$ by $v(\fk{m})=1$, where $\fk{m}=(x,y)$ the maximal ideal defining the closed point $(0,0)$.

Recall that the set consisting of such normalized centered quasi-monomial valuations on $R=k[[x,y]]$, denoted by $\V_{\qm}$, is a subtree of the complete tree $\V$ of normalized centered valuations (may take values in $\mathbf{R}_+\cup\{\infty\}$), where $\V$ is the so-called \emph{valuative tree} (see \cite{FJ04}). Therefore, it suffices to prove the following proposition, and in the proof, we will adopt some notations and conventions which can be referred to \cite{FJ04,FJ05a}.

\begin{proposition}\label{prop-qm.are.Zhou}
    For every quasi-monomial valuation $v\in\V_{\qm}$, there exists a large enough positive integer $N\gg 1$, such that $v$ is a Zhou valuation related to $\fk{m}^N$ up to rescaling.
\end{proposition}

\begin{proof}
    Due to \Cref{thm-A(v)=1-v(q)} and \Cref{cor-ZV.computes}, (as explained by the paragraph after \Cref{cor-ZV.computes}) we only need to prove that there exists $N\gg 1$ such that 
    \begin{equation}\label{eq-w.Vqm.only.v}
        \big\{w\in\V \colon w \ \text{computes} \ \lct^{\fk{m}^N}(\fk{a}_{\bullet}^v)\big\}=\{v\}.
    \end{equation} 

    Fix $N\in\bfZ_{\ge 0}$, which will be determined at last. We assume $v\neq w\in \V$ and $w$ computes $\lct^{\fk{m}^N}(\fk{a}_{\bullet}^v)$, i.e.,
    \[\lct^{\fk{m}^N}(\fk{a}_{\bullet}^v)=\inf_{\wt{v}\in\V_{\qm}}\frac{A(\wt{v})+N}{\wt{v}(\fk{a}_{\bullet}^{v})}=\frac{A(w)+N}{w(\fk{a}_{\bullet}^{v})}.\]

    We denote the last term in the above equality by $\sigma(w)$. If $w>v$, then $A(w)>A(v)$ by \cite[Section 3.6]{FJ04}. In addition, $w(\fk{a}_{\bullet}^{v})=1$ since $w>v$ and $w(\fk{m})=v(\fk{m})=1$. It follows that $\sigma(w)>\sigma(v)\ge \lct^{\fk{m}^N}(\fk{a}_{\bullet}^v)$, which shows $w$ can not compute $\lct^{\fk{m}^N}(\fk{a}_{\bullet}^v)$.

    Now we assume $w>v$ does not hold. Then the infimum $w\wedge v<v$, and either $w\wedge v=w$ or $w\wedge v<w$ holds. Due to \cite[Proposition 3.25]{FJ04} and the tree structure of $\V$, we have
    \begin{equation*}
        w(\fk{a}_{\bullet}^v)=\inf_{\phi}\frac{w(\phi)}{v(\phi)}=\inf_{\phi}\frac{m(\phi)\alpha(w\wedge v_{\phi})}{m(\phi)\alpha(v\wedge v_{\phi})}=\inf_{\phi}\frac{\alpha(w\wedge v_{\phi})}{\alpha(v\wedge v_{\phi})}=\frac{\alpha(w\wedge v)}{\alpha(v)},
    \end{equation*}
    where the infimum are over all irreducible $\phi\in\fk{m}$, $m(\phi)$ is the multiplicity of $\phi$, $\alpha$ is the \emph{skewness} (cf. \cite[Definition 3.23]{FJ04}), and $v_{\phi}$ is the \emph{curve valuation} (cf. \cite[Section 1.5.5]{FJ04}) defined by the curve $C=\{\phi=0\}$. Therefore, we have
    \[\sigma(w)=\frac{A(w)+N}{\alpha(w\wedge v)}\alpha(v)>\frac{A(w\wedge v)+N}{\alpha(w\wedge v)}\alpha(v)=\sigma(w\wedge v)\]
    once $w>w\wedge v$ does not hold, which shows that this is also not the case.

    According to the above discussions, we can assume $w<v$, that is, $w\in\V_{\qm}$ lying in the segment $[v_{\fk{m}}, v)$, where the multiplicity valuation $v_{\fk{m}}=m(\,\cdot\,)$ is the root of the tree. Let
    \[v_{\fk{m}}=v_0<v_1<\ldots<v_g<v_{g+1}=v\]
    be the \emph{approximation sequence} (cf. \cite[Definition 3.45]{FJ04}) associated to $v$, which is finite since $v\in\V_{\qm}$. Then (cf. \cite[Section 3.6]{FJ04})
    \[A(v)=2+\sum_{j=1}^{g+1}m_j(\alpha_j-\alpha_{j-1}),\]
    where the \emph{multiplicity} (cf. \cite[Section 3.4]{FJ04}) $m(\wt{v})=m_j$ for $\wt{v}\in (v_{j-1}, v_j]$, and $\alpha_j=\alpha(v_j)$. We assume $w\in (v_g, v)$ first. Consider the parametrization of the segment $(v_g, v]$ given by the skewness: for $\wt{v}\in (v_g,v]$ with $\alpha(\wt{v})=t\in (\alpha_g, \alpha_{g+1}]$, denote $A(t)=A(\wt{v})$ and $\sigma(t)=\sigma(\wt{v})$. Then 
    \[A(t)=2+m_{g+1}(t-\alpha_g)+\sum_{j=1}^g(\alpha_j-\alpha_{g-1})\]
    is a linear function in $t\in (\alpha_g, \alpha_{g+1}]$, and hence,
    \begin{equation}\label{eq-diff.of.sigma}
        \frac{\mathrm{d}\sigma(t)}{\mathrm{d}t}=\frac{\mathrm{d}}{\mathrm{d}t}\left(\frac{A(t)+N}{t}\cdot\alpha(v)\right)=\alpha(v)\frac{m_{g+1}\cdot t-A(t)-N}{t^2},
    \end{equation}
    for $t\in (\alpha_{g}, \alpha_{g+1}]$. Thus, if we take
    \begin{equation*}
        N>\max_{\alpha_g<t\le\alpha_{g+1}}\big|m_{g+1}\cdot t-A(t)\big|=\max_{v_g<\wt{v}\le v_{g+1}}\big|m(\wt{v})\alpha(\wt{v})-A(\wt{v})\big|,
    \end{equation*}
    then $\sigma(t)$ is strictly decreasing in $t\in (\alpha_g, \alpha_{g+1}]$, i.e., any $w\in (v_g, v_{g+1})$ can not compute $\lct^{\fk{m}^N}(\fk{a}_{\bullet}^v)$. Continuously, we can see that if we take
    \begin{equation}\label{eq-N.large.enough}
        N>\max_{v_{\fk{m}}\le \wt{v}\le v_{g+1}}\big|m(\wt{v})\alpha(\wt{v})-A(\wt{v})\big|,
    \end{equation}
    then $\sigma(t)$ is strictly decreasing in $t\in [1,v_{g+1}]$, that is, the valuation $w$ computing $\lct^{\fk{m}^N}(\fk{a}_{\bullet}^v)$ also can not lie in the segment $[v_{\fk{m}}, v)$.

    In conclusion, for every positive integer $N$ satisfying (\ref{eq-N.large.enough}), we have verified that (\ref{eq-w.Vqm.only.v}) holds, which implies that $v$ is a Zhou valuation related to the ideal $\fk{m}^N$ up to rescaling.
\end{proof}

In particular, for the trivial ideal $\fk{q}=(1)$ we have the following:

\begin{proposition}
    The subset $\ZV(1)$ of $\V_{\qm}$ consisting of Zhou valuations related to the trivial ideal $\fk{q}=(1)$ is given by:
    \[\ZV(1)=\big\{v\in\V_{\qm}\colon m(v)=1\big\}.\]
\end{proposition}

\begin{proof}
    The argument used in the first part of the proof of \Cref{prop-qm.are.Zhou} also shows that
    \[\ZV(1)=\big\{v\in\V_{\qm}\colon \sigma_0(w)>\sigma_0(v) \ \text{for all} \ w\in \V_{\qm} \ \text{with} \ w<v\big\},\]
    where
    \[\sigma_0(w)\coloneqq\frac{A(w)}{w(\fk{a}_{\bullet}^v)}=\frac{A(w)}{\alpha(w)}\alpha(v),\]
    for $w<v$.

    If $m(v)=1$, then for any $w<v$, we have $m(w)=1$, so
    \[\sigma_0(w)=\frac{1+\alpha(w)}{\alpha(w)}\alpha(v)>1+\alpha(v)=\sigma_0(v),\]
    by \cite[Proposition 3.48]{FJ04}. Hence, any $v\in \V_{\qm}$ with $m(v)=1$ belongs to $\ZV(1)$.

    Conversely, if $m(v)>1$, we can take
    \[v_g\coloneqq \max\big\{w\in [v_{\fk{m}}, v)\colon m(w)<m(v)\big\},\]
    which appears in the approximation sequence of $v$ satisfying $v_g<v$. Set $\alpha_g=\alpha(v_g)<\alpha(v)=\alpha_{g+1}$. Due to (\ref{eq-diff.of.sigma}), for $t\in (\alpha_g, \alpha_{g+1}]$, we have
    \[\frac{\mathrm{d}\sigma_0(t)}{\mathrm{d}t}=\alpha(v)\frac{m(w_t)\alpha(w)-A(w_t)}{\alpha(w_t)^2},\]
    where $t=\alpha(w_t)$ for $w_t\in (v_g, v]$. However, it follows from \cite[Proposition 3.48]{FJ04} that $A(w_t)<m(w_t)\alpha(w_t)$ as $m(w_t)=m(v)>1$ for all $w_t\in (v_g, v]$. Thus, the left derivative of $\sigma_0$ at $t=\alpha(v)$ is strictly positive, which shows that $v$ does not attain the infimum of $\sigma_0(\,\cdot\,)$ in $\V_{\qm}$ (in fact, one can show that the infimum is attained at $v_1=\max\{w\colon w\le v, \ m(w)=1\}$). Therefore, $v\notin \ZV(1)$ when $m(v)>1$.
\end{proof}

\end{document}